\title{The class of $(2P_3,C_4,C_6)$-free graphs, part II: $(2P_3,C_4,C_6,C_7,T_0)$-free graphs} 
\author{Irena Penev\thanks{Computer Science Institute (I\'UUK, MFF), Charles University, Prague, Czech Republic. Supported by GA\v{C}R grant 25-17377S. Email: {ipenev@iuuk.mff.cuni.cz}.}}
\numberwithin{figure}{section}
\newtheorem{theorem}{Theorem}[section]  
\newtheorem{proposition}[theorem]{Proposition} 
\newtheorem{lemma}[theorem]{Lemma} 
\newtheorem{corollary}[theorem]{Corollary} 
\newtheorem{claim}{Claim}[theorem]
\begin{document} 
\maketitle 
\noindent 

\begin{abstract} 
\noindent 
This is the second in a series of two papers dealing with $(2P_3,C_4,C_6)$-free graphs, or equivalently, $(2P_3,\text{even hole})$-free graphs. In this two-paper series, we give a full structural description of $(2P_3,C_4,C_6)$-free graphs that contain no simplicial vertices, and we show that such graphs have bounded clique-width. This implies that \textsc{Graph Coloring} can be solved in polynomial time for $(2P_3,C_4,C_6)$-free graphs. In the first paper of the series, we described the structure of $(2P_3,C_4,C_6)$-free graphs that contain an induced $C_7$ or an induced $T_0$ (where $T_0$ is a certain 2-connected graph on nine vertices in which all holes are of length five), and we showed that such graphs either contain a simplicial vertex or have bounded clique-width. In the present paper (the second part of the series), we describe the structure of $(2P_3,C_4,C_6,C_7,T_0)$-free graphs that contain no simplicial vertices, and we show that such graphs have bounded clique-width. Finally this paper gives the full statement of the theorem describing the structure of $(2P_3,C_4,C_6)$-free graphs that contain no simplicial vertices. 
\end{abstract} 

\section{Introduction}

All graphs in this paper are finite, simple, and nonnull. We mostly use standard terminology and notation, formally introduced in section~\ref{sec:terminology}. For now, let us define a few basic terms. As usual, for a positive integer $k$, $K_k$ is the complete graph on $k$ vertices, $P_k$ is the path on $k$ vertices and $k-1$ edges, and  $C_k$ ($k \geq 3$) is the cycle on $k$ vertices. For a positive integer $k$ and a graph $H$, the disjoint union of $k$ copies of the graph $H$ is denoted by $kH$. (See Figure~\ref{fig:ForbiddenIndSgs} for some graphs relevant to this paper.) For a graph $H$, we say that a graph $G$ is {\em $H$-free} if no induced subgraph of $G$ is isomorphic to $H$. For a family of graphs $\mathcal{H}$, we say that a graph $G$ is {\em $\mathcal{H}$-free} if $G$ is $H$-free for all $H \in \mathcal{H}$. A {\em hole} in a graph $G$ is an induced cycle on at least four vertices, and the {\em length} of a hole is the number of vertices (equivalently: edges) that it contains. A hole is {\em even} (resp.\ {\em odd}) if its length is even (resp.\ odd). A {\em clique} (resp.\ {\em stable set}) is a (possibly empty) set of pairwise adjacent (resp.\ nonadjacent) vertices, and a {\em simplicial vertex} is a vertex whose neighbors form a (possibly empty) clique. A {\em proper coloring} of a graph $G$ is an assignment of colors to the vertices of $G$ in such a way that no two adjacent vertices receive the same color. For an integer $k$, a graph $G$ is said to be {\em $k$-colorable} if there exists a proper coloring of $G$ that uses at most $k$ colors. The {\em chromatic number} of $G$, denoted by $\chi(G)$, is the smallest integer $k$ such that $G$ is $k$-colorable. \textsc{Graph Coloring} is the following problem. 

\bigskip 

\begin{minipage}{\textwidth}  
\textsc{Graph Coloring} 

\textbf{Instance:} A graph $G$ and an integer $k$. 

\textbf{Question:} Is $G$ $k$-colorable? 
\end{minipage} 

\bigskip 

The present paper is the second in a series of two papers that deal with $(2P_3,C_4,C_6)$-free graphs. For a detailed introduction, as well as some open questions, see the first paper of the series~\cite{2P3C4C6FreePart1}. We note, however, that all holes of length at least eight contain an induced $2P_3$, and consequently, all holes in a $(2P_3,C_4,C_6)$-free graph are of length five or seven. In particular, all $(2P_3,C_4,C_6)$-free graphs are even-hole-free, and in fact, $(2P_3,C_4,C_6)$-free graphs are precisely the $(2P_3,\text{even hole})$-free graphs. We also note that $4K_1$ is an induced subgraph of $2P_3$, which is in turn an induced subgraph of $P_7$; so, our class of $(2P_3,C_4,C_6)$-free graphs is a proper superclass of the class of $(4K_1,C_4,C_6)$-free graphs, and is a proper subclass of the class of $(P_7,C_4,C_6)$-free graphs. 

\begin{figure}
\begin{center}
\includegraphics[scale=0.5]{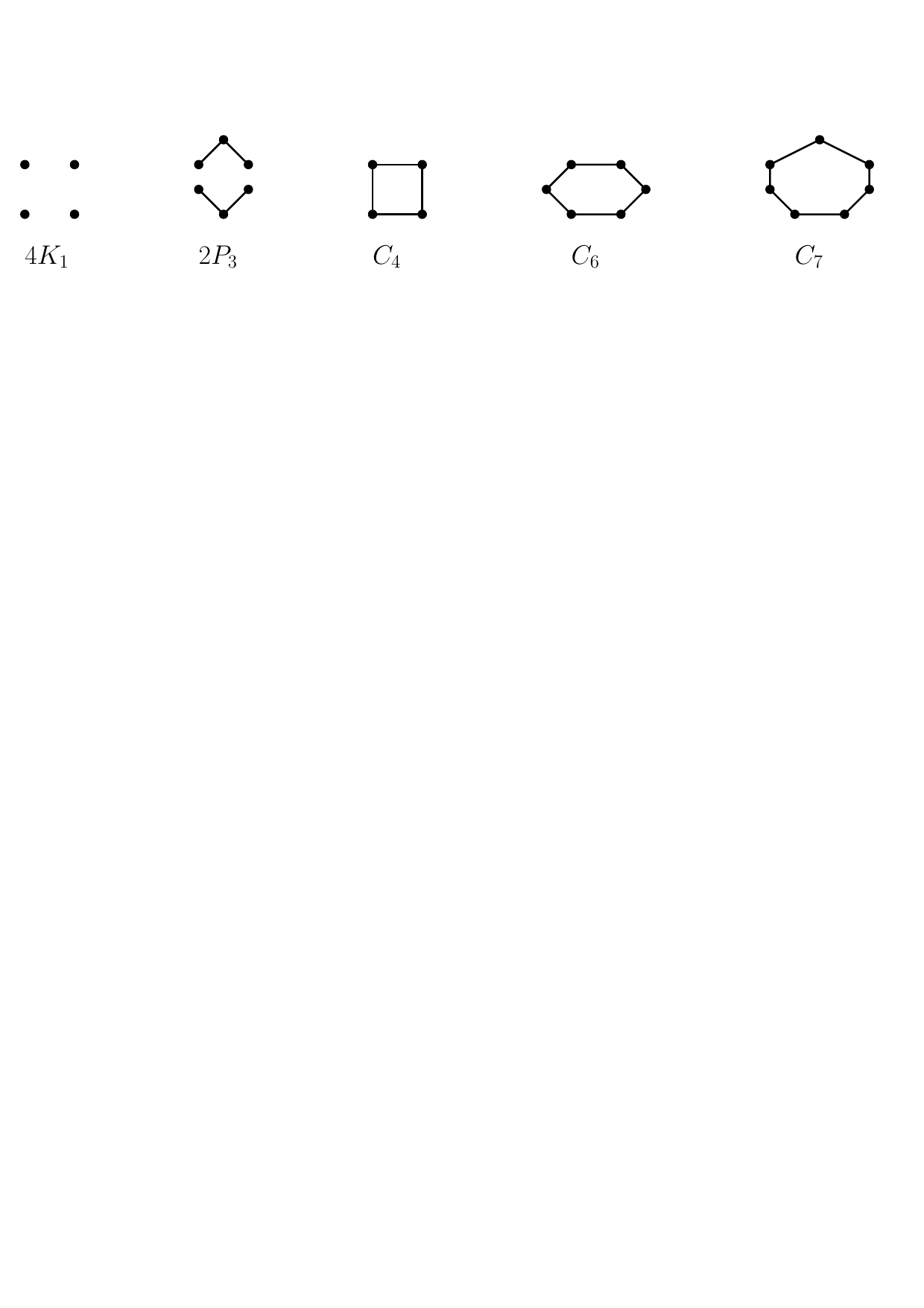}
\end{center} 
\caption{From left to right: graphs $4K_1$, $2P_3$, $C_4$, $C_6$, and $C_7$.} \label{fig:ForbiddenIndSgs} 
\end{figure}

The main result of this two-paper series is a full structural description of $(2P_3,C_4,C_6)$-free graphs that contain no simplicial vertices (see Theorem~\ref{thm-main-structure}). Moreover, we prove that all $(2P_3,C_4,C_6)$-free graphs that contain no simplicial vertices have bounded clique-width (see Theorem~\ref{thm-main-cwd}; for a formal definition of ``clique-width,'' see section~\ref{sec:terminology}). Since simplicial vertices pose no obstacle to coloring in polynomial time,\footnote{Indeed, if $k$ is an integer, and $x$ is a simplicial vertex of a graph $G$ on at least two vertices, then $G$ is $k$-colorable if and only if $d_G(x) \leq k-1$ and $G \setminus x$ is $k$-colorable. Moreover, the simplicial vertices of a graph (if any) can easily be found by simply examining the neighborhood of each vertex and checking if that neighborhood is a clique.} and since \textsc{Graph Coloring} can be solved in polynomial time for graphs of bounded clique-width~\cite{Rao}, it follows that the \textsc{Graph Coloring} problem can be solved in polynomial time for $(2P_3,C_4,C_6)$-free graphs (see Theorem~\ref{thm-main-coloring}). We note, however, that all coloring algorithms that rely on bounded clique-width are quite slow; in particular, the known coloring algorithms for graphs of clique-width at most $k$ all have running time $O(n^{f(k)})$ for some fast growing function $f$ (see~\cite{CWcol}). It would therefore be interesting to develop a faster coloring algorithm for $(2P_3,C_4,C_6)$-free graphs, perhaps relying on our structural results for this class of graphs. This is discussed in more detail at the end of section~\ref{sec:coloring}. 

In the first part of this series, we dealt with $(2P_3,C_4,C_6)$-free graphs that contain an induced $C_7$ or an induced $T_0$ (see Figure~\ref{fig:T0T1}). In the present paper, we deal with $(2P_3,C_4,C_6,C_7,T_0)$-free graphs; we note that all holes in such graphs are of length five, or if we use the terminology from~\cite{L-Holed}, these graphs are ``5-holed.'' In particular, we give a full structural description of $(2P_3,C_4,C_6,C_7,T_0)$-free graphs that contain no simplicial vertices (see Theorem~\ref{thm-structure-C7T0Free}), and we show that such graphs have bounded clique-width (see Theorem~\ref{thm-cwd-in-class-with-C7T0Free}). Combined with the results of the first paper of our series, this yields a full structural description of $(2P_3,C_4,C_6)$-free graphs that contain no simplicial vertices (see Theorem~\ref{thm-main-structure}), as well as the fact that all such graphs have bounded clique-width (see Theorem~\ref{thm-main-cwd}). As explained above, this implies that the \textsc{Graph Coloring} problem can be solved in polynomial time for $(2P_3,C_4,C_6)$-free graphs (see Theorem~\ref{thm-main-coloring}).

\begin{figure}
\begin{center}
\includegraphics[scale=0.5]{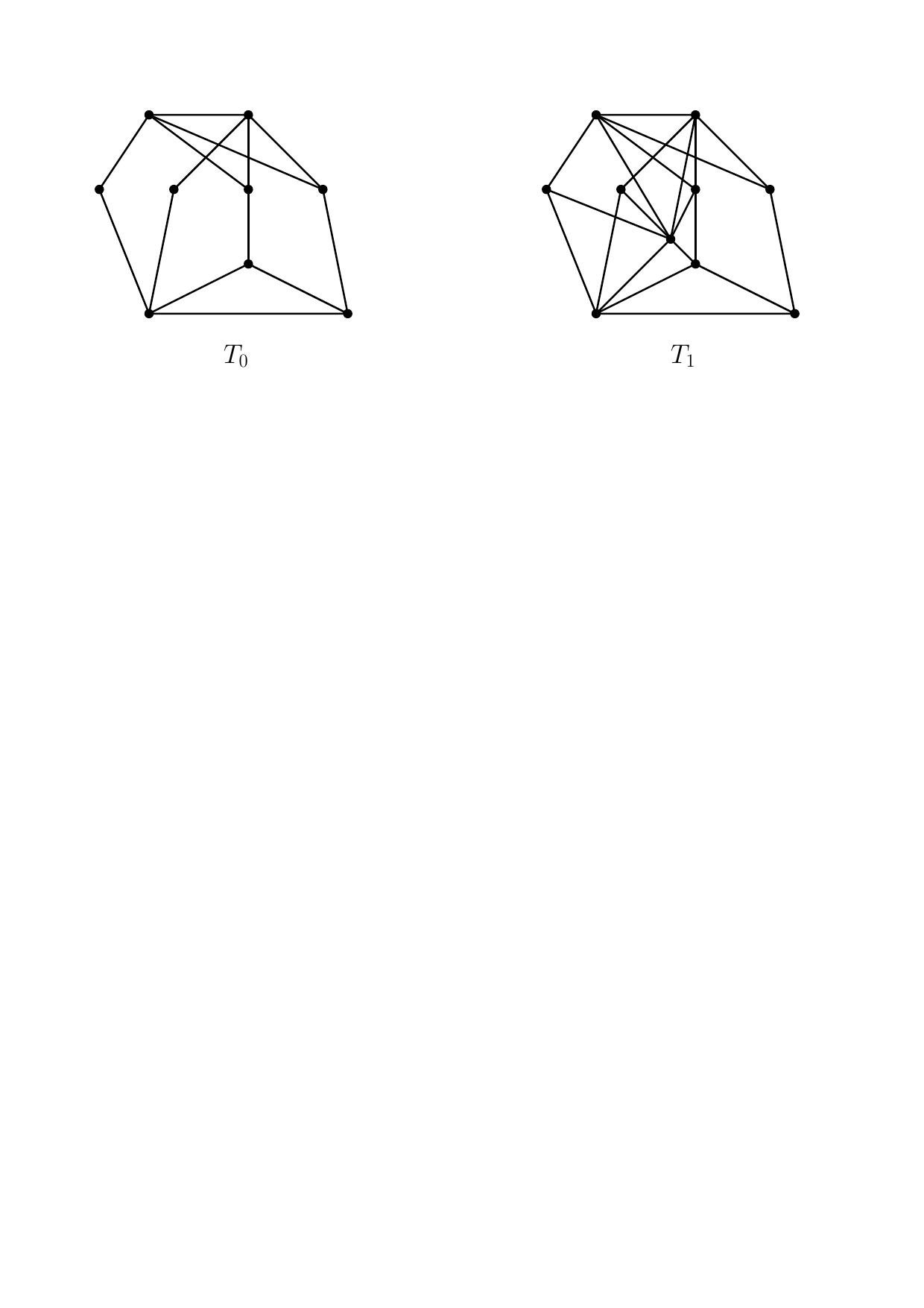}
\end{center} 
\caption{Graphs $T_0$ (left) and $T_1$ (right).} \label{fig:T0T1} 
\end{figure} 

%\begin{figure}
%\begin{center}
%\includegraphics[scale=0.5]{3PentagonT0.pdf}
%\end{center} 
%\caption{The 3-pentagon (left) and $T_0$ (right).} \label{fig:3PT0} 
%\end{figure}

\begin{figure}
\begin{center}
\includegraphics[scale=0.5]{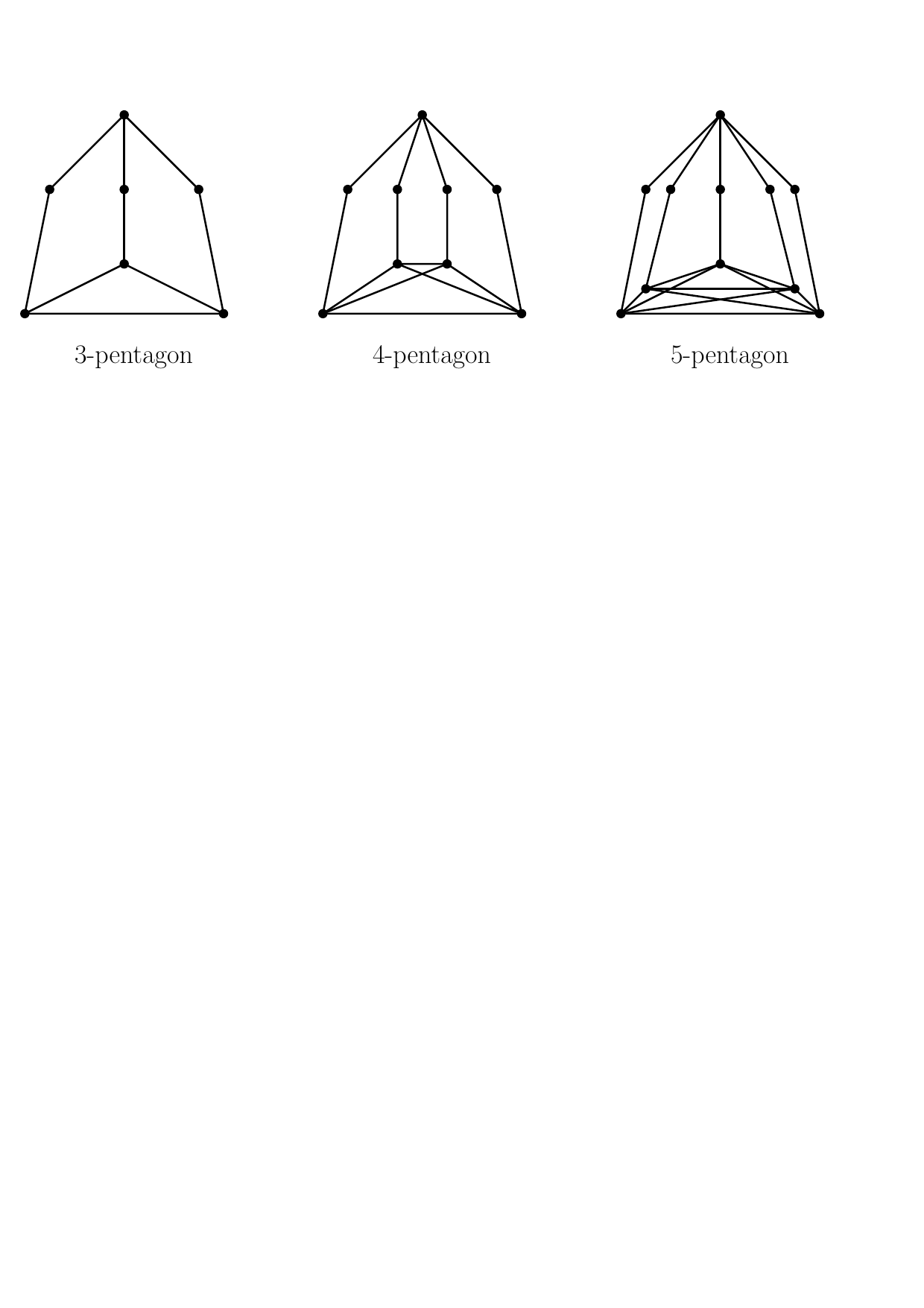}
\end{center} 
\caption{The $t$-pentagon for $t = 3$ (left), $t = 4$ (middle), and $t = 5$ (right).} \label{fig:tPentagon} 
\end{figure}

\subsection{Proof sketch} 

Let us briefly explain our strategy for describing the structure of $(2P_3,C_4,C_6,C_7,T_0)$-free graphs that contain no simplicial vertices, i.e.\ for proving Theorem~\ref{thm-structure-C7T0Free}. First of all, the {\em 3-pentagon} is the graph represented in Figure~\ref{fig:tPentagon} (left); we note that all holes in the 3-pentagon are of length five, and we also note that the 3-pentagon is an induced subgraph of $T_0$. A {\em universal vertex} of a graph is a vertex that is adjacent to all other vertices of that graph. 

Here is a brief sketch of our proof. We start with a $(2P_3,C_4,C_6,C_7,T_0)$-free graph $G$, and we break up our proof into two cases: when $G$ contains an induced 3-pentagon, and when $G$ is 3-pentagon-free. 

Suppose first that our graph $G$ does contain an induced 3-pentagon (this case is dealt with in section~\ref{sec:with-3-pentagon}; we remark that this section is the most technical part of our two-paper series). We then fix the largest integer $t \geq 3$ such that $G$ contains an induced ``$t$-pentagon.'' (We refer the reader to subsection~\ref{subsec:BasicGraphsDef} for a formal definition, but essentially, a $t$-pentagon is a $(2t+1)$-vertex graph consisting of a clique of size $t$, an additional vertex, plus $t$ suitable two-edge paths between this additional vertex and the clique. See Figure~\ref{fig:tPentagon} for an illustration, and note that all holes in a $t$-pentagon are of length five.) We then consider a maximal induced ``$t$-frame'' in our graph $G$ (a $t$-frame is a certain generalization of the $t$-pentagon, where the vertices of the $t$-pentagon become nonempty sets of vertices, and where adjacency between those sets is restricted in certain convenient ways, roughly imitating the $t$-pentagon; for a formal definition, see subsection~\ref{subsec:frames}). Then, by examining the exact structure of this $t$-frame in our $(2P_3,C_4,C_6,C_7,T_0,\text{$(t+1)$-pentagon})$-free graph $G$,\footnote{The fact that $G$ is $(t+1)$-pentagon-free follows from the maximality of $t$.} and by examining how the remaining vertices of $G$ ``attach'' to this $t$-frame, we eventually prove that either $G$ contains a simplicial or a universal vertex, or $G$ is one of the following: a ``5-basket,'' a ``villa,'' or a ``mansion'' (for the relevant definitions, see subsection~\ref{subsec:BasicGraphsDef}, and for a precise statement of our result, see Theorem~\ref{thm-T0free-pyramid-iff}). 

We then turn our attention to the 3-pentagon-free case (this case is dealt with in section~\ref{sec:3-pentagon-free}). So, we assume that our $(2P_3,C_4,C_6,C_7,T_0)$-free graph $G$ is additionally 3-pentagon-free, i.e.\ that $G$ is $(2P_3,C_4,C_6,C_7,\text{3-pentagon})$-free.\footnote{Since the 3-pentagon is an induced subgraph of $T_0$, we see that $(2P_3,C_4,C_6,C_7,T_0,\text{3-pentagon})$-free graphs are in fact exactly the $(2P_3,C_4,C_6,C_7,\text{3-pentagon})$-free graphs.} In this case, we rely heavily on ``Truemper configurations'' and certain results from the literature (for a slightly dated survey of Truemper configurations, see~\cite{Truemper-survey}). A {\em three-path-configuration} (or {\em 3PC} for short) is any theta, pyramid, or prism (see Figure~\ref{fig:Truemper}, and note that the 3-pentagon is a type of pyramid). A {\em wheel} is any graph that consists of a hole, plus an additional vertex that has at least three neighbors in the hole. A {\em Truemper configuration} is any 3PC or wheel. Recall that all holes in a $(2P_3,C_4,C_6,C_7)$-free graphs are of length five. Since the only 3PC in which all holes are of length five is the 3-pentagon, it follows that $(2P_3,C_4,C_6,C_7,\text{3-pentagon})$-free graphs are 3PC-free. On the other hand, the only wheels in which all holes are of length five are the two wheels represented in Figure~\ref{fig:TwoWheels}, and those two wheels are not ``proper'' (for a definition of a proper wheel, see section~\ref{sec:3-pentagon-free}). It follows that $(2P_3,C_4,C_6,C_7,\text{3-pentagon})$-free graphs are, in particular, $(\text{3PC},\text{proper wheel})$-free. Using a decomposition theorem for $(\text{3PC},\text{proper wheel})$-free graphs proven in~\cite{VIK}, as well as certain results of~\cite{4K1C4C6C7Free}, we can show fairly easily that either our graph $G$ contains a simplicial or a universal vertex, or $G$ is a ``5-crown'' (the 5-crown is defined in section~\ref{sec:3-pentagon-free}; for the precise statement of our result, see Theorem~\ref{thm-non-chordal-pyramid-free-iff}). 

\begin{figure} 
\begin{center}
\includegraphics[scale=0.5]{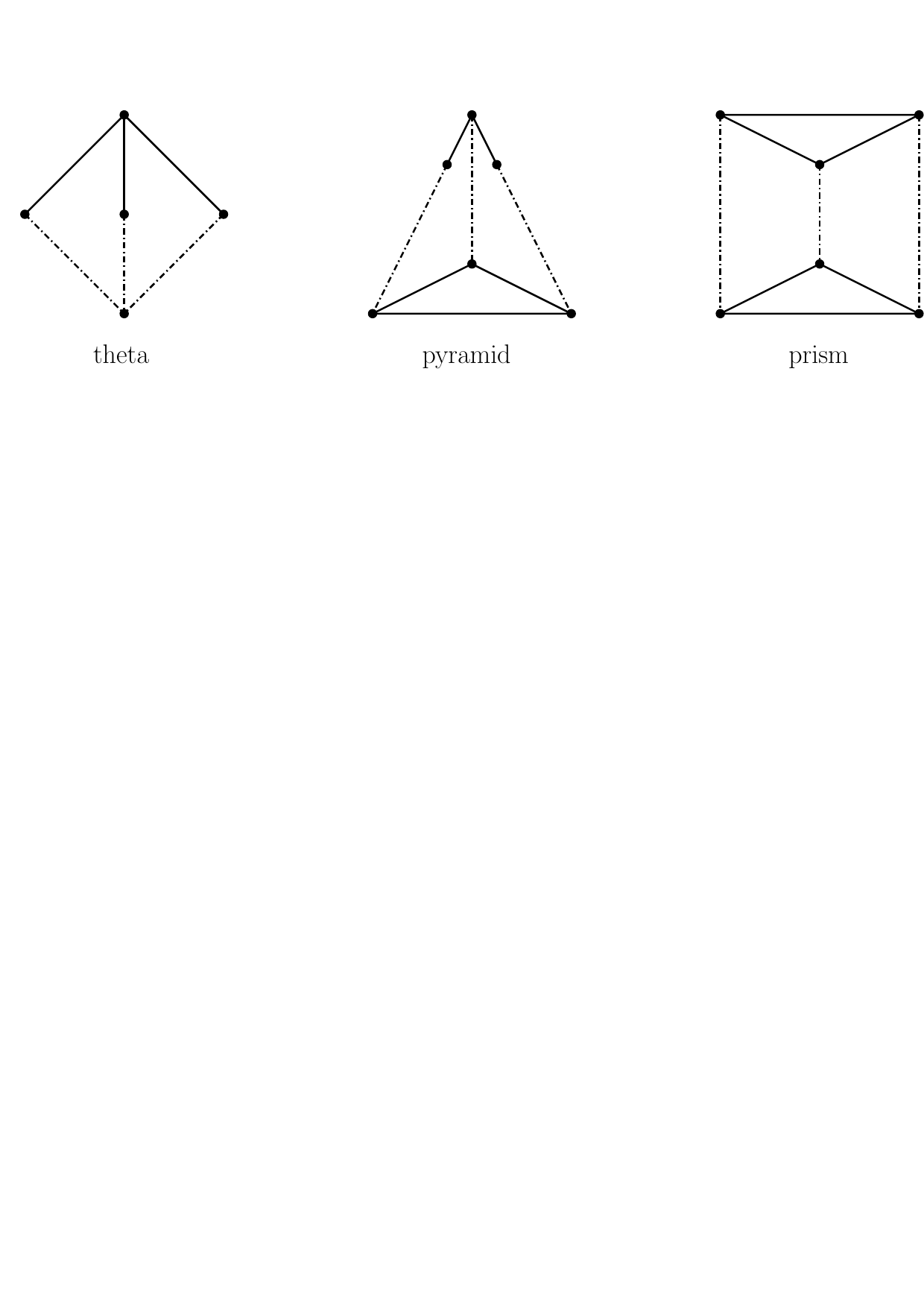}
\end{center} 
\caption{Three-path-configurations (3PCs): theta (left), pyramid (middle), and prism (right). A full line represents an edge, and a dashed line represents a path that has at least one edge.} \label{fig:Truemper} 
\end{figure}

\begin{figure}
\begin{center}
\includegraphics[scale=0.5]{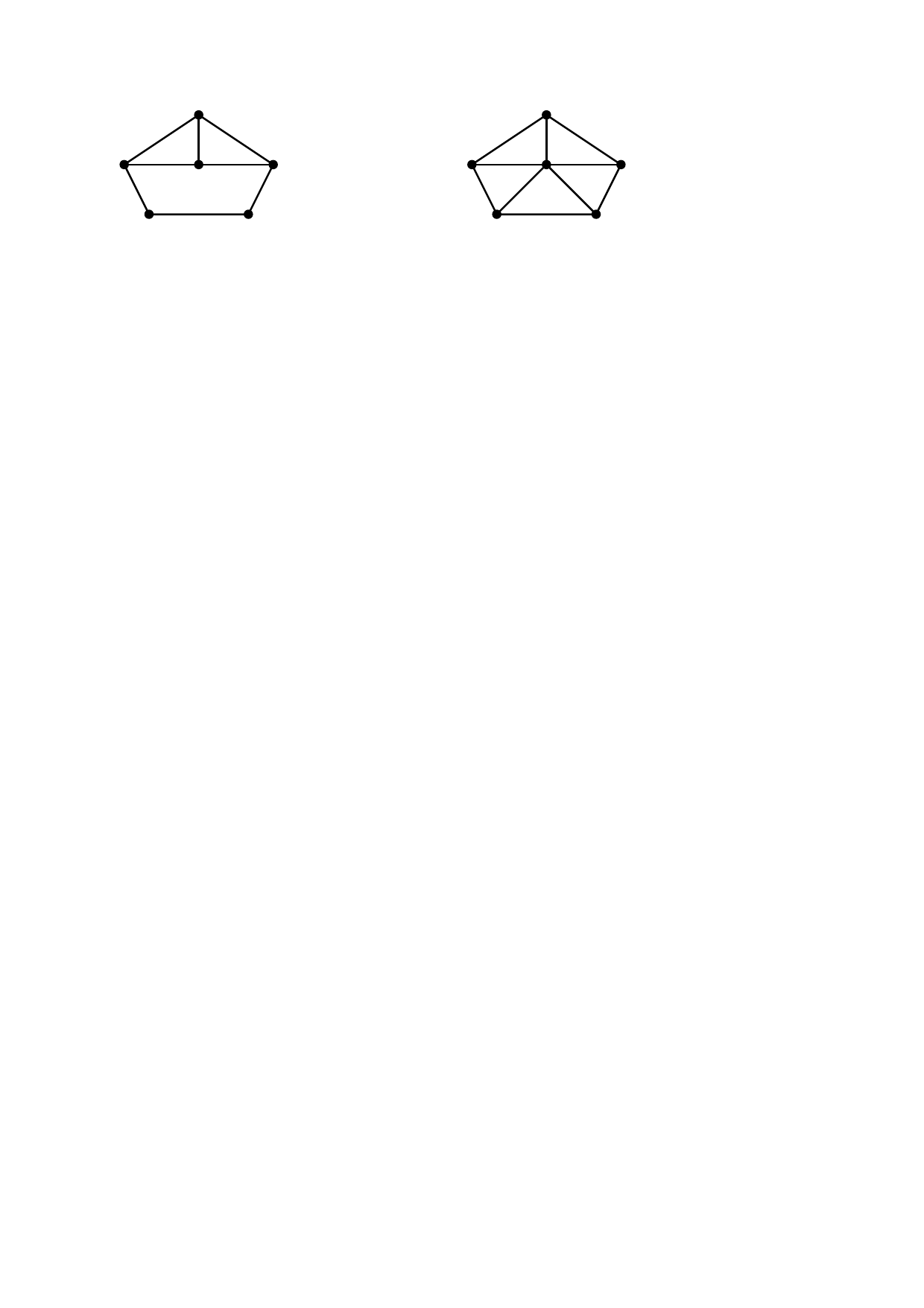}
\end{center} 
\caption{The only wheels in which all holes are of length five.} \label{fig:TwoWheels} 
\end{figure}

\subsection{Paper outline} 

This paper is organized as follows. In section~\ref{sec:terminology}, we give some (mostly standard) terminology and notation that we use throughout the paper, and we also describe some conventions that we use in our figures. Next, in section~\ref{sec:Part1}, we state the main results of the first paper of our two-paper series~\cite{2P3C4C6FreePart1}. Section~\ref{sec:simple} is devoted to some simple propositions that we will need in the remainder of the paper (the propositions stated in subsection~\ref{subsec:Part1SimpleResults} were proven in the first paper of our series, whereas the propositions stated and proven in subsection~\ref{subsec:simple-new} are new). In section~\ref{sec:with-3-pentagon}, we give a full structural description of $(2P_3,C_4,C_6,C_7,T_0)$-free graphs that contain an induced 3-pentagon and contain no simplicial vertices (see Theorem~\ref{thm-T0free-pyramid-iff}), and in section~\ref{sec:3-pentagon-free}, we give a full structural description of $(2P_3,C_4,C_6,C_7,\text{3-pentagon})$-free graphs that contain no simplicial vertices (see Theorem~\ref{thm-non-chordal-pyramid-free-iff}). In section~\ref{sec:structure}, we combine Theorems~\ref{thm-T0free-pyramid-iff} and~\ref{thm-non-chordal-pyramid-free-iff} to obtain Theorem~\ref{thm-structure-C7T0Free}, which gives a full structural description of $(2P_3,C_4,C_6,C_7,T_0)$-free graphs that contain no simplicial vertices. Then, we combine Theorem~\ref{thm-structure-C7T0Free} with the main structural result of the first paper of our series (that result is stated as Theorem~\ref{thm-main-withC7T0} in the present paper) in order to obtain the main structural result of our series: a full structural description of $(2P_3,C_4,C_6)$-free graphs that contain no simplicial vertices (see Theorem~\ref{thm-main-structure}). In section~\ref{sec:cwd}, we show that $(2P_3,C_4,C_6,C_7,T_0)$-free graphs that contain no simplicial vertices have bounded clique-width (see Theorem~\ref{thm-cwd-in-class-with-C7T0Free}); combined with the results of the first paper of our series, this implies that $(2P_3,C_4,C_6)$-free graphs that contain no simplicial vertices have bounded clique-width (see Theorem~\ref{thm-main-cwd}). Finally, in section~\ref{sec:coloring}, we show that \textsc{Graph Coloring} can be solved in polynomial time for $(2P_3,C_4,C_6)$-free graphs (see Theorem~\ref{thm-main-coloring}). As we have already explained, this last result relies on clique-width, and therefore, the running time is in fact very slow (albeit polynomial). In section~\ref{sec:coloring}, we explain that the problem of constructing a reasonably fast coloring algorithm for $(2P_3,C_4,C_6)$-free graphs can be reduced to that of coloring only two types of graphs, namely ``villas'' and ``mansions'' (these two types of graphs are defined in subsection~\ref{subsec:BasicGraphsDef}).

\section{Terminology and notation} \label{sec:terminology} 

When we say that ``sets $X_1,\dots,X_{\ell}$ form a partition of the set $X$,'' or that ``$(X_1,\dots,X_{\ell})$ is a partition of the set $X$,'' we mean that sets $X_1,\dots,X_{\ell}$ are pairwise disjoint and that $X = X_1 \cup \dots \cup X_{\ell}$. However, slightly nonstandardly, we allow some (or all) of the $X_i$'s to be empty. 

The vertex and edge set of a graph $G$ are denoted by $V(G)$ and $E(G)$, respectively. The {\em clique number} of a graph $G$ is the maximum cardinality of any clique in $G$, and it is denoted by $\omega(G)$; the {\em stability number} of $G$ is the maximum cardinality of any stable set in $G$, and it is denoted by $\alpha(G)$. 

For a vertex $x$ in a graph $G$, the {\em open neighborhood} (or simply {\em neighborhood}) of $x$ in $G$, denoted by $N_G(x)$, is the set of all neighbors of $x$ in $G$; the {\em closed neighborhood} of $x$ in $G$, denoted by $N_G[x]$, is defined as $N_G[x] := \{x\} \cup N_G(x)$; the {\em degree} of $x$ in $G$, denoted by $d_G(x)$, is the number of neighbors that $x$ has in $G$, {\em i.e.}\ $d_G(x) := |N_G(x)|$. 

A vertex $v$ of a graph $G$ is {\em universal} in $G$ if $N_G[v] = V(G)$, i.e.\ if $v$ is adjacent to all other vertices of the graph $G$. 

Distinct vertices $x,y$ of a graph $G$ are {\em twins} in $G$ if $N_G[x] = N_G[y]$. (Note that twins are, in particular, adjacent to each other.) 

For distinct vertices $x,y$ of a graph $G$, we say that $x$ {\em dominates} $y$ in $G$ if $N_G[y] \subseteq N_G[x]$. (Note that if $x$ dominates $y$ in $G$, then in particular, $x$ and $y$ are adjacent.) 

For a graph $G$ and a set $X \subseteq V(G)$, the {\em open neighborhood} of $X$ in $G$, denoted by $N_G(X)$, is the set of all vertices in $V(G) \setminus X$ that have a neighbor in $X$, and the {\em closed neighborhood} of $X$ in $G$ is the set $N_G[X] := X \cup N_G(X)$. 

For a graph $G$ and a nonempty set $X \subseteq V(G)$, we denote by $G[X]$ the subgraph of $G$ induced by $X$; for vertices $x_1,\dots,x_t \in V(G)$, we sometimes write $G[x_1,\dots,x_t]$ instead of $G[\{x_1,\dots,x_t\}]$. For a set $X \subsetneqq V(G)$, we set $G \setminus X := G[V(G) \setminus X]$. If $G$ has at least two vertices and $x \in V(G)$, we sometimes write $G \setminus x$ instead of $G \setminus \{x\}$.\footnote{Since our graphs are nonnull, if $x$ is the only vertex of a graph $G$, then $G \setminus x$ is not defined.} 

Given a graph $G$, a vertex $x \in V(G)$, and a set $Y \subseteq V(G) \setminus \{x\}$, we say that $x$ is {\em complete} (resp.\ {\em anticomplete}) to $Y$ in $G$ provided that $x$ is adjacent (resp.\ nonadjacent) to all vertices of $Y$ in $G$, and we say that $x$ is {\em mixed} on $Y$ in $G$ if $x$ is neither complete nor anticomplete to $Y$ in $G$ (i.e.\ if $x$ has both a neighbor and a nonneighbor in $Y$). 

Given a graph $G$ and disjoint sets $X,Y \subseteq V(G)$, we say that $X$ is {\em complete} (resp.\ {\em anticomplete}) to $Y$ in $G$ if every vertex in $X$ is complete (resp.\ anticomplete) to $Y$ in $G$. 

When we say, for graphs $G$ and $Q$, that ``$G$ can be obtained from $Q$ by possibly adding universal vertices to it,'' we mean that $Q$ is an induced subgraph of $G$, and $V(G) \setminus V(Q)$ is a (possibly empty) clique of $G$, complete to $V(Q)$ in $G$. Note that in this case, every vertex in $V(G) \setminus V(Q)$ is a universal vertex of $G$. However, it is possible that $G = Q$, in which case $G$ contains no universal vertices unless $Q$ does. 

A {\em thickening} of a graph $G$ is a graph $G^*$ for which there exists a family $\{X_v\}_{v \in V(G)}$ of pairwise disjoint, nonempty sets such that all the following hold: 
\begin{itemize} 
\item $V(G^*) = \bigcup_{v \in V(G)} X_v$; 
\item for all $v \in V(G)$, $X_v$ is a clique of $G^*$; 
\item for all distinct $u,v \in V(G)$, the following hold: 
\begin{itemize} 
\item if $u,v$ are adjacent in $G$, then $X_u$ and $X_v$ are complete to each other in $G^*$; 
\item if $u,v$ are nonadjacent in $G$, then $X_u$ and $X_v$ are anticomplete to each other in $G^*$. 
\end{itemize} 
\end{itemize} 

When we write, for a graph $G$, that ``$x_0,\dots,x_t$ is an induced path in $G$'' ($t \geq 0$), we mean that $x_0,\dots,x_t$ are pairwise distinct vertices, and that the edges of $G[x_0,\dots,x_t]$ are precisely $x_0x_1,x_1x_2,\dots,x_{t-1}x_t$; the {\em endpoints} of this path are $x_0$ and $x_t$ (note that if $t = 0$, then our path has only only one endpoint), the {\em interior vertices} of the path are all its vertices other than the endpoints, and furthermore, we say that our path is {\em between} its endpoints $x_0$ and $x_t$. The {\em length} of a path is the number of edges that it contains; we may also refer to a path of length $k$ as a ``$k$-edge path'' or as a ``$(k+1)$-vertex path.'' A path is {\em trivial} if it is of length zero (i.e.\ if it has only one vertex and no edges), and it is {\em nontrivial} if it is of length at least one. 

For an integer $k \geq 4$, a {\em $k$-hole} in a graph $G$ is an induced $C_k$ in $G$. When we write that ``$x_0,x_1,\dots,x_{k-1},x_0$ is a $k$-hole in $G$'' ($k \geq 4$), we mean that $x_0,x_1,\dots,x_{k-1}$ are pairwise distinct vertices of $G$, and that the edges of $G[x_0,x_1,\dots,x_{k-1}]$ are precisely the following: $x_0x_1,x_1x_2,\dots,x_{k-2}x_{k-1},x_{k-1}x_0$. A graph is {\em chordal} if it contains no holes, that is, if all its induced cycles (if any) are triangles. 

For an integer $k \geq 4$, a {\em $k$-hyperhole} is a graph $H$ whose vertex set can be partitioned into nonempty cliques $X_0,X_1,\dots,X_{k-1}$ (with indices in $\mathbb{Z}_k$) such that for all $i \in \mathbb{Z}_k$, $X_i$ is complete to $X_{i-1} \cup X_{i+1}$ and is anticomplete to $V(H) \setminus (X_{i-1} \cup X_i \cup X_{i+1})$; under these circumstances, we also say that $(X_0,X_1,\dots,X_{k-1})$ is a {\em $k$-hyperhole partition} of the $k$-hyperhole $H$. Note that, for an integer $k \geq 4$, a $k$-hyperhole is simply a thickening of $C_k$. 

The {\em complement} of a graph $G$, denoted by $\overline{G}$, is the graph whose vertex set is $V(G)$ and in which two distinct vertices are adjacent if and only if they are nonadjacent in $G$. A graph is {\em anticonnected} if its complement is connected. Note that every anticonnected graph on at least two vertices contains a pair of nonadjacent vertices. 

As usual, a {\em connected component} (or simply {\em component}) of a graph $G$ is a maximal connected induced subgraph of $G$. An {\em anticomponent} of a graph $G$ is a maximal anticonnected induced subgraph of $G$; clearly, $Q$ is an anticomponent of $G$ if and only if $\overline{Q}$ is a component of $\overline{G}$. An anticomponent is {\em trivial} if it has only one vertex, and it is {\em nontrivial} if it has at least two vertices. Clearly, the vertex sets of the anticomponents of a graph $G$ are complete to each other in $G$. Moreover, note that the unique vertex of any trivial anticomponent of a graph is a universal vertex of that graph. 

A {\em cutset} of a graph $G$ is a (possibly empty) set $C \subsetneqq V(G)$ such that $G \setminus C$ is disconnected. A {\em clique-cutset} of a graph $G$ is a cutset of $G$ that is also a clique of $G$. (In particular, if $G$ is disconnected, then $\emptyset$ is a clique-cutset of $G$.) Note that if $v$ is a simplicial vertex of a graph $G$, then either $G$ is complete, or $N_G(v)$ is a clique-cutset of $G$. 

The {\em clique-width} of a graph $G$, denoted by $\text{cwd}(G)$, is the minimum number of labels needed to construct $G$ using the following four operations: 
\begin{enumerate} 
\item creation of a new vertex $v$ with label $i$; 
\item disjoint union of two labeled graphs; 
\item joining by an edge every vertex labeled $i$ to every vertex labeled $j$ (where $i \neq j$); 
\item renaming label $i$ to label $j$.  
\end{enumerate}

\subsection{A convention for our figures} 

In all our figures, crosshatched disks represent (possibly empty) cliques. A straight line between two crosshatched disks indicates that the two cliques are complete to each other. The presence of a wavy line between two crosshatched disks indicates that edges may possibly be present between the two cliques; however, adjacency between the two cliques is not arbitrary in this case, and instead, the exact details of adjacency are always given in the formal definition of the graph represented in the figure in question. If there is no line of any kind (straight or wavy) between two crosshatched disks, this indicates that the two cliques are anticomplete to each other.

\section{The main results of the first paper of the series} \label{sec:Part1}

In this subsection, we state the main results of~\cite{2P3C4C6FreePart1}. We begin with some definitions. $T_0$ and $T_1$ are the graphs represented in Figure~\ref{fig:T0T1}. (Note that $T_0$ is an induced subgraph of $T_1$.) Next, we define graphs $M_0$, $M_1$, $M_2$, and $M_3$, as well as the family $\mathcal{M}$ of graphs, as follows. 

$M_0$ is the graph with vertex set $\{x_0,x_1,x_2,x_3,x_4,x_5,x_6,y_0,y_3,z_0,z_3,z_4\}$ and with adjacency as follows: 
\begin{itemize} 
\item $x_0,x_1,x_2,x_3,x_4,x_5,x_6,x_0$ is a 7-hole; 
\item $y_0$ is complete to $\{x_0,x_1,x_4\}$ and anticompelte to $\{x_2,x_3,x_5,x_6\}$; 
\item $y_3$ is complete to $\{x_3,x_4,x_0\}$ and anticomplete to $\{x_1,x_2,x_5,x_6\}$; 
\item $z_0$ is complete to $\{x_0,x_1,x_2,x_3,x_4\}$ and anticomplete to $\{x_5,x_6\}$; 
\item $z_3$ is complete to $\{x_3,x_4,x_5,x_6,x_0\}$ and anticompelte to $\{x_1,x_2\}$; 
\item $z_4$ is complete to $\{x_4,x_5,x_6,x_0,x_1\}$ and anticompelte to $\{x_2,x_3\}$; 
\item $\{y_0,y_3,z_0,z_3,z_4\}$ is a clique. 
\end{itemize} 

$M_1$ is the graph with vertex set $\{x_0,x_1,x_2,x_3,x_4,x_5,x_6,y_0,z_2\}$ and with adjacency as follows: 
\begin{itemize} 
\item $x_0,x_1,x_2,x_3,x_4,x_5,x_6,x_0$ is a 7-hole; 
\item $y_0$ is complete to $\{x_0,x_1,x_4\}$ and anticomplete to $\{x_2,x_3,x_5,x_6\}$; 
\item $z_2$ is complete to $\{x_2,x_3,x_4,x_5,x_6\}$ and anticomplete to $\{x_0,x_1\}$; 
\item $y_0$ and $z_2$ are nonadjacent.  
\end{itemize} 

$M_2$ is the graph with vertex set $\{x_0,x_1,x_2,x_3,x_4,x_5,x_6,y_0,z_1,z_2\}$ and with adjacency as follows: 
\begin{itemize} 
\item $M_2 \setminus z_1 = M_1$; 
\item $z_1$ is complete to $\{x_1,x_2,x_3,x_4,x_5\}$ and anticomplete to $\{x_6,x_0\}$; 
\item $z_1$ is complete to $\{y_0,z_2\}$. 
\end{itemize} 

$M_3$ is the graph with vertex set $\{x_0,x_1,x_2,x_3,x_4,x_5,x_6,y_0,z_2,z_3\}$ and with adjacency as follows: 
\begin{itemize} 
\item $M_3 \setminus z_3 = M_1$; 
\item $z_3$ is complete to $\{x_3,x_4,x_5,x_6,x_0\}$ and anticomplete to $\{x_1,x_2\}$; 
\item $z_3$ is complete to $\{y_0,z_2\}$. 
\end{itemize} 

We now define the family $\mathcal{M}$, as follows: 
\begin{displaymath} 
\begin{array}{rcl} 
\mathcal{M} & := & \big\{M_0 \setminus S \mid S \subseteq \{y_0,y_3,z_0,z_3,z_4\}\big\} \cup \{M_1,M_2,M_3\}. 
\end{array} 
\end{displaymath} 
Thus, $\mathcal{M}$ is the collection of graphs whose members are $M_1,M_2,M_3$, plus all induced subgraphs of $M_0$ that still contain the hole $x_0,x_1,x_2,x_3,x_4,x_5,x_6,x_0$. We note that each graph in $\mathcal{M}$ has at most 12 vertices and contains an induced $C_7$. 

\medskip 

The following two theorems are the main results of~\cite{2P3C4C6FreePart1}. 

\begin{theorem} [Theorem~5.2 of~\cite{2P3C4C6FreePart1}] \label{thm-main-withC7T0} For any graph $G$, the following are equivalent: 
\begin{enumerate}[(a)] 
\item $G$ is $(2P_3,C_4,C_6)$-free, contains an induced $C_7$ or $T_0$, and contains no simplicial vertices; 
\item $G$ has exactly one nontrivial anticomponent, and this anticomponent is a thickening of a graph in $\mathcal{M} \cup \{T_0,T_1\}$; 
\item $G$ can be obtained from a thickening of a graph in $\mathcal{M} \cup \{T_0,T_1\}$ by possibly adding universal vertices to it. 
\end{enumerate} 
\end{theorem}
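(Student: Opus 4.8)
The plan is to prove the equivalence of the three statements by establishing the cycle of implications $(a) \Rightarrow (b) \Rightarrow (c) \Rightarrow (a)$, since $(b) \Rightarrow (c)$ and $(c) \Rightarrow (a)$ are likely to be the comparatively routine directions. For $(b) \Rightarrow (c)$, I would argue that if $G$ has exactly one nontrivial anticomponent $Q$, then all remaining vertices of $G$ lie in trivial anticomponents, and by the remarks in the terminology section, each such vertex is universal in $G$; moreover, since the vertex sets of distinct anticomponents are complete to one another, the set $V(G) \setminus V(Q)$ is a clique complete to $V(Q)$, which is exactly the definition of obtaining $G$ from $Q$ (a thickening of a graph in $\mathcal{M} \cup \{T_0,T_1\}$) by adding universal vertices. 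For $(c) \Rightarrow (a)$, I would verify directly that the described construction yields the desired properties: first check that each graph in $\mathcal{M} \cup \{T_0, T_1\}$ is $(2P_3,C_4,C_6)$-free and contains an induced $C_7$ or $T_0$ (for the $M_i$ this follows since they each contain the $7$-hole, and $T_0,T_1$ contain $T_0$); then argue that these properties are preserved under thickening and under adding universal vertices, and finally that the resulting graph has no simplicial vertex.

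The genuinely hard direction is $(a) \Rightarrow (b)$, and I expect this to be where essentially all the work lies; indeed, this is presumably the content of a substantial structural analysis in the first paper, reused here. The strategy I would follow is to start from a $(2P_3,C_4,C_6)$-free graph $G$ with no simplicial vertices that contains an induced $C_7$ or $T_0$, and first reduce to the anticonnected case by peeling off universal vertices. Concretely, I would let $Q$ be obtained from $G$ by deleting all universal vertices (equivalently, by passing to the union of the nontrivial anticomponents together with the structure they induce), and then show that $G$ has exactly one nontrivial anticomponent: if there were two or more nontrivial anticomponents, their vertex sets would be complete to each other, and I would derive a contradiction with $(2P_3,C_4,C_6)$-freeness. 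The key observation here is that an induced $C_7$ (or an induced $T_0$) is itself anticonnected, so it must live entirely inside a single anticomponent; combining this with the structure forced on the other anticomponents by the forbidden subgraphs should pin down that there is a unique nontrivial anticomponent.

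The core remaining task in $(a) \Rightarrow (b)$ is then to show that this unique nontrivial anticomponent $Q$ is a thickening of one of the finitely many graphs in $\mathcal{M} \cup \{T_0, T_1\}$. The plan here is to fix an induced $C_7$ (say $x_0,x_1,\dots,x_6,x_0$) or, in the $C_7$-free but $T_0$-containing case, an induced $T_0$, and to analyze how every other vertex of $Q$ attaches to this fixed configuration. Using $C_4$- and $C_6$-freeness, the possible neighborhoods of an outside vertex on the $7$-hole are severely restricted (each outside vertex sees a short arc of the hole whose endpoints and gaps are controlled), which should reduce the possible ``attachment types'' to the specific $y$- and $z$-type vertices appearing in the definitions of $M_0,M_1,M_2,M_3$; then $2P_3$-freeness controls how many vertices of each type can appear and forces the global adjacency pattern among them. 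Finally, I would group vertices with identical closed neighborhoods into the clique classes $X_v$ of a thickening and verify the thickening axioms, namely that each class is a clique and that two classes are complete or anticomplete according to the adjacency in the underlying graph in $\mathcal{M} \cup \{T_0,T_1\}$; the no-simplicial-vertex hypothesis is what prevents degenerate attachments and guarantees that the recovered pattern is exactly one of the listed graphs rather than a proper induced subgraph omitting a forced vertex. The main obstacle will be the exhaustive but delicate case analysis of attachment types and the careful bookkeeping needed to show no other configurations survive the forbidden-subgraph constraints.
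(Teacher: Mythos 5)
This statement is not proven in the present paper at all: it is Theorem~5.2 of the first paper of the series, quoted here verbatim and used as a black box, so there is no in-paper argument to compare your proposal against line by line. Judged on its own terms, your outline is consistent with the strategy the series describes (fix an induced $C_7$ or $T_0$, classify how outside vertices attach, collapse twin classes into a thickening), and your treatment of (b)$\Rightarrow$(c) is exactly Proposition~\ref{prop-non-trivial-anticomp-univ-vertices}. Two remarks on the easier directions. First, for (c)$\Rightarrow$(a) the claim that $(2P_3,C_4,C_6)$-freeness is ``preserved under thickening'' needs a reason: it holds because none of $2P_3$, $C_4$, $C_6$ contains a pair of adjacent vertices with equal closed neighborhoods, so any induced copy in a thickening selects at most one vertex per class and hence pulls back to a copy in the base graph; without some such twin-freeness observation the step is unjustified, since thickening does create new induced subgraphs in general. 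Second, for the uniqueness of the nontrivial anticomponent you do not need the anticonnectedness of $C_7$ or $T_0$: two nontrivial anticomponents immediately yield a $C_4$ (Proposition~\ref{prop-C4-free-one-anticomp}), which is both shorter and the route the paper takes elsewhere.

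The genuine gap is that the entire content of the theorem lives in the direction (a)$\Rightarrow$(b), and your proposal reduces it to ``an exhaustive but delicate case analysis of attachment types'' without executing any of it. In particular, nothing in your sketch explains why the surviving configurations are precisely thickenings of the specific graphs $M_0\setminus S$, $M_1$, $M_2$, $M_3$, $T_0$, $T_1$ and not some further pattern, nor how the $C_7$-free-but-$T_0$-containing branch is handled (where there is no $7$-hole to attach to and the analysis must be organized around $T_0$ instead, leading to the $T_0,T_1$ outcomes). The no-simplicial-vertex hypothesis is also doing more work than ``preventing degenerate attachments'': one must show that any graph in the class that is not one of the listed thickenings actually possesses a simplicial vertex, which is a separate verification. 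So the proposal is a plausible roadmap but not a proof; the load-bearing classification is exactly the part left undone, and it is the content of the companion paper.
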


\begin{theorem} [Theorem~6.6 of~\cite{2P3C4C6FreePart1}]  \label{thm-cwd-in-class-with-C7-T0} Let $G$ be a $(2P_3,C_4,C_6)$-free graph that contains an induced $C_7$ or an induced $T_0$. Assume that $G$ contains no simplicial vertices. Then $\text{cwd}(G) \leq 12$. 
\end{theorem}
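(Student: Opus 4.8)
The plan is to read the bound off the structural characterization already available to us. I would begin by invoking the implication (a)$\Rightarrow$(c) of Theorem~\ref{thm-main-withC7T0}: since $G$ is $(2P_3,C_4,C_6)$-free, contains an induced $C_7$ or $T_0$, and has no simplicial vertices, $G$ can be obtained from a thickening $H^*$ of some graph $H \in \mathcal{M} \cup \{T_0,T_1\}$ by possibly adding universal vertices. The crucial numerical input is that every graph in $\mathcal{M} \cup \{T_0,T_1\}$ has at most twelve vertices; in fact $M_0$ has exactly twelve and is the worst case, which is precisely where the constant $12$ comes from. So, setting $m := |V(H)|$, we have $m \leq 12$.

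Next I would bound $\text{cwd}(H^*)$. Since $H$ has $m$ vertices, $\text{cwd}(H) \leq m$ by the trivial clique-width expression that gives each vertex of $H$ its own label and then realizes each edge by a single join of two (singleton) labels. A thickening replaces each vertex of $H$ by a clique, i.e.\ it is a substitution of cliques into the vertices of $H$, and each clique has clique-width at most $2$. I would then use the standard fact that a single substitution does not raise the clique-width above the maximum of the clique-widths of the two graphs involved (realized by building the substituted graph first, renaming all its labels to the single label that the replaced vertex carried, and splicing this block into the host expression at the point of creation of that vertex). Performing these substitutions one vertex of $H$ at a time, the clique-width stays at $\max\{2,\text{cwd}(H)\}$ throughout, so $\text{cwd}(H^*) \leq \max\{2,m\} = m \leq 12$.

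Finally I would account for the universal vertices. The added set $U$ is a clique complete to $V(H^*)$, so $G$ is exactly the complete join of $H^*$ with the complete graph on $U$; equivalently, $G$ is obtained by substituting $H^*$ and $K_{|U|}$ into the two (adjacent) vertices of $K_2$. Since $\text{cwd}(K_2)\le 2$ and $\text{cwd}(K_{|U|}) \leq 2$, the same substitution fact gives $\text{cwd}(G) \leq \max\{2,\text{cwd}(H^*)\} \leq 12$, which is the desired bound. (If $U=\emptyset$ then $G=H^*$ and the bound is immediate.)

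I expect the real difficulty here to be bookkeeping rather than conceptual, since all the structural content has been absorbed into Theorem~\ref{thm-main-withC7T0}. The two things I would be careful about are: fixing the exact form of the clique-width lemma for substitutions (so that it genuinely yields a $\max$, with no additive slack), and confirming that the universal vertices cost nothing extra. The latter is exactly why I would phrase their addition as a join, i.e.\ a substitution into $K_2$, rather than adding universal vertices one at a time with a fresh scratch label, which would risk degrading the final bound from $12$ to $13$. With this framing every step is bounded by $\max\{2,m\}$, and the constant $12$ is forced by the twelve vertices of $M_0$.
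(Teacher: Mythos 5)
This statement is imported from Part~I of the series (it is Theorem~6.6 of~\cite{2P3C4C6FreePart1}) and is not proved in the present paper, so there is no in-paper proof to compare against; your argument is correct and almost certainly reconstructs the intended one, deriving the bound from Theorem~\ref{thm-main-withC7T0} in exactly the way section~\ref{sec:cwd} derives Theorem~\ref{thm-cwd-in-class-with-C7T0Free} from Theorem~\ref{thm-structure-C7T0Free}, with the universal vertices handled by the same observation that the paper packages as Lemma~\ref{lemma-add-univ-cwd}. Your use of the standard substitution fact (clique-width of a substitution is at most the maximum of the clique-widths of the parts), applied once per vertex for the thickening and once for the join with the clique of universal vertices, is sound and yields $\max\{2,|V(H)|\}\leq 12$ with no additive slack. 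The one input you cannot verify from the text of this paper is that $|V(T_1)|\leq 12$ (only $|V(T_0)|=9$ and the bound of $12$ on the members of $\mathcal{M}$ are stated here); this does hold and is implicit in the quoted constant, but it is worth flagging as an assumption borrowed from Part~I rather than established here.
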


\section{A few simple propositions} \label{sec:simple}

This section is devoted to some simple propositions that we use in the remainder of the paper. The propositions stated in section~\ref{subsec:Part1SimpleResults} were proven in the first paper of our series. In subsection~\ref{subsec:simple-new}, we prove a few additional propositions. 

\subsection{A few simple propositions from the first paper of the series} \label{subsec:Part1SimpleResults} 

The propositions that we cite in this subsection were proven in subsection~2.2 of~\cite{2P3C4C6FreePart1}. 

\begin{proposition} \cite{2P3C4C6FreePart1} \label{prop-non-trivial-anticomp-univ-vertices} Let $G$ and $Q$ be graphs. Assume that $Q$ is anticonnected and contains at least two vertices. Then the following are equivalent: 
\begin{itemize} 
\item $G$ contains exactly one nontrivial anticomponent, and that anticomponent is $Q$; 
\item $G$ can be obtained from $Q$ by possibly adding universal vertices to it. 
\end{itemize} 
\end{proposition}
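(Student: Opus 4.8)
The plan is to prove the two bullet points equivalent by establishing each implication separately, using the structural facts about anticomponents laid out in the terminology section. Recall that the vertex sets of the anticomponents of $G$ are complete to each other, that the unique vertex of any trivial anticomponent is a universal vertex of $G$, and that (by definition) $Q$ being an induced subgraph obtained by ``possibly adding universal vertices'' means $V(G) \setminus V(Q)$ is a clique of $G$ complete to $V(Q)$.

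For the downward implication, suppose $G$ has exactly one nontrivial anticomponent, namely $Q$. Let $U := V(G) \setminus V(Q)$. Every vertex of $U$ lies in some anticomponent of $G$, and since $Q$ is the only nontrivial anticomponent, each such vertex lies in a trivial anticomponent and is therefore universal in $G$. Thus every vertex of $U$ is complete to $V(G) \setminus \{u\}$, which immediately gives that $U$ is a clique and that $U$ is complete to $V(Q)$; hence $G$ is obtained from $Q$ by adding the (possibly empty) set $U$ of universal vertices. For the upward implication, suppose $G$ is obtained from $Q$ by adding a (possibly empty) clique $U = V(G) \setminus V(Q)$ of universal vertices. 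Since $Q$ is anticonnected and is an induced subgraph of $G$ with $V(Q)$ complete to $U$, the subgraph $G[V(Q)] = Q$ is an anticonnected induced subgraph; I must check it is a \emph{maximal} one, i.e.\ an anticomponent. Any vertex $u \in U$ is universal in $G$, so in $\overline{G}$ it is isolated, meaning $u$ cannot be joined to $V(Q)$ in $\overline{G}$; therefore $Q$ is a maximal anticonnected induced subgraph, i.e.\ an anticomponent, and it is nontrivial since $|V(Q)| \geq 2$. Finally, every vertex of $U$ forms its own trivial anticomponent, so $Q$ is the \emph{only} nontrivial anticomponent.

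The cleanest way to organize both directions is to pass to the complement and invoke the correspondence ``$Q$ is an anticomponent of $G$ iff $\overline{Q}$ is a component of $\overline{G}$,'' together with the observation that universal vertices of $G$ are exactly the isolated vertices of $\overline{G}$. Under this translation, the statement becomes: $\overline{G}$ has exactly one nontrivial (i.e.\ at-least-two-vertex) component, namely $\overline{Q}$, if and only if $\overline{G}$ is the disjoint union of the connected graph $\overline{Q}$ (on $\geq 2$ vertices) with a set of isolated vertices. This is a routine fact about components of graphs, so both implications reduce to standard bookkeeping once the complementation dictionary is in place.

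I expect the only genuine subtlety — and hence the main obstacle, though a mild one — to be the maximality check in the upward direction: verifying that after adding universal vertices, $Q$ is not properly contained in a larger anticonnected induced subgraph. This is where the hypotheses that $U$ is a clique \emph{and} is complete to $V(Q)$ are both used, since either condition failing could let a vertex of $U$ be nonadjacent to some vertex of $G$ and thereby extend an anticonnected set. Passing to the complement makes this transparent: an isolated vertex of $\overline{G}$ can never belong to a connected subgraph of $\overline{G}$ on more than one vertex, so no component can properly contain $\overline{Q}$. Everything else is immediate from the definitions.
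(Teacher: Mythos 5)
Your proof is correct: both implications are established cleanly, and the complementation dictionary (anticomponents of $G$ are components of $\overline{G}$, universal vertices of $G$ are isolated vertices of $\overline{G}$) reduces everything to routine facts about components, with the maximality check in the upward direction handled properly. The present paper only cites this proposition from Part~I of the series without reproducing a proof, so there is nothing to compare against, but your argument is the natural one and matches the standard treatment.
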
 

\begin{proposition} \cite{2P3C4C6FreePart1} \label{prop-one-nontrivial-anticomp-simplicial} Assume that a graph $G$ contains exactly one nontrivial anticomponent, call it $Q$. Then $G$ contains a simplicial vertex if and only if $Q$ contains a simplicial vertex. 
\end{proposition}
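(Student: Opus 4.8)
The plan is to prove Proposition~\ref{prop-one-nontrivial-anticomp-simplicial} by a direct analysis of neighborhoods, exploiting the structure provided by Proposition~\ref{prop-non-trivial-anticomp-univ-vertices}. By that proposition, since $G$ has exactly one nontrivial anticomponent $Q$, the graph $G$ can be obtained from $Q$ by adding a (possibly empty) clique $U := V(G) \setminus V(Q)$ of universal vertices, with $U$ complete to $V(Q)$. The key observation I would establish first is that, because every vertex of $U$ is universal in $G$, the closed neighborhoods behave simply: for any vertex $v \in V(Q)$ we have $N_G[v] = N_Q[v] \cup U$, and for any $u \in U$ we have $N_G[u] = V(G)$. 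This lets me translate the simpliciality condition across the two graphs.

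First I would handle the easier direction: if $Q$ contains a simplicial vertex $v$, then $N_Q(v)$ is a clique in $Q$; I claim $v$ is simplicial in $G$ as well. Indeed, $N_G(v) = N_Q(v) \cup U$. Since $N_Q(v)$ is a clique, $U$ is a clique, and $U$ is complete to $V(Q) \supseteq N_Q(v)$, the union $N_Q(v) \cup U$ is again a clique in $G$; hence $v$ is simplicial in $G$.

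Next I would handle the converse, which I expect to be the main obstacle because of the universal vertices. Suppose $G$ has a simplicial vertex $x$. I need to produce a simplicial vertex of $Q$. If $x \in U$, then $N_G(x) = V(G) \setminus \{x\}$ must be a clique, forcing $G$ to be complete, and hence $Q$ (being an induced subgraph) is complete, so every vertex of $Q$ is simplicial in $Q$ (and $Q$ is nonnull by hypothesis). If instead $x \in V(Q)$, then $N_G(x) = N_Q(x) \cup U$ is a clique in $G$; restricting to $V(Q)$, the set $N_Q(x) = N_G(x) \cap V(Q)$ is a clique in $Q$, so $x$ is simplicial in $Q$. In all cases $Q$ has a simplicial vertex, completing the equivalence.

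The only subtlety to be careful about is the case $U \neq \emptyset$ in the forward direction, where a vertex of $Q$ that is simplicial in $G$ could in principle fail to be simplicial in $Q$ only if its $G$-neighborhood's cliquehood relied on edges absent in $Q$; but since $Q$ is an induced subgraph, edges within $N_Q(x)$ are identical in $G$ and $Q$, so no information is lost upon restriction. I would therefore emphasize the induced-subgraph property and the completeness of $U$ to $V(Q)$ as the two facts that make the neighborhood restriction argument clean, and I would invoke the hypothesis that $Q$ is nonnull to ensure the ``$G$ complete'' subcase actually yields a vertex.
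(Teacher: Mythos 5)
Your proof is correct: the reduction via Proposition~\ref{prop-non-trivial-anticomp-univ-vertices} to the situation where $V(G)\setminus V(Q)$ is a clique of universal vertices, followed by the observation that $N_G(v)=N_Q(v)\cup U$ for $v\in V(Q)$ and that the case of a simplicial universal vertex forces $G$ (hence $Q$) to be complete, handles both directions cleanly. The present paper only cites this proposition from the first part of the series without reproducing its proof, but your argument is the natural one and there is nothing to object to (beyond the cosmetic point that the ``$x\in U$'' case is in fact vacuous here, since the nontrivial anticomponent $Q$ already prevents $G$ from being complete).
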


\begin{proposition} \cite{2P3C4C6FreePart1} \label{prop-H-free-no-universal} Assume that a graph $G$ contains exactly one nontrivial anticomponent, call it $Q$. Let $H$ be a graph that contains no universal vertices. Then $G$ is $H$-free if and only if $Q$ is $H$-free. 
\end{proposition}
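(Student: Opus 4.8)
The plan is to prove the biconditional by proving the contrapositive in each direction, leveraging the structural description of $G$ furnished by Proposition~\ref{prop-non-trivial-anticomp-univ-vertices}. Since $Q$ is the unique nontrivial anticomponent of $G$, that proposition tells us $G$ is obtained from $Q$ by adding a (possibly empty) clique $U := V(G) \setminus V(Q)$ of universal vertices, complete to $V(Q)$. First I would dispose of the easy direction: since $Q$ is an induced subgraph of $G$, any induced copy of $H$ in $Q$ is also an induced copy of $H$ in $G$; hence if $G$ is $H$-free, then $Q$ is $H$-free. This gives ``$G$ is $H$-free $\Rightarrow$ $Q$ is $H$-free'' without using the hypothesis on $H$, and notably it holds for any induced subgraph relation.

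The substantive direction is the converse: assuming $Q$ is $H$-free, I want to show $G$ is $H$-free. I would argue by contraposition, so suppose $G$ contains an induced subgraph $H'$ isomorphic to $H$, with vertex set $A := V(H') \subseteq V(G)$. The idea is to show $A$ can be ``pushed'' entirely into $V(Q)$, contradicting that $Q$ is $H$-free. Write $A = A_U \cup A_Q$ where $A_U := A \cap U$ and $A_Q := A \cap V(Q)$. Every vertex of $A_U$ is universal in $G$, and hence, inside the induced subgraph $G[A] = H'$, every vertex of $A_U$ is a universal vertex of $H'$. But $H'$ is isomorphic to $H$, and $H$ has no universal vertices by hypothesis; therefore $H'$ has no universal vertices, which forces $A_U = \emptyset$. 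Consequently $A = A_Q \subseteq V(Q)$, so $H' = G[A] = Q[A]$ is an induced subgraph of $Q$ isomorphic to $H$, contradicting that $Q$ is $H$-free.

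The key step — and the only place where the hypothesis that $H$ has no universal vertices is actually used — is the observation that each vertex of $A_U$ becomes a universal vertex of the induced subgraph $H' = G[A]$. This is where I expect the main (modest) obstacle to lie: one must be careful that ``universal in $G$'' genuinely descends to ``universal in $G[A]$,'' which holds precisely because a vertex complete to all of $V(G) \setminus \{v\}$ is in particular complete to all of $A \setminus \{v\}$ for any $A \ni v$. With that observation in hand, the isomorphism $H' \cong H$ transports the absence of universal vertices from $H$ to $H'$, and the clique $U$ of added universal vertices is forced out of any induced copy of $H$. Combining the two directions yields the equivalence, completing the proof.
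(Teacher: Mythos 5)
Your proof is correct: the only substantive point is that a universal vertex of $G$ remains universal in any induced subgraph containing it, which forces every induced copy of $H$ to avoid $V(G) \setminus V(Q)$, and you handle this cleanly (together with the easy converse via $Q$ being an induced subgraph of $G$). The present paper only cites this proposition from the first part of the series without reproducing its proof, but your argument is the natural one and matches the intended reasoning, so there is nothing to add.
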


\begin{proposition} \cite{2P3C4C6FreePart1} \label{prop-non-adj-comp-clique} Let $G$ be a $C_4$-free graph, let $S \subseteq V(G)$, and let $v_1,v_2 \in V(G) \setminus S$ be distinct, nonadjacent vertices, complete to $S$ in $G$. Then $S$ is a clique. 
\end{proposition}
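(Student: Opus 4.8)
The plan is to argue by contradiction, exploiting the observation that a single nonedge inside $S$, taken together with the two apex vertices $v_1$ and $v_2$, is forced to produce an induced four-cycle. So I would begin by supposing that $S$ is \emph{not} a clique. Then $|S| \geq 2$, and there exist distinct vertices $s_1,s_2 \in S$ that are nonadjacent in $G$.

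Next I would verify that the four vertices $v_1,s_1,v_2,s_2$ are pairwise distinct, so that they genuinely span a four-vertex induced subgraph rather than a degenerate configuration. This is immediate: $v_1 \neq v_2$ and $s_1 \neq s_2$ hold by hypothesis, while $v_1,v_2 \in V(G) \setminus S$ guarantees that neither $v_1$ nor $v_2$ coincides with $s_1$ or $s_2$. I would then record the six pairwise adjacencies among these vertices. Since $v_1$ and $v_2$ are each complete to $S$, and $s_1,s_2 \in S$, the four edges $v_1s_1$, $s_1v_2$, $v_2s_2$, and $s_2v_1$ are all present in $G$. On the other hand, $v_1v_2$ is a nonedge by hypothesis, and $s_1s_2$ is a nonedge by the choice of $s_1,s_2$. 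Hence $v_1,s_1,v_2,s_2,v_1$ is an induced $C_4$ in $G$, contradicting the assumption that $G$ is $C_4$-free. This contradiction establishes that $S$ is a clique.

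I do not expect any genuine obstacle here; the argument is a short, direct contradiction. The only point deserving a moment's attention is the distinctness check for the four vertices, which is what ensures that the cycle we exhibit is an honest induced $C_4$ and not a collapsed degeneracy; but this follows at once from $v_1,v_2 \notin S$ together with $v_1 \neq v_2$ and $s_1 \neq s_2$. Note also that the case $|S| \leq 1$ needs no argument, since any set of at most one vertex is trivially a clique, so the contradiction hypothesis $|S| \geq 2$ is harmless.
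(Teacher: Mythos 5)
Your proof is correct and is essentially the same argument the paper relies on (the proposition is only cited here from the first paper of the series, but the intended proof is exactly this: a nonedge $s_1s_2$ in $S$ together with the nonadjacent pair $v_1,v_2$ yields the 4-hole $v_1,s_1,v_2,s_2,v_1$). The distinctness check and the case $|S|\leq 1$ are handled appropriately; nothing is missing.
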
 

\begin{proposition} \cite{2P3C4C6FreePart1} \label{prop-P3-free} A graph is $P_3$-free if and only if all its components are complete graphs. 
\end{proposition}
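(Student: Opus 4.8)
The plan is to prove both directions of the equivalence directly, with the forward direction carrying essentially all the content. Throughout, I view $P_3$ as the three-vertex path $a,b,c$ with edges $ab,bc$ and nonedge $ac$, so that ``$G$ contains an induced $P_3$'' means exactly that $G$ has vertices $a,b,c$ with $a$ adjacent to $b$, $b$ adjacent to $c$, and $a$ nonadjacent to $c$.

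For the easy implication, I would assume that every component of $G$ is a complete graph and show that $G$ is $P_3$-free. Suppose not, and let $a,b,c$ induce a $P_3$ in $G$, with $ab,bc \in E(G)$ and $ac \notin E(G)$. Since $a,b,c$ are joined by the path $a,b,c$, they all lie in a common component of $G$; as that component is complete by hypothesis, $a$ and $c$ must be adjacent, contradicting $ac \notin E(G)$. Hence no induced $P_3$ exists.

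For the main implication, I would assume that $G$ is $P_3$-free and argue that each component is complete. The cleanest route is to show that the relation $\sim$ on $V(G)$ defined by ``$x \sim y$ if and only if $x = y$ or $xy \in E(G)$'' is an equivalence relation. Reflexivity is immediate, and symmetry holds because $G$ is undirected. For transitivity, suppose $x \sim y$ and $y \sim z$ with $x \neq z$; if either relation is an equality the conclusion is trivial, so assume $xy, yz \in E(G)$. Were $x$ and $z$ nonadjacent, the vertices $x,y,z$ would induce a $P_3$ in $G$, contradicting $P_3$-freeness; hence $xz \in E(G)$, i.e.\ $x \sim z$. The key step, and the only place where $P_3$-freeness is used, is precisely this transitivity argument; everything else is bookkeeping.

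Finally I would translate the equivalence classes of $\sim$ into the statement about components. By construction, each class is a clique, and any two vertices lying in distinct classes are nonadjacent, so there are no edges between different classes. Consequently each class is a nonempty clique with no edges leaving it, which means each class induces a connected subgraph that is also a maximal connected induced subgraph of $G$; that is, the classes of $\sim$ are exactly the components of $G$, and each is a complete graph. This completes the forward direction and hence the proposition. I expect no serious obstacle here: the result is elementary, and the only subtlety is making sure the equivalence classes coincide with the components rather than merely being contained in them, which follows from each class being simultaneously a clique (hence connected) and edge-isolated from the rest of the graph.
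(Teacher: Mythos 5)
Your proof is correct. Note that the present paper does not actually prove this proposition --- it is cited from subsection~2.2 of the first paper of the series --- so there is no in-paper argument to compare against; your equivalence-relation argument (with transitivity being exactly where $P_3$-freeness is used, and the observation that the classes are cliques with no edges leaving them, hence precisely the components) is a standard and complete way to establish the result.
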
 

\begin{proposition} \cite{2P3C4C6FreePart1} \label{prop-C4Free-CoBip} Let $G$ be a $C_4$-free graph, and let $X$ and $Y$ be disjoint cliques in $G$. Then all the following hold: 
\begin{enumerate}[(a)] 
\item for all $x,x' \in X$, one of $N_G(x) \cap Y$ and $N_G(x') \cap Y$ is a subset of the other; 
\item $X$ can be ordered as $X = \{x_1,\dots,x_{|X|}\}$ so that $N_G(x_{|X|}) \cap Y \subseteq \dots \subseteq N_G(x_1) \cap Y$; 
\item for all $y,y' \in Y$, one of $N_G(y) \cap X$ and $N_G(y') \cap X$ is a subset of the other; 
\item $Y$ can be ordered as $Y = \{y_1,\dots,y_{|Y|}\}$ so that $N_G(y_{|Y|}) \cap X \subseteq \dots \subseteq N_G(y_1) \cap X$. 
\end{enumerate} 
\end{proposition}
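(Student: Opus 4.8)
The plan is to prove (a) directly from the $C_4$-freeness of $G$, to deduce (b) as an immediate consequence, and then to obtain (c) and (d) by symmetry, since the hypotheses are symmetric in $X$ and $Y$.

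For (a), I would argue by contradiction. Suppose there exist $x,x' \in X$ such that neither $N_G(x) \cap Y$ nor $N_G(x') \cap Y$ is a subset of the other. Then $x \neq x'$, and I can choose a vertex $y \in (N_G(x) \cap Y) \setminus N_G(x')$ and a vertex $y' \in (N_G(x') \cap Y) \setminus N_G(x)$. The first point to record is that $x,x',y,y'$ are pairwise distinct: indeed $x,x' \in X$ and $y,y' \in Y$ with $X,Y$ disjoint, we have $x \neq x'$ by assumption, and $y \neq y'$ since $y \in N_G(x)$ whereas $y' \notin N_G(x)$. Next I would read off the adjacencies among these four vertices. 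Since $X$ is a clique, $x$ is adjacent to $x'$; since $Y$ is a clique, $y$ is adjacent to $y'$. By the choice of $y$ and $y'$, the vertex $x$ is adjacent to $y$ but not to $y'$, and $x'$ is adjacent to $y'$ but not to $y$. Hence $G[\{x,x',y',y\}]$ has exactly the edges $xx'$, $x'y'$, $y'y$, $yx$, with the two non-edges $xy'$ and $x'y$, so that $x,x',y',y,x$ is an induced $C_4$ in $G$. This contradicts the assumption that $G$ is $C_4$-free, completing the proof of (a).

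For (b), I would observe that (a) says precisely that the family $\{N_G(x) \cap Y \mid x \in X\}$ is totally ordered by inclusion, i.e.\ any two of its members are comparable. I can therefore enumerate $X$ as $x_1,\dots,x_{|X|}$ in non-increasing order of these sets (breaking any ties arbitrarily, which is harmless since equal sets satisfy the non-strict inclusion), obtaining $N_G(x_{|X|}) \cap Y \subseteq \dots \subseteq N_G(x_1) \cap Y$, as required. Finally, (c) and (d) follow from (a) and (b), respectively, by interchanging the roles of $X$ and $Y$.

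I do not expect a genuine obstacle here: the entire content of the proposition is the single $C_4$ extracted in part (a), and the only things that require care are verifying that the four chosen vertices are pairwise distinct and confirming that the induced subgraph on them is exactly a $C_4$ (with no extra chord) rather than some other configuration. Parts (b)--(d) are then routine bookkeeping together with a symmetry appeal.
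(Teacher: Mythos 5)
Your proof is correct; the paper itself only cites this proposition from Part~I of the series without reproducing the proof, but the argument there is the standard one and is exactly what you give: two incomparable neighborhoods yield four pairwise distinct vertices inducing a $C_4$, after which (b) is the chain property of pairwise-comparable finite set families and (c), (d) follow by exchanging $X$ and $Y$. No gaps.
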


\begin{proposition} \cite{2P3C4C6FreePart1} \label{prop-2P3C4-free-clique-cut-simplicial} Let $G$ be a $(2P_3,C_4)$-free graph that admits a clique-cutset. Then $G$ contains a simplicial vertex. 
\end{proposition}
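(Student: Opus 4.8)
The statement to prove is Proposition~\ref{prop-2P3C4-free-clique-cut-simplicial}: every $(2P_3,C_4)$-free graph that admits a clique-cutset contains a simplicial vertex.

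\medskip

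The plan is to argue by induction on $|V(G)|$, exploiting the fact that the class of $(2P_3,C_4)$-free graphs is closed under taking induced subgraphs. Suppose $G$ is $(2P_3,C_4)$-free and admits a clique-cutset $C$; then $G \setminus C$ is disconnected. I would split $V(G) \setminus C$ into a nonempty proper part and its complement, obtaining a partition $(A,B)$ with $A,B$ both nonempty, anticomplete to each other, and each complete-or-arbitrary to $C$; set $G_A := G[A \cup C]$ and $G_B := G[B \cup C]$. Both $G_A$ and $G_B$ are proper induced subgraphs of $G$ (since $A,B$ are each nonempty and disjoint from each other), and both are $(2P_3,C_4)$-free. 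The heart of the argument is to produce a simplicial vertex of $G$ that actually \emph{survives} as a simplicial vertex in the full graph $G$, not merely in one of the pieces.

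\medskip

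First I would handle the base/degenerate situations. If $C$ itself can be taken to be $\emptyset$ (i.e.\ $G$ is disconnected), or more generally if one of $G_A$, $G_B$ is a complete graph, then that piece consists entirely of simplicial vertices of itself, and I would pick a vertex of the piece lying \emph{outside} $C$ (such a vertex exists because $A$, $B$ are nonempty); such a vertex has all its neighbors inside that single piece, so its neighborhood is the same in $G$ as in the piece, and being simplicial there makes it simplicial in $G$. If neither piece is complete, I would apply the induction hypothesis to each piece only if it again admits a clique-cutset; otherwise I need a different source of simplicial vertices. The key structural idea is this: a smallest piece, say $G_A$, is itself $(2P_3,C_4)$-free, and within $G_A$ the set $C$ is a clique that separates (or the piece is itself small). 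I would apply induction to $G_A$: either $G_A$ admits a clique-cutset, in which case induction gives a simplicial vertex of $G_A$, or $G_A$ admits no clique-cutset, in which case I argue directly that $G_A$ has a simplicial vertex outside $C$.

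\medskip

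The crucial localization step, and the main obstacle, is to guarantee a simplicial vertex \emph{in the interior} $A$, because any simplicial vertex of $G_A$ lying in the cutset clique $C$ might have neighbors on the $B$-side in $G$ and thus fail to be simplicial in $G$. To force interiority I would use the $(2P_3,C_4)$-free hypothesis together with Proposition~\ref{prop-C4Free-CoBip}: in a $C_4$-free graph the neighborhoods of the clique $C$ into $A$ are nested (linearly ordered by inclusion), so $C$ attaches to $A$ in a very controlled, ``staircase'' fashion; this lets me locate a vertex of $A$ whose neighborhood is already a clique. The $2P_3$-freeness is what bounds how the interior of $A$ can spread out once $C$ is removed, preventing the kind of long anticomplete structures that would otherwise block a simplicial vertex. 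The delicate point will be combining these two constraints to pin down a simplicial vertex strictly inside $A$ (equivalently, to choose the smaller side and show that a simplicial vertex of the piece can always be taken off the cutset), and I expect the bulk of the work — and the only genuinely nontrivial case analysis — to be exactly this interior-simpliciality argument; the separation bookkeeping and the reduction to the smaller piece are routine.
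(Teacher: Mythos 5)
There is a genuine gap: the only hard step of your plan --- producing a simplicial vertex that lies \emph{inside} a component of $G \setminus C$ rather than in $C$ --- is precisely the step you defer, and the scaffolding you build around it does not supply it. The induction hypothesis applied to a piece $G_A$ only yields a simplicial vertex of $G_A$ \emph{somewhere}, with no control over whether it lies in $A$ or in $C$, so the induction does not close. Your fallback branch ``if $G_A$ admits no clique-cutset, argue directly that $G_A$ has a simplicial vertex outside $C$'' is unsupported and false in general: a $(2P_3,C_4)$-free graph with no clique-cutset need not have any simplicial vertex (e.g.\ $C_5$), and $G_A$ really can be a $C_5$. Worse, the proposed localization to ``the smallest piece'' fails outright. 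Take $G$ to consist of a five-cycle $w_1,\dots,w_5$, a vertex $c$ adjacent only to $w_1$ among the $w_i$, and a clique $K_7$ complete to $c$ and anticomplete to the $w_i$. This graph is $(2P_3,C_4)$-free (any induced $P_3$ must use a vertex of the five-cycle, since $\{c\}\cup V(K_7)$ is a clique, and one checks no two anticomplete $P_3$'s exist), $\{c\}$ is a clique-cutset, the smaller piece is $G[\{w_1,\dots,w_5,c\}]$, and no vertex of $\{w_1,\dots,w_5\}$ is simplicial in $G$; all simplicial vertices lie in the \emph{larger} piece. So ``smallness'' is not the right invariant, and the interior-simpliciality argument you expect to be ``the bulk of the work'' is in fact the entire proposition and is absent.

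The missing idea is short and makes the induction unnecessary. The components of $G \setminus C$ are pairwise anticomplete, so if two of them each contained an induced $P_3$, their union would induce a $2P_3$; hence at most one component contains an induced $P_3$, and since there are at least two components, some component $A$ is connected and $P_3$-free, i.e.\ a clique by Proposition~\ref{prop-P3-free}. Now $A$ and $C$ are disjoint cliques in a $C_4$-free graph, so by Proposition~\ref{prop-C4Free-CoBip} we may order $A = \{a_1,\dots,a_m\}$ with $N_G(a_m) \cap C \subseteq \dots \subseteq N_G(a_1) \cap C$. Since $a_m \in A$ has all its neighbors in $A \cup C$, we get $N_G(a_m) = (A \setminus \{a_m\}) \cup (N_G(a_m) \cap C)$; both parts are cliques, and every $a_i$ is adjacent to every vertex of $N_G(a_m) \cap C$ by the nesting, so $N_G(a_m)$ is a clique and $a_m$ is simplicial in $G$. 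Your instinct to invoke Proposition~\ref{prop-C4Free-CoBip} was right, but that proposition requires both sides to be cliques; it is the $2P_3$-freeness argument above --- not a vague bound on how the interior ``spreads out'' --- that supplies the clique side, and once you have it, no induction and no case analysis on clique-cutsets of the pieces is needed.
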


\begin{proposition} \cite{2P3C4C6FreePart1} \label{prop-G-x-disjoint-cliques-2P3-free} Let $G$ be a graph, and let $X \subsetneqq V(G)$. Assume that for all $x \in X$, the set $V(G) \setminus N_G[x]$ can be partitioned into cliques of $G$, pairwise anticomplete to each other.\footnote{It is possible that $V(G) \setminus N_G[x] = \emptyset$, i.e.\ that $N_G[x] = V(G)$.} Then $G$ is $2P_3$-free if and only if $G \setminus X$ is $2P_3$-free. 
\end{proposition}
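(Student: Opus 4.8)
The plan is to prove the two implications separately, with the forward direction being essentially immediate and the reverse direction carrying all of the content. For the ``only if'' direction: if $G$ is $2P_3$-free, then since $G \setminus X$ is an induced subgraph of $G$, it is $2P_3$-free as well; this uses no hypothesis on $X$ at all.

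For the ``if'' direction, I would argue by contradiction. Assume $G \setminus X$ is $2P_3$-free, and suppose that $G$ nonetheless contains an induced $2P_3$, say on vertex set $\{a_1,a_2,a_3\} \cup \{b_1,b_2,b_3\}$, where $a_1,a_2,a_3$ and $b_1,b_2,b_3$ are the two constituent induced $P_3$'s and where $\{a_1,a_2,a_3\}$ is anticomplete to $\{b_1,b_2,b_3\}$ in $G$. Since $G \setminus X$ is $2P_3$-free, these six vertices cannot all lie in $V(G) \setminus X$ (otherwise the $2P_3$ would be an induced subgraph of $G \setminus X$), so at least one of them belongs to $X$. By the symmetry of $2P_3$ under swapping its two components, I may assume that some vertex of the first path, say $a_i$ with $i \in \{1,2,3\}$, lies in $X$.

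The key observation is then that the entire second path is confined to the non-neighborhood of $a_i$. Indeed, because $\{b_1,b_2,b_3\}$ is anticomplete to $\{a_1,a_2,a_3\}$, which contains $a_i$, and because $a_i \notin \{b_1,b_2,b_3\}$, we have $\{b_1,b_2,b_3\} \subseteq V(G) \setminus N_G[a_i]$. Now apply the hypothesis with $x := a_i \in X$: the set $V(G) \setminus N_G[a_i]$ is partitioned into cliques that are pairwise anticomplete to each other, which by Proposition~\ref{prop-P3-free} is exactly the assertion that $G[V(G) \setminus N_G[a_i]]$ is $P_3$-free. This contradicts the fact that $b_1,b_2,b_3$ is an induced $P_3$ contained in $V(G) \setminus N_G[a_i]$, and the contradiction completes the proof.

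There is essentially no technical obstacle in this argument; the two points that require care are both conceptual rather than computational. First, one must recognize that ``partitioned into pairwise-anticomplete cliques'' is synonymous, via Proposition~\ref{prop-P3-free}, with $P_3$-freeness of the induced subgraph on $V(G) \setminus N_G[x]$. Second, one must notice that in any induced $2P_3$ the component \emph{not} containing a chosen vertex $x$ is automatically forced into $V(G) \setminus N_G[x]$ by the anticomplete condition between the two components. Once these two facts are identified, the contradiction is immediate, and no case analysis beyond the single symmetry reduction is needed.
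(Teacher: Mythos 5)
Your proof is correct. The paper cites this proposition from Part~I of the series without reproducing its proof, but your argument --- reducing to the observation that the second $P_3$ must lie entirely in $V(G) \setminus N_G[a_i]$ and then invoking Proposition~\ref{prop-P3-free} to see that this set induces a $P_3$-free graph --- is the natural and evidently intended one, and every step checks out.
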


\subsection{A few more simple propositions} \label{subsec:simple-new}

\begin{proposition} \label{prop-mixed-on-Yi} Let $\ell \geq 2$ be an integer, let $G$ be a graph, let $x \in V(G)$, and let $Y_1,\dots,Y_{\ell}$ be disjoint, nonempty subsets of $V(G) \setminus \{x\}$. Assume that $x$ is mixed on $Y_1 \cup \dots \cup Y_{\ell}$. Then there exist distinct $i,j \in \{1,\dots,\ell\}$ such that $x$ has a neighbor in $Y_i$ and a nonneighbor in $Y_j$. 
\end{proposition}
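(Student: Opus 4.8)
The statement to prove is elementary, so the plan is to argue directly from the definition of ``mixed'' together with a short case analysis. Recall that $x$ being mixed on $Y_1 \cup \dots \cup Y_{\ell}$ means, by definition, that $x$ has both a neighbor and a nonneighbor in $Y_1 \cup \dots \cup Y_{\ell}$. So I would begin by fixing a neighbor $a$ and a nonneighbor $b$ of $x$, both lying in $Y_1 \cup \dots \cup Y_{\ell}$. Since the $Y_k$'s cover this union, there exist indices $i,j \in \{1,\dots,\ell\}$ with $a \in Y_i$ and $b \in Y_j$; note $x$ has a neighbor (namely $a$) in $Y_i$ and a nonneighbor (namely $b$) in $Y_j$.

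The only thing left to guarantee is that $i$ and $j$ can be chosen \emph{distinct}. First I would handle the easy case: if the $a,b$ produced above already land in different sets, i.e.\ $i \neq j$, we are done immediately. So the real content is the case $i = j$, i.e.\ when some single set $Y_i$ contains both a neighbor and a nonneighbor of $x$ (equivalently, $x$ is mixed on $Y_i$ itself). Here I would invoke the hypothesis $\ell \geq 2$: pick any index $k \in \{1,\dots,\ell\}$ with $k \neq i$ (such $k$ exists precisely because $\ell \geq 2$), and note that $Y_k$ is nonempty. Now I examine how $x$ behaves on $Y_k$. If $x$ has a neighbor $a' \in Y_k$, then the pair $(k,i)$ works, since $a'$ is a neighbor in $Y_k$ and the nonneighbor $b \in Y_i$ witnesses a nonneighbor in $Y_i$, with $k \neq i$. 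Otherwise $x$ is anticomplete to $Y_k$, so in particular $x$ has a nonneighbor in $Y_k$; then the pair $(i,k)$ works, using the neighbor $a \in Y_i$ and a nonneighbor in $Y_k$, again with $i \neq k$.

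I do not anticipate any genuine obstacle here; the argument is a finite case split and uses only the definitions together with $\ell \geq 2$ and the nonemptiness of the $Y_k$'s. The one point requiring a little care is to make sure that in the reduction to the case $i=j$ I do not implicitly assume $x$ is complete or anticomplete to the ``other'' sets: the clean way is simply to test, for a fixed auxiliary index $k \neq i$, whether $x$ has a neighbor in $Y_k$ or not, which is an exhaustive dichotomy and covers both subcases as above. The hypothesis that the $Y_k$'s are pairwise disjoint is not strictly needed for producing the witnesses, but it makes the conclusion meaningful and I would keep the statement as given.
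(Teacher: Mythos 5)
Your proposal is correct and follows essentially the same argument as the paper: fix a neighbor and a nonneighbor in the union, dispose of the case where they lie in different sets, and otherwise use $\ell \geq 2$ and the nonemptiness of a second set $Y_k$ to run the same exhaustive dichotomy (the paper tests adjacency to a single fixed vertex of $Y_k$, whereas you test whether $x$ has any neighbor in $Y_k$, but this is an immaterial difference). No gaps.
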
 
\begin{proof} 
Since $x$ is mixed on $Y_1 \cup \dots \cup Y_{\ell}$, we know that there exist $y,y' \in Y_1 \cup \dots \cup Y_{\ell}$ such that $x$ is adjacent to $y$ and nonadjacent to $y'$. If $y,y'$ belong to different $Y_i$'s, then we are done. By symmetry, we may now assume that $y,y' \in Y_1$. Now, fix any $y_2 \in Y_2$. If $x$ is adjacent to $y_2$, then $x$ has a neighbor (namely $y_2$) in $Y_2$ and a nonneighbor (namely $y'$) in $Y_1$, and we are done. On the other hand, if $x$ is nonadjacent to $y_2$, then $x$ has a neighbor (namely $y$) in $Y_1$ and a nonneighbor (namely $y_2$) in $Y_2$, and again we are done. 
\end{proof}

\begin{proposition} \label{prop-C4-free-one-anticomp} Any $C_4$-free graph has at most one nontrivial anticomponent. 
\end{proposition}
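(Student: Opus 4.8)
The plan is to argue by contradiction, exploiting two facts recorded in the terminology section: first, that the vertex sets of distinct anticomponents of a graph are complete to each other; and second, that every anticonnected graph on at least two vertices contains a pair of nonadjacent vertices. The goal is to show that having two distinct nontrivial anticomponents forces an induced $C_4$.

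So suppose, for the sake of contradiction, that a $C_4$-free graph $G$ has two distinct nontrivial anticomponents, call them $A$ and $B$. Since both are nontrivial, each has at least two vertices, and since each is anticonnected on at least two vertices, I can choose nonadjacent vertices $a_1,a_2 \in V(A)$ and nonadjacent vertices $b_1,b_2 \in V(B)$. Because the vertex sets of distinct anticomponents are complete to each other, all four of the edges $a_1b_1$, $a_1b_2$, $a_2b_1$, $a_2b_2$ are present in $G$.

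I would then observe that the four vertices $a_1,b_1,a_2,b_2$ induce a $C_4$: the edges are exactly $a_1b_1, b_1a_2, a_2b_2, b_2a_1$, while $a_1a_2$ and $b_1b_2$ are the two non-edges (the ``diagonals''). Explicitly, $a_1,b_1,a_2,b_2,a_1$ is a $4$-hole in $G$, contradicting the assumption that $G$ is $C_4$-free. Hence $G$ has at most one nontrivial anticomponent, which completes the argument.

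This proof is short and essentially mechanical once the two background facts are in hand, so I do not anticipate any genuine obstacle; the only point requiring a moment's care is confirming that the chosen four vertices are pairwise distinct (which is immediate, since $A$ and $B$ are disjoint and the two pairs are chosen within them) and that the induced subgraph on them is precisely a $C_4$ rather than a larger or degenerate configuration.
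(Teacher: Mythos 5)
Your proposal is correct and follows essentially the same argument as the paper: take two distinct nontrivial anticomponents, pick a nonadjacent pair in each, and observe that the four vertices induce a $4$-hole since the two anticomponents are complete to each other. No issues.
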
 
\begin{proof} 
We prove the contrapositive: if a graph has more than one nontrivial anticomponent, then it is not $C_4$-free. So, suppose that a graph $G$ has more than one nontrivial anticomponent, and let $A$ and $B$ be the vertex sets of two distinct nontrivial anticomponents of $G$. Then $A$ and $B$ are complete to each other, and each of them contains a pair of distinct, nonadjacent vertices. Fix distinct, nonadjacent vertices $a,a' \in A$, and fix distinct, nonadjacent vertices $b,b' \in B$. Then $a,b,a',b',a$ is a 4-hole in $G$, and it follows that $G$ is not $C_4$-free. 
\end{proof}

%\begin{proposition} \label{prop-NGX-clique} Let $G$ ba graph, let $X \subsetneqq V(G)$ be a nonempty set such that $N_G(X)$ is a clique, but $V(G) \setminus X$ is not a clique. Then $N_G(X)$ is a clique-cutset of $G$. 
%\end{proposition} 
%\begin{proof} 
%By definition, $N_G(X) \subseteq V(G) \setminus X$. Since $N_G(X)$ is a clique, but $V(G) \setminus X$ is not a clique, we see that $N_G(X) \subsetneqq V(G) \setminus X$; consequently, $N_G[X] \subsetneqq V(G)$, that is, $V(G) \setminus N_G[X] \neq \emptyset$. Since $X \neq \emptyset$, we deduce that $N_G(X)$ is a cutset of $G$ separating $X$ from $V(G) \setminus N_G[X]$. Since the cutset $N_G(X)$ is also a clique, we deduce that $N_G(X)$ is in fact a clique-cutset of $G$. 
%\end{proof} 

\begin{figure}
\begin{center}
\includegraphics[scale=0.5]{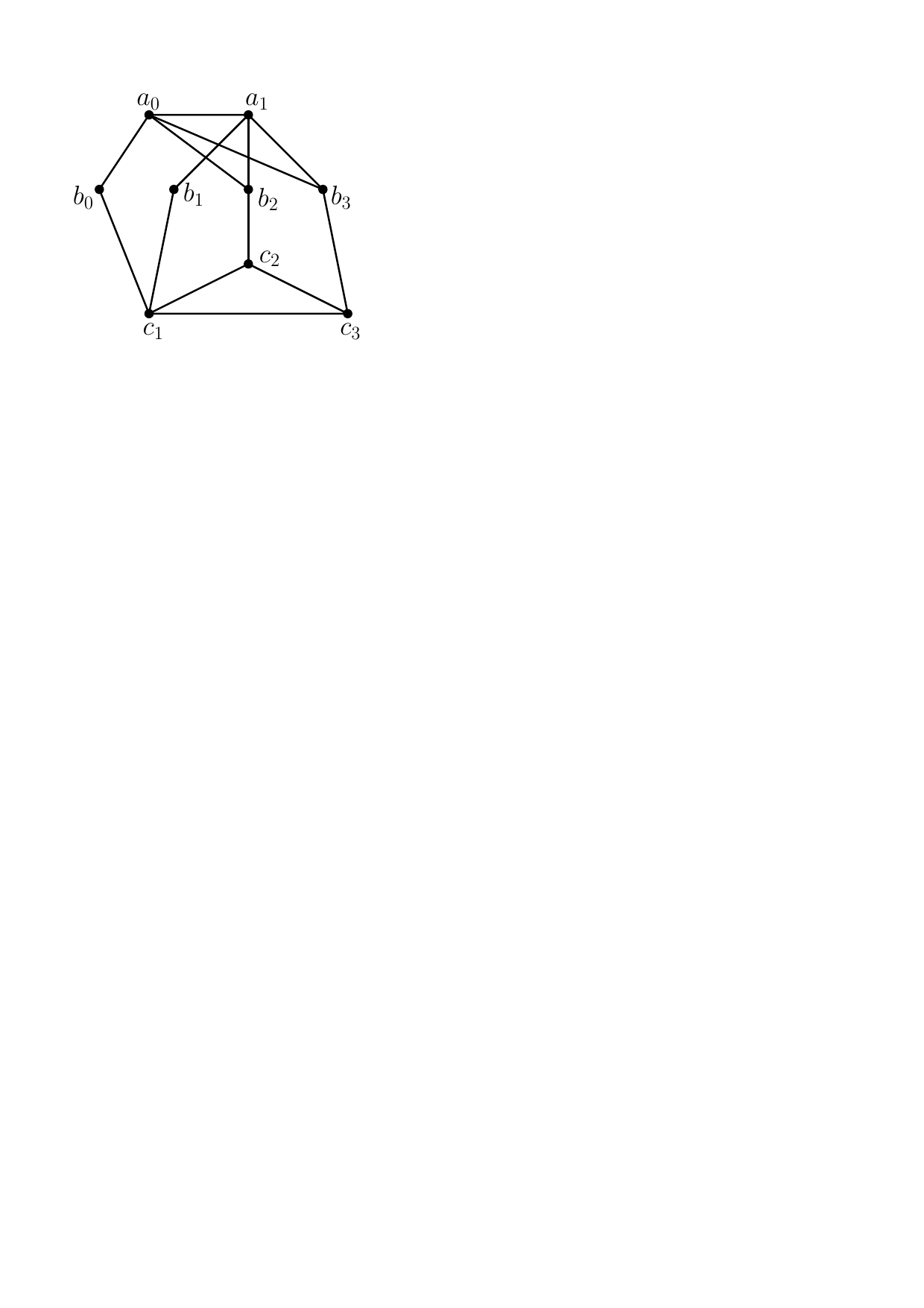}
\end{center} 
\caption{Graph $T_0$ (proof of Proposition~\ref{prop-S-clique-T0-free}).} \label{fig:T0-labeled} 
\end{figure} 

\begin{proposition} \label{prop-S-clique-T0-free} Let $G$ be a graph, and set $S := \big\{x \in V(G) \mid \alpha\big(G[N_G(x)]\big) \geq 3\big\}$.\footnote{Thus, $S$ is precisely the set of all vertices in $G$ that have three pairwise nonadjacent neighbors.} If $S$ is a clique, then $G$ is $T_0$-free. 
\end{proposition}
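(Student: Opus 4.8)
The plan is to prove the contrapositive: assuming that $G$ contains an induced $T_0$, I will show that $S$ is not a clique by exhibiting two nonadjacent vertices of $G$ that both lie in $S$. So, fix an induced copy of $T_0$ in $G$, and recall that, because this copy is induced, the neighbors of any of its vertices $u$ within $T_0$ are also neighbors of $u$ in $G$, and any set of vertices that is pairwise nonadjacent in $T_0$ is pairwise nonadjacent in $G$. Hence, if some vertex $u$ of $T_0$ has three pairwise nonadjacent neighbors inside $T_0$, then $\alpha\big(G[N_G(u)]\big) \geq 3$, and so $u \in S$. The whole argument therefore reduces to a finite check on the nine-vertex graph $T_0$.

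Using the labeling of $T_0$ in Figure~\ref{fig:T0-labeled}, I would then pick two nonadjacent vertices $u,v$ of $T_0$, each having three pairwise nonadjacent neighbors within $T_0$. One convenient witness is the apex $a$ of the 3-pentagon contained in $T_0$: its three neighbors are the ``middle'' vertices $m_1,m_2,m_3$ of the three two-edge paths of the pentagon, and these are pairwise nonadjacent, since each nonadjacent pair $\{m_i,m_j\}$, together with $a$ and the two clique vertices $c_i,c_j$, forms one of the 5-holes of the pentagon. Thus $a \in S$. For the second witness $v$ I would search among the non-neighbors of $a$ in $T_0$ — in particular the clique vertices of the pentagon and the two vertices of $T_0$ lying outside the pentagon — and verify directly from the figure that one such vertex, nonadjacent to $a$, also has an independent triple in its $T_0$-neighborhood. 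With both $u = a$ and this $v$ in $S$ and $uv \notin E(G)$, the set $S$ is not a clique, which is exactly the contrapositive.

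The only real work lies in this last verification: correctly reading off the adjacencies of $T_0$ from Figure~\ref{fig:T0-labeled} and confirming that the chosen second vertex indeed has three pairwise nonadjacent neighbors. It is conceivable, and would only streamline the argument, that \emph{every} vertex of $T_0$ carries such an independent triple, in which case all nine vertices of $T_0$ lie in $S$ and any single non-edge of $T_0$ immediately shows that $S$ is not a clique. Since $T_0$ has only nine vertices and a fully specified adjacency, this check is routine, and no hypotheses on $G$ beyond containing an induced $T_0$ are used.
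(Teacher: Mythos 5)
Your proposal is correct and takes essentially the same route as the paper: prove the contrapositive by exhibiting two nonadjacent vertices of an induced $T_0$ that each have three pairwise nonadjacent neighbors inside the $T_0$, and your first witness (the apex of the 3-pentagon, the vertex $a_1$ in the paper's Figure~\ref{fig:T0-labeled}) is exactly the paper's first witness. The paper completes the finite check you defer by taking $c_1$ (nonadjacent to $a_1$, with independent neighbor triple $b_0,b_1,c_2$) as the second witness; note that your hopeful aside that \emph{every} vertex of $T_0$ might carry an independent triple is false (only $a_0$, $a_1$, and $c_1$ do, since the pentagon's clique vertices $c_2,c_3$ and the middle vertices have neighborhoods covered by two cliques), but nothing in your argument relied on it.
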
 
\begin{proof} 
Note that $T_0$ contains a pair of nonadjacent vertices, each of which has three pairwise nonadjacent neighbors. Indeed, using the notation from Figure~\ref{fig:T0-labeled}, we see that vertices $a_1$ and $c_1$ are nonadjacent, that $a_1$ is adajcent to the pairwise nonadjacent vertices $b_1,b_2,b_3$, and that $c_1$ is adjacent to the pairwise nonadjacent vertices $b_0,b_1,c_2$. The result is now immediate. 
\end{proof}

\section{$\boldsymbol{(2P_3,C_4,C_6,C_7,T_0)}$-free graphs that contain an induced 3-pentagon} \label{sec:with-3-pentagon} 

Recall that the {\em 3-pentagon} is the graph represented in Figure~\ref{fig:tPentagon} (left); alternatively, see the definition at the beginning of subsection~\ref{subsec:BasicGraphsDef}. The main goal of this section is to prove Theorem~\ref{thm-T0free-pyramid-iff}, which gives a full structural description of $(2P_3,C_4,C_6,C_7,T_0)$-free graphs that contain an induced 3-pentagon and contain no simplicial vertices. The section is organized as follows. In subsection~\ref{subsec:BasicGraphsDef}, we define some ``basic graphs'' (those graphs that appear in the statement of Theorem~\ref{subsec:BasicGraphsDef}), and we prove a few preliminary results about those graphs. In subsection~\ref{subsec:frames}, we define ``$t$-frames'' ($t \geq 3$), and we prove a preliminary result about such graphs (see Lemma~\ref{lemma-t-frame}). In subsection~\ref{subsec:with3pentagon-main}, we prove Lemma~\ref{lemma-T0free-pyramid} and Theorem~\ref{thm-T0free-pyramid-iff}, where Lemma~\ref{lemma-T0free-pyramid} is the main technical result that we need to prove Theorem~\ref{thm-T0free-pyramid-iff} (the main result of this section). We note that $t$-frames play a crucial role in the proof of Lemma~\ref{lemma-T0free-pyramid}.

\subsection{Basic graphs: $\boldsymbol{t}$-pentagons, 5-baskets, villas, and mansions} \label{subsec:BasicGraphsDef}

For an integer $t \geq 3$, the {\em $t$-pentagon} is the $(2t+1)$-vertex graph $P$ with vertex set $V(P) = \{a,b_1,\dots,b_t,c_1,\dots,c_t\}$ and adjacency as follows: 
\begin{itemize} 
\item $a$ is complete $\{b_1,\dots,b_t\}$ and anticomplete to $\{c_1,\dots,c_t\}$; 
\item $\{b_1,\dots,b_t\}$ is a stable set; 
\item $\{c_1,\dots,c_t\}$ is a clique; 
\item for all $i,j \in \{1,\dots,t\}$, $b_i$ is adjacent to $c_j$ if and only if $i = j$. 
\end{itemize}

A {\em 5-basket} (originally defined in~\cite{4K1C4C6C7Free}) is a graph $Q$ whose vertex set can be partitioned into sets $A,B_1,B_2,B_3,C_1,C_2,C_3,F$ such that all the following hold: 
\begin{itemize} 
\item $A,B_1,B_2,B_3,C_1,C_2,C_3$ are nonempty cliques; 
\item $F$ is a (possibly empty) clique; 
\item cliques $B_1,B_2,B_3$ are pairwise anticomplete to each other; 
\item cliques $C_1,C_2,C_3$ are pairwise complete to each other; 
\item there exists an index $i^* \in \{1,2,3\}$ such that 
\begin{itemize} 
\item $A$ is complete to $(B_1 \cup B_2 \cup B_3) \setminus B_{i^*}$, and 
\item $A$ can be ordered as $A = \{a_1,\dots,a_t\}$ so that $N_Q(a_t) \cap B_{i^*} \subseteq \dots \subseteq N_Q(a_1) \cap B_{i^*} = B_{i^*}$;\footnote{Thus, $a_1$ is complete to $B_1 \cup B_2 \cup B_3$. Furthermore, $B_{i^*}$ can be ordered as $B_{i^*} = \{b_1,\dots,b_p\}$ so that $a_1 \in N_Q(b_p) \cap A \subseteq \dots \subseteq N_Q(b_1) \cap A$.} 
\end{itemize} 
\item $A$ is anticomplete to $C_1 \cup C_2 \cup C_3$; 
\item for all indices $i \in \{1,2,3\}$, $B_i$ is complete to $C_i$ and anticomplete to $(C_1 \cup C_2 \cup C_3) \setminus C_i$; 
\item there exists an index $j^* \in \{1,2,3\}$ such that $F$ is complete to $V(Q) \setminus (B_{j^*} \cup C_{j^*} \cup F)$ and anticomplete to $B_{j^*} \cup C_{j^*}$. 
\end{itemize} 
Under such circumstances, we say that $(A;B_1,B_2,B_3;C_1,C_2,C_3;F)$ is a {\em 5-basket partition} of the 5-basket $Q$. 

Note that there are effectively two different types of 5-basket (depending on whether or not $i^*$ and $j^*$ are the same). These two types of 5-basket (up to a permutation of the index set $\{1,2,3\}$) are represented in Figure~\ref{fig:5basket}. Note that the 3-pentagon is a 5-basket. 

\begin{figure} 
\begin{center}
\includegraphics[scale=0.5]{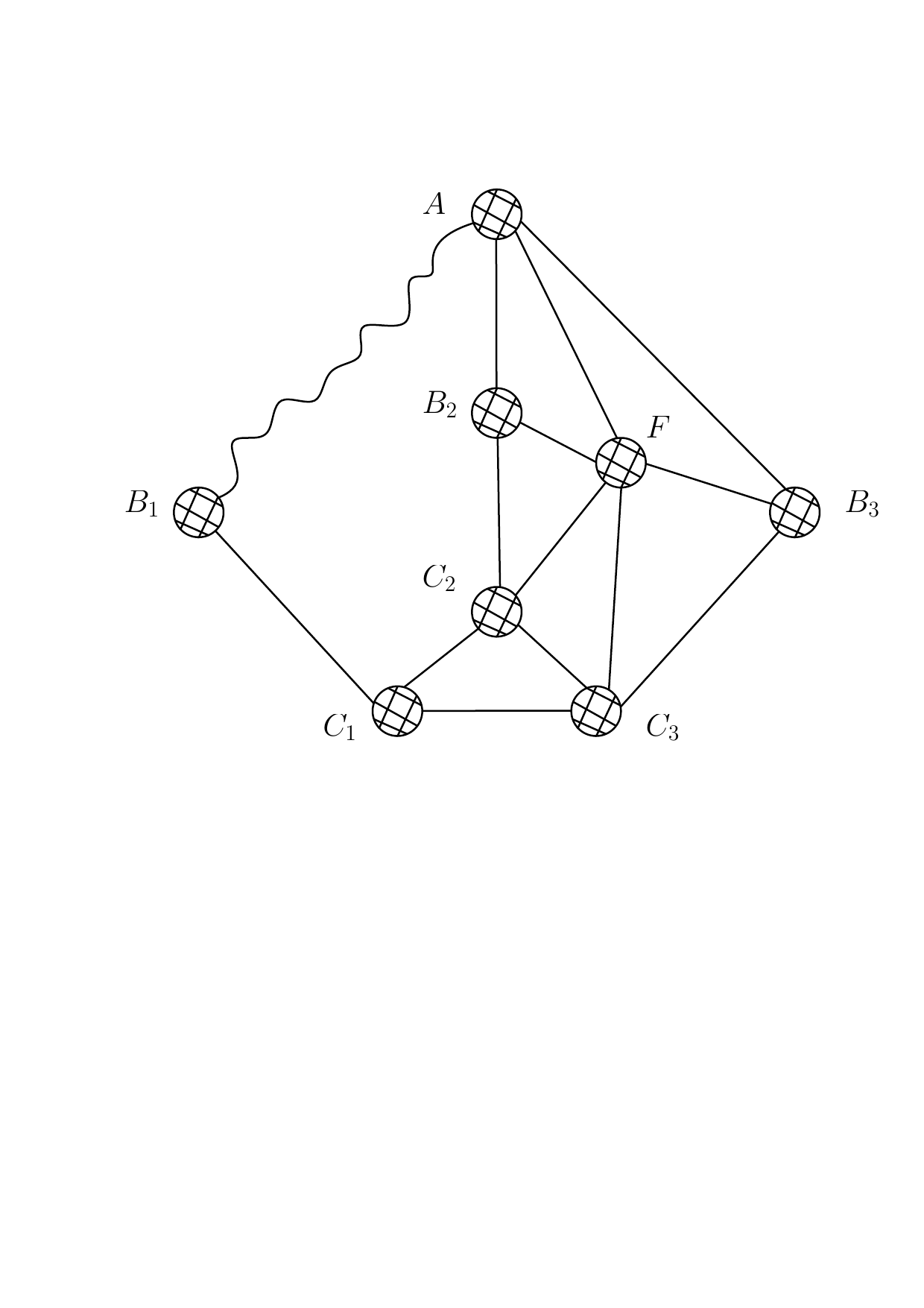}
\end{center} 

\vspace{1cm} 

\begin{center}
\includegraphics[scale=0.5]{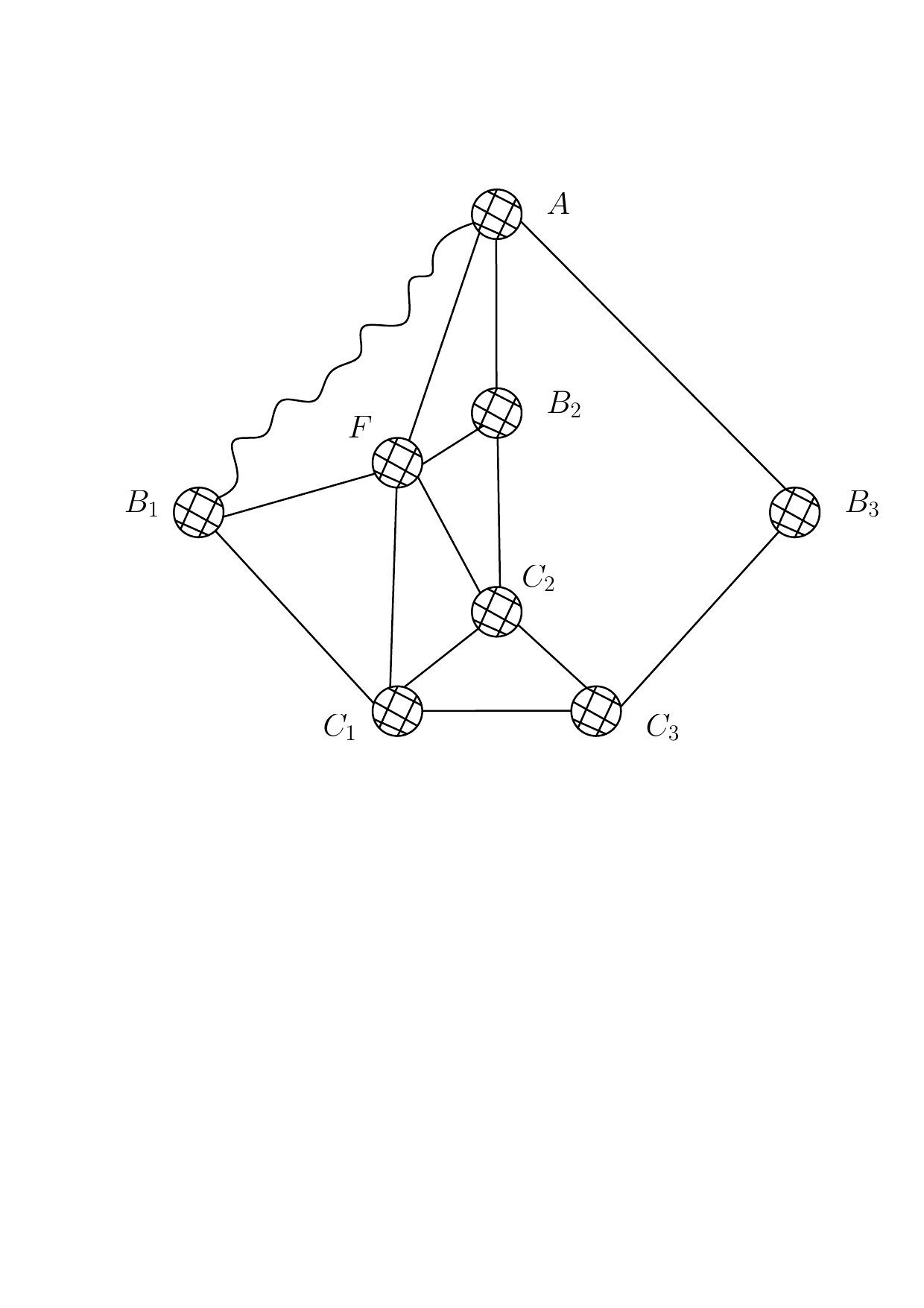}
\end{center} 
\caption{A 5-basket with an associated 5-basket partition $(A;B_1,B_2,B_3;C_1,C_2,C_3;F)$, and with $i^* = j^* = 1$ (top) or $i^* = 1$ and $j^* = 3$ (bottom). The clique $F$ may possibly be empty, and the remaining cliques represented by crosshatched disks are all nonempty.} \label{fig:5basket} 
\end{figure}

\begin{lemma} [Lemma~2.5 of~\cite{4K1C4C6C7Free}] \label{lemma-5-basket-4K1Free} Every 5-basket is $(4K_1,C_4,C_6,C_7)$-free and contains an induced 3-pentagon.
\end{lemma}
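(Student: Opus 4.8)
The plan is to prove the two assertions of Lemma~\ref{lemma-5-basket-4K1Free} separately, starting with the easier claim that every 5-basket contains an induced 3-pentagon. Let $Q$ be a 5-basket with 5-basket partition $(A;B_1,B_2,B_3;C_1,C_2,C_3;F)$, and let $i^*$ be the index guaranteed by the definition. Using the ordering $A = \{a_1,\dots,a_t\}$ with $N_Q(a_1) \cap B_{i^*} = B_{i^*}$, I would pick the vertex $a_1$ (which is complete to $B_1 \cup B_2 \cup B_3$), together with one vertex $b_i$ from each $B_i$ and one vertex $c_i$ from each $C_i$. I would then verify directly from the definition that $G[a_1,b_1,b_2,b_3,c_1,c_2,c_3]$ is isomorphic to the 3-pentagon: $a_1$ is complete to the stable set $\{b_1,b_2,b_3\}$ (the $B_i$ are pairwise anticomplete) and anticomplete to $\{c_1,c_2,c_3\}$ (since $A$ is anticomplete to $C_1\cup C_2\cup C_3$); $\{c_1,c_2,c_3\}$ is a clique (the $C_i$ are pairwise complete); and $b_i$ is adjacent to $c_j$ iff $i=j$ (since $B_i$ is complete to $C_i$ and anticomplete to the others). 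This matches the definition of the 3-pentagon exactly.

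For the $(4K_1,C_4,C_6,C_7)$-freeness, the strategy is to bound the independence and hole structure by exploiting the clique decomposition. The key observation is that $V(Q)$ is partitioned into the cliques $A,B_1,B_2,B_3,C_1,C_2,C_3,F$, so any stable set meets each of these eight cliques in at most one vertex. To rule out $4K_1$, I would argue that a stable set cannot simultaneously hit more than one of several "incompatible" groups: for instance, the $C_i$ are pairwise complete, so a stable set contains at most one vertex among $C_1\cup C_2\cup C_3$; similarly $F$ is complete to most of the graph, so $F$ contributes at most one isolated vertex only in restricted configurations. Combining such constraints, a maximum stable set should be shown to have size at most $3$ (one realized by $\{b_1,b_2,b_3\}$), giving $4K_1$-freeness.

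For the absence of $C_4$, $C_6$, and $C_7$, the plan is to use the fact that each of these holes would have to distribute its vertices across the partition cliques, with at most one vertex per clique on any stable subset of the hole, and then to use the very rigid adjacency pattern (especially that $A$ is anticomplete to the $C_i$, that $A$ is mixed on at most the single clique $B_{i^*}$ in the controlled nested way, and that $F$ has an almost-universal neighborhood) to derive a contradiction in each case. In practice it is cleanest to check $C_4$-freeness first using Proposition~\ref{prop-non-adj-comp-clique} applied to the various complete pairs, and then to handle $C_6$ and $C_7$ by locating, on any purported long hole, two nonadjacent vertices with a common neighbor or by counting how many partition cliques the hole must traverse. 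I expect the main obstacle to be the $C_6$- and $C_7$-freeness: these require a careful case analysis of which partition classes the hole vertices fall into and a systematic use of the nested neighborhood condition on $A$ and $B_{i^*}$, together with the location of $F$, to rule out all cyclic arrangements of length six and seven. Since this lemma is quoted from \cite{4K1C4C6C7Free} (it is stated as Lemma~2.5 there), I would ultimately defer the full case analysis to that reference, presenting here only the 3-pentagon construction and the $4K_1$ bound in detail as a sanity check.
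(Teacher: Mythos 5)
This lemma is not proved in the present paper at all: it is imported verbatim as Lemma~2.5 of~\cite{4K1C4C6C7Free}, so the ``paper's proof'' is simply that citation, and your decision to ultimately defer the $(C_4,C_6,C_7)$-freeness to the same reference is consistent with what the paper does. The portions you do work out are correct: the induced 3-pentagon $Q[a_1,b_1,b_2,b_3,c_1,c_2,c_3]$ is exactly right (the ordering of $A$ guarantees $a_1$ is complete to $B_1\cup B_2\cup B_3$, and the remaining adjacencies match the definition of the 3-pentagon verbatim), and the $4K_1$-freeness argument goes through: any stable set meets $C_1\cup C_2\cup C_3$ in at most one vertex, a vertex of $A$ excludes all of $(B_1\cup B_2\cup B_3)\setminus B_{i^*}$ and all of $F$ (since $F$ is complete to $A$), and a vertex of $F$ excludes all but one $B_j$ and all but one $C_j$, so $\alpha(Q)=3$. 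Your sketch for $C_6$- and $C_7$-freeness is the only place where the argument is not actually carried out, but since you explicitly fall back on the cited lemma for that part, there is no gap in the overall justification --- only less self-containment than your opening paragraphs suggest.
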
 

\begin{proposition} \label{prop-5-basket-in-class} Every 5-basket is $(2P_3,C_4,C_6,C_7,T_0)$-free and contains an induced 3-pentagon. Moreover, every 5-basket is anticonnected and contains no simplicial and no universal vertices. 
\end{proposition}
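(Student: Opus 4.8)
The plan is to dispatch the listed properties one at a time, leaning on Lemma~\ref{lemma-5-basket-4K1Free} for the cheap ones. Fix a 5-basket $Q$ with 5-basket partition $(A;B_1,B_2,B_3;C_1,C_2,C_3;F)$. By Lemma~\ref{lemma-5-basket-4K1Free}, $Q$ is $(4K_1,C_4,C_6,C_7)$-free and contains an induced 3-pentagon; this immediately settles the $C_4$-, $C_6$-, $C_7$-freeness and the induced 3-pentagon. Since $4K_1$ is an induced subgraph of $2P_3$, every graph containing an induced $2P_3$ also contains an induced $4K_1$, so $4K_1$-freeness yields $2P_3$-freeness. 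Thus $Q$ is $(2P_3,C_4,C_6,C_7)$-free, and among the forbidden-subgraph claims only $T_0$-freeness remains.

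For $T_0$-freeness I would invoke Proposition~\ref{prop-S-clique-T0-free}: it suffices to show that $S := \{x \in V(Q) \mid \alpha(Q[N_Q(x)]) \ge 3\}$ is a clique. The key observation is that $A \cup F$ is a clique (each of $A,F$ is a clique, and $F$ is complete to $A$), so it is enough to prove $S \subseteq A \cup F$, i.e.\ that no vertex of $B_1 \cup B_2 \cup B_3 \cup C_1 \cup C_2 \cup C_3$ has three pairwise nonadjacent neighbors. For $b \in B_i$, I would cover $N_Q(b)$ by the two cliques $(B_i \setminus \{b\}) \cup C_i$ and $N_Q(b) \cap (A \cup F)$, whence $\alpha(Q[N_Q(b)]) \le 2$. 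For $c \in C_i$, I would use that $A$ is anticomplete to the $C_j$'s and cover $N_Q(c)$ by the cliques $(C_1 \cup C_2 \cup C_3) \setminus \{c\}$ and $B_i \cup (N_Q(c) \cap F)$, the latter being a clique because $F$ is complete to $B_i$ when $i \ne j^*$, while when $i = j^*$ the vertex $c$ has no neighbor in $F$. Hence $S \subseteq A \cup F$ is a clique and $Q$ is $T_0$-free. This two-clique-cover bookkeeping is the step I expect to be most delicate, since it requires carefully tracking the partial adjacencies involving $B_{i^*}$ and the special index $j^*$.

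For the absence of universal vertices I would test each part of the partition against a nonempty part it misses: a vertex of $A$ (resp.\ of $C_i$) misses $C_1 \cup C_2 \cup C_3$ (resp.\ $A$); a vertex of $B_i$ misses the other $B_j$'s; and a vertex of $F$ misses $B_{j^*} \cup C_{j^*}$. Anticonnectedness then follows formally: since $Q$ has no universal vertex, none of its anticomponents is trivial, and since $Q$ is $C_4$-free, Proposition~\ref{prop-C4-free-one-anticomp} gives at most one nontrivial anticomponent; as $Q$ is nonnull it has at least one anticomponent, so it has exactly one, namely $Q$ itself, and therefore $Q$ is anticonnected.

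Finally, for the absence of simplicial vertices I would exhibit, for each vertex, two nonadjacent neighbors. A vertex of $A$ has neighbors in two distinct $B_j$'s with $j \ne i^*$. A vertex $b \in B_i$ has a neighbor in $C_i$ and a neighbor in $A$ (using that $a_1$ is complete to $B_{i^*}$ in the case $i = i^*$), and these are nonadjacent because $A$ is anticomplete to $C_i$. A vertex $c \in C_i$ has a neighbor in $B_i$ and a neighbor in some $C_j$ with $j \ne i$, which are nonadjacent. A vertex of $F$ has a neighbor in $A$ and a neighbor in some $C_j$ with $j \ne j^*$, again nonadjacent. This establishes every claim in the proposition.
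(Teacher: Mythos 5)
Your proof is correct, and its overall skeleton (invoke Lemma~\ref{lemma-5-basket-4K1Free} for the forbidden subgraphs and the induced 3-pentagon, then check the anticonnectedness/no-universal/no-simplicial claims by hand on the partition) matches the paper's. The one genuine divergence is in how $T_0$-freeness is obtained. The paper gets it for free: $T_0$ contains an induced $4K_1$ (e.g.\ in the labelling of Figure~\ref{fig:lemma-t-frame-claim-Bi-clique}, $\{b_0,b_1,b_2,b_3\}$ is a stable set of size four), so $4K_1$-freeness from Lemma~\ref{lemma-5-basket-4K1Free} already kills $T_0$ along with $2P_3$ — one sentence, no case analysis. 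You instead route through Proposition~\ref{prop-S-clique-T0-free} and a two-clique cover of the neighborhoods of all vertices in $B_1\cup B_2\cup B_3\cup C_1\cup C_2\cup C_3$; this is valid (your covers are correct, including the $i=j^*$ case where $N_Q(c)\cap F=\emptyset$), and it is the kind of argument the paper genuinely needs later for villas and mansions, where $4K_1$-freeness is unavailable — but here it is avoidable work, and you flagged it yourself as the delicate step. A second, smaller difference: you derive anticonnectedness formally from the absence of universal vertices plus Proposition~\ref{prop-C4-free-one-anticomp}, whereas the paper builds the anticonnected subgraph up directly from the pairwise-anticomplete $B_i$'s; both are fine, and your version is arguably cleaner since it reuses existing machinery. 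The remaining verifications (no universal and no simplicial vertices) coincide with the paper's.
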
 
\begin{proof} 
By Lemma~\ref{lemma-5-basket-4K1Free}, every 5-basket is $(4K_1,C_4,C_6,C_7)$-free. Since both $2P_3$ and $T_0$ contain an induced $4K_1$, it follows that every 5-basket is $(2P_3,C_4,C_6,C_7,T_0)$-free. Further, Lemma~\ref{lemma-5-basket-4K1Free} guarantees that every 5-basket contains an induced 3-pentagon. 

It remains to show that 5-baskets are anticonnected and contain no simplicial and no universal vertices. So, fix a 5-basket $Q$, and let $(A;B_1,B_2,B_3;C_1,C_2,C_3;F)$ be a 5-basket partition of $Q$. Then $B_1,B_2,B_3$ are nonempty, pairwise disjoint, and pairwise anticomplete to each other; therefore, $Q[B_1 \cup B_2 \cup B_3]$ is anticonnected. Moreover, $F$ is anticomplete to one of $B_1,B_2,B_3$, and so $Q[B_1 \cup B_2 \cup B_3 \cup F]$ is anticonnected. Further, for each $i \in \{1,2,3\}$, $C_i$ is anticomplete to two of $B_1,B_2,B_3$; so, $Q[B_1 \cup B_2 \cup B_3 \cup C_1 \cup C_2 \cup C_3 \cup F] = Q \setminus A$ is anticonnected. Finally, $A$ is anticomplete to $C_1 \cup C_2 \cup C_3 \neq \emptyset$, and we deduce that $Q$ is anticonnected. Since $Q$ has more than one vertex, it follows that $Q$ has no universal vertices. It remains to show that $Q$ contains no simplicial vertices. First, every vertex in $B_1 \cup B_2 \cup B_3 \cup F$ has a neighbor both in $A$ and in $C_1 \cup C_2 \cup C_3$; since $A$ and $C_1 \cup C_2 \cup C_3$ are nonempty, disjoint, and anticomplete to each other, we see that no vertex in $B_1 \cup B_2 \cup B_3 \cup F$ is simplicial in $Q$. Further, $A$ is complete to at least two of $B_1,B_2,B_3$; since $B_1,B_2,B_3$ are nonempty, disjoint, and pairwise anticomplete to each other, it follows that no vertex in $A$ is simplicial in $Q$. Finally, for all $i \in \{1,2,3\}$, every vertex in $C_i$ has a neighbor both in $B_i$ and in $(C_1 \cup C_2 \cup C_3) \setminus C_i$, and we know that $B_i$ and $(C_1 \cup C_2 \cup C_3) \setminus C_i$ are nonempty, disjoint, and anticomplete to each other; therefore, no vertex in $C_1 \cup C_2 \cup C_3$ is simplicial in $Q$. This proves that $Q$ contains no simplicial vertices. 
\end{proof} 

\medskip 

For an integer $t \geq 3$, a {\em $t$-villa} is a graph $Q$ whose vertex set can be partitioned into nonempty cliques $A,B_1,\dots,B_t,C_1,\dots,C_t$, with adjacency as follows: 
\begin{itemize} 
\item $A$ is complete to $B_1 \cup \dots \cup B_t$ and anticomplete to $C_1 \cup \dots \cup C_t$; 
\item cliques $B_1,\dots,B_t$ are pairwise anticomplete to each other; 
\item cliques $C_1,\dots,C_t$ are pairwise complete to each other; 
\item for all distinct $i,j \in \{1,\dots,t\}$, $B_i$ is anticomplete to $C_j$; 
\item for all $i \in \{1,\dots,t\}$, $B_i$ can be ordered as $B_i = \{b_1^i,\dots,b_{r_i}^i\}$ so that $\emptyset \neq N_Q(b_{r_i}^i) \cap C_i \subseteq \dots \subseteq N_Q(b_1^i) \cap C_i = C_i$.\footnote{Note that this implies that $C_i$ can be ordered as $C_i = \{c_1^i,\dots,c_{s_i}^i\}$ so that $\emptyset \neq N_Q(c_{s_i}^i) \cap B_i \subseteq \dots \subseteq N_Q(c_1^i) \cap B_i = B_i$.}
\end{itemize} 
Under these circumstances, we say that $(A;B_1,\dots,B_t;C_1,\dots,C_t)$ is a {\em $t$-villa partition} of the $t$-villa $Q$. (See Figure~\ref{fig:villa}.) 

A {\em villa} is any graph that is a $t$-villa for some integer $t \geq 3$. 

Note that for each integer $t \geq 3$, the $t$-pentagon is a $t$-villa. 

\begin{figure} 
\begin{center}
\includegraphics[scale=0.5]{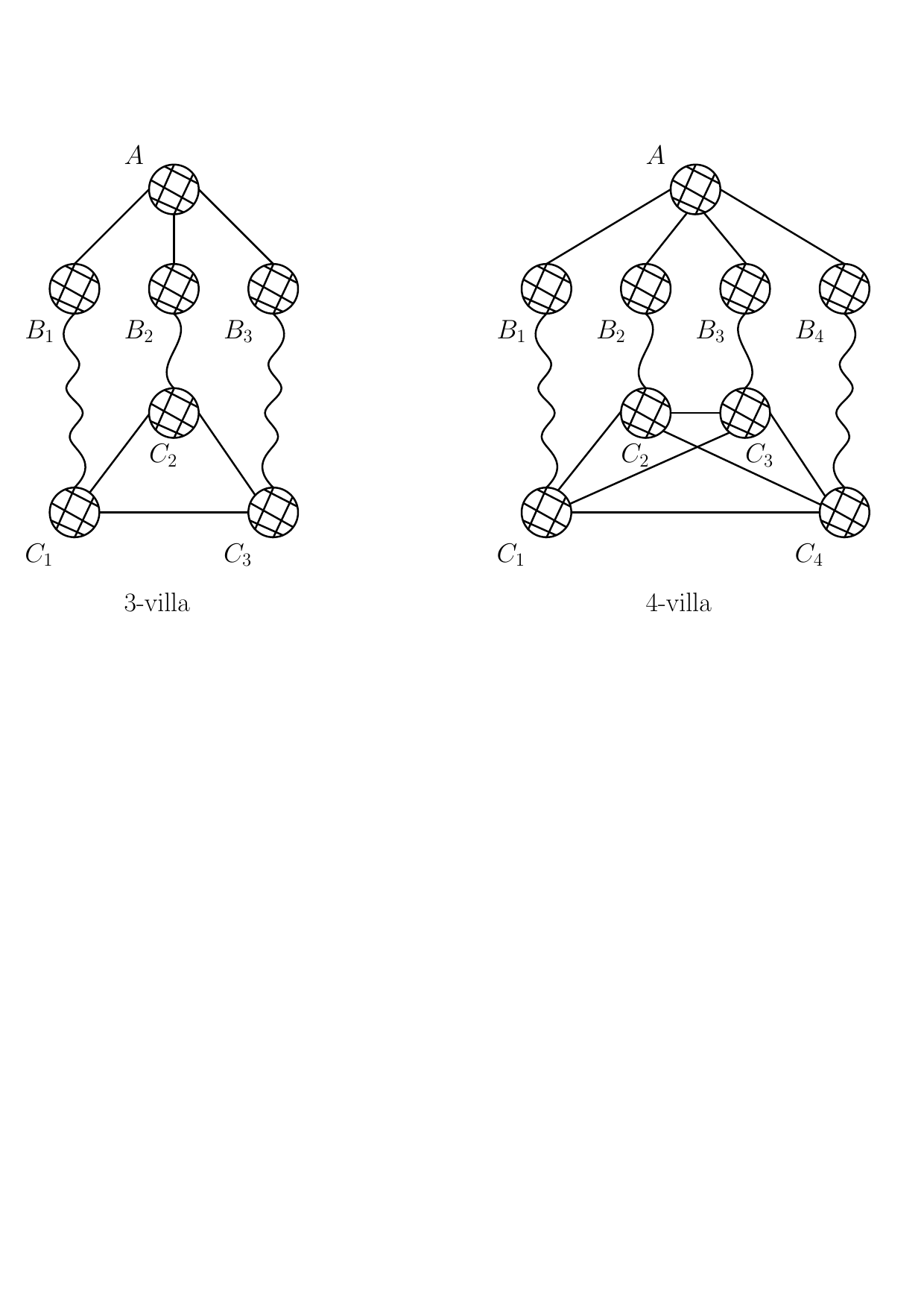}
\end{center} 
\caption{A 3-villa with an associated 3-villa partition $(A;B_1,B_2,B_3;C_1,C_2,C_3)$ (left), and a 4-villa with an associated 4-villa partition $(A;B_1,B_2,B_3,B_4;C_1,C_2,C_3,C_4)$. All cliques represented by crosshatched disks are nonempty.} \label{fig:villa} 
\end{figure}

\medskip 

A {\em simplicial elimination ordering} of a graph $G$ is an ordering $v_1,\dots,v_t$ of the vertices of $G$ such that for all $i \in \{1,\dots,t\}$, the vertex $v_i$ is simplicial in the graph $G \setminus \{v_1,\dots,v_{i-1}\}$. It is well known that a graph is chordal if and only if it admits a simplicial elimination ordering~\cite{FulkersonGross}. 

\begin{proposition} \label{prop-t-villa-in-class} Let $t \geq 3$ be an integer. Every $t$-villa is $(2P_3,C_4,C_6,C_7,T_0)$-free and contains an induced $t$-pentagon. Moreover, every $t$-villa is anticonnected and contains no simplicial and no universal vertices. 
\end{proposition}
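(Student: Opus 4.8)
The plan is to fix a $t$-villa $Q$ together with a $t$-villa partition $(A;B_1,\dots,B_t;C_1,\dots,C_t)$, and to write $B := B_1 \cup \dots \cup B_t$ and $C := C_1 \cup \dots \cup C_t$; throughout I would use that $B$ induces a disjoint union of cliques (the $B_i$ being pairwise anticomplete), that $C$ is a clique, that $A$ is complete to $B$ and anticomplete to $C$, and that the nesting condition governs adjacency between each $B_i$ and $C_i$. The easy part is the induced $t$-pentagon: I would choose one $a \in A$, for each $i$ the vertex $b_1^i \in B_i$ that is complete to $C_i$, and an arbitrary $c_i \in C_i$; a direct check against the four bullet points defining the $t$-pentagon (using that $A$ is complete to $B$ and anticomplete to $C$, that distinct $B_i$'s are anticomplete, that distinct $C_i$'s are complete, that $b_1^i$ is complete to $C_i$, and that $B_i$ is anticomplete to $C_j$ for $j \ne i$) shows that $\{a,b_1^1,\dots,b_1^t,c_1,\dots,c_t\}$ induces a $t$-pentagon.

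For $2P_3$-freeness I would apply Proposition~\ref{prop-G-x-disjoint-cliques-2P3-free} twice. First, with $X = A$: for each $a \in A$ we have $V(Q) \setminus N_Q[a] = C$, which is a single clique, so the hypothesis holds and $Q$ is $2P_3$-free if and only if $Q \setminus A = Q[B \cup C]$ is. Second, inside $Q' := Q[B \cup C]$, I would take $X = C$: for each $c \in C_k$ one computes $V(Q') \setminus N_{Q'}[c] = (B \setminus B_k) \cup (B_k \setminus N_Q(c))$, which partitions into the cliques $\{B_j\}_{j \ne k}$ together with $B_k \setminus N_Q(c)$, pairwise anticomplete since distinct $B$-parts are anticomplete. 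Hence $Q'$ is $2P_3$-free if and only if $Q' \setminus C = Q[B]$ is, and $Q[B]$ is a disjoint union of cliques, hence $P_3$-free and a fortiori $2P_3$-free. (Note that one cannot delete $A$ and $C$ simultaneously, because after deleting only $C$ the set $V(Q) \setminus N_Q[c]$ still contains all of $A$, which is complete to $B$ and therefore cannot be absorbed into a pairwise-anticomplete family of cliques once two distinct $B_j$'s survive.)

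The main obstacle is $(C_4,C_6,C_7)$-freeness, which I would obtain by proving the stronger statement that \emph{every hole of $Q$ has length exactly five}. Since $C$ is a clique, a hole $H$ meets $C$ in at most two vertices, and I would split into the three cases $|V(H) \cap C| \in \{0,1,2\}$. In each case I delete the $C$-vertices of $H$ and examine the induced path $P$ they leave behind inside $A \cup B$. The two structural facts that drive the argument are: an $A$-vertex is complete to all of $A \cup B$, so if $P$ contains an $A$-vertex then that vertex has at most two neighbors on $P$ and hence $P$ has at most three vertices; and $B$ induces a disjoint union of cliques, so if $P$ avoids $A$ then $P$, being an induced path in a disjoint union of cliques, has at most two vertices. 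Tracking how the endpoints of $P$ must attach to the one or two $C$-vertices then forces $|V(H) \cap C| = 2$, $|V(H) \cap A| = 1$, and $|V(H) \cap B| = 2$ with the two $B$-vertices in distinct parts; the only configuration that survives is precisely a $5$-hole. The one genuinely delicate point, which rules out the remaining putative $C_4$, is the case $|V(H) \cap C| = 2$ with $P$ a two-vertex path inside a single $B_k$: here the would-be $4$-hole lives in $B_k \cup C_k$ and is excluded exactly by the nesting condition in the definition of a $t$-villa (equivalently, by Proposition~\ref{prop-C4Free-CoBip} applied to the cliques $B_k$ and $C_k$). Having shown that all holes have length five, $(C_4,C_6,C_7)$-freeness follows.

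For $T_0$-freeness I would use Proposition~\ref{prop-S-clique-T0-free} and show that $S := \{x \mid \alpha(Q[N_Q(x)]) \ge 3\}$ equals $A$, which is a clique. For $b \in B_i$ the set $N_Q(b) = (A \cup (B_i \setminus \{b\})) \cup (N_Q(b) \cap C_i)$ is a union of two cliques, so $\alpha(Q[N_Q(b)]) \le 2$; likewise for $c \in C_i$, $N_Q(c) = (C \setminus \{c\}) \cup (N_Q(c) \cap B_i)$ is a union of two cliques, so $\alpha(Q[N_Q(c)]) \le 2$; and for $a \in A$, $N_Q(a) = (A \setminus \{a\}) \cup B$ contains a stable transversal $\{b_1,\dots,b_t\}$ (one vertex from each $B_i$), so $\alpha(Q[N_Q(a)]) = t \ge 3$. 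Thus $S = A$ is a clique and Proposition~\ref{prop-S-clique-T0-free} gives $T_0$-freeness. The remaining claims mirror the proof of Proposition~\ref{prop-5-basket-in-class}: $Q$ is anticonnected because $\overline{Q[B]}$ is complete multipartite (hence connected, as $t \ge 3$), every $C$-vertex has a non-neighbor in some $B_j$ and every $A$-vertex has a non-neighbor in $C$, so all these vertices lie in the single component of $\overline{Q}$; anticonnectedness together with $|V(Q)| \ge 2$ rules out universal vertices; and no vertex is simplicial because each has two nonadjacent neighbors (an $a \in A$ sees $b \in B_1$ and $b' \in B_2$; a $b \in B_i$ sees some $a \in A$ and, using that every vertex of $B_i$ has a neighbor in $C_i$, some $c \in C_i$; a $c \in C_i$ sees some $c' \in C_j$ with $j \ne i$ and, using that every vertex of $C_i$ has a neighbor in $B_i$, some $b \in B_i$).
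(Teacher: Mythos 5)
Your proposal is correct, and most of it coincides with the paper's proof: the induced $t$-pentagon on $\{a,b_1^1,\dots,b_1^t,c_1,\dots,c_t\}$, the two-stage application of Proposition~\ref{prop-G-x-disjoint-cliques-2P3-free} (first deleting $A$, then $C$, reducing to the $P_3$-free graph $Q[B]$), the use of Proposition~\ref{prop-S-clique-T0-free} via the observation that every vertex of $B \cup C$ has closed neighborhood covered by two cliques, and the anticonnectivity and non-simpliciality arguments are all the same (your extra verification that $S$ equals $A$ rather than merely $S \subseteq A$ is harmless but unnecessary). The one genuine divergence is the proof that all holes have length five. The paper first shows that every hole meets $A$ in exactly one vertex -- it exhibits an explicit simplicial elimination ordering of $Q \setminus A$ to conclude that $Q \setminus A$ is chordal, and uses the fact that the vertices of $A$ are pairwise twins -- and then reads off the hole structure directly: the two $B$-neighbors of the unique $A$-vertex lie in distinct parts $B_1,B_2$, forcing the remaining vertices into $C_1$ and $C_2$, which are complete to each other. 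You instead localize via the clique $C$ (a hole meets it in at most two consecutive vertices) and analyze the residual path in $A \cup B$; this works, but it shifts the burden of excluding $C_4$ onto the nesting condition between $B_k$ and $C_k$ (equivalently Proposition~\ref{prop-C4Free-CoBip}), a point you correctly flag as the delicate one, whereas in the paper's route that same nesting condition is consumed once and for all in verifying the simplicial elimination ordering of $Q \setminus A$. The paper's version is slightly slicker in that the chordality claim disposes of every hole avoiding $A$ in one stroke; your version is more self-contained in that it does not invoke the characterization of chordal graphs from~\cite{FulkersonGross}.
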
 
\begin{proof} 
Let $Q$ be a $t$-villa, and let $(A;B_1,\dots,B_t;C_1,\dots,C_t)$ be a $t$-villa partition of $Q$. Set $B := B_1 \cup \dots \cup B_t$ and $C := C_1 \cup \dots \cup C_t$. Further, for all $i \in \{1,\dots,t\}$: 
\begin{itemize} 
\item let $B_i = \{b_1^i,\dots,b_{r_i}^i\}$ be an ordering of $B_i$ such that $\emptyset \neq N_Q(b_{r_i}^i) \cap C_i \subseteq \dots \subseteq N_Q(b_1^i) \cap C_i = C_i$, 
\item let $C_i = \{c_1^i,\dots,c_{s_i}^i\}$ be an ordering of $C_i$ such that $\emptyset \neq N_Q(c_{s_i}^i) \cap B_i \subseteq \dots \subseteq N_Q(c_1^i) \cap B_i = B_i$, 
\end{itemize} 
as in the definition of a $t$-villa.

\begin{adjustwidth}{1cm}{1cm} 
\begin{claim} \label{lemma-t-villa-in-class-claim-t-pentagon}
$Q$ contains an induced $t$-pentagon. 
\end{claim} 
\end{adjustwidth} 
{\em Proof of Claim~\ref{lemma-t-villa-in-class-claim-t-pentagon}.} We fix any $a \in A$, and we observe that $Q[a,b_1^1,\dots,b_1^t,c_1^1,\dots,c_1^t]$ is a $t$-pentagon.~$\blacklozenge$

\begin{adjustwidth}{1cm}{1cm} 
\begin{claim} \label{lemma-t-villa-in-class-claim-anticonn} 
$Q$ is anticonnected and contains no simplicial and no universal vertices. 
\end{claim} 
\end{adjustwidth} 
{\em Proof of Claim~\ref{lemma-t-villa-in-class-claim-anticonn}.} We first show that $Q$ is anticonnected. First of all, we know that $t \geq 3$ and that the cliques $B_1,\dots,B_t$ are nonempty, pairwise disjoint, and pairwise anticomplete to each other; therefore, $Q[B]$ is anticonnected. Next, for all $i \in \{1,\dots,t\}$, $C_i$ is anticomplete to $B \setminus B_i \neq \emptyset$, and consequently, $Q[B \cup C]$ is anticonnected. Finally, $A$ is anticomplete to $C \neq \emptyset$, and so $Q[A \cup B \cup C] = Q$ is anticonnected. Since $Q$ contains more than one vertex, this implies, in particular, that $Q$ contains no universal vertices. 

It remains to show that $Q$ contains no simplicial vertices. First, $A$ is complete to $B$, and $B$ is not a clique; so, no vertex of $A$ is simplicial in $Q$. Next, every vertex in $B$ has a neighbor both in $A$ and in $C$; since $A$ and $C$ are nonempty, disjoint, and anticomplete to each other, we see that no vertex of $B$ is simlicial in $Q$. Finally, for all $i \in \{1,\dots,t\}$, every vertex in $C_i$ has a neighbor in $B_i$ and is complete to $C \setminus C_i \neq \emptyset$, and so since $B_i$ and $C \setminus C_i$ are disjoint and anticomplete to each other, we see that no vertex in $C_i$ is simplicial in $Q$. This proves that $Q$ has no simplicial vertices.~$\blacklozenge$

\medskip 

In view of Claims~\ref{lemma-t-villa-in-class-claim-t-pentagon} and~\ref{lemma-t-villa-in-class-claim-anticonn}, it now only remains to show that $Q$ is $(2P_3,C_4,C_6,C_7,T_0)$-free. 

%\begin{adjustwidth}{1cm}{1cm} 
%\begin{claim} \label{lemma-t-villa-in-class-claim-Q-A-chordal}
%$Q \setminus A$ is chordal. 
%\end{claim} 
%\end{adjustwidth} 
%{\em Proof of Claim~\ref{lemma-t-villa-in-class-claim-Q-A-chordal}.} Clearly, the following is a simplicial elimination ordering of $Q \setminus A$: 
%\begin{displaymath} 
%b_{r_1}^1,\dots,b_1^1,b_{r_2}^2,\dots,b_1^2,\dots,b_{r_t}^t,\dots,b_1^t,c_{s_1}^1,\dots,c_1^1,c_{s_2}^2,\dots,c_1^2,\dots,c_{s_t}^t,\dots,c_1^t. 
%\end{displaymath} 
%Therefore, by~\cite{FulkersonGross}, $Q \setminus A$ is chordal.~$\blacklozenge$ 

\begin{adjustwidth}{1cm}{1cm} 
\begin{claim} \label{lemma-t-villa-in-class-claim-A-1}
Every hole in $Q$ contains exactly one vertex of $A$. 
\end{claim} 
\end{adjustwidth} 
{\em Proof of Claim~\ref{lemma-t-villa-in-class-claim-A-1}.} First of all, note that the following is a simplicial elimination ordering of $Q \setminus A$: 
\begin{displaymath} 
b_{r_1}^1,\dots,b_1^1,b_{r_2}^2,\dots,b_1^2,\dots,b_{r_t}^t,\dots,b_1^t,c_{s_1}^1,\dots,c_1^1,c_{s_2}^2,\dots,c_1^2,\dots,c_{s_t}^t,\dots,c_1^t. 
\end{displaymath} 
Therefore, by~\cite{FulkersonGross}, $Q \setminus A$ is chordal, and it follows that every hole in $Q$ contains at least one vertex of $A$. On the other hand, any two distinct vertices of $A$ are twins in $Q$, and no hole contains a pair of twins; therefore, no hole in $Q$ contains more than one vertex of $A$.~$\blacklozenge$

\begin{adjustwidth}{1cm}{1cm} 
\begin{claim} \label{lemma-t-villa-in-class-C5} 
Every hole in $Q$ is of length five. In particular, $Q$ is $(C_4,C_6,C_7)$-free. 
\end{claim} 
\end{adjustwidth} 
{\em Proof of Claim~\ref{lemma-t-villa-in-class-C5}.} Let $x_0,x_1,\dots,x_{k-1},x_0$ be a hole in $Q$ (with $k \geq 4$, and with indices in $\mathbb{Z}_k$). We must show that $k = 5$. By Claim~\ref{lemma-t-villa-in-class-claim-A-1}, and by symmetry, we may assume that $x_0 \in A$ and $x_1,\dots,x_{k-1} \notin A$. Since $A$ is complete to $B$ and anticomplete to $C$, we see that $x_{k-1},x_1 \in B$ and $x_2,\dots,x_{k-2} \in C$. Since $B_1,\dots,B_t$ are cliques, and since $x_1,x_{k-1}$ are nonadjacent (because $k \geq 4$), we see that vertices $x_1,x_{k-1}$ belong to distinct $B_i$'s. By symmetry, we may assume that $x_1 \in B_1$ and $x_{k-1} \in B_2$. But then $x_2 \in C_1$ and $x_{k-2} \in C_2$, and in particular, $x_2,x_{k-2}$ are adjacent. This implies that $k-2 = 3$, that is, $k = 5$.~$\blacklozenge$

\begin{adjustwidth}{1cm}{1cm} 
\begin{claim} \label{lemma-t-villa-in-class-claim-Q-2P3-free} 
$Q$ is $2P_3$-free. 
\end{claim} 
\end{adjustwidth} 
{\em Proof of Claim~\ref{lemma-t-villa-in-class-claim-Q-2P3-free}.} By the definition of a $t$-villa, $A$ is a clique, complete to $B$ and anticomplete to $C$. Therefore, for all $a \in A$, we have that $V(Q) \setminus N_Q[a] = C$, and in particular, $V(Q) \setminus N_Q[a]$ is clique. So, by Proposition~\ref{prop-G-x-disjoint-cliques-2P3-free}, it suffices to show that $Q_1 := Q \setminus A = Q[B \cup C]$ is $2P_3$-free. Now, since $C$ is a clique, we see that for all $c \in C$, we have that $C \subseteq N_{Q_1}[c]$, and consequently, $V(Q_1) \setminus N_{Q_1}[c] \subseteq B$. By the definition of a $t$-villa, $B$ can be partitioned into cliques (namely, $B_1,\dots,B_t$), pairwise anticomplete to each other. So, by Proposition~\ref{prop-G-x-disjoint-cliques-2P3-free}, it suffices to show that $Q_1 \setminus C = Q[B]$ is $2P_3$-free. But the components of $Q[B]$ are precisely the complete graphs $Q[B_1],\dots,Q[B_t]$, and so Proposition~\ref{prop-P3-free} guarantees that $Q[B]$ is $2P_3$-free.~$\blacklozenge$

\begin{adjustwidth}{1cm}{1cm} 
\begin{claim} \label{lemma-t-villa-in-class-claim-Q-T0-free} 
$Q$ is $T_0$-free. 
\end{claim} 
\end{adjustwidth} 
{\em Proof of Claim~\ref{lemma-t-villa-in-class-claim-Q-T0-free}.} In view of Proposition~\ref{prop-S-clique-T0-free}, it suffices to show that all vertices in $Q$ that have three pairwise nonadjacent neighbors belong to the clique $A$. For this, we just need to show that the closed neighborhood of each vertex in $V(Q) \setminus A = B \cup C$ is the union of two cliques. But this follows immediately from the definition of a $t$-villa. Indeed, fix any $x \in B \cup C$. By symmetry, we may assume that $x \in B_1 \cup C_1$. If $x \in B_1$, then $N_Q[x] \subseteq A \cup B_1 \cup C_1$, and $A \cup B_1$ and $C_1$ are cliques. On the other hand, if $x \in C_1$, then $N_Q[x] \subseteq B_1 \cup C$, and $B_1$ and $C$ are cliques.~$\blacklozenge$

\medskip 

Claims~\ref{lemma-t-villa-in-class-C5},~\ref{lemma-t-villa-in-class-claim-Q-2P3-free}, and~\ref{lemma-t-villa-in-class-claim-Q-T0-free} together imply that $Q$ is $(2P_3,C_4,C_6,C_7,T_0)$-free, and we are done. 
\end{proof} 

\medskip 

For an integer $t \geq 3$, a {\em $t$-mansion} is a graph $Q$ whose vertex set can be partitioned into cliques $A,B_1,\dots,B_t,C_1,\dots,C_t,F,X,Y$ such that all the following hold: 
\begin{itemize} 
\item cliques $A,B_1,\dots,B_t,C_1,\dots,C_t,F$ are all nonempty (cliques $X,Y$ may possibly be empty); 
\item $Q \setminus (F \cup X \cup Y)$ is a $t$-villa, and $(A;B_1,\dots,B_t;C_1,\dots,C_t)$ is an associated $t$-villa partition of $Q \setminus (F \cup X \cup Y)$; 
\item there exists some $j^* \in \{1,\dots,t\}$ such that all the following hold: 
\begin{itemize} 
\item $F$ is complete to $A$, $(B_1 \cup \dots \cup B_t) \setminus B_{j^*}$, and $(C_1 \cup \dots \cup C_t) \setminus C_{j^*}$, and is anticomplete to $B_{j^*} \cup C_{j^*}$, 
\item $B_{j^*}$ is complete to $C_{j^*}$, 
\item $X$ is complete to $A \cup B_{j^*}$ and is anticomplete to $(B_1 \cup \dots \cup B_t) \setminus B_{j^*}$ and $C_1 \cup \dots \cup C_t$, 
\end{itemize} 
\item $F$ is complete to $X \cup Y$; 
\item $X$ is anticomplete to $Y$; 
\item $Y$ is complete to $C_1 \cup \dots \cup C_t$ and anticomplete to $A \cup B_1 \cup \dots \cup B_t$. 
\end{itemize} 
Under these circumstance, we also say that $(A;B_1,\dots,B_t;C_1,\dots,C_t;F;X;Y)$ is {\em $t$-mansion partition} of the $t$-mansion $Q$. (A 3-mansion is represented in Figure~\ref{fig:3mansion}.) 

A {\em mansion} is any graph that is a $t$-mansion for some integer $t \geq 3$.  

\begin{figure} 
\begin{center}
\includegraphics[scale=0.5]{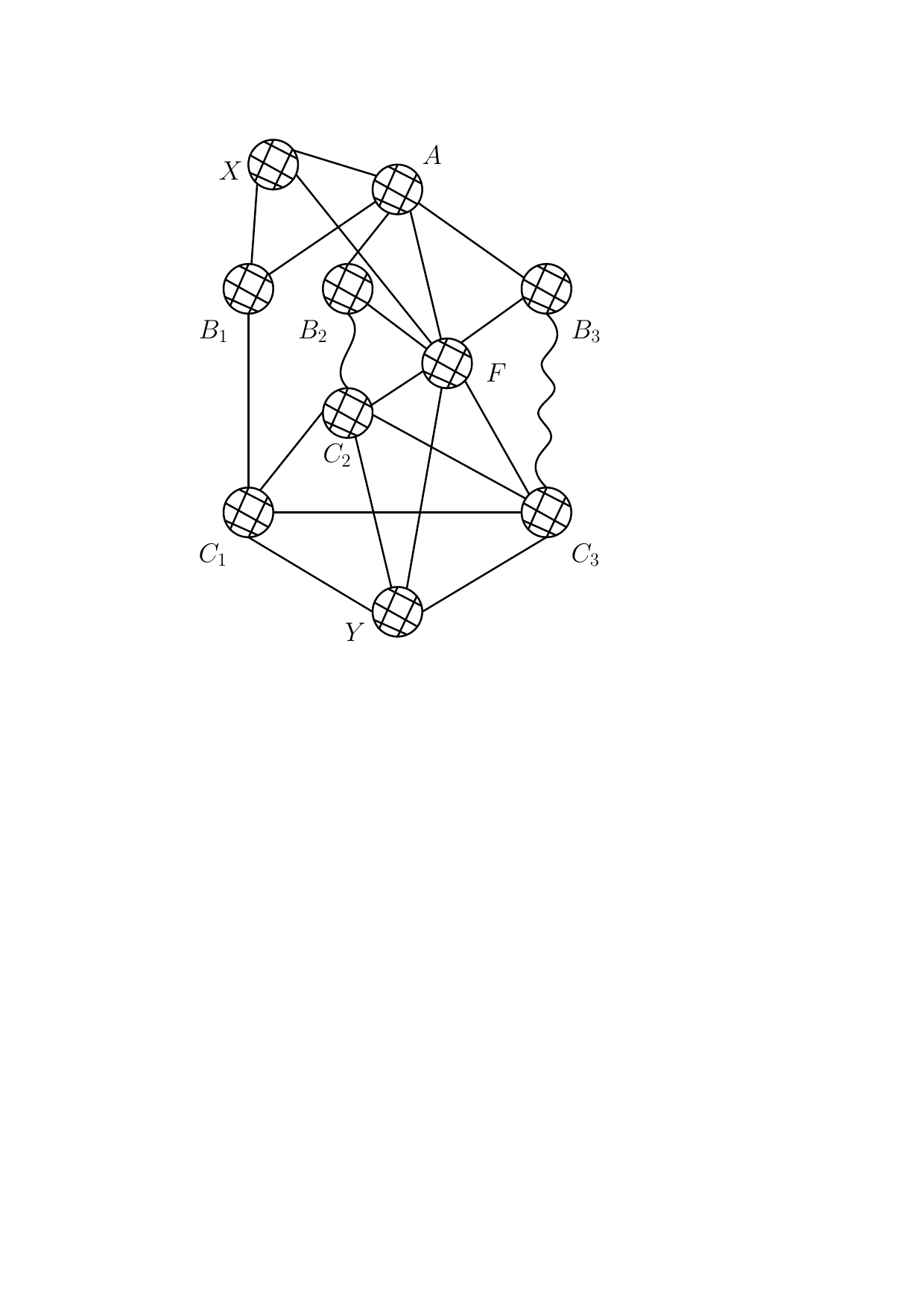}
\end{center} 
\caption{A 3-mansion with an associated 3-mansion partiton $(A;B_1,B_2,B_3;C_1,C_2,C_3;F;X;Y)$, and with $j^* = 1$. Cliques $X$ and $Y$ may possibly be empty; the other cliques represented by crosshatched disks are all nonempty.} \label{fig:3mansion} 
\end{figure}

\begin{proposition} \label{prop-t-mansion-in-class} Let $t \geq 3$ be an integer. Every $t$-mansion is $(2P_3,C_4,C_6,C_7,T_0)$-free and contains an induced $t$-pentagon. Moreover, every $t$-mansion is anticonnected and contains no simplicial and no universal vertices. 
\end{proposition}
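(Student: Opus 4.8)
The plan is to mirror the structure of the proof of Proposition~\ref{prop-t-villa-in-class} for $t$-villas, since a $t$-mansion is built on top of a $t$-villa by attaching the three extra cliques $F$, $X$, and $Y$. Fix a $t$-mansion $Q$ with $t$-mansion partition $(A;B_1,\dots,B_t;C_1,\dots,C_t;F;X;Y)$, and let $Q' := Q \setminus (F \cup X \cup Y)$ be the underlying $t$-villa with partition $(A;B_1,\dots,B_t;C_1,\dots,C_t)$. By Proposition~\ref{prop-t-villa-in-class}, $Q'$ is $(2P_3,C_4,C_6,C_7,T_0)$-free, contains an induced $t$-pentagon, is anticonnected, and has no simplicial or universal vertices. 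The presence of the $t$-pentagon in $Q'$ immediately gives an induced $t$-pentagon in $Q$, disposing of that clause. For anticonnectedness, I would argue as in the villa case: $Q[B_1 \cup \dots \cup B_t]$ is anticonnected (as $t \geq 3$ and the $B_i$ are nonempty and pairwise anticomplete), then absorb the $C_i$'s and $A$ exactly as before, and finally observe that each of $F$, $X$, $Y$ is anticomplete to a nonempty set already in the anticonnected core (namely $B_{j^*} \cup C_{j^*}$ for $F$, the nonempty set $(B_1\cup\dots\cup B_t)\setminus B_{j^*}$ together with $C_1\cup\dots\cup C_t$ for $X$, and $A \cup (B_1\cup\dots\cup B_t)$ for $Y$), so they attach without destroying anticonnectedness. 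Anticonnectedness on more than one vertex then rules out universal vertices.

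For the absence of simplicial vertices, the cleanest route is to observe that each vertex of $Q$ still has two nonadjacent neighbors. Vertices in $A \cup B_1 \cup \dots \cup B_t \cup C_1 \cup \dots \cup C_t$ already have the nonadjacent neighbors furnished by the villa argument in Proposition~\ref{prop-t-villa-in-class}, and those neighbors remain present and remain nonadjacent in $Q$ (adding $F$, $X$, $Y$ does not merge them). For $F$, $X$, and $Y$, I would exhibit two nonadjacent neighbors directly from the adjacency list: for instance, a vertex of $F$ is complete to $A$ and to $(B_1 \cup \dots \cup B_t) \setminus B_{j^*}$, which contains a nonadjacent pair; a vertex of $X$ is complete to $A \cup B_{j^*}$, and $A$ is complete to $B_{j^*}$ here, so I would instead pick neighbors from $A$ and $F$ (noting $F$ is complete to $X$ and anticomplete to $B_{j^*}$, giving the nonadjacency); and a vertex of $Y$ is complete to $C_1 \cup \dots \cup C_t$ and to $F$, with $F$ anticomplete to $C_{j^*}$ giving the needed nonadjacent pair. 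Care is needed for $X$ and $Y$ because they may be empty, but then there is nothing to check.

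The genuinely substantial part is verifying that $Q$ is $(2P_3,C_4,C_6,C_7,T_0)$-free, and I expect this to be the main obstacle. For $T_0$-freeness I would again invoke Proposition~\ref{prop-S-clique-T0-free} and show that every vertex with three pairwise nonadjacent neighbors lies in a single clique; this requires checking that the closed neighborhood of each vertex outside some designated clique is a union of two cliques, and the bookkeeping for $F$, $X$, $Y$ against the $j^*$-indexed blocks is where the adjacency constraints must be used most carefully. For $2P_3$-freeness I would look to apply Proposition~\ref{prop-G-x-disjoint-cliques-2P3-free} iteratively, peeling off vertex sets whose non-closed-neighborhoods decompose into pairwise-anticomplete cliques, reducing to a graph whose $2P_3$-freeness is transparent (ideally reducing back toward the villa or to a $P_3$-free residue handled by Proposition~\ref{prop-P3-free}); identifying the right peeling order that respects $F$, $X$, $Y$ is the delicate step. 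For the hole lengths I would prove every hole has length five, as in Claim~\ref{lemma-t-villa-in-class-C5}: first show every hole meets $A$ in exactly one vertex (using that $A$ is a clique of twins and $Q \setminus A$ is chordal via a simplicial elimination ordering extending the villa's, with $F$, $X$, $Y$ placed appropriately), then trace the hole through $B$ and $C$ exactly as before; the new wrinkle is confirming that $F$, $X$, $Y$ cannot lengthen a hole, which follows because their adjacencies force any hole through them to shortcut. Establishing the $Q \setminus A$ chordality and the ``exactly one vertex of $A$'' claim in the enlarged graph is the crux, and I would spend the most effort there.
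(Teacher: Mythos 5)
Your treatment of the $t$-pentagon, anticonnectedness, universal vertices, and simplicial vertices matches the paper's proof in substance, and your strategies for $2P_3$-freeness (iterated peeling via Proposition~\ref{prop-G-x-disjoint-cliques-2P3-free}) and $T_0$-freeness (Proposition~\ref{prop-S-clique-T0-free} applied to the clique $A \cup F$) are exactly the ones the paper executes, though you leave the peeling order and the designated clique unspecified. One small slip: for a vertex $x \in X$ you propose the nonadjacent pair of neighbors ``from $A$ and $F$,'' but $F$ is complete to $A$; your own parenthetical points to the correct pair, namely a vertex of $B_{j^*}$ and a vertex of $F$.

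The genuine gap is in the hole-length argument, precisely where you say you would spend the most effort. Your plan rests on two claims that are false for mansions: that $Q \setminus A$ is chordal, and that every hole meets $A$ in exactly one vertex. Take $j^* = 1$ and suppose $F \neq \emptyset$ and $X \neq \emptyset$ (which the definition permits), and pick $x \in X$, $b \in B_1$, $c \in C_1$, $c' \in C_2$, $f \in F$. Then $x,b,c,c',f,x$ is a $5$-hole: the edges $xb$, $bc$ (as $B_{j^*}$ is complete to $C_{j^*}$), $cc'$, $c'f$, $fx$ are present, and the pairs $xc$, $xc'$, $bc'$, $bf$, $cf$ are nonedges. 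This hole avoids $A$ entirely and lies in $Q \setminus A$, so the ``exactly one vertex of $A$'' lemma and the chordality of $Q \setminus A$ both fail, and the villa-style case analysis tracing a hole from its unique $A$-vertex through $B$ and then $C$ cannot be run. The paper pivots on $F$ instead of $A$: every vertex of $X \cup Y$ is simplicial in $Q \setminus F$ (here one uses that $A$ is complete to $B$ in the underlying villa, so $A \cup B_{j^*} \cup X$ is a clique), hence any hole avoiding $F$ is a hole of the $t$-villa $Q \setminus (F \cup X \cup Y)$ and has length five by Proposition~\ref{prop-t-villa-in-class}; a hole meeting $F$ contains exactly one $F$-vertex (twins), avoids $B_2 \cup \dots \cup B_t$ because each $F$-vertex dominates those vertices, and therefore lies in $Q[A \cup B_1 \cup C \cup F \cup X \cup Y]$, which is a $5$-hyperhole with partition $(A \cup X, B_1, C_1, (C \setminus C_1) \cup Y, F)$, forcing length five. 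You would need to replace your proposed crux with an argument of this kind.
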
 
\begin{proof} 
Fix a $t$-mansion $Q$, and let $(A;B_1,\dots,B_t;C_1,\dots,C_t;F;X;Y)$ be a $t$-mansion partition of $Q$. Set $B := B_1 \cup \dots \cup B_t$ and $C := C_1 \cup \dots \cup C_t$. Let $j^*$ be as in the definition of a $t$-mansion; by symmetry, we may assume that $j^* = 1$.

\begin{adjustwidth}{1cm}{1cm} 
\begin{claim} \label{lemma-t-mansion-in-class-claim-Q-anticonn} 
$Q$ contains an induced $t$-pentagon. Moreover, $Q$ is anticonnected and contains no simplicial vertices and no universal vertices. 
\end{claim} 
\end{adjustwidth} 
{\em Proof of Claim~\ref{lemma-t-mansion-in-class-claim-Q-anticonn}.} First of all, by the definition of a $t$-mansion, $Q \setminus (F \cup X \cup Y)$ is a $t$-villa, and so by Proposition~\ref{prop-t-villa-in-class}, $Q \setminus (F \cup X \cup Y)$ contains an induced $t$-pentagon; consequently, $Q$ contains an induced $t$-pentagon. 

Next, by Proposition~\ref{prop-t-villa-in-class}, the $t$-villa $Q \setminus (F \cup X \cup Y)$ is anticonnected. Now, we know that $F$ is anticomplete to $B_1 \cup C_1$, that $X$ is anticompelte to $C$, and that $Y$ is anticomplete to $A$; since $B_1 \cup C_1$, $C$, and $A$ are all nonempty, we deduce that $Q$ is anticonnected. Since $Q$ contains more than one vertex, it follows that $Q$ contains no universal vertices. 

It remains to show that $Q$ contains no simplicial vertices. By Proposition~\ref{prop-t-villa-in-class}, the $t$-villa $Q \setminus (F \cup X \cup Y)$ contains no simplicial vertices; therefore, no vertex of $V(Q) \setminus (F \cup X \cup Y) = A \cup B \cup C$ is simplicial in $Q$. Further, we know that $F$ is complete to $A \cup (C \setminus C_1)$; since $A$ and $C \setminus C_1$ are nonempty, disjoint, and anticomplete to each other, we deduce that no vertex of $F$ is simplicial in $Q$. Next, $X$ is complete to $B_1 \cup F$; since $B_1$ and $F$ are nonempty, disjoint, and anticomplete to each other, it follows that no vertex of $X$ is simplicial in $Q$. Finally, $Y$ is complete to $C_1 \cup F$; since $C_1$ and $F$ are nonempty, disjoint, and anticomplete to each other, it follows that no vertex of $Y$ is simplicial in $Q$. This proves that $Q$ contains no simplicial vertices.~$\blacklozenge$ 

\medskip 

In view of Claim~\ref{lemma-t-mansion-in-class-claim-Q-anticonn}, it now only remains to show that $Q$ is $(2P_3,C_4,C_6,C_7)$-free. 

\begin{adjustwidth}{1cm}{1cm} 
\begin{claim} \label{lemma-t-mansion-in-class-claim-Q-F-hole-5} 
All holes in $Q \setminus F$ are of length five. 
\end{claim} 
\end{adjustwidth} 
{\em Proof of Claim~\ref{lemma-t-mansion-in-class-claim-Q-F-hole-5}.} First of all, by definition of a $t$-mansion, all vertices in $X \cup Y$ are simplicial in the graph $Q \setminus F$. Since no hole contains a simplicial vertex, we deduce that all holes in $Q \setminus F$ are in fact holes in $Q \setminus (F \cup X \cup Y)$. But by the definition of a $t$-mansion, $Q \setminus (F \cup X \cup Y)$ is a $t$-villa. So, by Proposition~\ref{prop-t-villa-in-class}, $Q \setminus (F \cup X \cup Y)$ is $(2P_3,C_4,C_6,C_7)$-free, and consequently, all holes in $Q \setminus (F \cup X \cup Y)$ are of length five.~$\blacklozenge$ 

\begin{adjustwidth}{1cm}{1cm} 
\begin{claim} \label{lemma-t-mansion-in-class-claim-Q-hole-5} 
All holes in $Q$ are of length five. In particular, $Q$ is $(C_4,C_6,C_7)$-free. 
\end{claim} 
\end{adjustwidth} 
{\em Proof of Claim~\ref{lemma-t-mansion-in-class-claim-Q-hole-5}.} Let $x_0,x_1,\dots,x_{k-1},x_0$ be a hole in $Q$ (with $k \geq 4$, and with indices in $\mathbb{Z}_k$). We must show that $k = 5$. We may assume that $\{x_0,x_1,\dots,x_{k-1}\} \cap F \neq \emptyset$, for otherwise, the result follows from Claim~\ref{lemma-t-mansion-in-class-claim-Q-F-hole-5}. Next, note that any two distinct vertices of $F$ are twins. Since no hole contains a pair of twins, we now deduce that $F$ contains exactly one vertex of the hole $x_0,x_1,\dots,x_{k-1},x_0$; by symmetry, we may assume that $x_0 \in F$ and $x_1,\dots,x_{k-1} \notin F$. Next, note that every vertex of $F$ dominates all vertices of $B_2 \cup \dots \cup B_t$ in $Q$; since no vertex of a hole dominates another vertex of that hole, we deduce that $\{x_0,x_1,\dots,x_{k-1}\} \cap (B_2 \cup \dots \cup B_t) = \emptyset$. Thus, $x_0,x_1,\dots,x_{k-1},x_0$ is a hole in $Q_1 := Q \setminus (B_2 \cup \dots \cup B_t) = Q[A \cup B_1 \cup C \cup F \cup X \cup Y]$. But note that $Q_1$ is a 5-hyperhole with an associated 5-hyperhole partition $(A \cup X,B_1,C_1,(C \setminus C_1) \cup Y,F)$. Clearly, all holes in a 5-hyperhole are of length five, and we deduce that $k = 5$.~$\blacklozenge$

\begin{adjustwidth}{1cm}{1cm} 
\begin{claim} \label{lemma-t-mansion-in-class-claim-Q-2P3-free} 
$Q$ is $2P_3$-free. 
\end{claim} 
\end{adjustwidth} 
{\em Proof of Claim~\ref{lemma-t-mansion-in-class-claim-Q-2P3-free}.} Note that for all $a \in A$, we have that $V(Q) \setminus N_Q[a] = C \cup Y$, whereas for all $f \in F$, we have that $V(Q) \setminus N_Q[f] = B_1 \cup C_1$. Since $C \cup Y$ and $B_1 \cup C_1$ are both cliques, Proposition~\ref{prop-G-x-disjoint-cliques-2P3-free} guarantees that $Q$ is $2P_3$-free if and only if $Q_1 := Q \setminus (A \cup F)$ is $2P_3$-free. So, it is enough to show that $Q_1$ is $2P_3$-free. Next, note that for all $c \in C \cup Y$, we have that $V(Q_1) \setminus N_{Q_1}[c] \subseteq B \cup X$, and that $B \cup X$ can be partitioned into cliques (namely, $B_1 \cup X,B_2,\dots,B_t$) that are pairwise anticomplete to each other. So, by Proposition~\ref{prop-G-x-disjoint-cliques-2P3-free}, it is enough to show that $Q_2 := Q_1 \setminus (C \cup Y)$ is $2P_3$-free. But note that $V(Q_2) = B \cup X$ can be partitioned into cliques (namely $B_1 \cup X,B_2,\dots,B_t$), pairwise anticomplete to each other. Thus, all components of $Q_2$ are complete graphs, and so Proposition~\ref{prop-P3-free} implies that $Q_2$ is $P_3$-free.~$\blacklozenge$ 

\begin{adjustwidth}{1cm}{1cm} 
\begin{claim} \label{lemma-t-mansion-in-class-claim-Q-T0-free} 
$Q$ is $T_0$-free. 
\end{claim} 
\end{adjustwidth} 
{\em Proof of Claim~\ref{lemma-t-mansion-in-class-claim-Q-T0-free}.} By the definition of a $t$-mansion, we know that $A \cup F$ is a clique. So, in view of Proposition~\ref{prop-S-clique-T0-free}, it suffices to show that all vertices of $Q$ that have three pairwise nonadjacent neighbors belong to $A \cup F$. Clearly, it is enough to show that the closed neighborhood of each vertex in $V(Q) \setminus (A \cup F) = B \cup C \cup X \cup Y$ can be partitioned into two cliques. Fix $z \in B \cup C \cup X \cup Y$. Since $j^* = 1$, we may assume by symmetry that $z \in B_1 \cup B_2 \cup C_1 \cup C_2 \cup X \cup Y$. Using the definition of a $t$-mansion, we easily see that all the following hold: 
\begin{itemize} 
\item if $z \in B_1$, then $N_Q[z] = A \cup B_1 \cup C_1 \cup X$, and $A \cup X$ and $B_1 \cup C_1$ are cliques; 
\item if $z \in B_2$, then $N_Q[z] \subseteq A \cup B_2 \cup C_2 \cup F$, and $A \cup B_2 \cup F$ and $C_2$ are cliques; 
\item if $z \in C_1$, then $N_Q[z] = B_1 \cup C \cup Y$, and $B_1$ and $C \cup Y$ are cliques; 
\item if $z \in C_2$, then $N_Q[z] \subseteq B_2 \cup C \cup F \cup Y$, and $B_2 \cup F$ and $C \cup Y$ are cliques; 
\item if $z \in X$, then $N_Q[z] = A \cup B_1 \cup F \cup X$, and $A \cup B_1$ and $F \cup X$ are cliques; 
\item if $z \in Y$, then $N_Q[z] = C \cup F \cup Y$, and $C$ and $F \cup Y$ are cliques. 
\end{itemize} 
So, in any case, $N_G[z]$ is the union of two cliques, which completes the proof of the claim.~$\blacklozenge$ 

\medskip 

Our proof is now complete: Claims~\ref{lemma-t-mansion-in-class-claim-Q-hole-5},~\ref{lemma-t-mansion-in-class-claim-Q-2P3-free}, and~\ref{lemma-t-mansion-in-class-claim-Q-T0-free} together guarantee that $Q$ is $(2P_3,C_4,C_6,C_7,T_0)$-free, which is what we needed to show. 
\end{proof} 

\subsection{Frames} \label{subsec:frames} 

For an integer $t \geq 3$, a {\em $t$-frame} is a graph $Q$ whose vertex set can be partitioned into nonempty sets $A,B_1,\dots,B_t,C_1,\dots,C_t$ such that all the following hold: 
\begin{itemize} 
\item every vertex in $A$ has a neighbor in at least two of $B_1,\dots,B_t$; 
\item $A$ is anticomplete to $C_1,\dots,C_t$; 
\item $B_1,\dots,B_t$ are pairwise anticomplete to each other; 
\item $C_1,\dots,C_t$ are pairwise complete to each other; 
\item for all distinct $i,j \in \{1,\dots,t\}$, $B_i$ is anticomplete to $C_j$; 
\item for all $i \in \{1,\dots,t\}$, all the following hold: 
\begin{itemize} 
\item every vertex in $B_i$ has a neighbor in $A$, 
\item every vertex in $B_i$ has a neighbor in $C_i$, 
\item every vertex in $C_i$ has a neighbor in $B_i$. 
\end{itemize} 
\end{itemize} 
Under these circumstances, we say that $(A;B_1,\dots,B_t;C_1,\dots,C_t)$ is a {\em $t$-frame partition} of the $t$-frame $Q$. 

\medskip 

Note that for any integer $t \geq 3$, the $t$-pentagon is a $t$-frame. As we shall see, $t$-frames will play a crucial role in the proof of Lemma~\ref{lemma-T0free-pyramid} (a technical lemma that is the main ingredient of our proof of Theorem~\ref{thm-T0free-pyramid-iff}, the main result of this section). For now, note that the definition of a $t$-frame places no restrictions at all on adjacency between vertices that belong to the same set of a $t$-frame partition, and consequently, $t$-frames may contain any graph at all as an induced subgraph. However, as Lemma~\ref{lemma-t-frame} (below) establishes, any $t$-frame that happens to be $(2P_3,C_4,C_6,C_7,T_0,\text{$(t+1)$-pentagon})$-free is either a villa or a 5-basket.

\begin{lemma} \label{lemma-t-frame} Let $t \geq 3$ be an integer, and let $Q$ be a $(2P_3,C_4,C_6,C_7,T_0,\text{$(t+1)$-pentagon})$-free $t$-frame with an associated $t$-frame partition $(A;B_1,\dots,B_t;C_1,\dots,C_t)$. Set $B := B_1 \cup \dots \cup B_t$ and $C := C_1 \cup \dots \cup C_t$. Let $A = \{a_1,\dots,a_r\}$ be an ordering of $A$ such that $d_Q(a_r) \leq \dots \leq d_Q(a_1)$. Further, for all $i \in \{1,\dots,t\}$, let $B_i = \{b_1^i,\dots,b_{r_i}^i\}$ be an ordering of $B_i$ such that $d_Q(b_{r_i}^i) \leq \dots \leq d_Q(b_1^i)$, and let $C_i = \{c_1^i,\dots,c_{s_i}^i\}$ be an ordering of $C_i$ such that $d_Q(c_{s_i}^i) \leq \dots \leq d_Q(c_1^i)$. Then all the following hold: 
\begin{enumerate}[(a)] 
\item \label{ref-lemma-t-frame-ABiCi-cliques} sets $A,B_1,\dots,B_t,C_1,\dots,C_t$ are cliques;\footnote{Since $C_1,\dots,C_t$ are complete to each other (by the definition of a $t$-frame), this in particular implies that $C$ is a clique.} 
\item \label{ref-lemma-t-frame-AB} either $A$ is complete to $B$, or all the following hold: 
\begin{itemize} 
\item $t = 3$, 
\item there exists some $i^* \in \{1,2,3\}$ such that $B \setminus B_{i^*} \subseteq N_Q(a_r) \cap B \subseteq \dots \subseteq N_Q(a_1) \cap B = B$, and in particular, $A$ is complete to $B \setminus B_{i^*}$, 
\item for all $i \in \{1,2,3\}$, $B_i$ is complete to $C_i$; 
\end{itemize} 
\item \label{ref-lemma-t-frame-BiCi-order} for all $i \in \{1,\dots,t\}$, both the following hold: 
\begin{itemize} 
\item $\{c_1^i\} \subseteq N_Q(b_{r_i}^i) \cap C_i \subseteq \dots \subseteq N_Q(b_1^i) \cap C_i = C_i$,\footnote{In particular, $b_1^i$ is complete to $C_i$, and $c_1^i$ is complete to $B_i$.} 
\item $\{b_1^i\} \subseteq N_Q(c_{s_i}^i) \cap B_i \subseteq \dots \subseteq N_Q(c_1^i) \cap B_i = B_i$; 
\end{itemize} 
\item \label{ref-lemma-t-frame-t-villa} if $A$ is complete to $B$, then $Q$ is a $t$-villa with an associated $t$-villa partition $(A;B_1,\dots,B_t;C_1,\dots,C_t)$; 
\item \label{ref-lemma-t-frame-5-basket} if $A$ is not complete to $B$, then $t = 3$, and $Q$ is a 5-basket with an associated 5-basket partition $(A;B_1,B_2,B_3;C_1,C_2,C_3;\emptyset)$. 
\end{enumerate} 
\end{lemma}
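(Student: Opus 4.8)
The plan is to establish the five parts roughly in the order (a), (c), (b), and finally (d) and~(e), since the last two are essentially bookkeeping once the structural facts are in hand. Indeed, if $A$ is complete to $B$, then reading off the frame axioms together with part~(a) (all the listed sets are cliques) and the first bullet of part~(c) (the nesting of the $C_i$-neighborhoods of $B_i$) shows that $(A;B_1,\dots,B_t;C_1,\dots,C_t)$ satisfies the definition of a $t$-villa, which is~(d); and if $A$ is not complete to $B$, then part~(b) supplies exactly the index $i^*$, the nesting of $A$ into $B_{i^*}$, and the fact that each $B_i$ is complete to $C_i$, so that $(A;B_1,B_2,B_3;C_1,C_2,C_3;\emptyset)$ satisfies the definition of a 5-basket, which is~(e). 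So the real work is in parts~(a), (c), and~(b).

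First I would prove~(a). Since $Q$ is $(2P_3,C_4,C_6,C_7)$-free, every hole in $Q$ has length exactly five (holes of length at least eight contain an induced $2P_3$). To show a given set is a clique, I assume a nonadjacent pair inside it and derive a forbidden configuration. In the \emph{easy} situations one produces an induced $C_4$ directly: for nonadjacent $b,b'\in B_i$ with a common neighbor $a\in A$ and a common neighbor $c\in C_i$, the four vertices $a,b,c,b'$ form a $C_4$; analogously one argues for nonadjacent pairs in $A$, and where a single common neighbor is unavailable one can instead invoke Proposition~\ref{prop-non-adj-comp-clique}. The genuinely delicate case is that each $C_i$ (and, by the same obstruction, the general $A$- and $B_i$-cases) is a clique: a local $C_4$-argument is \emph{not} enough here, because a nonadjacent pair $c,c'\in C_i$ with $c\sim b$ and $c'\sim b'$ for suitable $b,b'\in B_i$ is compatible with $C_4,C_6,C_7,2P_3,T_0$-freeness on a small vertex set. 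The way around this is to exploit $t\ge 3$ by bringing in a third pair $B_k,C_k$: choosing $a\in A$ adjacent to $b$ and to some $\beta\in B_k$, and $\gamma\in C_k$ with $\beta\sim\gamma$, one obtains two $5$-holes through $a$ and $\gamma$, and then a short case analysis on whether $b\sim b'$ produces either an induced $C_6$ (when $b\not\sim b'$, via two internally disjoint length-three paths between $a$ and $\gamma$) or, after adjoining the remaining pair's vertices, an induced $T_0$ (when $b\sim b'$), in both cases contradicting the hypotheses. I expect this to be the main obstacle of the whole lemma.

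Next I would prove~(c). With part~(a) in hand, $B_i$ and $C_i$ are disjoint cliques, so Proposition~\ref{prop-C4Free-CoBip} gives that the sets $N_Q(b)\cap C_i$ ($b\in B_i$) and $N_Q(c)\cap B_i$ ($c\in C_i$) are each linearly ordered by inclusion. The frame axioms that every vertex of $B_i$ has a neighbor in $C_i$, and every vertex of $C_i$ a neighbor in $B_i$, force every set in these chains to be nonempty; and a one-line argument shows the top of each chain is the whole opposite clique, since if $b_1^i$ is inclusion-maximal in $B_i$, then any $c\in C_i$ has a neighbor $b\in B_i$ with $c\in N_Q(b)\cap C_i\subseteq N_Q(b_1^i)\cap C_i$, so $b_1^i$ is complete to $C_i$. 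Consistency of this inclusion order with the prescribed degree order (via Proposition~\ref{prop-C4Free-CoBip}) follows once one observes that the \emph{other} neighborhoods of the $B_i$-vertices are constant in the cases that actually arise, so that total degree is monotone in $|N_Q(b)\cap C_i|$.

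Finally, part~(b): assume $A$ is not complete to $B$ and fix $a\in A$, $b\in B_i$ with $a\not\sim b$. Using that $a$ has neighbors in at least two of the $B_j$'s, the frame adjacencies, and the restriction of holes to length five, I would first force $t=3$; this is where $(t+1)$-pentagon-freeness enters, since more than three pairs would permit either a larger pentagon or a forbidden hole or $2P_3$. With $t=3$ fixed, applying Proposition~\ref{prop-C4Free-CoBip} to the cliques $A$ and $B_{i^*}$ (a $B$-set to which $A$ is not complete) yields the chain $B\setminus B_{i^*}\subseteq N_Q(a_r)\cap B\subseteq\dots\subseteq N_Q(a_1)\cap B=B$, and a further $C_4$/hole analysis shows that each $B_i$ must then be complete to $C_i$. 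Combining everything gives~(d) and~(e) as described in the first paragraph. The two hardest points are thus the $C_i$-clique step of part~(a) and the reduction to $t=3$ in part~(b); everything else is an application of $C_4$-freeness (Propositions~\ref{prop-non-adj-comp-clique} and~\ref{prop-C4Free-CoBip}), the hole-length bound, and $T_0$-freeness through Proposition~\ref{prop-S-clique-T0-free}.
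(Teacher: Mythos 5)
Your overall architecture (prove the sets are cliques via forbidden-subgraph arguments, get the orderings from Proposition~\ref{prop-C4Free-CoBip}, then reduce to $t=3$ and read off the villa/basket structure) matches the paper's, but there is one genuine gap in part~(a). You never invoke $(t+1)$-pentagon-freeness there: your obstruction list for the clique claims is $C_4$, $C_6$ (or a long hole/path), and $T_0$. That cannot work for the $B_i$'s. Concretely, the $(t+1)$-pentagon itself is a $t$-frame (take $A=\{a\}$, $B_1=\{b_1,b_2\}$, $C_1=\{c_1,c_2\}$, and singletons for the remaining $B_i,C_i$) in which $B_1$ is \emph{not} a clique, and by Proposition~\ref{prop-t-villa-in-class} it is $(2P_3,C_4,C_6,C_7,T_0)$-free. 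So the case of two nonadjacent $b_0,b_1\in B_i$ with a common neighbor $a\in A$ (and with $a$ complete to the other $B_j$'s) is consistent with every hypothesis except $(t+1)$-pentagon-freeness, and no amount of $C_6$/$T_0$ hunting will close it; this is exactly where the paper produces a $(t+1)$-pentagon (Claim~\ref{lemma-t-frame-claim-Bi-clique}). Relatedly, you place the pentagon hypothesis in the reduction to $t=3$ in part~(b), where the paper needs only a $2P_3$ — the hypothesis has been misattributed, not merely under-used.

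Two secondary points. First, for part~(a) on $A$, "invoke Proposition~\ref{prop-non-adj-comp-clique}" does not address the hard case of nonadjacent $a,a'\in A$ with crossing neighborhoods in $B$; the paper needs a chain of nestedness claims (on the index sets $\mathcal{I}_a$ and on $N_Q(a)\cap B_i$, $N_Q(a)\cap B$) built from induced paths of length at least four through $B_1\cup C_1$, extended to forbidden long holes or a $P_7$. Your "same obstruction via a third pair" is in the right spirit but substantially underestimates this step. Second, in part~(c) your consistency of the degree order with the inclusion order rests on "the other neighborhoods of the $B_i$-vertices are constant in the cases that actually arise"; since $A$ need not be complete to $B_{i^*}$, this is only true after part~(b) is available (or after proving, as the paper does in Claim~\ref{lemma-t-frame-claim-Nbi-nested}, that the full closed neighborhoods within $B_i$ are nested, which needs its own 6-hole argument). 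As written, your order (a), (c), (b) makes this circular; it is fixable, but it needs to be said.
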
 
\begin{proof} 
Clearly,~(\ref{ref-lemma-t-frame-t-villa}) follows from~(\ref{ref-lemma-t-frame-ABiCi-cliques}),~(\ref{ref-lemma-t-frame-BiCi-order}), and the definition of a $t$-frame. On the other hand,~(\ref{ref-lemma-t-frame-5-basket}) follows from~(\ref{ref-lemma-t-frame-ABiCi-cliques}),~(\ref{ref-lemma-t-frame-AB}), and the definition of a $t$-frame. Thus, it is enough to prove~(\ref{ref-lemma-t-frame-ABiCi-cliques}),~(\ref{ref-lemma-t-frame-AB}), and~(\ref{ref-lemma-t-frame-BiCi-order}), which we do by proving a sequence of claims. Before we begin, we observe that $Q$ is $P_7$-free (because $Q$ is $2P_3$-free, and $2P_3$ is an induced subgraph of $P_7$), a fact that we will use throughout our proof. We now introduce some notation. For each $a \in A$, let $\mathcal{I}_a$ be the set of all indices $i \in \{1,\dots,t\}$ such that $a$ has a neighbor in $B_i$. By the definition of a $t$-frame, we know that $|\mathcal{I}_a| \geq 2$ for all $a \in A$.

\begin{adjustwidth}{1cm}{1cm} 
\begin{claim} \label{lemma-t-frame-claim-Ia-nested} 
For all $a,a' \in A$, one of the sets $\mathcal{I}_{a},\mathcal{I}_{a'}$ is included in the other. 
\end{claim} 
\end{adjustwidth} 
{\em Proof of Claim~\ref{lemma-t-frame-claim-Ia-nested}.} Suppose otherwise, and fix $a,a' \in A$ such that $\mathcal{I}_{a} \not\subseteq \mathcal{I}_{a'}$ and $\mathcal{I}_{a'} \not\subseteq \mathcal{I}_{a}$. (In particular, $a \neq a'$.) By symmetry, we may assume that $1 \in \mathcal{I}_{a} \setminus \mathcal{I}_{a'}$ and $2 \in \mathcal{I}_{a'} \setminus \mathcal{I}_{a}$. Fix a neighbor $b_1 \in B_1$ of $a$, and fix a neighbor $b_2 \in B_2$ of $a'$. (Note that $a$ is anticompelte to $B_2$, and is consequently nonadjacent to $b_2$. Similarly, $a'$ is anticomplete to $B_1$, and is consequently nonadjacent to $b_1$.) For each $i \in \{1,2\}$, let $c_i \in C_i$ be a neighbor of $b_i$ (such a $c_i$ exists by the definition of a $t$-frame); by the definition of a $t$-frame, $c_1,c_2$ are adjacent. Then $a,a'$ are nonadjacent, for otherwise $a,a',b_2,c_2,c_1,b_1,a$ would be a 6-hole in $Q$, a contradiction. Since $|\mathcal{I}_{a}| \geq 2$, we may assume by symmetry that $3 \in \mathcal{I}_{a}$. Fix a neighbor $b_3 \in B_3$ of $a$. But if $a',b_3$ are adjacent, then $a',b_2,c_2,c_1,b_1,a,b_3,a'$ is a 7-hole in $Q$, whereas if $a',b_3$ are nonadjacent, then $a',b_2,c_2,c_1,b_1,a,b_3$ is an induced 7-vertex path in $Q$, a contradiction in either case.~$\blacklozenge$ 

\begin{adjustwidth}{1cm}{1cm} 
\begin{claim} \label{lemma-t-frame-claim-nonadj-Ia-t-1} 
For all $a \in A$, we have that $t-1 \leq |\mathcal{I}_a| \leq t$. 
\end{claim} 
\end{adjustwidth} 
{\em Proof of Claim~\ref{lemma-t-frame-claim-nonadj-Ia-t-1}.} Fix $a \in A$. The fact that $|\mathcal{I}_a| \leq t$ follows simply from the fact that $\mathcal{I}_a \subseteq \{1,\dots,t\}$. So, we just need to show that $|\mathcal{I}_a| \geq t-1$. Suppose otherwise, that is, assume that $|\mathcal{I}_a| \leq t-2$. By the definition of a $t$-frame, we have that $|\mathcal{I}_a| \geq 2$. So, $2 \leq |\mathcal{I}_a| \leq t-2$, and in particular, $t \geq 4$. By symmetry, we may assume that $1,2 \in \mathcal{I}_a$ and $3,4 \notin \mathcal{I}_a$. Fix $b_1 \in B_1$ and $b_2 \in B_2$ such that $a$ is complete to $\{b_1,b_2\}$. Fix any $b_3 \in B_3$ and $c_4 \in C_4$, and (using the definition of a $t$-frame) fix any neighbor $c_3 \in C_3$ of $b_3$. But then $Q[b_1,a,b_2,b_3,c_3,c_4]$ is a $2P_3$, a contradiction.~$\blacklozenge$

\begin{adjustwidth}{1cm}{1cm} 
\begin{claim} \label{lemma-t-frame-claim-nonadj-aaBi-nested} 
For all $i \in \{1,\dots,t\}$, and all distinct, nonadjacent vertices $a,a' \in A$, one of the sets $N_Q(a) \cap B_i$ and $N_Q(a') \cap B_i$ is included in the other. 
\end{claim} 
\end{adjustwidth}
{\em Proof Claim~\ref{lemma-t-frame-claim-nonadj-aaBi-nested}.} 
Suppose otherwise. By symmetry, we may assume that there exist distinct, nonadjacent vertices $a,a' \in A$ such that $N_Q(a) \cap B_1 \not\subseteq N_Q(a') \cap B_1$ and $N_Q(a') \cap B_1 \not\subseteq N_Q(a) \cap B_1$. Fix $b_1,b_1' \in B_1$ such that $b_1$ is adjacent to $a$ and nonadjacent to $a'$, and such that $b_1'$ is adjacent to $a'$ and nonadjacent to $a$. 

We begin by showing that $b_1,b_1'$ are nonadjacent. Suppose otherwise. Fix $b_2 \in B_2$ and $b_3 \in B_3$. If $\{a,a'\}$ is complete to $\{b_2,b_3\}$, then $a,b_2,a',b_3,a$ is a 4-hole in $Q$, a contradiction. So, by symmetry, we may assume that $a,b_2$ are nonadjacent. Using the definition of a $t$-frame, we now fix a neighbor $c_2 \in C_2$ of $b_2$, and we fix any $c_3 \in C_3$. But now $Q[a,b_1,b_1',b_2,c_2,c_3]$ is a $2P_3$, a contradiction. This proves that $b_1,b_1'$ are indeed nonadjacent. 

Next, we claim that $Q[A \cup B_1 \cup C_1]$ contains an induced path of length at least four between $a$ and $a'$, with all internal vertices of this path belonging to $B_1 \cup C_1$. By the definition of a $t$-frame, there exist some $c_1,c_1' \in C_1$ such that $b_1$ is adjacent to $c_1$, and $b_1'$ is adjacent to $c_1'$. If $b_1'$ is adjacent to $c_1$, then $a,b_1,c_1,b_1',a'$ is the path that we need. Similarly, if $b_1$ is adjacent to $c_1'$, then $a,b_1,c_1',b_1',a'$ is the path that we need. So, we may assume that $b_1$ is nonadjacent to $c_1'$, and that $b_1'$ is nonadjacent to $c_1$. In particular, $c_1 \neq c_1'$. Then $c_1$ and $c_1'$ are adjacent, for otherwise, $Q[a,b_1,c_1,a',b_1',c_1']$ would be a $2P_3$, a contradiction. But now $a,b_1,c_1,c_1',b_1',a'$ is the path that we need. This proves that there exist vertices $x_1,\dots,x_{\ell} \in B_1 \cup C_1$ such that $a,x_1,\dots,x_{\ell},a'$ is an induced path of length at least four in $Q$ (thus, $\ell \geq 3$). 

Now, since $Q$ is $(2P_3,C_4,C_6,C_7)$-free, we know that all holes in $Q$ are of length five. Moreover, since $Q$ is $P_7$-free, we know that all induced paths in $Q$ are of length at most five.\footnote{Recall that the {\em length} of a path is the number of edges that it contains. So, $P_7$ is of length six.} We will derive a contradiction by exhibiting either a hole of length at least six or an induced path of length at least six in $Q$. 

By Claim~\ref{lemma-t-frame-claim-Ia-nested}, and by symmetry, we may assume that $\mathcal{I}_{a'} \subseteq \mathcal{I}_a$. Moreover, since $|\mathcal{I}_{a'}| \geq 2$ (by the definition of a frame), we may assume by symmetry that $2 \in \mathcal{I}_{a'}$ (and consequently, $2 \in \mathcal{I}_a$). Fix $b_2,b_2' \in B_2$ such that $a$ is adjacent to $b_2$, and $a'$ is adjacent to $b_2'$. Then $a'$ is nonadjacent to $b_2$, for otherwise, $a',b_2,a,x_1,\dots,x_{\ell},a'$ would be a hole of length at least six in $Q$, a contradiction. Similarly, $a$ is nonadjacent to $b_2'$. (In particular, $b_2 \neq b_2'$.) But now if $b_2,b_2'$ are adjacent, then $b_2,a,x_1,\dots,x_{\ell},a',b_2',b_2$ is a hole of length at least seven in $Q$, and if $b_2,b_2'$ are nonadjacent, then $b_2,a,x_1,\dots,x_{\ell},a',b_2'$ is an induced path in $Q$ of length at least six, a contradiction in either case.~$\blacklozenge$

\begin{adjustwidth}{1cm}{1cm} 
\begin{claim} \label{lemma-t-frame-claim-nonadj-aaB-nested} 
For all distinct, nonadjacent vertices $a,a' \in A$, one of the sets $N_Q(a) \cap B$ and $N_Q(a') \cap B$ is included in the other. 
\end{claim} 
\end{adjustwidth}
{\em Proof Claim~\ref{lemma-t-frame-claim-nonadj-aaB-nested}.} Suppose otherwise. By Claim~\ref{lemma-t-frame-claim-nonadj-aaBi-nested}, and by symmetry, we may assume that for some distinct, nonadjacent vertices $a,a' \in A$, we have that $N_Q(a') \cap B_1 \subsetneqq N_Q(a) \cap B_1$ and $N_Q(a) \cap B_2 \subsetneqq N_Q(a') \cap B_2$. Fix $b_1 \in B_1$ that is adjacent to $a$ and nonadjacent to $a'$, and fix $b_2 \in B_2$ that is adjacent to $a'$ and nonadjacent to $a$. Fix a neighbor $c_1 \in C_1$ of $b_1$, and fix a neighbor $c_2 \in C_2$ of $b_2$. 

We now claim that $\{a,a'\}$ is anticomplete to $B_3 \cup \dots \cup B_t$. Suppose otherwise. By symmetry, we may assume that some $b_3 \in B_3$ is adjacent to $a$. But if $b_3$ is also adjacent to $a'$, then $b_3,a,b_1,c_1,c_2,b_2,a',b_3$ is a 7-hole in $Q$, whereas if $b_3$ is nonadjacent to $a'$, then $b_3,a,b_1,c_1,c_2,b_2,a'$ is an induced 7-vertex path in $Q$, a contradiction in either case. This proves that $\{a,a'\}$ is indeed anticomplete to $B_3 \cup \dots \cup B_t$. Consequently, $3,\dots,t \notin \mathcal{I}_{a} \cup \mathcal{I}_{a'}$. Since $|\mathcal{I}_{a}|,|\mathcal{I}_{a'}| \geq 2$ (by the definition of a $t$-frame), we now deduce that $\mathcal{I}_{a} = \mathcal{I}_{a'} = \{1,2\}$. 

Using the fact that $\mathcal{I}_{a} = \mathcal{I}_{a'} = \{1,2\}$, we now fix some $b_1' \in B_1$ that is adjacent to $a'$, and we fix some $b_2' \in B_2$ that is adjacent to $a$. Since $N_Q(a') \cap B_1 \subsetneqq N_Q(a) \cap B_1$ and $N_Q(a) \cap B_2 \subsetneqq N_Q(a') \cap B_2$, we deduce that $\{b_1',b_2'\}$ is in fact complete to $\{a,a'\}$. But then $a,b_1',a',b_2',a$ is a 4-hole in $G$, a contradiction.~$\blacklozenge$

\begin{adjustwidth}{1cm}{1cm} 
\begin{claim} \label{lemma-t-frame-claim-A-clique} 
$A$ is a clique. 
\end{claim} 
\end{adjustwidth} 
{\em Proof of Claim~\ref{lemma-t-frame-claim-A-clique}.} Suppose otherwise, and fix distinct, nonadjacent vertices $a,a' \in A$. By Claim~\ref{lemma-t-frame-claim-nonadj-aaB-nested}, and by symmetry, we may assume that $N_Q(a) \cap B \subseteq N_Q(a') \cap B$. By the definition of a $t$-frame, $a$ has a neighbor in at least two of $B_1,\dots,B_t$; by symmetry, we may assume that there exist $b_1 \in B_1$ and $b_2 \in B_2$ that are both adjacent to $a$. Since $N_Q(a) \cap B \subseteq N_Q(a') \cap B$, we see that $b_1,b_2$ are also adjacent to $a'$. But now $a,b_1,a',b_2,a$ is a 4-hole in $G$, a contradiction.~$\blacklozenge$

\begin{figure}
\begin{center}
\includegraphics[scale=0.5]{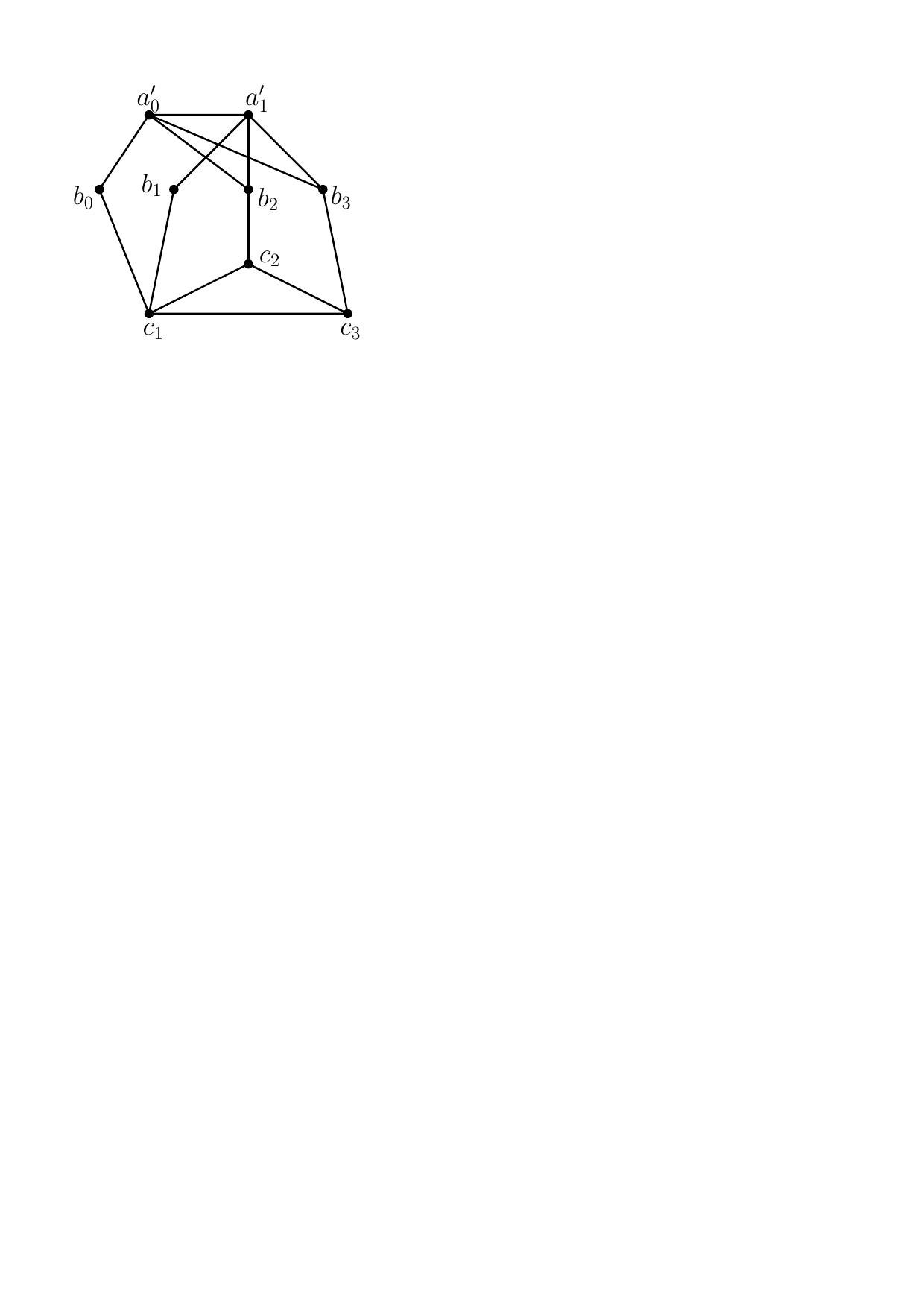}
\end{center} 
\caption{$Q[a_0',a_1',b_0,b_1,b_2,b_3,c_1,c_2,c_3]$ is a $T_0$ (proof of Claim~\ref{lemma-t-frame-claim-Bi-clique}).} \label{fig:lemma-t-frame-claim-Bi-clique} 
\end{figure}

\begin{adjustwidth}{1cm}{1cm} 
\begin{claim} \label{lemma-t-frame-claim-Bi-clique}
Sets $B_1,\dots,B_t$ are all cliques. 
\end{claim} 
\end{adjustwidth} 
{\em Proof of Claim~\ref{lemma-t-frame-claim-Bi-clique}.} Suppose otherwise. By symmetry, we may assume that some distinct vertices $b_0,b_1 \in B_1$ are nonadjacent. 

Suppose first that $b_0,b_1$ have a common neighbor $a \in A$. For each $i \in \{0,1\}$, fix a neighbor $c_i \in C_1$ of $b_i$. Then $b_0$ is nonadjacent to $c_1$, for otherwise, $a,b_0,c_1,b_1,a$ would be a 4-hole in $Q$, a contradiction. Analogously, $b_1$ is nonadjacent to $c_0$. In particular, $c_0 \neq c_1$. Then $c_0,c_1$ are adjacent, for otherwise, we fix any $c_2 \in C_2$, and we observe that $a,b_0,c_0,c_2,c_1,b_1,a$ is a 6-hole in $G$, a contradiction. Now, for all $i \in \{2,\dots,t\}$, fix an arbitrary $b_i \in B_i$, and fix a neighbor $c_i \in C_i$ of $b_i$. If $a$ is complete to $\{b_2,\dots,b_t\}$, then $Q[a,b_0,b_1,\dots,b_t,c_0,c_1,\dots,c_t]$ is a $(t+1)$-pentagon, a contradiction. Therefore, $a$ is not complete to $\{b_2,\dots,b_t\}$. By symmetry, we may assume that $a$ is nonadjacent to $b_2$. But now $Q[b_0,a,b_1,b_2,c_2,c_3]$ is a $2P_3$, a contradiction. 

Thus, $b_0,b_1$ have no common neighbors in $A$. However, by the definition of a $t$-frame, every vertex in $B$ has a neighbor in $A$. So, for each $i \in \{0,1\}$, fix a neighbor $a_i' \in A$ of $b_i$. Since $b_0,b_1$ have no common neighbors in $A$, we deduce that $a_0'$ is nonadjacent to $b_1$, and $b_0$ is nonadjacent to $a_1'$. In particular, $a_0' \neq a_1'$, and so by Claim~\ref{lemma-t-frame-claim-A-clique}, $a_0'$ is adjacent to $a_1'$. 

For each $i \in \{0,1\}$, fix a neighbor $c_i \in C_1$ of $b_i$. Suppose first that $b_0$ is nonadjacent to $c_1$, and $b_1$ is nonadjacent to $c_0$. Then $c_0,c_1$ are nonadjacent, for otherwise, $a_0',b_0,c_0,c_1,b_1,a_1',a_0'$ would be a 6-hole in $Q$, a contradiction. But now we fix an arbitrary $c_2 \in C_2$, and we observe that $a_0',b_0,c_0,c_2,c_1,b_1,a_1',a_0'$ is a 7-hole in $Q$, again a contradiction. This proves that either $b_0$ is adjacent to $c_1$, or $b_1$ is adjacent to $c_0$. By symmetry, we may assume that the former holds. Thus, $c_1$ is complete to $\{b_0,b_1\}$. 

We now claim that $\{a_0',a_1'\}$ is complete to $B_2 \cup \dots \cup B_t$. Suppose otherwise. By symmetry, we may assume that $a_0'$ has a nonneighbor $b_2 \in B_2$. Fix a neighbor $c_2 \in C_2$ of $b_2$, and fix any $c_3 \in C_3$. If $b_2$ is nonadjacent to $a_1'$, then $Q[a_0',a_1',b_1,b_2,c_2,c_3]$ is a $2P_3$, a contradiction. So, $b_2$ is adjacent to $a_1'$. But now $a_0',a_1',b_2,c_2,c_1,b_0,a_0'$ is a 6-hole in $Q$, a contradiction. This proves that $\{a_0',a_1'\}$ is complete to $B_2 \cup \dots \cup B_t$. 

Now, fix any $b_2 \in B_2$ and $b_3 \in B_3$, and for each $i \in \{2,3\}$, fix a neighbor $c_i \in C_i$ of $b_i$. But we now see that adjacency between vertices $a_0',a_1',b_0,b_1,b_2,b_3,c_1,c_2,c_3$ is exactly as in Figure~\ref{fig:lemma-t-frame-claim-Bi-clique}, and we deduce that $Q[a_0',a_1',b_0,b_1,b_2,b_3,c_1,c_2,c_3]$ is a $T_0$, contrary to the fact that $Q$ is $T_0$-free.~$\blacklozenge$

\begin{adjustwidth}{1cm}{1cm} 
\begin{claim} \label{lemma-t-frame-claim-a-comp-BBi}
Every vertex of $A$ is complete to all but possibly one of $B_1,\dots,B_t$. 
\end{claim} 
\end{adjustwidth} 
{\em Proof of Claim~\ref{lemma-t-frame-claim-a-comp-BBi}.} Suppose otherwise. By symmetry, we may assume that some $a \in A$ has a nonneighbor $b_1' \in B_1$ and a nonneighbor $b_2' \in B_2$. By Claim~\ref{lemma-t-frame-claim-nonadj-Ia-t-1}, $a$ has a neighbor in all but possibly one of $B_1,\dots,B_t$. Therefore, $a$ has a neighbor in at least one of $B_1,B_2$; by symmetry, we may assume that $a$ has a neighbor $b_1 \in B_1$. By Claim~\ref{lemma-t-frame-claim-Bi-clique}, $b_1,b_1'$ are adjacent. Now, using the definition of a $t$-frame, we fix a neighbor $c_2 \in C_2$ of $b_2'$, and we fix any $c_3 \in C_3$. Then $Q[a,b_1,b_1',b_2',c_2,c_3]$ is a $2P_3$, a contradiction.~$\blacklozenge$

\begin{adjustwidth}{1cm}{1cm} 
\begin{claim} \label{lemma-t-frame-claim-Na1Na2-subset}
For all $a,a' \in A$, one of the sets $N_Q(a) \cap B$ and $N_Q(a') \cap B$ is included in the other. 
\end{claim} 
\end{adjustwidth} 
{\em Proof of Claim~\ref{lemma-t-frame-claim-Na1Na2-subset}.} Suppose otherwise, and fix $a,a' \in A$ such that neither one of $N_Q(a) \cap B$ and $N_Q(a') \cap B$ is a subset of the other. In particular, $a \neq a'$, and by Claim~\ref{lemma-t-frame-claim-A-clique}, $a,a'$ are adjacent. Now, fix $b,b' \in B$ such that $b$ is adjacent to $a$ and nonadjacent to $a'$, and such that $b'$ is adjacent to $a'$ and nonadjacent to $a$. If there exists some $i \in \{1,\dots,t\}$ such that $b,b' \in B_i$, then Claim~\ref{lemma-t-frame-claim-Bi-clique} guarantees that $b,b'$ are adjacent, and we see that $a,b,b',a',a$ is a 4-hole in $Q$, a contradiction. Thus, there exist distinct $i,j \in \{1,\dots,t\}$ such that $b \in B_i$ and $b' \in B_j$. Using the definition of a $t$-frame, we fix a neighbor $c \in C_i$ of $b$, and we fix a neighbor $c' \in C_j$ of $b'$. But then $a,b,c,c',b',a',a$ is a 6-hole in $G$, a contradiction.~$\blacklozenge$

\begin{adjustwidth}{1cm}{1cm} 
\begin{claim} \label{lemma-t-frame-claim-Na-nested}
There exists an index $i^* \in \{1,\dots,t\}$ such that $A \cup (B \setminus B_{i^*}) \subseteq N_Q[a_r] \subseteq \dots \subseteq N_Q[a_1] = A \cup B$.  
\end{claim} 
\end{adjustwidth} 
{\em Proof of Claim~\ref{lemma-t-frame-claim-Na-nested}.} By Claim~\ref{lemma-t-frame-claim-A-clique}, $A$ is a clique, and by the definition of a $t$-frame, $N_Q(A) \subseteq B$. Therefore, for all $a \in A$, we have that $A \subseteq N_G[a] \subseteq A \cup B$. Since $d_Q(a_r) \leq \dots \leq d_Q(a_1)$, Claim~\ref{lemma-t-frame-claim-Na1Na2-subset} now implies that $A \subseteq N_Q[a_r] \subseteq \dots \subseteq N_Q[a_1] \subseteq A \cup B$. By Claim~\ref{lemma-t-frame-claim-a-comp-BBi}, we know that $a_r$ is complete to all but possibly one of $B_1,\dots,B_t$. Therefore, there exists an index $i^* \in \{1,\dots,t\}$ such that $A \cup (B \setminus B_{i^*}) \subseteq N_Q[a_r]$. It remains to show that $B \subseteq N_Q[a_1]$. Fix any $b \in B$. By the definition of a $t$-frame, $b$ has a neighbor in $A = \{a_1,\dots,a_r\}$. Since $N_Q[a_r] \subseteq \dots \subseteq N_Q[a_1]$, it follows that $b$ is adjacent to $a_1$, that is, $b \in N_Q[a_1]$. This proves that $B \subseteq N_Q[a_1]$, and we are done.~$\blacklozenge$ 

\begin{figure}
\begin{center}
\includegraphics[scale=0.5]{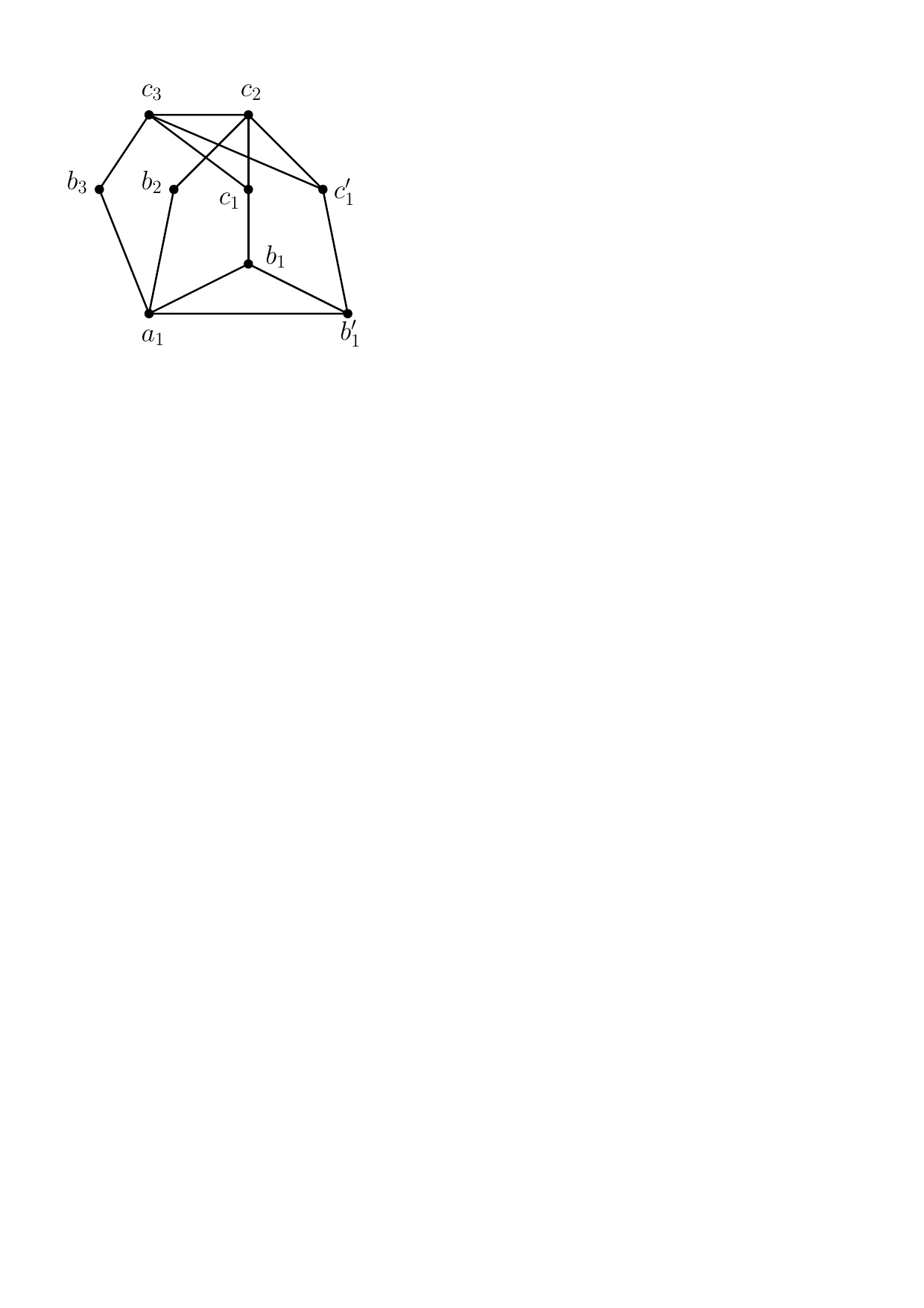}
\end{center} 
\caption{$Q[a_1,b_1,b_1',b_2,b_3,c_1,c_1',c_2,c_3]$ is a $T_0$ (proof of Claim~\ref{lemma-t-frame-claim-Ci-clique}).} \label{fig:lemma-t-frame-claim-Ci-clique} 
\end{figure}

\begin{adjustwidth}{1cm}{1cm} 
\begin{claim} \label{lemma-t-frame-claim-Ci-clique} 
$C_1,\dots,C_t$ are all cliques. 
\end{claim} 
\end{adjustwidth} 
{\em Proof of Claim~\ref{lemma-t-frame-claim-Ci-clique}.} Suppose otherwise. By symmetry, we may assume that $C_1$ is not a clique. Fix distinct, nonadjacent vertices $c_1,c_1' \in C_1$. 

Using the definition of a $t$-frame, we fix $b_1,b_1' \in B_1$ such that $b_1$ is adjacent to $c_1$, and $b_1'$ is adjacent to $c_1'$. If $b_1$ is adjacent to $c_1'$, then we fix an arbitrary $c_2 \in C_2$, and we observe that $b_1,c_1,c_2,c_1',b_1$ is a 4-hole in $Q$, a contradiction. Therefore, $b_1$ is nonadjacent to $c_1'$, and similarly, $b_1'$ is nonadjacent to $c_1$. In particular, $b_1 \neq b_1'$, and so by Claim~\ref{lemma-t-frame-claim-Bi-clique}, $b_1,b_1'$ are adjacent. 

By Claim~\ref{lemma-t-frame-claim-Na-nested}, $a_1$ is complete to $B$. Further, fix arbitrary $b_2 \in B_2$ and $b_3 \in B_3$. For each $i \in \{2,3\}$, fix a neighbor $c_i \in C_i$ of $b_i$. But then adjacency between vertices $a_1,b_1,b_1',b_2,b_3,c_1,c_1',c_2,c_3$ in $Q$ is as shown in Figure~\ref{fig:lemma-t-frame-claim-Ci-clique}, and in particular, these vertices induce a $T_0$ in $Q$, contrary to the fact that $Q$ is $T_0$-free.~$\blacklozenge$

\begin{adjustwidth}{1cm}{1cm} 
\begin{claim} \label{lemma-t-frame-claim-Nbi-nested} 
For all $i \in \{1,\dots,t\}$, and all $b,b' \in B_i$, one of the sets $N_Q[b]$ and $N_Q[b']$ is included in the other. 
\end{claim}  
\end{adjustwidth} 
{\em Proof of Claim~\ref{lemma-t-frame-claim-Nbi-nested}.} By symmetry, it suffices to prove the claim for $i = 1$. Fix $b_1,b_1' \in B_1$. By the definition of a $t$-frame, we know that $N_Q[b_1] \subseteq A \cup B_1 \cup C_1$ and $N_Q[b_1'] \subseteq A \cup B_1 \cup C_1$. Moreover, by Claim~\ref{lemma-t-frame-claim-Bi-clique}, we have that $B_1$ is a clique, and consequently, $B_1 \subseteq N_Q[b_1]$ and $B_1 \subseteq N_Q[b_1']$. (Note that this implies that $b_1,b_1'$ are adjacent.) So, it suffices to show that one of $N_Q(b_1) \cap (A \cup C_1)$ and $N_Q(b_1') \cap (A \cup C_1)$ is included in the other. Suppose otherwise. Fix $x,x' \in A \cup C_1$ such that $x$ is adjacent to $b_1$ and nonadjacent to $b_1'$, whereas $x'$ is adjacent to $b_1'$ and nonadjacent to $b_1$. (Clearly, $x \neq x'$.) If $x,x'$ are adjacent, then $b_1,x,x',b_1',b_1$ is a 4-hole in $Q$, a contradiction. Therefore, $x,x'$ are nonadjacent. By Claims~\ref{lemma-t-frame-claim-A-clique} and~\ref{lemma-t-frame-claim-Ci-clique}, $A$ and $C_1$ are both cliques. Therefore, one of $x,x'$ belongs to $A$ and the other one belongs to $C_1$. By symmetry, we may assume that $x \in A$ and $x' \in C_1$. By the definition of a $t$-frame, $x$ has a neighbor in at least two of $B_1,\dots,B_t$; consequently, $x$ has a neighbor in at least one of $B_2,\dots,B_t$, and by symmetry, we may assume that $x$ is adjacent to some $b_2 \in B_2$. Now, using the definition of a $t$-frame, we fix a neighbor $c_2 \in C_2$ of $b_2$, and we observe that $b_1,x,b_2,c_2,x',b_1',b_1$ is a 6-hole in $Q$, a contradiction.~$\blacklozenge$

\begin{adjustwidth}{1cm}{1cm} 
\begin{claim} \label{lemma-t-frame-claim-Nci-nested} 
For all $i \in \{1,\dots,t\}$, and all $c,c' \in C_i$, one of the sets $N_Q[c]$ and $N_Q[c']$ is included in the other. 
\end{claim}  
\end{adjustwidth} 
{\em Proof of Claim~\ref{lemma-t-frame-claim-Nci-nested}.} By symmetry, it suffices to prove the claim for $i = 1$. Fix $c_1,c_1' \in C_1$. By the definition of a $t$-frame, $N_Q[c_1] \subseteq B_1 \cup C$ and $N_Q[c_1'] \subseteq B_1 \cup C$. Next, by Claim~\ref{lemma-t-frame-claim-Ci-clique}, $C_1$ is a clique, and by the definition of a $t$-frame, $C_1$ is complete to $C \setminus C_1$. Therefore, $C \subseteq N_Q[c_1]$ and $C \subseteq N_Q[c_1']$, and it is enough to show that one of $N_Q(c_1) \cap B_1$ and $N_Q(c_1') \cap B_1$ is included in the other. But since $B_1$ and $C_1$ are both cliques (by Claims~\ref{lemma-t-frame-claim-Bi-clique} and~\ref{lemma-t-frame-claim-Ci-clique}), this follows immediately from Proposition~\ref{prop-C4Free-CoBip}.~$\blacklozenge$

\begin{adjustwidth}{1cm}{1cm} 
\begin{claim} \label{lemma-t-frame-claim-Bi-ordering-closed} 
For all $i \in \{1,\dots,t\}$, both the following hold: 
\begin{itemize} 
\item $N_Q[b_{r_i}^i] \subseteq \dots \subseteq N_Q[b_1^i]$; 
\item $N_Q[c_{s_i}^i] \subseteq \dots \subseteq N_Q[c_1^i]$. 
\end{itemize} 
\end{claim}  
\end{adjustwidth} 
{\em Proof of Claim~\ref{lemma-t-frame-claim-Bi-ordering-closed}.} Fix $i \in \{1,\dots,t\}$. Recall that $d_Q(b_{r_i}^i) \leq \dots \leq d_Q(b_1^i)$ and $d_Q(c_{s_i}^i) \leq \dots \leq d_Q(c_1^i)$. So, by Claims~\ref{lemma-t-frame-claim-Nbi-nested} and~\ref{lemma-t-frame-claim-Nci-nested}, we have that $N_Q[b_{r_i}^i] \subseteq \dots \subseteq N_Q[b_1^i]$ and $N_Q[c_{s_i}^i] \subseteq \dots \subseteq N_Q[c_1^i]$.~$\blacklozenge$

%\begin{adjustwidth}{1cm}{1cm} 
%\begin{claim} \label{lemma-t-frame-claim-b1i-complete-Ci} 
%For all $i \in \{1,\dots,t\}$, both the following hold: 
%\begin{itemize} 
%\item $b_1^i$ is complete to $C_i$; 
%\item $c_1^i$ is complete to $B_i$. 
%\end{itemize} 
%\end{claim}  
%\end{adjustwidth} 
%{\em Proof of Claim~\ref{lemma-t-frame-claim-b1i-complete-Ci}.} Fix $i \in \{1,\dots,t\}$. By the definition of a $t$-frame, every vertex in $B_i$ has a neighbor in $C_i$, and every vertex in $C_i$ has a neighbor in $B_i$. The result now follows immediately from Claim~\ref{lemma-t-frame-claim-Bi-ordering-closed}.~$\blacklozenge$ 

\begin{adjustwidth}{1cm}{1cm} 
\begin{claim} \label{lemma-t-frame-claim-Bi-ordering} 
For all $i \in \{1,\dots,t\}$, both the following hold: 
\begin{itemize} 
\item $\{c_1^i\} \subseteq N_Q(b_{r_i}^i) \cap C_i \subseteq \dots \subseteq N_Q(b_1^i) \cap C_i = C_i$; 
\item $\{b_1^i\} \subseteq N_Q(c_{s_i}^i) \cap B_i \subseteq \dots \subseteq N_Q(c_1^i) \cap B_i = B_i$. 
\end{itemize} 
\end{claim}  
\end{adjustwidth} 
{\em Proof of Claim~\ref{lemma-t-frame-claim-Bi-ordering}.} Fix $i \in \{1,\dots,t\}$. In view of Claim~\ref{lemma-t-frame-claim-Bi-ordering-closed}, it suffices to show that $b_1^i$ is complete to $C_i$, and that $c_1^i$ complete to $B_i$. But this follows immediately from Claim~\ref{lemma-t-frame-claim-Bi-ordering-closed}, and from the fact that, by the definition of a $t$-frame, every vertex in $B_i$ has a neighbor in $C_i$, and every vertex in $C_i$ has a neighbor in $B_i$.~$\blacklozenge$

\begin{adjustwidth}{1cm}{1cm} 
\begin{claim} \label{lemma-t-frame-claim-AB-not-comp} 
If $A$ is not complete to $B$, then both the following hold: 
\begin{itemize} 
\item $t = 3$; 
\item for all $i \in \{1,2,3\}$, $B_i$ is complete to $C_i$. 
\end{itemize} 
\end{claim} 
\end{adjustwidth} 
{\em Proof of Claim~\ref{lemma-t-frame-claim-AB-not-comp}.} Assume that $A$ is not complete to $B$. Recall that $A = \{a_1,\dots,a_r\}$, and that by Claim~\ref{lemma-t-frame-claim-Na-nested}, there exists an index $i^* \in \{1,\dots,t\}$ such that $A \cup (B \setminus B_{i^*}) \subseteq N_Q[a_r] \subseteq \dots \subseteq N_Q[a_1] = A \cup B$. By symmetry, we may assume that $i^* = 1$, so that $A \cup (B_2 \cup \dots \cup B_t) \subseteq N_Q[a_r] \subseteq \dots \subseteq N_Q[a_1] = A \cup B$. Since $A$ is not complete to $B$, we see that $a_r$ has a nonneighbor in $B_1$. On the other hand, $A$ is complete to $B_2 \cup \dots \cup B_t$. 

We first show that $t = 3$. Suppose otherwise, that is, suppose that $t \geq 4$. Fix a nonneighbor $b_1 \in B_1$ of $a_r$, fix a neighbor $c_1 \in C_1$ of $b_1$, and fix any $c_2 \in C_2$, $b_3 \in B_3$, and $b_4 \in B_4$. Since $B_2 \cup \dots \cup B_t \subseteq N_Q[a_r]$, we see that $a_r$ is complete to $\{b_3,b_4\}$. But now $Q[b_1,c_1,c_2,b_3,a_r,b_4]$ is a $2P_3$, a contradiction. This proves that $t = 3$. 

It remains to show that for all $i \in \{1,2,3\}$, $B_i$ is complete to $C_i$. 

We first show that $B_2$ is complete to $C_2$. Suppose otherwise, and fix nonadjacent vertices $b_2 \in B_2$ and $c_2 \in C_2$. Since $A$ is not complete to $B_1$, our ordering of $A$ guarantees that $a_r$ has a nonneighbor $b_1 \in B_1$. Fix a neighbor $c_1 \in C_1$ of $b_1$. Fix any $b_3 \in B_3$, and recall that $a_r$ is complete to $B_2 \cup B_3$; in particular, $a_r$ is complete to $\{b_2,b_3\}$. But now $Q[b_1,c_1,c_2,b_2,a_r,b_3]$ is a $2P_3$, a contradiction. This proves that $B_2$ is complete to $C_2$. Analogously, $B_3$ is complete to $C_3$. 

Next, we show that every vertex of $B_1$ is complete to at least one of $A,C_1$. Suppose otherwise, and fix a vertex $b_1 \in B_1$ that has a nonneighbor both in $A$ and in $C_1$. It then follows from our ordering of $A$ that $b_1$ is nonadjacent to $a_r$. Moreover, recall that $a_r$ is complete to $B_2 \cup B_3$. Fix any $b_2 \in B_2$ and $b_3 \in B_3$. Next, fix a nonneighbor $c_1' \in C_1$ of $b_1$. By the definition of a $t$-frame, $b_1$ has a neighbor $c_1 \in C_1$, and by Claim~\ref{lemma-t-frame-claim-Ci-clique}, $c_1$ and $c_1'$ are adjacent. But now $Q[b_1,c_1,c_1',b_2,a_r,b_3]$ is a $2P_3$, a contradiction. This proves that every vertex of $B_1$ is complete to at least one of $A,C_1$. 

We can now show that $B_1$ is complete to $C_1$. Suppose otherwise, and fix nonadjacent vertices $b_1 \in B_1$ and $c_1 \in C_1$. By what we just showed, $b_1$ is complete to $A$, and in particular, $b_1$ is adjacent to $a_r$. Now, recall that $a_r$ has a nonneighbor in $B_1$; so, fix some $b_1' \in B_1$ that is nonadjacent to $a_r$. Then $b_1'$ is adjacent to $c_1$, since otherwise, $b_1'$ would have a nonneighbor both in $A$ and in $C_1$, contrary to what we showed above. By Claim~\ref{lemma-t-frame-claim-Bi-clique}, $b_1,b_1'$ are adjacent. Next, fix any $b_2 \in B_2$, and fix a neighbor $c_2 \in C_2$ of $b_2$. Recall that $A$ is complete to $B_2$, and in particular, $a_r$ is adjacent to $b_2$. But now $a_r,b_1,b_1',c_1,c_2,b_2,a_r$ is a 6-hole in $Q$, a contradiction. This proves that $B_1$ is complete to $C_1$, and we are done.~$\blacklozenge$

\medskip 

Our proof is now complete: (\ref{ref-lemma-t-frame-ABiCi-cliques}) follows from Claims~\ref{lemma-t-frame-claim-A-clique}, \ref{lemma-t-frame-claim-Bi-clique}, and~\ref{lemma-t-frame-claim-Ci-clique}; (\ref{ref-lemma-t-frame-AB}) follows from Claims~\ref{lemma-t-frame-claim-Na-nested} and~\ref{lemma-t-frame-claim-AB-not-comp}; and (\ref{ref-lemma-t-frame-BiCi-order}) follows from Claim~\ref{lemma-t-frame-claim-Bi-ordering}. 
\end{proof}

\subsection{The structure of $\boldsymbol{(2P_3,C_4,C_6,C_7,T_0)}$-free graphs that contain an induced 3-pentagon and contain no simplicial vertices} \label{subsec:with3pentagon-main} 

In this subsection, we prove Lemma~\ref{lemma-T0free-pyramid} (a technical lemma) and Theorem~\ref{thm-T0free-pyramid-iff} (the main theorem of this section). We note that Lemma~\ref{lemma-T0free-pyramid} is the main ingredient of the proof of Theorem~\ref{thm-T0free-pyramid-iff}.

\begin{lemma} \label{lemma-T0free-pyramid} Let $G$ be a $(2P_3,C_4,C_6,C_7,T_0)$-free graph that contains an induced 3-pentagon. Then one of the following holds: 
\begin{itemize} 
\item $G$ is a 5-basket, a villa, or a mansion; 
\item $G$ contains a simplicial or a universal vertex. 
\end{itemize} 
\end{lemma}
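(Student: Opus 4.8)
The plan is to follow the strategy sketched in the introduction: extract a maximal ``pentagonal'' skeleton, pin down its internal structure via Lemma~\ref{lemma-t-frame}, and then classify how the remaining vertices attach. Since $G$ is finite and contains an induced 3-pentagon, I would let $t \geq 3$ be the largest integer for which $G$ contains an induced $t$-pentagon; by maximality of $t$, the graph $G$ is $(t+1)$-pentagon-free, hence $(2P_3,C_4,C_6,C_7,T_0,(t+1)\text{-pentagon})$-free. Every $t$-pentagon is a $t$-frame, so among all induced $t$-frames of $G$ I would choose one, $Q$, with $V(Q)$ maximal under vertex-set inclusion, and fix a $t$-frame partition $(A;B_1,\dots,B_t;C_1,\dots,C_t)$. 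Because $Q$ is an induced subgraph of $G$, Lemma~\ref{lemma-t-frame} applies: either $A$ is complete to $B := B_1 \cup \dots \cup B_t$ and $Q$ is a $t$-villa, or $A$ is not complete to $B$, in which case $t = 3$ and $Q$ is a 5-basket (with empty $F$). In both cases the cliques $A,B_1,\dots,B_t,C_1,\dots,C_t$ and the nested-neighborhood orderings of parts~(\ref{ref-lemma-t-frame-AB}) and~(\ref{ref-lemma-t-frame-BiCi-order}) of Lemma~\ref{lemma-t-frame} are at our disposal. We may assume $G$ has no simplicial and no universal vertex (otherwise the second alternative of the lemma already holds), so the goal reduces to showing that $G$ is a 5-basket, a villa, or a mansion.

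The heart of the argument is the analysis of $W := V(G) \setminus V(Q)$. For each $w \in W$ I would determine $N_G(w) \cap V(Q)$ by repeatedly exploiting the forbidden configurations, the abundance of 5-holes in $Q$ (such as $a,b_i,c_i,c_j,b_j,a$ for a suitable $a \in A$ complete to $b_i,b_j$), and the maximality of $Q$. A first milestone is to show that $w$ is complete or anticomplete to the clique $A$; conditional on this, I would constrain $w$'s adjacency to each $B_i$ and each $C_i$, where any vertex deviating from the prescribed patterns yields a $2P_3$, a long hole ($C_6$ or $C_7$), a $(t+1)$-pentagon, or a $T_0$ (the $T_0$-obstructions being the analogues of those in Figures~\ref{fig:lemma-t-frame-claim-Bi-clique} and~\ref{fig:lemma-t-frame-claim-Ci-clique}). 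The outcome of this case analysis is that each $w \in W$ sorts into one of a few types: an $F$-type vertex (complete to the frame except to one $B_{j^*} \cup C_{j^*}$), an $X$-type vertex (attached essentially to $A \cup B_{j^*}$), a $Y$-type vertex (attached essentially to $C$), or a vertex that could extend one of $A,B_i,C_i$. Vertices of the last kind are excluded by the maximality of $Q$ (adding such a vertex to the appropriate part would produce a strictly larger $t$-frame), and any vertex fitting none of these types is shown to be simplicial or to create a forbidden subgraph, a contradiction.

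Having typed the vertices of $W$, I would assemble the global structure by analyzing the adjacencies among the $W$-vertices and between them and the frame: that the $F$-type vertices form a clique complete to the $X$- and $Y$-type vertices and anticomplete to $B_{j^*} \cup C_{j^*}$, that the $X$-type vertices are anticomplete to the $Y$-type vertices, and so on, again using $C_4$-freeness, the bans on long holes, and $T_0$-freeness. When $Q$ is a $t$-villa, this exhibits $W$ as sets $F$, $X$, $Y$ satisfying the mansion axioms on top of the $t$-villa partition $(A;B_1,\dots,B_t;C_1,\dots,C_t)$; here the no-simplicial-vertex assumption is what forces $F$ to be nonempty whenever $X$ or $Y$ is nonempty (an isolated $X$- or $Y$-vertex with $F=\emptyset$ would be simplicial), so $G$ is a villa if $F=X=Y=\emptyset$ and a mansion otherwise. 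When $Q$ is a 5-basket with $t=3$, the same analysis shows that $W$ contributes exactly the clique $F$ of an enlarged 5-basket, so $G$ is a 5-basket.

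The main obstacle is precisely this long attachment analysis. There are several vertex-types to isolate and six forbidden configurations to juggle simultaneously, and the interactions \emph{between} distinct $W$-vertices (needed to verify the global mansion or 5-basket axioms, e.g.\ that $F$ is a clique and that $X$ is anticomplete to $Y$) are delicate. The difficulty is compounded by the fact that, inside the frame, the relevant neighborhoods are only \emph{nested} rather than fully complete or anticomplete, so many case splits must account for the ``boundary'' vertices of each $B_i$ and $C_i$. Throughout, care is needed to keep the simplicial-vertex escape hatch available whenever an attachment pattern degenerates, so that every branch ends either in one of the three target structures or in a simplicial or universal vertex.
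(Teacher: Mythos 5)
Your proposal follows essentially the same route as the paper's proof: fix the largest $t$ with an induced $t$-pentagon, take a maximal induced $t$-frame $Q$, apply Lemma~\ref{lemma-t-frame} to see that $Q$ is a $t$-villa or a 5-basket, and then sort the vertices outside $Q$ into attachment types ($F$-, $X$-, $Y$-like, plus residual types that force a simplicial or universal vertex via clique-cutsets or are excluded by the maximality of $Q$), assembling the villa/mansion/5-basket structure at the end. The paper carries out exactly this plan with an explicit partition of $V(G)\setminus V(Q)$ into sets $D_i,F_i,X_i,Y,Z,W$ and a long sequence of claims, so your outline is a faithful (if necessarily less detailed) match.
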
 
\begin{proof} 
By hypothesis, $G$ contains an induced 3-pentagon; let $t \geq 3$ be the largest integer such that $G$ contains an induced $t$-pentagon. Then $G$ is $(t+1)$-pentagon-free. Now, note that the $t$-pentagon is, in particular, a $t$-frame. Let $Q$ be a maximal induced $t$-frame in $G$,\footnote{As usual, this means that for all induced subgraphs $Q'$ of $G$ such that $Q$ is a proper induced subgraph of $Q'$, the graph $Q'$ is not a $t$-frame.} and let $(A;B_1,\dots,B_t;C_1,\dots,C_t)$ be a $t$-frame partition of $Q$. To simplify notation, set $B := B_1 \cup \dots \cup B_t$ and $C := C_1 \cup \dots \cup C_t$. Let $A = \{a_1,\dots,a_r\}$ be an ordering of $A$ such that $d_Q(a_r) \leq \dots \leq d_Q(a_1)$. Next, for all $i \in \{1,\dots,t\}$, let $B_i = \{b_1^i,\dots,b_{r_i}^i\}$ be an ordering of $B_i$ such that $d_Q(b_{r_i}^i) \leq \dots \leq d_Q(b_1^i)$, and let $C_i = \{c_1^i,\dots,c_{s_i}^i\}$ be an ordering of $C_i$ such that $d_Q(c_{s_i}^i) \leq \dots \leq d_Q(c_1^i)$. Since $G$ is $(2P_3,C_4,C_6,C_7,T_0,\text{$(t+1)$-pentagon})$-free, so is its induced subgraph $Q$. Thus, Lemma~\ref{lemma-t-frame} applies to the $t$-frame $Q$, to the $t$-frame partition $(A;B_1,\dots,B_t;C_1,\dots,C_t)$ of $Q$, and to our orderings of the sets $A,B_1,\dots,B_t,C_1,\dots,C_t$. In particular, we know that either $Q$ is a $t$-villa with an associated $t$-villa partition $(A;B_1,\dots,B_t;C_1,\dots,C_t)$, or $t = 3$ and $Q$ is a 5-basket with an associated 5-basket partition $(A;B_1,B_2,B_3;C_1,C_2,C_3;\emptyset)$. Furthermore (and still by Lemma~\ref{lemma-t-frame}), we know that $A,B_1,\dots,B_t,C_1,\dots,C_t$ are all nonempty cliques, and that the following hold: 
\begin{itemize} 
\item either $A$ is complete to $B$, or all the following hold: 
\begin{itemize} 
\item $t = 3$, 
\item there exists some $i^* \in \{1,2,3\}$ such that $B \setminus B_{i^*} \subseteq N_Q(a_r) \cap B \subseteq \dots \subseteq N_Q(a_1) \cap B = B$, and in particular, $A$ is complete to $B \setminus B_{i^*}$, 
\item for all $i \in \{1,2,3\}$, $B_i$ is complete to $C_i$; 
\end{itemize} 
\item for all $i \in \{1,\dots,t\}$, both the following hold: 
\begin{itemize} 
\item $\{c_1^i\} \subseteq N_Q(b_{r_i}^i) \cap C_i \subseteq \dots \subseteq N_Q(b_1^i) \cap C_i = C_i$,
\item $\{b_1^i\} \subseteq N_Q(c_{s_i}^i) \cap B_i \subseteq \dots \subseteq N_Q(c_1^i) \cap B_i = B_i$. 
\end{itemize} 
\end{itemize} 
Note that the first bullet point above in particular implies that $A$ is complete to all but possibly one of the cliques $B_1,\dots,B_t$, and moreover, $a_1$ is in fact complete to $B$. On the other hand, the second bullet point implies that for all indices $i \in \{1,\dots,t\}$, $b_1^i$ is complete to $C_i$, and $c_1^i$ is complete to $B_i$. Moreover, since $C_1,\dots,C_t$ are cliques, and since they are pairwise complete to each other (by the definition of a $t$-frame), we see that $C$ is a clique. 

\medskip 

We now define sets $D_1,\dots,D_t,F_1,\dots,F_t,X_1,\dots,X_t,Y,Z,W$ as follows: 
\begin{itemize} 
\item for all $i \in \{1,\dots,t\}$, set $D_i := \big\{v \in V(G) \setminus V(Q) \mid N_G(v) \cap V(Q) = B_i \cup C_i\big\}$; 
\item for all $i \in \{1,\dots,t\}$, set $F_i := \big\{v \in V(G) \setminus V(Q) \mid N_G(v) \cap V(Q) = A \cup (B \setminus B_i) \cup (C \setminus C_i)\big\}$; 
\item for all $i \in \{1,\dots,t\}$, set $X_i := \big\{v \in V(G) \setminus V(Q) \mid B_i \subseteq N_G(v) \subseteq A \cup B_i\big\}$; 
\item set $Y := \big\{v \in V(G) \setminus V(Q) \mid \emptyset \neq N_G(v) \cap V(Q) \subseteq C\big\}$; 
\item set $Z := \big\{v \in V(G) \setminus V(Q) \mid N_G(v) \cap V(Q) \subseteq A\big\}$; 
\item set $W := \big\{v \in V(G) \setminus V(Q) \mid N_G(v) \cap V(Q) = V(Q)\big\}$. 
\end{itemize} 
To simplify notation, we further set 
\begin{itemize} 
\item $D := D_1 \cup \dots \cup D_t$; 
\item $F := F_1 \cup \dots \cup F_t$; 
\item $X := X_1 \cup \dots \cup X_t$. 
\end{itemize}

In what follows, we will prove a lengthy sequence of claims about the sets that we just defined, that is, sets $D_1,\dots,D_t,F_1,\dots,F_t,X_1,\dots,X_t,Y,Z,W$. Before going into technical details, let us give a brief outline of the entire proof. Our first goal is to prove that these sets form a partition of $V(G) \setminus V(Q)$ (see Claim~\ref{lemma-T0free-pyramid-claim-sets-partition-VG-VQ}). Then, we examine the relationship between these sets. In particular, it will turn out that not all of these sets may simultaneously be nonempty, most of them are cliques, and adjacency between them is not arbitrary. If all of these sets are empty, then $G = Q$ (in which case $G$ is a 5-basket or a villa). Otherwise, we will see that either $G$ is a 5-basket or a mansion (this can only happen when almost all of our sets are empty), or $G$ contains a simplicial or a universal vertex. We now continue our formal proof. 

\begin{adjustwidth}{1cm}{1cm} 
\begin{claim} \label{lemma-T0free-pyramid-claim-anticomp-B} 
For all $v \in V(G) \setminus V(Q)$, if $v$ is anticomplete to $B$, then $v \in Y \cup Z$. 
\end{claim} 
\end{adjustwidth} 
{\em Proof of Claim~\ref{lemma-T0free-pyramid-claim-anticomp-B}.} Fix $v \in V(G) \setminus V(Q)$ that is anticomplete to $B$, i.e.\ $N_G(v) \cap V(Q) \subseteq A \cup C$. Clearly, it is enough to show that $v$ is anticomplete to $A$ or $C$, for it will then immediately follow that $v \in Y \cup Z$. Suppose otherwise. By symmetry, we may assume that $v$ is adjacent to some $a \in A$ and $c_1 \in C_1$. Using the definition of a $t$-frame, fix a neighbor $b_1 \in B_1$ of $c_1$. Then $a$ is nonadjacent to $b_1$, for otherwise, $v,a,b_1,c_1,v$ would be a 4-hole in $G$, a contradiction. Similarly, $v$ is nonadjacent to $a_1$, for otherwise, $v,a_1,b_1,c_1,v$ would be a 4-hole in $G$, a contradiction.\footnote{We are using the fact that $a_1$ is complete to $B$.} Note that this, in particular, implies that $a \neq a_1$; since $A$ is a clique, it follows that $a,a_1$ are adjacent. 

Since $a$ is nonadjacent to $b_1$, we see that $A$ is not complete to $B_1$. So, by Lemma~\ref{lemma-t-frame}(\ref{ref-lemma-t-frame-AB}), $A$ is complete to $B \setminus B_1 = B_2 \cup \dots \cup B_t$. Now, fix any $b_2 \in B_2$ and $b_3 \in B_3$. By the definition of a $t$-frame, $b_2$ has a neighbor $c_2 \in C_2$, and $b_3$ has a neighbor $c_3 \in C_3$. Then $v$ is nonadjacent to $c_2$, for otherwise, $v,a,b_2,c_2,v$ would be a 4-hole in $G$, a contradiction. Similarly, $v$ is nonadjacent to $c_3$. Moreover, by the definition of a $t$-frame, $\{b_1,b_2,b_3\}$ is a stable set, whereas $\{c_1,c_2,c_3\}$ is a clique. But now adjacency between vertices $a,a_1,v,b_1,b_2,b_3,c_1,c_2,c_3$ is as in Figure~\ref{fig:thm-T0free-pyramid-claim-anticomp-B}, and so $G[a,a_1,v,b_1,b_2,b_3,c_1,c_2,c_3]$ is a $T_0$, contrary to the fact that $G$ is $T_0$-free.~$\blacklozenge$ 

\begin{figure}
\begin{center}
\includegraphics[scale=0.5]{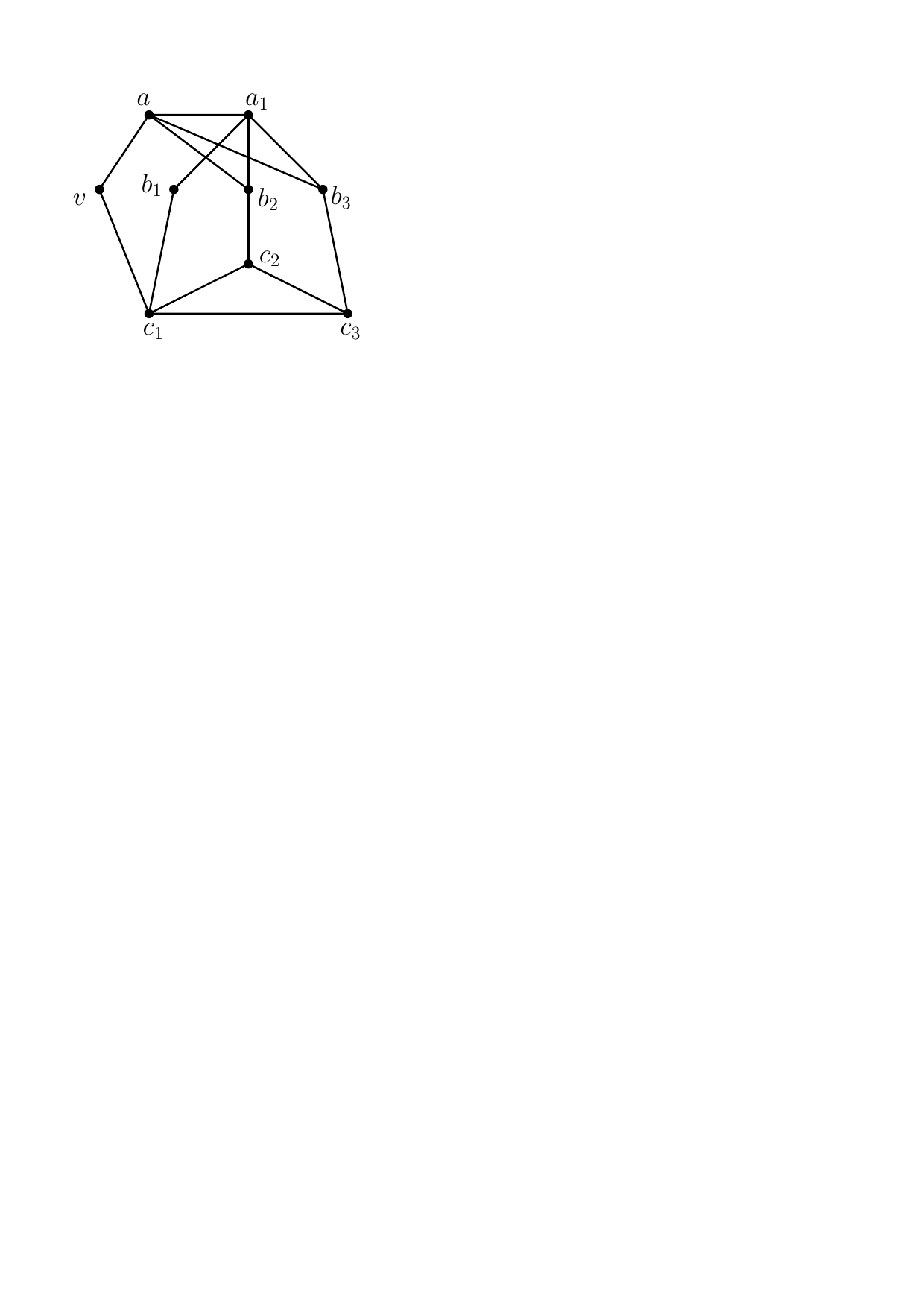}
\end{center} 
\caption{$G[a,a_1,v,b_1,b_2,b_3,c_1,c_2,c_3]$ is a $T_0$ (proof of Claim~\ref{lemma-T0free-pyramid-claim-anticomp-B}).} \label{fig:thm-T0free-pyramid-claim-anticomp-B} 
\end{figure}

\begin{adjustwidth}{1cm}{1cm} 
\begin{claim} \label{lemma-T0free-pyramid-claim-BiBj} 
If a vertex $v \in V(G) \setminus V(Q)$ has a neighbor in at least two of $B_1,\dots,B_t$, then $v$ is complete to $A$. 
\end{claim} 
\end{adjustwidth} 
{\em Proof of Claim~\ref{lemma-T0free-pyramid-claim-BiBj}.} Suppose otherwise. By symmetry, we may assume that there exists a vertex $v \in V(G) \setminus V(Q)$ that is adjacent to some $b_1 \in B_1$ and $b_2 \in B_2$, but is nonadjacent to some $a \in A$. Then $a$ is not complete to $\{b_1,b_2\}$, for otherwise, $v,b_1,a,b_2,v$ would be a 4-hole in $G$, a contradiction. By symmetry, we may assume that $a$ is nonadjacent to $b_1$. On the other hand, by Lemma~\ref{lemma-t-frame}(\ref{ref-lemma-t-frame-AB}), $a_1$ is complete to $B$, and so $v$ is adjacent to $a_1$, for otherwise, $a_1,b_1,v,b_2,a_1$ would be a 4-hole in $G$, a contradiction. 

Now, since $a$ is nonadjacent to $b_1$, we see, in particular, that $A$ is not complete to $B_1$. So, by Lemma~\ref{lemma-t-frame}(\ref{ref-lemma-t-frame-AB}), we have that $t = 3$, and that $A$ is complete to $B \setminus B_1 = B_2 \cup B_3$. Now, fix any $b_3 \in B_3$, and using the definition of a $t$-frame, fix any neighbor $c_3 \in C_3$ of $b_3$. First of all, $v$ is nonadjacent to $b_3$, for otherwise, $v,b_2,a,b_3,v$ would be a 4-hole in $G$, a contradiction. Then $v$ is also nonadjacent to $c_3$, for otherwise, $v,a_1,b_3,c_3,v$ would be a 4-hole in $G$, again a contradiction. Now, using the definition of a $t$-frame, we fix any neighbor $c_1 \in C_1$ of $b_1$. Then $v$ is nonadjacent to $c_1$, for otherwise, $v,c_1,c_3,b_3,a,b_2,v$ would be a 6-hole in $G$, a contradiction. But now $v,b_1,c_1,c_3,b_3,a,b_2,v$ is a 7-hole in $G$, again a contradiction.~$\blacklozenge$

\begin{adjustwidth}{1cm}{1cm} 
\begin{claim} \label{lemma-T0free-pyramid-claim-BiCj} 
For all $v \in V(G) \setminus V(Q)$ and $i \in \{1,\dots,t\}$, if $v$ has a neighbor both in $B_i$ and in $C \setminus C_i$, then $v$ is complete to $A$. 
\end{claim} 
\end{adjustwidth} 
{\em Proof of Claim~\ref{lemma-T0free-pyramid-claim-BiCj}.} Suppose otherwise. By symmetry, we may assume that some vertex $v \in V(G) \setminus V(Q)$ has a neighbor both in $B_1$ and in $C \setminus C_1 = C_2 \cup \dots \cup C_t$, but is not complete to $A$. By Claim~\ref{lemma-T0free-pyramid-claim-BiBj}, $v$ is anticomplete to $B \setminus B_1 = B_2 \cup \dots \cup B_t$. 

We first show that $v$ is nonadjacent to $a_1$. Suppose otherwise. Since $v$ has a neighbor in $C_2 \cup \dots \cup C_t$, we may assume by symmetry that $v$ is adjacent to some $c_2 \in C_2$. But then since $v$ is anticomplete to $B_2$, we see that $v,a_1,b_1^2,c_2,v$ is a 4-hole in $G$, a contradiction. Thus, $v$ is indeed nonadjacent to $a_1$. 

Next, we claim that $v$ is complete to $C \setminus C_1 = C_2 \cup \dots \cup C_t$. Suppose otherwise. Since $v$ has a neighbor in $C_2 \cup \dots \cup C_t$, we see that $v$ is mixed on $C_2 \cup \dots \cup C_t$. By Proposition~\ref{prop-mixed-on-Yi}, and by symmetry, we may now assume that $v$ is adjacent to some $c_2 \in C_2$ and nonadjacent to some $c_3 \in C_3$. We now fix any neighbor $b_1 \in B_1$ of $v$, and we observe that  $v,b_1,a_1,b_1^3,c_3,c_2,v$ is a 6-hole in $G$, a contradiction. This proves that $v$ is indeed complete to $C \setminus C_1 = C_2 \cup \dots \cup C_t$. 

Now, if $v$ is anticomplete to $A$, then $G[V(Q) \cup \{v\}]$ is a $t$-frame, with an associated $t$-frame partition $(A;B_1,B_2,\dots,B_t;C_1 \cup \{v\},C_2,\dots,C_t)$, contrary to the maximality of the $t$-frame $Q$. So, $v$ has a neighbor $a \in A$. By the definition of a $t$-frame, $a$ has a neighbor in at least two of $B_1,\dots,B_t$, and consequently, $a$ has a neighbor in at least one of $B_2,\dots,B_t$. By symmetry, we may assume that $a$ has a neighbor $b_2 \in B_2$. But then $v,a,b_2,c_1^2,v$ is a 4-hole in $G$, a contradiction.~$\blacklozenge$

\begin{adjustwidth}{1cm}{1cm} 
\begin{claim} \label{lemma-T0free-pyramid-claim-not-mixed-Bi}
No vertex in $V(G) \setminus V(Q)$ is mixed on any one of $B_1,\dots,B_t$. 
\end{claim} 
\end{adjustwidth} 
{\em Proof of Claim~\ref{lemma-T0free-pyramid-claim-not-mixed-Bi}.} Suppose otherwise. By symmetry, we may assume that some vertex $v \in V(G) \setminus V(Q)$ is mixed on $B_1$. Fix $b_1,b_1' \in B_1$ such that $v$ is adjacent to $b_1$ and nonadjacent to $b_1'$.\footnote{Recall that $A,B_1,\dots,B_t,C_1,\dots,C_t$ are all cliques. In particular, $b_1,b_1'$ are adjacent.} First of all, note that $v$ has a neighbor in $(B \setminus B_1) \cup (C \setminus C_1)$, for otherwise, $G[v,b_1,b_1',b_1^2,c_1^2,c_1^3]$ would be a $2P_3$, a contradiction. Since $v$ has a neighbor (namely $b_1$) in $B_1$, Claims~\ref{lemma-T0free-pyramid-claim-BiBj} and~\ref{lemma-T0free-pyramid-claim-BiCj} now guarantee that $v$ is complete to $A$, and in particular, $v$ is adjacent to $a_1$. Then $v$ is nonadjacent to $c_1^1$, for otherwise, $v,a_1,b_1',c_1^1,v$ would be a 4-hole in $G$, a contradiction. It now follows that $v$ is anticomplete to $C \setminus C_1 = C_2 \cup \dots \cup C_t$, for otherwise, we fix a neighbor $c \in C \setminus C_1$ of $v$, and we observe that $v,c,c_1^1,b_1,v$ is a 4-hole in $G$, a contradiction. So, $v$ has a neighbor in $B \setminus B_1$,\footnote{Indeed, we showed above that $v$ has a neighbor in $(B \setminus B_1) \cup (C \setminus C_1)$, but is anticomplete to $C \setminus C_1$. So, $v$ has a neighbor in $B \setminus B_1$} and by symmetry, we may assume that $v$ is adjacent to some $b_2 \in B_2$. If $v$ is anticomplete to $C_1$, then $G[V(Q) \cup \{v\}]$ is a $t$-frame, with an associated $t$-frame partition $(A \cup \{v\};B_1,\dots,B_t;C_1,\dots,C_t)$,\footnote{This is because $v$ is adjacent to $b_1 \in B_1$ and to $b_2 \in B_2$ (and in particular, $v$ has a neighbor in at least two of $B_1,\dots,B_t$), and is anticomplete to $C$.} contrary to the maximality of the $t$-frame $Q$. So, $v$ has a neighbor $c_1 \in C_1$. But now $v,c_1,c_1^2,b_2,v$ is a 4-hole in $G$, a contradiction.~$\blacklozenge$

\begin{adjustwidth}{1cm}{1cm} 
\begin{claim} \label{lemma-T0free-pyramid-claim-not-mixed-Ci}
No vertex in $V(G) \setminus V(Q)$ is mixed on any one of $C_1,\dots,C_t$. 
\end{claim} 
\end{adjustwidth} 
{\em Proof of Claim~\ref{lemma-T0free-pyramid-claim-not-mixed-Ci}.} Suppose otherwise. By symmetry, we may assume that some vertex $v \in V(G) \setminus V(Q)$ is mixed on $C_1$. Fix $c_1,c_1' \in C_1$ such that $v$ is adjacent to $c_1$ and nonadjacent to $c_1'$. 

We first show that $v$ is adjacent to $a_1$. Suppose otherwise. Then $v$ has a neighbor (namely $c_1$) in $C_1$ and is not complete to $A$, and so by Claim~\ref{lemma-T0free-pyramid-claim-BiCj}, $v$ is anticomplete to $B \setminus B_1 = B_2 \cup \dots \cup B_t$. But now $G[v,c_1,c_1',b_1^2,a_1,b_1^3]$ is a $2P_3$, a contradiction. This proves that $v$ is indeed adjacent to $a_1$. Note that this implies that $v$ is adjacent to $b_1^1$, for otherwise, $v,a_1,b_1^1,c_1,v$ would be a 4-hole in $G$, a contradiction. 

Next, we claim that $v$ is anticomplete to $C \setminus C_1 = C_2 \cup \dots \cup C_t$. Suppose otherwise. By symmetry, we may assume that $v$ is adjacent to some $c_2 \in C_2$. But then $v,c_2,c_1',b_1^1,v$ is a 4-hole in $G$, a contradiction. This proves that $v$ is indeed anticomplete to $C \setminus C_1$. 

We now show that $v$ is anticomplete to $B \setminus B_1 = B_2 \cup \dots \cup B_t$. Suppose otherwise. By symmetry, we may assume that $v$ is adjacent to some $b_2 \in B_2$. Since $v$ is anticomplete to $C \setminus C_1$, we know that $v$ is nonadjacent to $c_1^2$. But then $v,b_2,c_1^2,c_1,v$ is a 4-hole in $G$, a contradiction. This proves that $v$ is indeed anticomplete to $B \setminus B_1$. 

We now know that $v$ has a neighbor (namely $a$) in $A$ and a neighbor (namely $c_1$) in $C_1$, and we also know that $v$ is anticomplete to $(B \setminus B_1) \cup (C \setminus C_1)$. But then $G[V(Q) \cup \{v\}]$ is a $t$-frame, with an associated $t$-frame partition $(A;B_1 \cup \{v\},B_2,\dots,B_t;C_1,C_2,\dots,C_t)$, contrary to the maximality of the $t$-frame $Q$.~$\blacklozenge$

\begin{adjustwidth}{1cm}{1cm} 
\begin{claim} \label{lemma-T0free-pyramid-claim-BiBj-FiW}
For all $v \in V(G) \setminus V(Q)$, if $v$ has a neighbor in at least two of $B_1,\dots,B_t$, then $v \in F \cup W$. 
\end{claim} 
\end{adjustwidth} 
{\em Proof of Claim~\ref{lemma-T0free-pyramid-claim-BiBj-FiW}.} Fix $v \in V(G) \setminus V(Q)$, and assume that $v$ has a neighbor in at least two of $B_1,\dots,B_t$. By Claim~\ref{lemma-T0free-pyramid-claim-not-mixed-Bi}, $v$ is not mixed on any one of $B_1,\dots,B_t$. So, by symmetry, we may assume that there exists some $\ell \in \{2,\dots,t\}$ such that $v$ is complete to $B_1 \cup \dots \cup B_{\ell}$ and anticomplete to $B_{\ell+1} \cup \dots \cup B_t$. By Claim~\ref{lemma-T0free-pyramid-claim-BiBj}, $v$ is complete to $A$. 

Next, we claim that $v$ is anticomplete to $C_{\ell+1} \cup \dots \cup C_t$. Suppose otherwise. (In particular, $\ell \leq t-1$.) By symmetry, we may assume that $v$ is adjacent to some $c_{\ell+1} \in C_{\ell+1}$. But then $v,c_{\ell+1},b_1^{\ell+1},a_1,v$ is a 4-hole in $G$, a contradiction. This proves that $v$ is indeed anticomplete to $C_{\ell+1} \cup \dots \cup C_t$. Now, if $v$ is also anticomplete to $C_1 \cup \dots \cup C_{\ell}$, then $G[V(Q) \cup \{v\}]$ is a $t$-frame, with an associated $t$-frame partition $(A \cup \{v\};B_1,\dots,B_t;C_1,\dots,C_t)$, contrary to the maximality of the $t$-frame $Q$. So, $v$ has a neighbor in $C_1 \cup \dots \cup C_{\ell}$. We claim that $v$ is in fact complete to $C_1 \cup \dots \cup C_{\ell}$. Suppose otherwise. Then $v$ is mixed on $C_1 \cup \dots \cup C_{\ell}$, and so by Proposition~\ref{prop-mixed-on-Yi}, and by symmetry, we may assume that $v$ has a neighbor $c_1 \in C_1$ and a nonneighbor $c_2 \in C_2$. But then $v,c_1,c_2,b_1^2,v$ is a 4-hole in $G$, a contradiction. This proves that $v$ is complete to $C_1 \cup \dots \cup C_{\ell}$. 

So far, we have shown that $v$ is complete to $A \cup (B_1 \cup \dots \cup B_{\ell}) \cup (C_1 \cup \dots \cup C_{\ell})$ and anticomplete to $(B_{\ell+1} \cup \dots \cup C_t) \cup (C_{\ell+1} \cup \dots \cup C_t)$. If $\ell \geq t-1$, then $v \in F_t \cup W$, and we are done. We may therefore assume that $\ell \leq t-2$. So, $2 \leq \ell \leq t-2$, and in particular, $t \geq 4$. But now $G[b_1^1,v,b_1^2,b_1^{\ell+1},c_1^{\ell+1},c_1^{\ell+2}]$ is a $2P_3$, a contradiction.~$\blacklozenge$

\begin{adjustwidth}{1cm}{1cm} 
\begin{claim} \label{lemma-T0free-pyramid-claim-sets-partition-VG-VQ}
Sets $D_1,\dots,D_t,F_1,\dots,F_t,X_1,\dots,X_t,Y,Z,W$ form a partition of $V(G) \setminus V(Q)$.\footnote{As usual, some (or all) of the sets $D_1,\dots,D_t,F_1,\dots,F_t,X_1,\dots,X_t,Y,Z,W$ may possibly be empty.} Consequently, sets $D,F,X,Y,Z,W$ form a partition of $V(G) \setminus V(Q)$. 
\end{claim} 
\end{adjustwidth} 
{\em Proof of Claim~\ref{lemma-T0free-pyramid-claim-sets-partition-VG-VQ}.} It suffices to prove the first statement, for the second statement follows immediately from the first and from the definition of the sets $D,F,X$. By definition, sets $D_1,\dots,D_t,F_1,\dots,F_t,X_1,\dots,X_t,Y,Z,W$ are pairwise disjoint, and they are all subsets of $V(G) \setminus V(Q)$. So, it suffices to show that every vertex of $V(G) \setminus V(Q)$ belongs to one of $D_1,\dots,D_t,F_1,\dots,F_t,X_1,\dots,X_t,Y,Z,W$. 

Fix $v \in V(G) \setminus V(Q)$. If $v$ is anticomplete to $B$, then Claim~\ref{lemma-T0free-pyramid-claim-anticomp-B} guarantees that $v \in Y \cup Z$. On the other hand, if $v$ has a neighbor in at least two of $B_1,\dots,B_t$, then Claim~\ref{lemma-T0free-pyramid-claim-BiBj-FiW} guarantees that $v \in F_1 \cup \dots \cup F_t \cup W$. So from now on, we may assume that $v$ has a neighbor in exactly one of $B_1,\dots,B_t$. By symmetry, we may assume that $v$ has a neighbor in $B_1$ and is anticomplete to $B \setminus B_1 = B_2 \cup \dots \cup B_t$. Claim~\ref{lemma-T0free-pyramid-claim-not-mixed-Bi} then guarantees that $v$ is complete to $B_1$. 

First, we claim that $v$ is anticomplete to $C \setminus C_1 = C_2 \cup \dots \cup C_t$. Suppose otherwise. By symmetry, we may assume that $v$ has a neighbor $c_2 \in C_2$. Since $v$ has a neighbor both in $B_1$ and in $C \setminus C_1$, Claim~\ref{lemma-T0free-pyramid-claim-BiCj} guarantees that $v$ is complete to $A$. But then $v,a_1,b_1^2,c_2,v$ is a 4-hole in $G$, a contradiction. This proves that $v$ is indeed anticomplete to $C \setminus C_1$. 

So far, we have established that $B_1 \subseteq N_G(v) \cap V(Q) \subseteq A \cup B_1 \cup C_1$. We may now assume that $v$ has a neighbor in $C_1$, for otherwise, we have that $B_1 \subseteq N_G(v) \cap V(Q) \subseteq A \cup B_1$, and consequently, $v \in X_1$. By Claim~\ref{lemma-T0free-pyramid-claim-not-mixed-Ci}, $v$ is complete to $C_1$. If $v$ is anticomplete to $A$, then $N_G(v) \cap V(Q) = B_1 \cup C_1$, and consequently, $v \in D_1$. We may therefore assume that $v$ has a neighbor in $A$. But now $G[V(Q) \cup \{v\}]$ is a $t$-frame, with an associated $t$-frame partition $(A;B_1 \cup \{v\},B_2,\dots,B_t;C_1,C_2,\dots,C_t)$, contrary to the maximality of the $t$-frame $Q$.~$\blacklozenge$

\begin{adjustwidth}{1cm}{1cm}  
\begin{claim} \label{lemma-T0free-pyramid-claim-NGS-clique}
For all nonempty sets $S \subseteq V(G) \setminus V(Q)$, if $N_G(S)$ is a clique, then $G$ contains a simplicial vertex. 
\end{claim} 
\end{adjustwidth} 
{\em Proof of Claim~\ref{lemma-T0free-pyramid-claim-NGS-clique}.} Fix a nonempty set $S \subseteq V(G) \setminus V(Q)$, and assume that $N_G(S)$ is a clique. In view of Proposition~\ref{prop-2P3C4-free-clique-cut-simplicial}, it suffices to show that $N_G(S)$ is a clique-cutset of $G$; since $N_G(S)$ is a clique, it is in fact enough to show that $N_G(S)$ is a cutset of $G$. Clearly, $\big(S,N_G(S),V(G) \setminus N_G[S]\big)$ is a partition of $V(G)$, there are no edges between $S$ and $V(G) \setminus N_G[S]$, and we know that $S \neq \emptyset$. Thus, we just need to show that $V(G) \setminus N_G[S] \neq \emptyset$. But note that $V(Q) \setminus N_G(S) \subseteq (V(Q) \cup S) \setminus N_G[S] \subseteq V(G) \setminus N_G[S]$. So, we just need to show that $V(Q) \setminus N_G(S) \neq \emptyset$. But this follows immediately from the fact that $N_G(S)$ is a clique, whereas $V(Q)$ is not.\footnote{The fact that $V(Q)$ is not a clique follows, for example, from the fact that $t \geq 3$, and the fact that $B_1,\dots,B_t$ are nonempty, pairwise disjoint sets, pairwise anticomplete to each other.}~$\blacklozenge$

\begin{adjustwidth}{1cm}{1cm} 
\begin{claim} \label{lemma-T0free-pyramid-claim-Fi-clique} 
$F \cup W$ is a clique, and at most one of $F_1,\dots,F_t$ is nonempty. Moreover, for all $i \in \{1,\dots,t\}$, if $F_i \neq \emptyset$, then $B_i$ is complete to $C_i$. 
\end{claim}  
\end{adjustwidth} 
{\em Proof of Claim~\ref{lemma-T0free-pyramid-claim-Fi-clique}.} We first show that at most one of $F_1,\dots,F_t$ is nonempty. Suppose otherwise. By symmetry, we may assume that $F_1,F_2$ are both nonempty. Fix $f_1 \in F_1$ and $f_2 \in F_2$. If $f_1,f_2$ are adjacent, then $f_1,f_2,c_1^1,c_1^2,f_1$ is a 4-hole in $G$, a contradiction. So, $f_1,f_2$ are nonadjacent. But then $a_1,f_1,c_1^3,f_2,a_1$ is a 4-hole in $G$, again a contradiction.

Next, we show that $F \cup W$ is a clique. By what we just showed, and by symmetry, we may assume that $F_2 = \dots = F_t = \emptyset$, so that $F = F_1$ and $F \cup W = F_1 \cup W$. But note that $a_1,c_1^2$ are distinct, nonadjacent vertices that are both complete to $F_1 \cup W$, and so Proposition~\ref{prop-non-adj-comp-clique} guarantees that $F_1 \cup W$ is a clique. 

It remains to show that for all $i \in \{1,\dots,t\}$, if $F_i \neq \emptyset$, then $B_i$ is complete to $C_i$. Suppose otherwise. By symmetry, we may assume that $F_1 \neq \emptyset$, and that $B_1$ is not complete to $C_1$. It then follows from our orderings of $B_1$ and $C_1$ that $b_{r_1}^1$ and $c_{s_1}^1$ are nonadjacent, that $r_1 \geq 2$, and that $b_1^1$ is complete to $\{b_{r_1}^1,c_{s_1}^1\}$. Fix any $f_1 \in F_1$. But then $G[b_{r_1}^1,b_1^1,c_{s_1}^1,b_1^2,f_1,b_1^3]$ is a $2P_3$, a contradiction.~$\blacklozenge$

\begin{adjustwidth}{1cm}{1cm} 
\begin{claim} \label{lemma-T0free-pyramid-claim-YZ-anticomp} 
$Y$ is anticomplete to $Z$. 
\end{claim} 
\end{adjustwidth} 
{\em Proof of Claim~\ref{lemma-T0free-pyramid-claim-YZ-anticomp}.} Suppose otherwise, and fix adjacent vertices $y \in Y$ and $z \in Z$. 

Suppose first that $z$ is nonadjacent to $a_1$. By the definition of $Y$, we know that $y$ has a neighbor in $C$; by symmetry, we may assume that $y$ is adjacent to some $c_1 \in C_1$. But then $G[z,y,c_1,b_1^2,a_1,b_1^3]$ is a $2P_3$, a contradiction. 

We have now shown that $z$ is adjacent to $a_1$. Then $y$ is not complete to $C$, for otherwise, $G[a_1,z,b_1^1,\dots,b_1^t,y,c_1^1,\dots,c_1^t]$ would be a $(t+1)$-pentagon, a contradiction. Since $y$ has a neighbor in $C$ (by the definition of $Y$), we see that $y$ is mixed on $C = C_1 \cup \dots \cup C_t$. By Proposition~\ref{prop-mixed-on-Yi}, and by symmetry, we may now assume that $y$ is adjacent to some $c_1 \in C_1$ and nonadjacent to some $c_2 \in C_2$. But then $z,y,c_1,c_2,b_1^2,a_1,z$ is a 6-hole in $G$, a contradiction.~$\blacklozenge$ 

\begin{adjustwidth}{1cm}{1cm} 
\begin{claim} \label{lemma-T0free-pyramid-claim-DiXiZ-anticomp} 
$Z$ is anticomplete to $D \cup X$. 
\end{claim}  
\end{adjustwidth} 
{\em Proof of Claim~\ref{lemma-T0free-pyramid-claim-DiXiZ-anticomp}.} Suppose otherwise. By symmetry, we may assume that some $z \in Z$ and $x \in D_1 \cup X_1$ are adjacent. Now, we know that $Z$ is anticomplete to $B \cup C$, whereas $D_1 \cup X_1$ is complete to $B_1$ and anticomplete to $(B \setminus B_1) \cup (C \setminus C_1)$. So, $G[z,x,b_1^1,b_1^2,c_1^2,c_1^3]$ is a $2P_3$, a contradiction.~$\blacklozenge$

\begin{adjustwidth}{1cm}{1cm} 
\begin{claim} \label{lemma-T0free-pyramid-claim-NGZ} 
$N_G(Z) \subseteq A \cup F \cup W$, and consequently, $N_G(Z)$ is a clique. Therefore, if $Z \neq \emptyset$, then $G$ contains a simplicial vertex. 
\end{claim} 
\end{adjustwidth} 
{\em Proof of Claim~\ref{lemma-T0free-pyramid-claim-NGZ}.} By the definition of $Z$, we have that $N_G(Z) \cap V(Q) \subseteq A$, and by Claims~\ref{lemma-T0free-pyramid-claim-sets-partition-VG-VQ},~\ref{lemma-T0free-pyramid-claim-YZ-anticomp}, and~\ref{lemma-T0free-pyramid-claim-DiXiZ-anticomp}, we have that $N_G(Z) \setminus V(Q) \subseteq F \cup W$. Therefore, $N_G(Z) \subseteq A \cup F \cup W$. Now, by Lemma~\ref{lemma-t-frame}(\ref{ref-lemma-t-frame-ABiCi-cliques}), $A$ is a clique, and by Claim~\ref{lemma-T0free-pyramid-claim-Fi-clique}, $F \cup W$ is a clique. Moreover, by the definition of $F$ and $W$, we know that $A$ is complete to $F \cup W$. So, $A \cup F \cup W$ is a clique, and consequently, its subset $N_G(Z)$ is also a clique. Therefore, if $Z \neq \emptyset$, then Claim~\ref{lemma-T0free-pyramid-claim-NGS-clique} guarantees that $G$ contains a simplicial vertex.~$\blacklozenge$

\begin{adjustwidth}{1cm}{1cm}
\begin{claim} \label{lemma-T0free-pyramid-claim-DiXiY-anticomp} 
$Y$ is anticomplete to $D \cup X$. 
\end{claim} 
\end{adjustwidth} 
{\em Proof of Claim~\ref{lemma-T0free-pyramid-claim-DiXiY-anticomp}.} We first show that $Y$ is anticomplete to $D$. Suppose otherwise. By symmetry, we may assume that some $y \in Y$ and $d_1 \in D_1$ are adjacent. Suppose first that $y$ has a neighbor in $C_2 \cup \dots \cup C_t$; by symmetry, we may assume that $y$ is adjacent to some $c_2 \in C_2$. But then $d_1,y,c_2,b_1^2,a_1,b_1^1,d_1$ is a 6-hole in $G$, a contradiction. Thus, $y$ is anticomplete to $C_2 \cup \dots \cup C_t$. But now $G[y,d_1,b_1^1,b_1^2,c_1^2,c_1^3]$ is a $2P_3$, a contradiction. This proves that $Y$ is anticomplete to $D$. 

It remains to show that $Y$ is anticomplete to $X$. Suppose otherwise. By symmetry, we may assume that some $y \in Y$ is adjacent to some $x_1 \in X_1$. If $y$ is adjacent to some $c_1 \in C_1$, then $x_1,y,c_1,b_1^1,x_1$ is a 4-hole in $G$, a contradiction. Therefore, $y$ is anticomplete to $C_1$. Now, by the definition of $Y$, we know that $y$ has a neighbor in $C$, and so by symmetry, we may assume that $y$ has a neighbor in $C_2$; by Claim~\ref{lemma-T0free-pyramid-claim-not-mixed-Ci}, it follows that $y$ is complete to $C_2$. Then $x_1$ is adjacent to $a_1$, for otherwise, $x_1,b_1^1,a_1,b_1^2,c_1^2,y,x_1$ would be a 6-hole in $G$, a contradiction. Next, if $y$ had a nonneighbor $c_3 \in C_3$, then $y,c_1^2,c_3,b_1^3,a_1,x_1,y$ would be a 6-hole in $G$, a contradiction. Therefore, $y$ is complete to $C_3$. But now we see that adjacency between vertices $x_1,y,a_1,b_1^1,b_1^2,b_1^3,c_1^1,c_1^2,c_1^3$ is as in Figure~\ref{fig:thm-T0free-pyramid-claim-DiXiY-anticomp}, and we deduce that these vertices induce a $T_0$ in $G$, a contradiction. This proves that $Y$ is indeed anticomplete to $X$, and we are done.~$\blacklozenge$

\begin{figure}
\begin{center}
\includegraphics[scale=0.5]{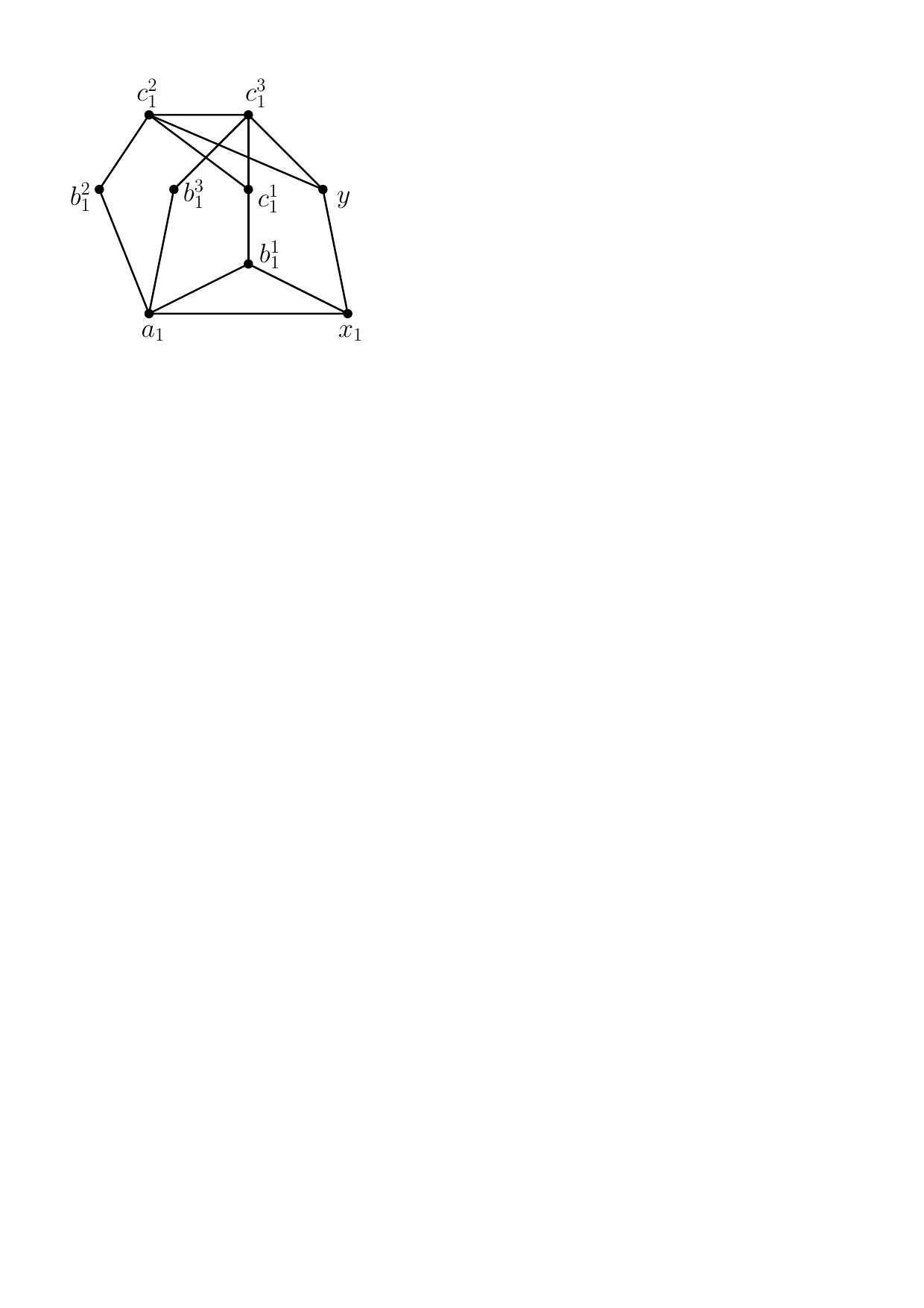}
\end{center} 
\caption{$G[x_1,y,a_1,b_1^1,b_1^2,b_1^3,c_1^1,c_1^2,c_1^3]$ is a $T_0$ (proof of Claim~\ref{lemma-T0free-pyramid-claim-DiXiY-anticomp}).} \label{fig:thm-T0free-pyramid-claim-DiXiY-anticomp} 
\end{figure} 

\begin{adjustwidth}{1cm}{1cm}
\begin{claim} \label{lemma-T0free-pyramid-claim-NGY} 
$N_G(Y) \subseteq C \cup F \cup W$. 
\end{claim} 
\end{adjustwidth} 
{\em Proof of Claim~\ref{lemma-T0free-pyramid-claim-NGY}.} By the definition of $Y$, we have that $N_G(Y) \cap V(Q) \subseteq C$. It remains to show that $N_G(Y) \setminus V(Q) \subseteq F \cup W$. But this follows immediately from Claims~\ref{lemma-T0free-pyramid-claim-sets-partition-VG-VQ},~\ref{lemma-T0free-pyramid-claim-YZ-anticomp}, and~\ref{lemma-T0free-pyramid-claim-DiXiY-anticomp}.~$\blacklozenge$

\begin{adjustwidth}{1cm}{1cm}
\begin{claim} \label{lemma-T0free-pyramid-claim-YC-AB} 
For all $i \in \{1,\dots,t\}$, if some vertex of $Y$ has a neighbor in $C_i$, then $A$ is complete to $B_i$. Consequently, if $Y$ is nonempty and complete to $C$, then $A$ is complete to $B$. 
\end{claim} 
\end{adjustwidth} 
{\em Proof of Claim~\ref{lemma-T0free-pyramid-claim-YC-AB}.} Clearly, the first statement implies the second. So, it suffices to prove the first statement, and by symmetry, it suffices to prove it for $i = 1$. Suppose otherwise, that is, suppose that some vertex $y \in Y$ has a neighbor in $C_1$, but $A$ is not complete to $B_1$. By Claim~\ref{lemma-T0free-pyramid-claim-not-mixed-Ci}, $y$ is complete to $C_1$, and by Lemma~\ref{lemma-t-frame}(\ref{ref-lemma-t-frame-AB}), $A$ is complete to $B \setminus B_1$. Now, fix nonadjacent vertices $a \in A$ and $b_1 \in B_1$. Then $G[y,c_1^1,b_1,b_1^2,a,b_1^3]$ is a $2P_3$, a contradiction.~$\blacklozenge$

\begin{adjustwidth}{1cm}{1cm}
\begin{claim} \label{lemma-T0free-pyramid-claim-Y-outcomes} 
If $Y \neq \emptyset$, then one of the following holds: 
\begin{itemize} 
\item $F \neq \emptyset$, and moreover, the set $Y$ is a clique, complete to $C \cup F \cup W$ and anticomplete to $V(G) \setminus (C \cup F \cup W \cup Y)$; 
\item $N_G(Y)$ is a clique, and $G$ contains a simplicial vertex. 
\end{itemize} 
\end{claim} 
\end{adjustwidth} 
{\em Proof of Claim~\ref{lemma-T0free-pyramid-claim-Y-outcomes}.} Assume that $Y \neq \emptyset$. If $N_G(Y)$ is a clique, then Claim~\ref{lemma-T0free-pyramid-claim-NGS-clique} guarantees that $G$ contains a simplicial vertex, and so the second outcome holds. Thus, it is enough to show that either the first outcome holds, or $N_G(Y)$ is a clique. 

Suppose first that $F = \emptyset$. Then Claim~\ref{lemma-T0free-pyramid-claim-NGY} guarantees that $N_G(Y) \subseteq C \cup W$. By Lemma~\ref{lemma-t-frame}(\ref{ref-lemma-t-frame-ABiCi-cliques}) and Claim~\ref{lemma-T0free-pyramid-claim-Fi-clique}, we know that $C$ and $W$ are cliques, and by the definition of $W$, we know that $W$ is complete to $C$. So, $C \cup W$ is a clique, and consequently, its subset $N_G(Y)$ is also a clique. 

From now on, we assume that $F \neq \emptyset$. By Claim~\ref{lemma-T0free-pyramid-claim-Fi-clique} and by symmetry, we may assume that $F_2 = \dots = F_t = \emptyset$ and $F = F_1 \neq \emptyset$. 

Suppose first that $Y$ is anticomplete to $C_1$. Then Claim~\ref{lemma-T0free-pyramid-claim-NGY} guarantees that $N_G(Y) \subseteq (C \setminus C_1) \cup F_1 \cup W$. By the definition of $F_1$ and $W$, we know that $F_1 \cup W$ is complete to $C \setminus C_1$. On the other hand, by Lemma~\ref{lemma-t-frame}(\ref{ref-lemma-t-frame-ABiCi-cliques}) and Claim~\ref{lemma-T0free-pyramid-claim-Fi-clique}, both $C \setminus C_1$ and $F_1 \cup W$ are cliques. Thus, $(C \setminus C_1) \cup F_1 \cup W$ is a clique, and consequently, its subset $N_G(Y)$ is also a clique. 

From now on, we assume that $Y$ is not anticomplete to $C_1$. Our goal is to show that the first outcome holds. By Claim~\ref{lemma-T0free-pyramid-claim-NGY}, we have that $N_G(Y) \subseteq C \cup F \cup W$, and in particular, $Y$ is anticomplete to $V(G) \setminus (C \cup F \cup W \cup Y)$. Thus, we just need to show that $Y$ is a clique, complete to $C \cup F \cup W = C \cup F_1 \cup W$. 

Let $Y'$ be the set of all vertices in $Y$ that have a neighbor in $C_1$; since $Y$ is not anticomplete to $C_1$, we know that $Y' \neq \emptyset$. By Claim~\ref{lemma-T0free-pyramid-claim-not-mixed-Ci}, $Y'$ is in fact complete to $C_1$. Then $Y'$ is complete to $F_1$, for otherwise, we fix nonadjacent vertices $y \in Y'$ and $f_1 \in F_1$, and we observe that $G[y,c_1^1,b_1^1,b_1^2,f_1,b_1^3]$ is a $2P_3$, a contradiction. Further, by Claim~\ref{lemma-T0free-pyramid-claim-Fi-clique}, $F_1$ is a clique, complete to $W$. But now for any $f_1 \in F_1$, we have that $f_1,c_1^1$ are distinct, nonadjacent vertices that are both complete to $(C \setminus C_1) \cup W \cup Y'$, and so by Proposition~\ref{prop-non-adj-comp-clique}, $(C \setminus C_1) \cup W \cup Y'$ is a clique. In particular, $Y'$ is a clique, complete to $(C \setminus C_1) \cup W$. We have already seen that $Y'$ is complete to $C_1$ and to $F_1$, and we deduce that $Y'$ is a clique, complete to $C \cup F_1 \cup W$. 

%So far, we have shown that $Y'$ is complete to $C \cup F_1 \cup W$. Moreover, $Y'$ is a clique, for otherwise, we fix distinct, nonadjacent vertices $y,y' \in Y'$, and we observe that $G[y,c_1^1,y',b_1^2,a_1,b_1^3]$ is a $2P_3$, a contradiction. 

It remains to show that $Y' = Y$. Suppose otherwise, and fix some $y \in Y \setminus Y'$. By the definition of $Y$, $y$ has a neighbor in $C$; since $y$ is anticomplete to $C_1$ (because $y \notin Y'$), it follows that $y$ in fact has a neighbor in $C \setminus C_1 = C_2 \cup \dots \cup C_t$. By symmetry, we may assume that $y$ is adjacent to some $c_2 \in C_2$. Fix any $y' \in Y'$. But now if $y,y'$ are adjacent, then $G[y,y',c_1^1,b_1^2,a_1,b_1^3]$ is a $2P_3$, a contradiction; and if $y,y'$ are nonadjacent, then $G[y,c_2,y',b_1^1,a_1,b_1^3]$ is a $2P_3$, again a contradiction. This proves that $Y' = Y$, and we are done.~$\blacklozenge$

\begin{adjustwidth}{1cm}{1cm}
\begin{claim} \label{lemma-T0free-pyramid-claim-DiXi-at-most-one-nonempty} 
At most one of $D$ and $X$ is nonempty. 
\end{claim} 
\end{adjustwidth} 
{\em Proof of Claim~\ref{lemma-T0free-pyramid-claim-DiXi-at-most-one-nonempty}.} We assume that $D \neq \emptyset$, and we show that $X = \emptyset$. By symmetry, we may assume that $D_1 \neq \emptyset$, and we fix some $d_1 \in D_1$. 

We first show that $X_1 = \emptyset$. Suppose otherwise, and fix some $x_1 \in X_1$. Then $d_1,x_1$ are adjacent for otherwise, $G[d_1,b_1^1,x_1,b_1^2,c_1^2,c_1^3]$ would be a $2P_3$, a contradiction. Now $x_1$ is nonadjacent to $a_1$, for otherwise, $d_1,x_1,a_1,b_1^2,c_1^2,c_1^1,d_1$ would be a 6-hole in $G$, a contradiction. But now $G[x_1,d_1,c_1^1,b_1^2,a_1,b_1^3]$ is a $2P_3$, a contradiction. This proves that $X_1 = \emptyset$. 

It remains to show that $X_2 \cup \dots \cup X_t = \emptyset$. Suppose otherwise. By symmetry, we may assume that $X_2 \neq \emptyset$, and we fix some $x_2 \in X_2$. Suppose first that $d_1,x_2$ are adjacent. Then $x_2$ is nonadjacent to $a_1$, for otherwise, $x_2,a_1,b_1^1,d_1,x_2$ would be a 4-hole in $G$, a contradiction. But then $d_1,x_2,b_1^2,a_1,b_1^3,c_1^3,c_1^1,d_1$ is a 7-hole in $G$, again a contradiction. This proves that $d_1,x_2$ are nonadjacent. But now if $x_2$ is adjacent to $a_1$, then $G[x_2,a_1,b_1^3,d_1,c_1^1,c_1^2]$ is a $2P_3$, a contradiction; and if $x_2$ is nonadjacent to $a_1$, then $G[x_2,b_1^2,a_1,d_1,c_1^1,c_1^3]$ is a $2P_3$, again a contradiction. This proves that $X_2 \cup \dots \cup X_t = \emptyset$, and we are done.~$\blacklozenge$

\begin{adjustwidth}{1cm}{1cm} 
\begin{claim} \label{lemma-T0free-pyramid-claim-DiXi-clique} 
Sets $D_1,\dots,D_t$ are cliques, and at most one of them is nonempty. Sets $X_1,\dots,X_t$ are cliques, pairwise anticomplete to each other. 
\end{claim}  
\end{adjustwidth} 
{\em Proof of Claim~\ref{lemma-T0free-pyramid-claim-DiXi-clique}.} We first show that $D_1 \cup X_1,\dots,D_t \cup X_t$ are all cliques. Suppose otherwise. By symmetry, we may assume that $D_1 \cup X_1$ is not a clique. Fix distinct, nonadjacent vertices $x_1,x_1' \in D_1 \cup X_1$. But $D_1 \cup X_1$ is complete to $B_1$ and anticomplete to $(B \setminus B_1) \cup (C \setminus C_1)$, and we deduce that $G[x_1,b_1^1,x_1',b_1^2,c_1^2,c_1^3]$ is a $2P_3$, a contradiction. This proves that $D_1 \cup X_1,\dots,D_t \cup X_t$ are all cliques. Consequently, $D_1,\dots,D_t,X_1,\dots,X_t$ are all cliques. 

Next, we show that at most one of $D_1,\dots,D_t$ is nonempty. Suppose otherwise. By symmetry, we may assume that $D_1,D_2$ are both nonempty, and we fix $d_1 \in D_1$ and $d_2 \in D_2$. But if $d_1,d_2$ are adjacent then $d_1,d_2,c_1^2,c_1^1,d_1$ is a 4-hole in $G$, whereas if $d_1,d_2$ are nonadjacent, then $G[d_1,b_1^1,a_1,d_2,c_1^2,c_1^3]$ is a $2P_3$, a contradiction in either case. This proves that at most one of $D_1,\dots,D_t$ is nonempty.

It remains to show that $X_1,\dots,X_t$ are pairwise anticomplete to each other. Suppose otherwise. By symmetry, we may assume that some $x_1 \in X_1$ and $x_2 \in X_2$ are adjacent. Then $x_1,b_1^1,c_1^1,c_1^2,b_1^2,x_2,x_1$ is a 6-hole in $G$, a contradiction. Thus, $X_1,\dots,X_t$ are indeed pairwise anticomplete to each other, and we are done.~$\blacklozenge$

\begin{adjustwidth}{1cm}{1cm}
\begin{claim} \label{lemma-T0free-pyramid-claim-Di-t=3} 
If $D \neq \emptyset$, then $t = 3$, and for all $i \in \{1,2,3\}$, $B_i$ is complete to $C_i$. 
\end{claim} 
\end{adjustwidth} 
{\em Proof of Claim~\ref{lemma-T0free-pyramid-claim-Di-t=3}.} Assume that $D \neq \emptyset$. By symmetry, we may assume that $D_1 \neq \emptyset$, and we fix an arbitrary $d_1 \in D_1$. 

First, if $t \geq 4$, then $G[d_1,c_1^1,c_1^2,b_1^3,a_1,b_1^4]$ is a $2P_3$, a contradiction. Therefore, $t = 3$. 

Next, if some $b_1 \in B_1$ and $c_1 \in C_1$ are nonadjacent, then $d_1,b_1,a_1,b_1^2,c_1^2,c_1,d_1$ is a 6-hole in $G$, a contradiction. Therefore, $B_1$ is complete to $C_1$. 

Finally, if some $b_2 \in B_2$ and $c_2 \in C_2$ are nonadjacent, then $G[d_1,c_1^1,c_2,b_2,a_1,b_1^3]$ is a $2P_3$, a contradiction. Therefore, $B_2$ is complete to $C_2$, and similarly, $B_3$ is complete to $C_3$.~$\blacklozenge$

\begin{adjustwidth}{1cm}{1cm}
\begin{claim} \label{lemma-T0free-pyramid-claim-NGDi} 
For all $i \in \{1,\dots,t\}$, $N_G(D_i) \subseteq B_i \cup C_i \cup (F \setminus F_i) \cup W$. 
\end{claim} 
\end{adjustwidth} 
{\em Proof of Claim~\ref{lemma-T0free-pyramid-claim-NGDi}.} By symmetry, it suffices to prove the claim for $i = 1$. We may assume that $D_1 \neq \emptyset$, for otherwise, $N_G(D_i) = \emptyset$, and the result is immediate. %By Claim~\ref{lemma-T0free-pyramid-claim-Di-t=3}, we have that $t = 3$. 

By the definition of $D_1$, we know that $N_G(D_1) \cap V(Q) = B_1 \cup C_1$. So, we just need to show that $N_G(D_1) \setminus V(Q) \subseteq (F \setminus F_1) \cup W$. Now, since $D_1 \neq \emptyset$, Claims~\ref{lemma-T0free-pyramid-claim-sets-partition-VG-VQ},~\ref{lemma-T0free-pyramid-claim-DiXi-at-most-one-nonempty}, and~\ref{lemma-T0free-pyramid-claim-DiXi-clique} together guarantee that $V(G) \setminus V(Q) = D_1 \cup F \cup Y \cup Z \cup W$. Further, by Claims~\ref{lemma-T0free-pyramid-claim-DiXiZ-anticomp} and~\ref{lemma-T0free-pyramid-claim-DiXiY-anticomp}, we have that $D$ is anticomplete to $Y \cup Z$, and we deduce that $N_G(D_1) \setminus V(Q) \subseteq F \cup W$. 

It now remains to show that $D_1$ is anticomplete to $F_1$. Suppose otherwise. By symmetry, we may assume that some $d_1 \in D_1$ and $f_1 \in F_1$ are adjacent. But then $d_1,f_1,c_1^2,c_1^1,d_1$ is a 4-hole in $G$, a contradiction.~$\blacklozenge$

\begin{adjustwidth}{1cm}{1cm}
\begin{claim} \label{lemma-T0free-pyramid-claim-Di-simplicial} 
If $D \neq \emptyset$, then $G$ contains a simplicial vertex. 
\end{claim} 
\end{adjustwidth} 
{\em Proof of Claim~\ref{lemma-T0free-pyramid-claim-Di-simplicial}.} Assume that $D \neq \emptyset$. By symmetry, we may assume that $D_1 \neq \emptyset$. By Claim~\ref{lemma-T0free-pyramid-claim-NGS-clique}, it suffices to show that $N_G(D_1)$ is a clique. By Claim~\ref{lemma-T0free-pyramid-claim-NGDi}, we have that $N_G(D_1) \subseteq B_1 \cup C_1 \cup (F \setminus F_1) \cup W$, and so we just need to show that $B_1 \cup C_1 \cup (F \setminus F_1) \cup W$ is a clique. By the definition of sets $F$, $F_1$, and $W$, we know that $(F \setminus F_1) \cup W$ is complete to $B_1 \cup C_1$. By Claim~\ref{lemma-T0free-pyramid-claim-Fi-clique}, $(F \setminus F_1) \cup W$ is a clique, and by Lemma~\ref{lemma-t-frame}(\ref{ref-lemma-t-frame-ABiCi-cliques}), $B_1$ and $C_1$ are cliques. Finally, since $D_1 \neq \emptyset$, Claim~\ref{lemma-T0free-pyramid-claim-Di-t=3} guarantees that $B_1$ is complete to $C_1$. So, $B_1 \cup C_1 \cup (F \setminus F_1) \cup W$ is indeed a clique, and we are done.~$\blacklozenge$ 

%\begin{adjustwidth}{1cm}{1cm}
%\begin{claim} \label{lemma-T0free-pyramid-claim-Xi-cliques} 
%Sets $X_1,\dots,X_t$ are cliques, pairwise anticomplete to each other. 
%\end{claim} 
%\end{adjustwidth} 
%{\em Proof of Claim~\ref{lemma-T0free-pyramid-claim-Xi-cliques}.} We first show that $X_1,\dots,X_t$ are cliques. Suppose otherwise. By symmetry, we may assume that $X_1$ is not a clique. Fix distinct, nonadjacent vertices $x_1,x_1' \in X_1$. Then $G[x_1,b_1^1,x_1',b_1^2,c_1^2,c_1^3]$ is a $2P_3$, a contradiction. This proves that $X_1,\dots,X_t$ are indeed cliques. 
%
%It remains to show that $X_1,\dots,X_t$ are pairwise anticomplete to each other. Suppose otherwise. By symmetry, we may assume that some $x_1 \in X_1$ and $x_2 \in X_2$ are adjacent. Then $x_1,b_1^1,c_1^1,c_1^2,b_1^2,x_2,x_1$ is a 6-hole in $G$, a contradiction. Thus, $X_1,\dots,X_t$ are indeed pairwise anticomplete to each other, and we are done.~$\blacklozenge$ 

\begin{adjustwidth}{1cm}{1cm}
\begin{claim} \label{lemma-T0free-pyramid-claim-Xi-nonempty-AB-complete} 
If $X \neq \emptyset$, then $A$ is complete to $B$. 
\end{claim} 
\end{adjustwidth} 
{\em Proof of Claim~\ref{lemma-T0free-pyramid-claim-Xi-nonempty-AB-complete}.} Assume that $X \neq \emptyset$. By symmetry, we may assume that $X_1 \neq \emptyset$, and we fix some $x_1 \in X_1$. 

We first show that $A$ is complete to $B_1$. Suppose otherwise, and fix nonadjacent vertices $a \in A$ and $b_1 \in B_1$. Then $x_1$ is nonadjacent to $a$, for otherwise, $x_1,a,b_1^2,c_1^2,c_1^1,b_1,x_1$ would be a 6-hole in $G$, a contradiction. Now, since $A$ is not complete to $B_1$, Lemma~\ref{lemma-t-frame}(\ref{ref-lemma-t-frame-AB}) guarantees that $t = 3$ and that $A$ is complete to $B \setminus B_1 = B_2 \cup B_3$. But then $G[x_1,b_1^1,c_1^1,b_1^2,a,b_1^3]$ is a $2P_3$, a contradiction. This proves that $A$ is indeed complete to $B_1$. 

It remains to show that $A$ is complete to $B \setminus B_1 = B_2 \cup \dots \cup B_t$. Suppose otherwise. By symmetry, we may assume that some $a \in A$ and $b_2 \in B_2$ are nonadjacent. In particular, $A$ is not complete to $B_2$, and so by Lemma~\ref{lemma-t-frame}(\ref{ref-lemma-t-frame-AB}), we have that $t = 3$ and that $A$ is complete to $B \setminus B_2 = B_1 \cup B_3$. But now if $x_1$ is adjacent to $a$, then $G[x_1,a,b_1^3,b_2,c_1^2,c_1^1]$ is a $2P_3$, a contradiction; and if $x_1$ is nonadjacent to $a$, then $G[x_1,b_1^1,a,b_2,c_1^2,c_1^3]$ is a $2P_3$, again a contradiction. This proves that $A$ is indeed complete to $B \setminus B_1$, and we are done.~$\blacklozenge$

\begin{adjustwidth}{1cm}{1cm}
\begin{claim} \label{lemma-T0free-pyramid-claim-FiXi} 
For all $i \in \{1,\dots,t\}$, $X_i$ is complete to $B_i \cup F_i$ and anticomplete to $V(G) \setminus (A \cup B_i \cup F \cup W \cup X_i)$, and consequently, $N_G(X_i) \subseteq A \cup B_i \cup F \cup W$.  
\end{claim} 
\end{adjustwidth} 
{\em Proof of Claim~\ref{lemma-T0free-pyramid-claim-FiXi}.} By symmetry, it suffices to prove the claim for $i = 1$. We may assume that $X_1 \neq \emptyset$, for otherwise, the result is immediate. 

By definition, $X_1$ is complete to $B_1$. Moreover, $X_1$ is complete to $F_1$, for if some $x_1 \in X_1$ and $f_1 \in F_1$ were nonadjacent, then $G[x_1,b_1^1,c_1^1,b_1^2,f_1,b_1^3]$ would be a $2P_3$, a contradiction. This proves that $X_1$ is complete to $B_1 \cup F_1$. 

It now remains to show that $X_1$ is anticomplete to $V(G) \setminus (A \cup B_1 \cup F \cup W \cup X_1)$. But note that $V(G) \setminus (A \cup B_1 \cup F \cup W \cup X_1) = \big(V(Q) \setminus (A \cup B_1)\big) \cup \big(V(G) \setminus (V(Q) \cup F \cup W \cup X_1)\big)$. By definition, $X_1$ is anticomplete to $V(Q) \setminus (A \cup B_1)$. So, in fact, we just need to prove that $X_1$ is anticomplete to $V(G) \setminus (V(Q) \cup F \cup W \cup X_1)$. Since $X_1 \neq \emptyset$, Claim~\ref{lemma-T0free-pyramid-claim-DiXi-at-most-one-nonempty} guarantees that $D = \emptyset$. So, by Claim~\ref{lemma-T0free-pyramid-claim-sets-partition-VG-VQ}, we have that $V(G) \setminus (V(Q) \cup F \cup W \cup X_1) = (X_2 \cup \dots \cup X_t) \cup Y \cup Z$. But we are now done, since Claims~\ref{lemma-T0free-pyramid-claim-DiXiZ-anticomp},~\ref{lemma-T0free-pyramid-claim-DiXiY-anticomp}, and~\ref{lemma-T0free-pyramid-claim-DiXi-clique} together guarantee that $X_1$ is anticomplete to $(X_2 \cup \dots \cup X_t) \cup Y \cup Z$.~$\blacklozenge$

\begin{adjustwidth}{1cm}{1cm}
\begin{claim} \label{lemma-T0free-pyramid-claim-Fi-empty-NGXi-clique} 
For all $i \in \{1,\dots,t\}$, if $F_i = \emptyset$, then $N_G(X_i)$ is a clique. Consequently, if there exists some $i \in \{1,\dots,t\}$ such that $F_i = \emptyset$ and $X_i \neq \emptyset$, then $G$ contains a simplicial vertex. 
\end{claim} 
\end{adjustwidth} 
{\em Proof of Claim~\ref{lemma-T0free-pyramid-claim-Fi-empty-NGXi-clique}.} In view of Claim~\ref{lemma-T0free-pyramid-claim-NGS-clique}, we see that the second statement follows immediately from the first. Thus, it suffices to prove the first statement. So, we fix $i \in \{1,\dots,t\}$, we assume that $F_i = \emptyset$, and we show that $N_G(X_i)$ is a clique. We may assume that $X_i \neq \emptyset$, for otherwise, $N_G(X_i) = \emptyset$, and we are done. By Claim~\ref{lemma-T0free-pyramid-claim-FiXi}, we have that $N_G(X_i) \subseteq A \cup B_i \cup F \cup W$. Thus, it is enough to show that $A \cup B_i \cup F \cup W$ is  clique. By Lemma~\ref{lemma-t-frame}(\ref{ref-lemma-t-frame-ABiCi-cliques}), $A$ and $B_i$ are cliques, and since $X_i \neq \emptyset$, Claim~\ref{lemma-T0free-pyramid-claim-Xi-nonempty-AB-complete} guarantees that $A$ is complete to $B$ (and consequently, to $B_i$); so $A \cup B_i$ is a clique. Moreover, since $F_i = \emptyset$, the definition of $F$ and $W$ guarantees that $F \cup W$ is complete to $A \cup B_i$. Finally, Claim~\ref{lemma-T0free-pyramid-claim-Fi-clique} guarantees that $F \cup W$ is a clique. Thus, $A \cup B_i \cup F \cup W$ is a clique, and we are done.~$\blacklozenge$

\begin{adjustwidth}{1cm}{1cm}
\begin{claim} \label{lemma-T0free-pyramid-claim-Fi-nonempty-Xi-complete-ABiFi} 
For all $i \in \{1,\dots,t\}$, if $F_i \neq \emptyset$, then $X_i$ is complete to $A \cup B_i \cup F_i \cup W$ and anticomplete to $V(G) \setminus (A \cup B_i \cup F_i \cup W \cup X_i)$. 
\end{claim} 
\end{adjustwidth} 
{\em Proof of Claim~\ref{lemma-T0free-pyramid-claim-Fi-nonempty-Xi-complete-ABiFi}.} By symmetry, it suffices to prove the claim for $i = 1$. So, assume that $F_1 \neq \emptyset$. By Claim~\ref{lemma-T0free-pyramid-claim-Fi-clique}, it follows that $F_2 = \dots = F_t = \emptyset$, and consequently, $F = F_1$. Now, we must show that $X_1$ is complete to $A \cup B_1 \cup F_1 \cup W$ and anticomplete to $V(G) \setminus (A \cup B_1 \cup F_1 \cup W \cup X_1)$. Since $F = F_1$, Claim~\ref{lemma-T0free-pyramid-claim-FiXi} guarantees that $X_1$ is complete to $B_1 \cup F_1$ and anticomplete to $V(G) \setminus (A \cup B_1 \cup F_1 \cup W \cup X_1)$. So, it only remains to show that $X_1$ is complete to $A \cup W$. Now, since $X_1 \neq \emptyset$, Claim~\ref{lemma-T0free-pyramid-claim-Xi-nonempty-AB-complete} guarantees that $A$ is complete to $B$. On the other hand, by Claim~\ref{lemma-T0free-pyramid-claim-Fi-clique}, $F$ is complete to $W$. But now we fix any $f_1 \in F_1$, and we observe that $b_1^1,f_1$ are distinct, nonadjacent vertices that are both complete to $X_1 \cup A \cup W$; so, by Proposition~\ref{prop-non-adj-comp-clique}, $X_1 \cup A \cup W$ is a clique. In particular, $X_1$ is complete to $A \cup W$, and we are done.~$\blacklozenge$

\begin{adjustwidth}{1cm}{1cm}
\begin{claim} \label{lemma-T0free-pyramid-claim-W-universal} 
If $W \neq \emptyset$, then $G$ contains a simplicial vertex or a universal vertex. 
\end{claim} 
\end{adjustwidth} 
{\em Proof of Claim~\ref{lemma-T0free-pyramid-claim-W-universal}.} Assume that $W \neq \emptyset$. First, we may assume that $D \cup Z = \emptyset$, for otherwise, Claims~\ref{lemma-T0free-pyramid-claim-NGZ} and~\ref{lemma-T0free-pyramid-claim-Di-simplicial} guarantee that $G$ contains a simplicial vertex, and we are done. So, by Claim~\ref{lemma-T0free-pyramid-claim-sets-partition-VG-VQ}, we have that $V(G) = V(Q) \cup F \cup X \cup Y \cup W$. By the definition of $W$, we know that $W$ is complete to $V(Q)$, and by Claim~\ref{lemma-T0free-pyramid-claim-Fi-clique}, $W$ is a clique, complete to $F$. Moreover, we may assume that $W$ is complete to $Y$, for otherwise, Claim~\ref{lemma-T0free-pyramid-claim-Y-outcomes} guarantees that $G$ contains a simplicial vertex, and we are done.\footnote{Indeed, suppose that $W$ is not complete to $Y$. In particular, $Y \neq \emptyset$, and so Claim~\ref{lemma-T0free-pyramid-claim-Y-outcomes} applies. Since $Y$ is not complete to $W$, the first outcome of Claim~\ref{lemma-T0free-pyramid-claim-Y-outcomes} does not hold. Therefore, the second outcome of Claim~\ref{lemma-T0free-pyramid-claim-Y-outcomes} holds, and it follows that $G$ contains a simplicial vertex.} If $W$ is complete to $X$, then every vertex of $W$ is universal in $G$, and consequently (since $W \neq \emptyset$), $G$ contains a universal vertex. So, we may assume that $W$ is not complete to $X$. By symmetry, we may further assume that $W$ is not complete to $X_1$, and in particular, $X_1 \neq \emptyset$. Since $X_1$ is not complete to $W$, Claim~\ref{lemma-T0free-pyramid-claim-Fi-nonempty-Xi-complete-ABiFi} guarantees that $F_1 = \emptyset$. But now Claim~\ref{lemma-T0free-pyramid-claim-Fi-empty-NGXi-clique} guarantees that $G$ contains a simplicial vertex.~$\blacklozenge$ 

\begin{adjustwidth}{1cm}{1cm} 
\begin{claim} \label{lemma-T0free-pyramid-claim-outcomes-prelim}
At least one of the following holds: 
\begin{itemize} 
\item $G = Q$; 
\item there exists some $i \in \{1,\dots,t\}$ such that $F_i \neq \emptyset$ and $V(G) = V(Q) \cup F_i \cup X_i \cup Y$; 
\item $G$ contains a simplicial vertex or a universal vertex. 
\end{itemize} 
\end{claim} 
\end{adjustwidth} 
{\em Proof of Claim~\ref{lemma-T0free-pyramid-claim-outcomes-prelim}.} We may assume that $G$ contains no simplicial vertices and no universal vertices, for otherwise, we are done. So, by Claims~\ref{lemma-T0free-pyramid-claim-NGZ},~\ref{lemma-T0free-pyramid-claim-Di-simplicial}, and~\ref{lemma-T0free-pyramid-claim-W-universal}, $D,Z,W$ are all empty. Therefore, by Claim~\ref{lemma-T0free-pyramid-claim-sets-partition-VG-VQ}, we have that $V(G) = V(Q) \cup F \cup X \cup Y$. By Claim~\ref{lemma-T0free-pyramid-claim-Fi-clique}, and by symmetry, we may assume that $F_2 = \dots = F_t = \emptyset$ and $F = F_1$. So, by Claim~\ref{lemma-T0free-pyramid-claim-Fi-empty-NGXi-clique}, we have that $X_2 = \dots = X_t = \emptyset$. Therefore, $V(G) = V(Q) \cup F_1 \cup X_1 \cup Y$. If $F_1 \neq \emptyset$, then we are done. We may therefore assume that $F_1 = \emptyset$. But now Claim~\ref{lemma-T0free-pyramid-claim-Fi-empty-NGXi-clique} implies that $X_1 = \emptyset$, and Claim~\ref{lemma-T0free-pyramid-claim-Y-outcomes} implies that $Y = \emptyset$. So, $V(G) = V(Q)$. Since $Q$ is an induced subgraph of $G$, it follows that $G = Q$, and we are done.~$\blacklozenge$

\medskip 

We are now ready to complete the proof of the lemma. First of all, by Lemma~\ref{lemma-t-frame}(\ref{ref-lemma-t-frame-t-villa}-\ref{ref-lemma-t-frame-5-basket}), we have the following: 
\begin{enumerate}[(1)] 
\item if $A$ is complete to $B$, then $Q$ is a $t$-villa with an associated $t$-villa partition $(A;B_1,\dots,B_t;C_1,\dots,C_t)$; 
\item if $A$ is not complete to $B$, then $t = 3$, and $Q$ is a 5-basket with an associated 5-basket partition $(A;B_1,B_2,B_3;C_1,C_2,C_3;\emptyset)$. 
\end{enumerate} 
In particular, $Q$ is a villa or a 5-basket, and so if $G = Q$, then we are done. We are also done if $G$ contains a simplicial vertex or a universal vertex. Thus, by Claim~\ref{lemma-T0free-pyramid-claim-outcomes-prelim}, and by symmetry, we may assume that $F_1 \neq \emptyset$, and that $V(G) = V(Q) \cup F_1 \cup X_1 \cup Y$. By definition, $F_1$ is complete to $V(Q) \setminus (B_1 \cup C_1)$ and anticomplete to $B_1 \cup C_1$, and by Claim~\ref{lemma-T0free-pyramid-claim-Fi-clique}, we have that $F_1$ is a clique, and that $B_1$ is complete to $C_1$. 

Suppose first that $A$ is not complete to $B$. Then Claims~\ref{lemma-T0free-pyramid-claim-YC-AB} and~\ref{lemma-T0free-pyramid-claim-Y-outcomes} together guarantee that $Y = \emptyset$,\footnote{Indeed, suppose that $Y \neq \emptyset$. Then, since $G$ contains no simplicial vertices, Claim~\ref{lemma-T0free-pyramid-claim-Y-outcomes} guarantees that $Y$ is complete to $C$. But then by Claim~\ref{lemma-T0free-pyramid-claim-YC-AB}, $A$ is complete to $B$, a contradiction.} whereas Claim~\ref{lemma-T0free-pyramid-claim-Xi-nonempty-AB-complete} guarantees that $X_1 = \emptyset$. So, $V(G) = V(Q) \cup F_1$. In view of~(2), we now deduce that $t = 3$, and that $G$ is a 5-basket, with an associated 5-basket partition $(A;B_1,B_2,B_3;C_1,C_2,C_3;F_1)$. 

Suppose now that $A$ is complete to $B$. Our goal is to show that $G$ is a $t$-mansion, with an associated $t$-mansion partition $(A;B_1,\dots,B_t;C_1,\dots,C_t;F_1;X_1;Y)$, and with $j^* = 1$.\footnote{Here, $j^*$ is the index from the definition of a $t$-mansion.} First of all, by~(1), $Q$ is a $t$-villa, with an associated $t$-villa partition $(A;B_1,\dots,B_t;C_1,\dots,C_t)$. Next, recall that $F = F_1 \neq \emptyset$, that $W = \emptyset$,\footnote{The fact that $W = \emptyset$ follows from the fact that $V(G) = V(Q) \cup F_1 \cup X_1 \cup Y$.} and that $B_1$ is complete to $C_1$. By Claims~\ref{lemma-T0free-pyramid-claim-DiXi-clique} and~\ref{lemma-T0free-pyramid-claim-Fi-nonempty-Xi-complete-ABiFi}, $X_1$ is a clique, complete to $A \cup B_1 \cup F_1$ and anticomplete to $V(G) \setminus (A \cup B_1 \cup F_1 \cup X_1)$; in particular, $X_1$ is anticomplete to $Y$. Further, since $G$ contains no simplicial vertices, Claim~\ref{lemma-T0free-pyramid-claim-Y-outcomes} implies that $Y$ is a clique, complete to $C \cup F_1$ and anticomplete to $V(G) \setminus (C \cup F_1 \cup Y)$.\footnote{If $Y = \emptyset$, then $Y$ is obviously a clique, complete to $C \cup F_1$ and anticomplete to $V(G) \setminus (C \cup F_1 \cup Y)$. On the other hand, if $Y \neq \emptyset$, then Claim~\ref{lemma-T0free-pyramid-claim-Y-outcomes} applies.} The result is now immediate. 
\end{proof}

\begin{theorem} \label{thm-T0free-pyramid-iff} For any graph $G$, the following are equivalent: 
\begin{enumerate}[(a)]
\item $G$ is a $(2P_3,C_4,C_6,C_7,T_0)$-free graph that contains an induced 3-pentagon and contains no simplicial vertices; 
\item $G$ contains exactly one nontrivial anticomponent, and this anticomponent is a 5-basket, a villa, or a mansion; 
\item $G$ can be obtained from a 5-basket, a villa, or a mansion by possibly adding universal vertices to it. 
\end{enumerate} 
\end{theorem}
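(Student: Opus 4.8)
The plan is to treat Theorem~\ref{thm-T0free-pyramid-iff} as a corollary of the heavy technical Lemma~\ref{lemma-T0free-pyramid}, together with the structural Propositions~\ref{prop-5-basket-in-class}, \ref{prop-t-villa-in-class}, and~\ref{prop-t-mansion-in-class} and the anticomponent machinery (Propositions~\ref{prop-non-trivial-anticomp-univ-vertices}, \ref{prop-one-nontrivial-anticomp-simplicial}, \ref{prop-H-free-no-universal}, and~\ref{prop-C4-free-one-anticomp}). I would first dispose of (b)$\Leftrightarrow$(c): each of a 5-basket, villa, and mansion is anticonnected and has at least two vertices (this is asserted in Propositions~\ref{prop-5-basket-in-class}, \ref{prop-t-villa-in-class}, and~\ref{prop-t-mansion-in-class}), so Proposition~\ref{prop-non-trivial-anticomp-univ-vertices}, applied with $Q$ ranging over these three families, yields (b)$\Leftrightarrow$(c) at once. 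It then remains to prove (a)$\Leftrightarrow$(b), which I would split into the implications (a)$\Rightarrow$(b) and (b)$\Rightarrow$(a).

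For (a)$\Rightarrow$(b), the idea is to pass to the unique nontrivial anticomponent and apply Lemma~\ref{lemma-T0free-pyramid} there. Since $G$ contains an induced 3-pentagon, $G$ has a pair of nonadjacent vertices and hence at least one nontrivial anticomponent; since $G$ is $C_4$-free, Proposition~\ref{prop-C4-free-one-anticomp} gives at most one. So $G$ has exactly one nontrivial anticomponent $Q$, and $Q$, being anticonnected with at least two vertices, has no universal vertex (a universal vertex would be isolated in the connected complement $\overline{Q}$). Because each of $2P_3$, $C_4$, $C_6$, $C_7$, $T_0$, and the 3-pentagon has no universal vertex, Proposition~\ref{prop-H-free-no-universal} shows that $Q$ inherits from $G$ both $(2P_3,C_4,C_6,C_7,T_0)$-freeness and the presence of an induced 3-pentagon, while Proposition~\ref{prop-one-nontrivial-anticomp-simplicial} shows that $Q$, like $G$, has no simplicial vertex. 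Thus $Q$ meets the hypotheses of Lemma~\ref{lemma-T0free-pyramid}, and since $Q$ has neither a simplicial nor a universal vertex, the lemma forces $Q$ to be a 5-basket, villa, or mansion, which is exactly~(b).

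For (b)$\Rightarrow$(a), I would run the same reduction in reverse. Let $Q$ be the unique nontrivial anticomponent of $G$, which by~(b) is a 5-basket, villa, or mansion. Propositions~\ref{prop-5-basket-in-class}, \ref{prop-t-villa-in-class}, and~\ref{prop-t-mansion-in-class} give that $Q$ is $(2P_3,C_4,C_6,C_7,T_0)$-free, has no simplicial vertex, and contains an induced $t$-pentagon for some $t \geq 3$; since a $t$-pentagon contains an induced 3-pentagon (for instance on $\{a,b_1,b_2,b_3,c_1,c_2,c_3\}$), $Q$ contains an induced 3-pentagon, and as $Q$ is an induced subgraph of $G$, so does $G$. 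Proposition~\ref{prop-H-free-no-universal}, applied to each forbidden graph, transfers $(2P_3,C_4,C_6,C_7,T_0)$-freeness from $Q$ up to $G$, and Proposition~\ref{prop-one-nontrivial-anticomp-simplicial} transfers the absence of simplicial vertices from $Q$ up to $G$. This establishes~(a) and closes the cycle.

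Since essentially all of the real content lives in Lemma~\ref{lemma-T0free-pyramid}, I do not expect a genuine obstacle at this stage; the only care required is bookkeeping. The two points to watch are verifying that every graph in the forbidden list, as well as the 3-pentagon, has no universal vertex (so that Proposition~\ref{prop-H-free-no-universal} applies in both directions), and confirming the elementary fact that a $t$-pentagon contains an induced 3-pentagon, which is what converts the ``$t$-pentagon'' conclusion of Propositions~\ref{prop-t-villa-in-class} and~\ref{prop-t-mansion-in-class} into the ``3-pentagon'' wording of statement~(a).
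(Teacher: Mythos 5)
Your proposal is correct and follows essentially the same route as the paper: (b)$\Leftrightarrow$(c) via Proposition~\ref{prop-non-trivial-anticomp-univ-vertices}, and (a)$\Leftrightarrow$(b) by passing to the unique nontrivial anticomponent (Propositions~\ref{prop-C4-free-one-anticomp}, \ref{prop-H-free-no-universal}, \ref{prop-one-nontrivial-anticomp-simplicial}) and invoking Lemma~\ref{lemma-T0free-pyramid} in one direction and Propositions~\ref{prop-5-basket-in-class}--\ref{prop-t-mansion-in-class} in the other. The only cosmetic differences are that the paper deduces the existence of a nontrivial anticomponent from the absence of simplicial vertices rather than from the 3-pentagon, and that you explicitly record the (true, and worth noting) fact that a $t$-pentagon contains an induced 3-pentagon.
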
 
\begin{proof} 
Fix a graph $G$. By Propositions~\ref{prop-5-basket-in-class},~\ref{prop-t-villa-in-class},~\ref{prop-t-mansion-in-class}, we know that 5-baskets, villas, and mansions are anticonnected, and obviously, they all have at least two vertices. So, by Proposition~\ref{prop-non-trivial-anticomp-univ-vertices}, (b) and (c) are equivalent. It remains to show that (a) and (b) are equivalent. 

Suppose first that (a) holds, so that $G$ is a $(2P_3,C_4,C_6,C_7,T_0)$-free graph that contains an induced 3-pentagon and contains no simplicial vertices. Since $G$ contains no simplicial vertices, we know that $G$ is not a complete graph. Therefore, $G$ contains at least one nontrivial anticomponent; since $G$ is $C_4$-free, Proposition~\ref{prop-C4-free-one-anticomp} guarantees that $G$ in fact contains exactly one nontrivial anticomponent, call it $Q$. Since $G$ is $(2P_3,C_4,C_6,C_7,T_0)$-free, so is $Q$. Since $G$ contains an induced 3-pentagon, and since the 3-pentagon contains no universal vertices, Proposition~\ref{prop-H-free-no-universal} guarantees that $Q$ contains an induced 3-pentagon. Moreover, since $G$ contains no simplicial vertices, Proposition~\ref{prop-one-nontrivial-anticomp-simplicial} guarantees that $Q$ contains no simplcial vertices. Finally, since $Q$ is an anticonnected graph on at least two vertices, we know that $Q$ contains no universal vertices. Therefore, Lemma~\ref{lemma-T0free-pyramid} guarantees that $Q$ is a 5-basket, villa, or mansion. So, (b) holds. 

Suppose conversely that (b) holds. Then $G$ contains exactly one nontrivial anticomponent, call it $Q$, and this anticomponent is a 5-basket, villa, or mansion. By Propositions~\ref{prop-5-basket-in-class},~\ref{prop-t-villa-in-class}, and~\ref{prop-t-mansion-in-class}, $Q$ is a $(2P_3,C_4,C_6,C_7,T_0)$-free graph that contains an induced 3-pentagon and contains no simplicial vertices. Since none of the graphs $2P_3,C_4,C_6,C_7,T_0$ contains a universal vertex, Proposition~\ref{prop-H-free-no-universal} guarantees that $G$ is $(2P_3,C_4,C_6,C_7,T_0)$-free. Moreover, since $Q$ contains an induced 3-pentagon, so does $G$. Finally, since $Q$ contains no simplicial vertices, Proposition~\ref{prop-one-nontrivial-anticomp-simplicial} guarantees that $G$ contains no simplicial vertices. Thus, (a) holds. 
\end{proof}

\section{$\boldsymbol{(2P_3,C_4,C_6,C_7,\text{3-pentagon})}$-free graphs} \label{sec:3-pentagon-free}

In this section, we study those $(2P_3,C_4,C_6,C_7,T_0)$-free graphs that are additionally 3-pentagon-free. Recall that the 3-pentagon is an induced subgraph of $T_0$; consequently, a graph is $(2P_3,C_4,C_6,C_7,T_0,\text{3-pentagon})$-free if and only if it is simply $(2P_3,C_4,C_6,C_7,\text{3-pentagon})$-free. The main goal of this section in to prove Theorem~\ref{thm-non-chordal-pyramid-free-iff}, which gives a full structural description of $(2P_3,C_4,C_6,C_7,\text{3-pentagon})$-free graphs that contain no simplicial vertices. %As we shall see, $(2P_3,C_4,C_6,C_7,\text{3-pentagon})$-free are, in particular, $(\text{3PC},\text{proper wheel})$-free (see below for the relevant definitions, and also see Proposition~\ref{prop-no-3-pentagon-GUT}). $(\text{3PC},\text{proper wheel})$-free graphs were studied in~\cite{VIK}, and as we proceed, we will make heavy use of that paper. Before continuing, let us also mention that 3PC's and wheels (defined below) are sometimes referred to as ``Truemper configurations'' (see~\cite{Truemper-survey} for a slightly dated survey). 

We begin with some definitions. A {\em theta} is any subdivision of the complete bipartite graph $K_{2,3}$; in particular, $K_{2,3}$ is a theta. A {\em pyramid} is any subdivision of the complete graph $K_4$ in which one triangle remains unsubdivided, and of the remaining three edges, at least two edges are subdivided at least once. A {\em prism} is any subdivision of $\overline{C_6}$ (the complement of $C_6$) in which the two triangles remain unsubdivided; in particular, $\overline{C_6}$ is a prism. A {\em three-path-configuration} (or {\em 3PC} for short) is any theta, pyramid, or prism; the three types of 3PC are represented in Figure~\ref{fig:Truemper}. Note that any 3PC contains exactly three holes. Moreover, any theta and any prism contains an even hole. On the other hand, every pyramid contains an odd hole, and it is easy to see that the 3-pentagon is the only pyramid in which all three holes are of length five. Since all holes in a $(2P_3,C_4,C_6,C_7)$-free graph are of length five, it follows that all $(2P_3,C_4,C_6,C_7,\text{3-pentagon})$-free graphs are 3PC-free. 

A {\em wheel} is a graph that consists of a hole and an additional vertex that has at least three neighbors in the hole. If this additional vertex is adjacent to all vertices of the hole, then the wheel is said to be a {\em universal wheel}; if the additional vertex is adjacent to three consecutive vertices of the hole, and to no other vertices of the hole, then the wheel is said to be a {\em twin wheel}. A {\em proper wheel} is a wheel that is neither a universal wheel nor a twin wheel. Note that every proper wheel has at least two holes of two different lengths. Since all holes in a $(2P_3,C_4,C_6,C_7)$-free graph are of length five, it follows that all $(2P_3,C_4,C_6,C_7)$-free graphs are proper-wheel free, and moreover, that the only wheels that $(2P_3,C_4,C_6,C_7)$-free graphs may possibly contain as induced subgraphs are the two wheels represented in Figure~\ref{fig:TwoWheels}. We summarize these remarks in Proposition~\ref{prop-no-3-pentagon-GUT} (below) for future reference. 

\begin{proposition} \label{prop-no-3-pentagon-GUT} Any $(2P_3,C_4,C_6,C_7,\text{3-pentagon})$-free graph is $(\text{3PC},\text{proper wheel})$-free. 
\end{proposition}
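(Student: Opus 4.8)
The plan is to assemble the two structural remarks recorded immediately before the statement, each of which rests on the single fact that every hole in a $(2P_3,C_4,C_6,C_7)$-free graph has length exactly five. I would first re-derive this fact: a hole has length at least four; holes of length at least eight contain an induced $2P_3$, and holes of lengths four, six, and seven are precisely $C_4$, $C_6$, and $C_7$, so once all of $2P_3,C_4,C_6,C_7$ are forbidden only length-five holes survive (note that $C_5$ contains none of the forbidden graphs). Fix, then, a $(2P_3,C_4,C_6,C_7,\text{3-pentagon})$-free graph $G$, so that every hole of $G$ has length five.

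For 3PC-freeness I would argue by contradiction. Suppose $G$ contains an induced 3PC $H$; since $H$ is induced, each of its three holes is an induced cycle of $G$, hence has length five. If $H$ is a theta or a prism, then writing the three branch-path lengths as $\ell_1,\ell_2,\ell_3$, the three hole lengths are $\ell_i+\ell_j$ (theta) or $\ell_i+\ell_j+2$ (prism) over the three pairs; their sum is even, so they cannot all be odd, giving an even hole and contradicting that all holes of $G$ are odd. If instead $H$ is a pyramid with apex-to-triangle paths of lengths $p,q,r$, its three holes have lengths $p+q+1$, $q+r+1$, $r+p+1$, and forcing all three to equal five yields $p=q=r=2$, so $H$ is the $3$-pentagon, contradicting $3$-pentagon-freeness. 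Hence $G$ is 3PC-free.

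For proper-wheel-freeness I would again argue by contradiction: an induced proper wheel contains two holes of two distinct lengths (as noted before the statement), but both are holes of $G$ and so have length five, a contradiction. Combining the two conclusions proves the proposition.

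Since the surrounding text already records all the auxiliary facts, the only genuine content is their verification, and this is where I expect the (mild) main obstacle to lie: confirming that thetas and prisms always carry an even hole, that the $3$-pentagon is the unique pyramid all of whose holes have length five, and that a proper wheel always exhibits holes of two different lengths. The first three reduce to the parity and arithmetic bookkeeping sketched above; the last follows by noting that a vertex with at least three neighbors on a hole, attached neither as a twin nor universally, splits the hole into sectors that spawn holes of at least two distinct lengths. All of this is elementary, so the proof proper amounts to organizing these observations.
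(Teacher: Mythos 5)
Your proposal is correct and takes essentially the same route as the paper, which states this proposition without a separate proof, merely summarizing the remarks recorded immediately before it (all holes in a $(2P_3,C_4,C_6,C_7)$-free graph have length five; every theta and prism contains an even hole; the 3-pentagon is the only pyramid all of whose holes have length five; every proper wheel has holes of two distinct lengths), and your parity and arithmetic verifications of those remarks are sound. One small clarification: in the proper-wheel case the two distinct hole lengths come from comparing a sector-hole with the rim itself (the sector-holes alone may all have equal length, e.g.\ three sectors of length two on a $6$-rim), but this does not affect the argument.
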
 
%\begin{proof} 
%Let $G$ be a $(2P_3,C_4,C_6,C_7,\text{3-pentagon})$-free graph. Since $G$ is $(2P_3,C_4,C_6,C_7)$-free, we know that all holes of $G$ are of length five. Since every proper wheel contains two holes of different length, we see that $G$ is proper-wheel-free. Moreover, since every theta of prism contains an even hole, $G$ is (theta,prism)-free. Moreover, it is clear that any pyramid in which all holes are of length five is the 3-pentagon; since all holes in $G$ are of length five, and since $G$ is 3-pentagon-free, we see that $G$ is pyramid-free. Thus, $G$ is 3PC-free. 
%\end{proof} 

A {\em ring} (originally introduced in~\cite{VIK}) is a graph $R$ whose vertex set can be partitioned into $k \geq 4$ nonempty sets, say $X_0,\dots,X_{k-1}$ (with indices understood to be in $\mathbb{Z}_k$), such that for all $i \in \mathbb{Z}_k$, $X_i$ can be ordered as $X_i = \{u_1^i,\dots,u_{|X_i|}^i\}$ so that $X_i \subseteq N_R[u_{|X_i|}^i] \subseteq \dots \subseteq N_R[u_1^i] = X_{i-1} \cup X_i \cup X_{i+1}$. (Note that this implies that $X_0,\dots,X_{k-1}$ are all cliques. Moreover, note that for each $i \in \mathbb{Z}_k$, $u_1^i$ is complete to $X_{i-1}  \cup X_{i+1}$. Consequently, for all $i \in \mathbb{Z}_k$, every vertex in $X_i$ is complete to $\{u_1^{i-1},u_1^{i+1}\}$, and in particular, every vertex in $X_i$ has a neighbor both in $X_{i-1}$ and in $X_{i+1}$.) Under these circumstances, we also say that the ring $R$ is of {\em length} $k$, as well as that $R$ is a {\em $k$-ring}. Furthermore, we say that $(X_0,\dots,X_{k-1})$ is a {\em $k$-ring partition} (or simply a {\em ring partition}) of the ring $R$. 

It may be worth mentioning that rings generalize hyperholes. Indeed, for any integer $k \geq 4$, if $H$ is a $k$-hyperhole with an associated $k$-hyperhole partition $(X_0,X_1,\dots,X_{k-1})$, then clearly, $H$ is also a $k$-ring with an associated $k$-ring partition $(X_0,X_1,\dots,X_{k-1})$. 

A {\em long hole} is a hole of length at least five, and a {\em long ring} is a ring of length at least five. Theorem~\ref{thm-GUT-decomp} (below) is a decomposition theorem for $(\text{3PC},\text{proper wheel})$-free graphs proven in~\cite{VIK}. The proof of Theorem~\ref{thm-non-chordal-pyramid-free-iff} (the main result of this section) will rely heavily on Theorem~\ref{thm-GUT-decomp}. 

\begin{theorem} [Theorem~1.6 of~\cite{VIK}] \label{thm-GUT-decomp} Let $G$ be a $(\text{3PC},\text{proper wheel})$-free graph. Then one of the following holds: 
\begin{itemize} 
\item $G$ has exactly one nontrivial anticomponent, and this anticomponent is a long ring;%\footnote{We remind the reader that a ring is {\em long} if it is of length at least five.} 
\item $G$ is (long hole, $K_{2,3}$, $\overline{C_6}$)-free; 
\item $\alpha(G) = 2$, and every anticomponent of $G$ is either a 5-hyperhole or a $(C_5,\overline{C_6})$-free graph; 
\item $G$ admits a clique-cutset. 
\end{itemize} 
\end{theorem}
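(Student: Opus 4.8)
The plan is to run the standard ``basic-or-decomposable'' scheme for classes defined by forbidden Truemper configurations, with the clique-cutset as the sole decomposition. First I would dispose of the decomposable case outright: if $G$ admits a clique-cutset, the fourth outcome holds, so from now on assume $G$ has no clique-cutset. I would then split on the anticomponent structure. Since $G$ is theta-free it is in particular $K_{2,3}$-free, and this already constrains joins: if $G$ had two distinct nontrivial anticomponents, then taking three pairwise nonadjacent vertices $a,a',a''$ inside one of them together with two nonadjacent vertices $b,b'$ of the other (which exist because an anticomponent on at least two vertices is anticonnected) would exhibit an induced $K_{2,3}$ on $\{b,b';a,a',a''\}$, which is a theta and hence forbidden. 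Therefore every nontrivial anticomponent of $G$ has stability number at most $2$, and because the stability number of a join is attained inside a single part, $\alpha(G)=2$. In this branch I would aim for the third outcome, which reduces to the self-contained classification of anticonnected $(\text{3PC},\text{proper wheel})$-free graphs of stability number at most two: I would show that such a graph is $(C_5,\overline{C_6})$-free unless it contains an induced $C_5$, in which case the rigidity forced by excluding all $3$PCs and proper wheels pins it down to a $5$-hyperhole.

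It remains to treat the case where $G$ has at most one nontrivial anticomponent. If it has none then $G$ is complete, hence long-hole-, $K_{2,3}$-, and $\overline{C_6}$-free, so the second outcome holds. Otherwise let $Q$ be the unique nontrivial anticomponent; by Proposition~\ref{prop-non-trivial-anticomp-univ-vertices}, $G$ is obtained from $Q$ by adding universal vertices. Since a universal vertex lies in no hole of length at least four, $G$ and $Q$ have the same long holes, and a short argument shows that a clique-cutset of $Q$ would extend (by the universal vertices) to one of $G$, so $Q$ too has no clique-cutset. If $Q$ has no long hole, the second outcome holds as above. Thus the whole theorem reduces to the claim I regard as its technical heart: \emph{an anticonnected $(\text{3PC},\text{proper wheel})$-free graph with no clique-cutset that contains a long hole is a long ring}, which yields the first outcome.

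To prove this claim I would fix a long hole $H=h_0h_1\cdots h_{k-1}$, $k\ge 5$, and first establish the attachment lemma: for every $v\notin V(H)$ the set $N(v)\cap V(H)$ is empty, a single vertex, two consecutive vertices, three consecutive vertices, or all of $V(H)$. Here the forbidden configurations do the work --- a pair of nonadjacent neighbors on $H$ produces a theta, and a neighborhood of at least three vertices that is neither a run of exactly three consecutive vertices nor all of $V(H)$ makes $v$ the center of a proper wheel (with a pyramid appearing in some configurations). Using these local types I would group the vertices of $Q$ by their position around $H$ into candidate ring-classes $X_0,\dots,X_{k-1}$, handling the $H$-universal vertices separately, and then globalize: invoking the absence of a clique-cutset to show that no class is empty and that no vertex is anticomplete to $H$ (an unattached part would be cut off by a clique), and invoking theta-, pyramid-, prism-, and proper-wheel-freeness again to force, within each class and between consecutive classes, the nested closed-neighborhood condition $X_i\subseteq N[u^i_{|X_i|}]\subseteq\cdots\subseteq N[u^i_1]=X_{i-1}\cup X_i\cup X_{i+1}$ that defines a ring partition. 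I expect this globalization step to be the main obstacle: turning the purely local attachment information into a single consistent cyclic ring partition of the entire vertex set --- ruling out long-range interactions between distant classes and degenerate or merging positions --- is exactly where the no-clique-cutset hypothesis has to be used repeatedly and with care.
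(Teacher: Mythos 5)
This statement is not proven in the paper at all: it is quoted verbatim as Theorem~1.6 of~\cite{VIK}, a decomposition theorem whose proof occupies a substantial portion of that (roughly fifty-page) paper. So there is no in-paper argument to compare yours against, and your proposal has to be judged as a from-scratch proof of an external result.

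Judged that way, it is a roadmap rather than a proof. The routine reductions are correct: peeling off the clique-cutset outcome; the $K_{2,3}$ argument showing that if $G$ has two nontrivial anticomponents then every anticomponent has stability number at most two, whence $\alpha(G)=2$; the observation that a clique-cutset of the unique nontrivial anticomponent extends by the universal vertices to a clique-cutset of $G$; and the attachment lemma, whose list of types ($\emptyset$, one vertex, two or three consecutive vertices, or all of $V(H)$) is right, since two nonconsecutive neighbours yield a theta and any neighbourhood of size at least three other than three consecutive vertices or all of $V(H)$ yields a proper wheel. But the two statements carrying essentially all of the content are asserted, not proven: (i) that an anticonnected $(\text{3PC},\text{proper wheel})$-free graph with $\alpha\leq 2$ containing an induced $C_5$ is a $5$-hyperhole (note also that in your ``two nontrivial anticomponents'' branch the individual anticomponents need not inherit the no-clique-cutset hypothesis from $G$, so this must be proven without it); and (ii) that an anticonnected $(\text{3PC},\text{proper wheel})$-free graph with no clique-cutset containing a long hole is a long ring. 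For (ii) you yourself flag the globalization from local attachment types to a single consistent cyclic ring partition as ``the main obstacle'' and leave it there; that step --- together with handling vertices anticomplete to $H$, vertices universal to $H$, and the interaction of the partition with long holes other than $H$ --- is precisely where the bulk of the work in~\cite{VIK} lies and cannot be waved through. The architecture is plausible and the case division is exhaustive, but as written the proposal does not constitute a proof of the theorem; for the purposes of this paper the correct move is simply to cite~\cite{VIK}, as the author does.
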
 

Before we can use Theorem~\ref{thm-GUT-decomp} to prove our main theorem of the section (Theorem~\ref{thm-non-chordal-pyramid-free-iff}), we need to examine rings a bit more closely. First of all, Lemma~\ref{lemma-ring-hole} and Proposition~\ref{prop-k-ring-k-hole} (below) together guarantee that, for each integer $k \geq 4$, a ring $R$ contains a $k$-hole if and only if the ring $R$ is itself of length $k$. Since all holes in a $(2P_3,C_4,C_6,C_7)$-free graph are of length five, it follows that only rings that can possibly be $(2P_3,C_4,C_6,C_7)$-free are rings of length five. 

\begin{lemma} [Lemma~2.4(b) of~\cite{VIK}] \label{lemma-ring-hole} Let $k \geq 4$ be an integer. Then every hole in a $k$-ring is of length $k$. 
\end{lemma}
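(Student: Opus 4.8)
The plan is to fix a $k$-ring $R$ with ring partition $(X_0,\dots,X_{k-1})$ and an arbitrary hole $H = h_0,h_1,\dots,h_{m-1},h_0$ (so $m \geq 4$, indices in $\mathbb{Z}_m$), and to prove that the map $\phi$ sending each hole vertex $h_j$ to the index $i$ with $h_j \in X_i$ is a bijection from $V(H)$ onto $\mathbb{Z}_k$; this immediately forces $m = k$. Throughout I would use three facts that are direct consequences of the definition of a ring: each $X_i$ is a clique; every vertex of $X_i$ satisfies $N_R[v] \subseteq X_{i-1}\cup X_i \cup X_{i+1}$ (so $X_i$ is anticomplete to $X_j$ whenever $j \notin \{i-1,i,i+1\}$, using $k \geq 4$); and the closed neighborhoods of the vertices of a single clique $X_i$ are totally ordered by inclusion, i.e.\ nested (this is precisely the chain $X_i \subseteq N_R[u_{|X_i|}^i] \subseteq \dots \subseteq N_R[u_1^i]$ in the definition).

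First I would prove that $\phi$ is injective, equivalently that each clique contains at most one vertex of $H$, which gives $m \leq k$. Suppose some $X_i$ contains two hole vertices $x,y$. Since $X_i$ is a clique they are adjacent, hence consecutive on $H$; say $x = h_j$ and $y = h_{j+1}$, and let $p = h_{j-1}$ and $q = h_{j+2}$ be their other hole-neighbors. Because $m \geq 4$, the four vertices $p,q,x,y$ are pairwise distinct, $p$ is at hole-distance $2$ from $y$, and $q$ is at hole-distance $2$ from $x$; hence $p \in N_R[x]\setminus N_R[y]$ and $q \in N_R[y]\setminus N_R[x]$. Thus $N_R[x]$ and $N_R[y]$ are incomparable, contradicting the nestedness of the closed neighborhoods inside $X_i$. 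This kills every multiplicity at once, without any separate bookkeeping on how two vertices of a clique could sit on the hole.

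Next I would prove that $\phi$ is surjective, i.e.\ that $H$ meets every clique, which gives $m \geq k$ and completes the proof. Suppose instead that $V(H)\cap X_c = \emptyset$ for some $c$. Then $H$ lives inside $R' := R \setminus X_c$, whose cliques, read in cyclic order as $Y_1 = X_{c+1},\,Y_2 = X_{c+2},\,\dots,\,Y_{k-1} = X_{c-1}$, form a \emph{linear} segment: consecutive $Y_t$'s may be adjacent, but non-consecutive ones are anticomplete (in particular the two ends $Y_1 = X_{c+1}$ and $Y_{k-1} = X_{c-1}$ are anticomplete, since deleting $X_c$ breaks the only cyclic link between them). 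Let $t_0$ be the least index with $V(H)\cap Y_{t_0} \neq \emptyset$; by injectivity there is a unique such vertex $h$. Since $Y_{t_0-1}$ contains no hole vertex and $Y_{t_0}$ contains only $h$, both hole-neighbors of $h$ are forced into $Y_{t_0+1}$ (and in the degenerate case $t_0 = k-1$ there is nowhere for them to go, an immediate contradiction). But $Y_{t_0+1}$ is a clique, so these two neighbors are adjacent, contradicting the fact that the two hole-neighbors of $h$, being at hole-distance $2$, are nonadjacent.

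I expect the surjectivity step to be the main obstacle, since it is the only place where the cyclic (rather than merely local) structure of the ring is essential: the argument works precisely because deleting one clique converts the cyclic clique-sequence into a linear one, which has an extremal clique from which the hole cannot escape. The injectivity step, by contrast, is purely local and uses only the within-clique nestedness. One should also take care that the two routine facts invoked — that a hole of length $\geq 4$ has no triangle (bounding multiplicities) and that two vertices at hole-distance $2$ are nonadjacent — are applied only under $m \geq 4$, so no small-cycle exceptions arise.
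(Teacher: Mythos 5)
The paper does not prove Lemma~\ref{lemma-ring-hole} at all: it is quoted verbatim as Lemma~2.4(b) of~\cite{VIK}, so there is no in-paper argument to compare yours against. On its own merits, your proof is correct and complete. The injectivity step is sound: two hole vertices in one $X_i$ would be adjacent (clique), hence consecutive on the hole, and for $m \geq 4$ their outer hole-neighbours $h_{j-1}$ and $h_{j+2}$ are distinct and witness the incomparability of $N_R[h_j]$ and $N_R[h_{j+1}]$, contradicting the nested-neighbourhood chain in the definition of a ring; this gives $m \leq k$. The surjectivity step is also sound: if the hole misses $X_c$, then since $k \geq 4$ the surviving cliques $X_{c+1},\dots,X_{c-1}$ really do form a linear sequence (any two non-consecutive ones, including the two ends, differ by an index gap between $2$ and $k-2$, hence are anticomplete), and taking the first clique in this sequence that meets the hole forces both hole-neighbours of its unique hole vertex into the single next clique, where they would be adjacent despite being at hole-distance two --- with the extremal case $t_0 = k-1$ correctly handled as an immediate contradiction. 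The two bounds combine to give $m = k$. This is a clean, self-contained replacement for the external citation.
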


\begin{proposition} \label{prop-k-ring-k-hole} Let $k \geq 4$ be an integer. Then every $k$-ring contains a hole of length $k$. 
\end{proposition}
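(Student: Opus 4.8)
The plan is to exhibit an explicit induced $C_k$ inside any $k$-ring $R$, using the vertices that sit at the ``top'' of each nested chain of closed neighborhoods. Fix a $k$-ring partition $(X_0,\dots,X_{k-1})$ of $R$, and for each $i\in\mathbb{Z}_k$ fix the ordering $X_i=\{u_1^i,\dots,u_{|X_i|}^i\}$ from the definition of a ring, so that $N_R[u_1^i]=X_{i-1}\cup X_i\cup X_{i+1}$. I would then claim that $u_1^0,u_1^1,\dots,u_1^{k-1},u_1^0$ is a $k$-hole, and verify this directly.

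The verification splits into three routine checks. First, the $u_1^i$ are pairwise distinct, since they lie in the pairwise disjoint sets $X_0,\dots,X_{k-1}$. Second, consecutive vertices are adjacent: for each $i$, the vertex $u_1^i$ is complete to $X_{i-1}\cup X_{i+1}$ (as noted just after the definition of a ring), so in particular $u_1^i$ is adjacent to $u_1^{i+1}\in X_{i+1}$. Third, non-consecutive vertices are non-adjacent: if $j\notin\{i-1,i,i+1\}$, then $u_1^j\in X_j$ and, because $X_0,\dots,X_{k-1}$ partition $V(R)$, the set $X_j$ is disjoint from $X_{i-1}\cup X_i\cup X_{i+1}=N_R[u_1^i]$; hence $u_1^j\notin N_R[u_1^i]$, so $u_1^i$ and $u_1^j$ are non-adjacent. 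Together these show that $R[\{u_1^0,\dots,u_1^{k-1}\}]$ has as its edges exactly the $k$ consecutive pairs, which is precisely an induced $C_k$.

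There is essentially no serious obstacle here: the statement is a near-immediate consequence of the definition of a ring once the right vertices are selected. The one point that requires a little care is the chord-freeness step, which relies on the fact that the definition gives the \emph{equality} $N_R[u_1^i]=X_{i-1}\cup X_i\cup X_{i+1}$ (not merely the containment $X_{i-1}\cup X_{i+1}\subseteq N_R[u_1^i]$): it is this equality that guarantees $u_1^i$ has no neighbor outside the three consecutive parts. The hypothesis $k\ge 4$ is exactly what ensures that, for each $i$, at least one index $j$ lies outside $\{i-1,i,i+1\}$, so that the chosen cycle genuinely has length at least four and the three consecutive parts do not already exhaust the partition.
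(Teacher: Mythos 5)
Your proof is correct and takes exactly the same route as the paper: both select the vertex $u_1^i$ from each part $X_i$ (the one with $N_R[u_1^i] = X_{i-1}\cup X_i\cup X_{i+1}$) and observe that these $k$ vertices induce a $C_k$. The paper states this in one line, while you spell out the three verifications (distinctness, consecutive adjacency, chord-freeness), all of which are accurate.
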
 
\begin{proof} 
Let $R$ be a $k$-ring, let $(X_0,\dots,X_{k-1})$ be a ring partition of $R$ (with indices understood to be in $\mathbb{Z}_k$), and for all $i \in \mathbb{Z}_k$, let $X_i = \{u_1^i,\dots,u_{|X_i|}^i\}$ be an ordering of $R$ such that $X_i \subseteq N_R[u_{|X_i|}^i] \subseteq \dots \subseteq N_R[u_1^i] = X_{i-1} \cup X_i \cup X_{i+1}$, as in the definition of a $k$-ring. Then $x_1^0,x_1^1,\dots,x_1^{k-1},x_1^0$ is a $k$-hole in $R$. 
\end{proof} 

\begin{proposition} \label{prop-5-ring-4K1-2P3} A 5-ring is $4K_1$-free if and only if it is $2P_3$-free. 
\end{proposition}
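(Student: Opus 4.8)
The plan is to prove the two implications separately, with only the converse requiring any real work.

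The implication "$4K_1$-free $\Rightarrow$ $2P_3$-free" holds for every graph and uses no ring structure: since $4K_1$ is an induced subgraph of $2P_3$ (the four endpoints of a $2P_3$ are pairwise nonadjacent), any graph containing an induced $2P_3$ also contains an induced $4K_1$, and taking the contrapositive gives the claim. I would dispose of this direction in a single sentence.

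The substance is the converse, which I would prove in contrapositive form: if a $5$-ring $R$ contains an induced $4K_1$, then it contains an induced $2P_3$. I fix a ring partition $(X_0,\dots,X_4)$ (indices in $\mathbb{Z}_5$) and an independent set $\{w_0,w_1,w_2,w_3\}$. Since each $X_i$ is a clique, these four vertices lie in four distinct parts; as deleting one index from the $5$-cycle $\mathbb{Z}_5$ leaves four consecutive indices, and as cyclic relabeling of a ring partition again yields a ring partition, I may assume $w_i\in X_i$ for $i\in\{0,1,2,3\}$ while $X_4$ contains none of them. The key step is then to build the two paths using the ``full'' vertices $u_1^0$ and $u_1^3$ supplied by the ring definition, where $u_1^i$ is complete to $X_{i-1}\cup X_{i+1}$. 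Writing $a_0:=u_1^0$ and $a_3:=u_1^3$, I claim $\{w_0,a_0,w_1\}$ and $\{w_2,a_3,w_3\}$ induce a $2P_3$. Indeed $a_0$ is complete to $X_0\cup X_1$ (so $a_0\sim w_0,w_1$) and anticomplete to $X_2\cup X_3$, while $a_3$ is complete to $X_2\cup X_3$ (so $a_3\sim w_2,w_3$) and anticomplete to $X_0\cup X_1$; moreover $a_0\not\sim a_3$ since $X_0$ and $X_3$ are at cyclic distance two and hence anticomplete. Combined with the pairwise nonadjacency of the $w_i$, a direct check of the nine cross pairs shows no edge runs between the two triples, and each triple is an induced $P_3$ because its endpoints ($w_0,w_1$, respectively $w_2,w_3$) are nonadjacent. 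One also checks $a_0\neq w_0$ and $a_3\neq w_3$ (otherwise $w_0\sim w_1$ or $w_3\sim w_2$), so all six vertices are distinct.

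I expect the only genuine obstacle to be \emph{identifying} this pair of middle vertices. The naive attempt to join the distance-two pairs $(w_0,w_2)$ and $(w_1,w_3)$ through their common-neighbor parts $X_1$ and $X_2$ fails, because each such middle vertex lies in a clique that also contains an endpoint of the other path, forcing an unwanted edge; indeed, any two $P_3$'s whose supports are distance-two triples must, by a counting argument on the $5$-cycle of parts, overlap in a part and so cannot be anticomplete. Placing the two middles in the extreme parts $X_0$ and $X_3$ of the consecutive arc circumvents this, since the closed neighborhoods of $a_0$ and $a_3$ meet only in the unused region around $X_4$. Once this choice is made, the verification is routine and uses nothing beyond the basic adjacency facts recorded in the definition of a ring.
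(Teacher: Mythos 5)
Your proposal is correct and takes essentially the same route as the paper: after observing that the four independent vertices must lie in four consecutive parts, the paper likewise forms the two $P_3$'s by taking a vertex of the first part of the arc complete to the second part and a vertex of the last part complete to the second-to-last (your $u_1^0$ and $u_1^3$, up to a cyclic shift of indices), and checks the same adjacencies. No gaps.
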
 
\begin{proof} 
Let $R$ be a 5-ring, and let $(X_0,X_1,X_2,X_3,X_4)$ be a ring partition of $R$ (with indices understood to be in $\mathbb{Z}_5$). Since $4K_1$ is an induced subgraph of $2P_3$, it is clear that if $R$ is $4K_1$-free, then $R$ is $2P_3$-free. Assume now that $R$ is not $4K_1$-free, and let $\{a_1,a_2,a_3,a_4\}$ be a stable set of size four in $R$. Since $X_0,X_1,X_2,X_3,X_4$ are all cliques, it is clear that none of them contains more than one of $a_1,a_2,a_3,a_4$. By symmetry, we may now assume that $a_1 \in X_1$, $a_2 \in X_2$, $a_3 \in X_3$, and $a_4 \in X_4$. By the definition of a ring, some vertex $x_1 \in X_1$ is complete to $X_2$, and some vertex $x_4 \in X_4$ is complete to $X_3$.\footnote{Since $a_1$ is nonadjacent to $a_2 \in X_2$, we see that $a_1$ is not complete to $X_2$, and consequently, $x_1 \neq a_1$. Similarly, $x_4 \neq a_4$.} But then $R[a_1,x_1,a_2,a_3,x_4,a_4]$ is a $2P_3$, and it follows that $R$ is not $2P_3$-free. 
\end{proof}

A {\em 5-crown} (originally defined in~\cite{4K1C4C6C7Free}) is a 5-ring $R$ with ring partition $(X_0,X_1,X_2,X_3,X_4)$ such that for some index $i^* \in \mathbb{Z}_5$, we have that $X_{i^*-1}$ is complete to $X_{i^*-2}$, and $X_{i^*+1}$ is complete to $X_{i^*+2}$. Under such circumstances, we say that $(X_0,X_1,X_2,X_3,X_4)$ is a {\em 5-crown partition} of the 5-crown $R$. A 5-crown with $i^* = 0$ is represented in Figure~\ref{fig:5crown}. 

Note that the vertex set of any 5-crown can be partitioned into three cliques;\footnote{Indeed, if $(X_0,X_1,X_2,X_3,X_4)$ is a 5-crown partition of a 5-crown $R$, with $i^*$ as in the definition of a 5-crown, then $X_{i^*}$, $X_{i^*-1} \cup X_{i^*-2}$, and $X_{i^*+1} \cup X_{i^*+2}$ are cliques of $R$, and clearly, these three cliques partition $V(R)$.} therefore, 5-crowns are $4K_1$-free. Interestingly, in the context of 5-rings, a converse of sorts also holds, as per the following lemma. 

\begin{figure} 
\begin{center}
\includegraphics[scale=0.5]{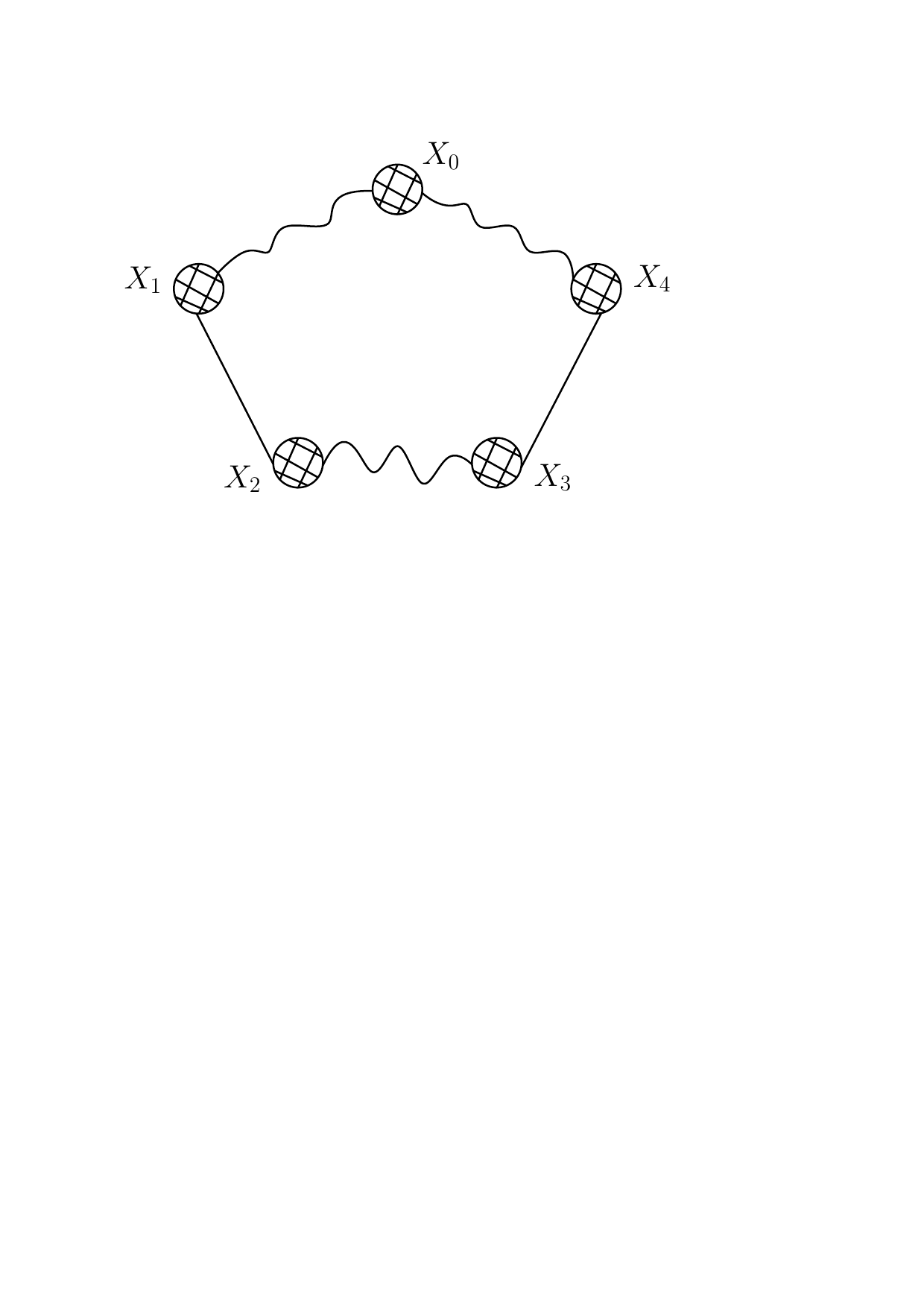}
\end{center} 
\caption{A 5-crown with an associated 5-crown partition $(X_0,X_1,X_2,X_3,X_4)$ and with $i^* = 0$. All cliques represented by crosshatched disks are nonempty.} \label{fig:5crown} 
\end{figure}

\begin{lemma} [Lemma~2.10 of~\cite{4K1C4C6C7Free}] \label{lemma-5crown-4K1-free} Let $R$ be a graph. Then the following are equivalent: 
\begin{itemize} 
\item $R$ is a $4K_1$-free 5-ring; 
\item $R$ is a 5-crown. 
\end{itemize} 
Moreover, any ring partition of a $4K_1$-free 5-ring is a 5-crown partition. 
\end{lemma}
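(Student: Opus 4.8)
The plan is to prove the two implications separately and to obtain the ``moreover'' clause for free from the harder direction. The easy direction is that every 5-crown is a $4K_1$-free 5-ring: a 5-crown is a 5-ring by definition, and if $(X_0,X_1,X_2,X_3,X_4)$ is a 5-crown partition with index $i^*$, then $X_{i^*}$, $X_{i^*-1}\cup X_{i^*-2}$, and $X_{i^*+1}\cup X_{i^*+2}$ are three cliques that partition $V(R)$ (the latter two are cliques precisely because of the completeness required in the definition of a 5-crown), so $\alpha(R)\le 3$ and $R$ is $4K_1$-free. This is exactly the observation recorded in the footnote preceding the lemma.

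For the converse, together with the ``moreover'' statement, I would fix a $4K_1$-free 5-ring $R$ with an \emph{arbitrary} ring partition $(X_0,X_1,X_2,X_3,X_4)$ (indices in $\mathbb{Z}_5$) and show by contraposition that it must be a 5-crown partition. First I would record two structural facts that follow directly from the nested-neighborhood form of the ring definition. \textbf{(1)} \emph{Distance-two anticompleteness}: since $N_R[u_1^i]=X_{i-1}\cup X_i\cup X_{i+1}$ and every vertex of $X_i$ has closed neighborhood contained in $N_R[u_1^i]$, the block $X_i$ is anticomplete to $X_{i+2}\cup X_{i+3}$. \textbf{(2)} \emph{Last-vertex nonadjacency}: call a consecutive pair $\{i,i+1\}$ \emph{bad} if $X_i$ is not complete to $X_{i+1}$. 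Because $N_R[u_{|X_i|}^i]\subseteq\dots\subseteq N_R[u_1^i]$, the sets $N_R(u_j^i)\cap X_{i+1}$ are nested along the ordering of $X_i$, with $N_R(u_1^i)\cap X_{i+1}=X_{i+1}$ (and symmetrically for $X_{i+1}$ into $X_i$); hence if $\{i,i+1\}$ is bad, then the ``last'' vertices $u_{|X_i|}^i$ and $u_{|X_{i+1}|}^{i+1}$ (those of smallest closed neighborhood in their blocks) are nonadjacent. Indeed, badness forces $u_{|X_i|}^i$ to miss some $q\in X_{i+1}$, and then the nesting from the $X_{i+1}$-side forces $u_{|X_{i+1}|}^{i+1}$ to miss $u_{|X_i|}^i$ as well. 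Note fact (2) needs only the ring definition, so I do not have to invoke $C_4$-freeness.

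The heart of the argument is a reduction to a small combinatorial fact about $C_5$. Unwinding the definition, a 5-crown partition is exactly the existence of an index $i^*$ for which both consecutive pairs $\{i^*-2,i^*-1\}$ and $\{i^*+1,i^*+2\}$ are \emph{good} (not bad); these are precisely the \emph{vertex-disjoint} pairs of good edges of the pentagon (any two vertex-disjoint edges cover four of the five vertices, and the omitted vertex is $i^*$). So I would assume the partition is not a 5-crown partition, i.e.\ no two good pairs are vertex-disjoint, and deduce that three \emph{consecutive} pairs are bad: if no three consecutive pairs were bad, the good pairs would meet every window of three consecutive pentagon edges, and a short gap-length count on the $5$-cycle (good pairs $\ge 2$, and either two of them lie at cyclic distance $2$ or $3$ when there are exactly two, or among three of them two must be vertex-disjoint since the ``share-a-vertex'' graph on pentagon edges is the triangle-free $C_5$) forces two good pairs to be vertex-disjoint, a contradiction. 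After relabeling I may assume $\{1,2\},\{2,3\},\{3,4\}$ are all bad. Writing $v_i:=u_{|X_i|}^i$, fact (2) makes $v_1v_2$, $v_2v_3$, $v_3v_4$ nonedges, while fact (1) makes $v_1v_3$, $v_2v_4$, $v_1v_4$ nonedges (these pairs lie in blocks at cyclic distance two). Thus $\{v_1,v_2,v_3,v_4\}$ is a stable set of size four, i.e.\ an induced $4K_1$, contradicting $4K_1$-freeness; hence the partition is a 5-crown partition, which settles both the remaining implication and the ``moreover'' clause simultaneously.

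I expect the main obstacle to be pure bookkeeping: correctly translating the 5-crown condition into ``there exist two vertex-disjoint good pairs,'' and verifying the little $C_5$ covering fact that the absence of three consecutive bad pairs yields two disjoint good ones. Facts (1) and (2) are routine consequences of the nested-neighborhood definition, and once three consecutive bad pairs are secured the $4K_1$ drops out immediately from the six nonadjacencies above; the only place where genuine care is needed is getting the index arithmetic in $\mathbb{Z}_5$ exactly right.
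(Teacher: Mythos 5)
The paper does not prove this lemma --- it is imported verbatim as Lemma~2.10 of~\cite{4K1C4C6C7Free} --- so there is no in-paper argument to compare yours against; I can only assess your proof on its own terms, and it is correct and self-contained. The forward direction (5-crown $\Rightarrow$ $4K_1$-free 5-ring) via the three-clique cover $X_{i^*}$, $X_{i^*-1}\cup X_{i^*-2}$, $X_{i^*+1}\cup X_{i^*+2}$ is exactly the observation in the footnote preceding the lemma. For the converse, your two structural facts are sound: (1) is immediate from $N_R[u_j^i]\subseteq N_R[u_1^i]=X_{i-1}\cup X_i\cup X_{i+1}$, and (2) follows by applying the nesting first inside $X_i$ (to push a missing neighbor down to $u_{|X_i|}^i$) and then inside $X_{i+1}$ (to push the resulting nonadjacency down to $u_{|X_{i+1}|}^{i+1}$). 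The translation of ``5-crown partition'' into ``two vertex-disjoint good edges of the pentagon'' is right, and your combinatorial step, though phrased in a roundabout way, checks out; a more direct route is to observe that ``no two good edges are vertex-disjoint'' says the good edges form a clique in the edge-intersection graph of $C_5$, which is itself a triangle-free $C_5$, so there are at most two good edges and, if two, they are consecutive --- either way three consecutive bad edges remain. The resulting four vertices $v_1,v_2,v_3,v_4$ lie in distinct blocks, the three consecutive nonadjacencies come from fact (2) and the three ``distance-two'' nonadjacencies from fact (1), so they induce a $4K_1$ as claimed, and since the argument works for an arbitrary ring partition it also yields the ``moreover'' clause.
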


\begin{proposition} \label{prop-5-crown-in-class} Every 5-crown is $(2P_3,C_4,C_6,C_7,\text{3-pentagon})$-free. Moreover, every 5-crown is anticonnected and contains no universal and no simplicial vertices. 
\end{proposition}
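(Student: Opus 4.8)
The plan is to fix a 5-crown $R$ together with a 5-crown partition $(X_0,X_1,X_2,X_3,X_4)$ (indices in $\mathbb{Z}_5$) and an index $i^*$ as in the definition, and to exploit throughout the fact that $R$ is, in particular, a 5-ring, so that every ring property recalled just before the statement is available. Three of the five freeness claims should be essentially immediate. For $C_4$, $C_6$, and $C_7$, I would invoke Lemma~\ref{lemma-ring-hole}: since every hole in a 5-ring has length five, and $C_4,C_6,C_7$ are holes of lengths $4,6,7$, none can occur as an induced subgraph of $R$. For $2P_3$, I would use the clique cover of a 5-crown noted in the text, namely that $X_{i^*}$, $X_{i^*-1}\cup X_{i^*-2}$, and $X_{i^*+1}\cup X_{i^*+2}$ are three cliques covering $V(R)$; this gives $\alpha(R)\le 3$, so $R$ is $4K_1$-free, and Proposition~\ref{prop-5-ring-4K1-2P3} then upgrades $4K_1$-freeness to $2P_3$-freeness.

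The one freeness claim that requires a genuine argument, and the step I expect to be the main obstacle, is $3$-pentagon-freeness: it does not follow from the hole-length lemma (all three holes of the $3$-pentagon have length five) nor from $4K_1$-freeness (the $3$-pentagon has independence number $3$). My idea is to reuse the three-clique cover of $R$, since coverability by three cliques is inherited by every induced subgraph; it then suffices to show that the $3$-pentagon itself cannot be covered by three cliques. Writing its vertices as $a,b_1,b_2,b_3,c_1,c_2,c_3$ as in the definition, I would observe that $b_1,b_2,b_3$ are pairwise nonadjacent, so any clique cover must place them in three distinct cliques; hence a cover by exactly three cliques has one $b_i$ per clique. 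Examining the maximal cliques through each $b_i$ (only $\{a,b_i\}$ and $\{b_i,c_i\}$, since $c_j\not\sim b_i$ for $j\ne i$ and $a$ is anticomplete to the $c$'s, so no clique through $b_i$ exceeds size two), these three cliques can together absorb at most three of the four remaining vertices $a,c_1,c_2,c_3$ (as $a$ can be used only once). Thus at least four cliques are needed, so the $3$-pentagon is not three-clique-coverable and therefore is not an induced subgraph of $R$.

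For the remaining structural properties I would argue directly from the ring structure, using that in a 5-ring each $X_i$ is anticomplete to $X_{i+2}$ and $X_{i-2}$ (because $N_R[u]\subseteq X_{i-1}\cup X_i\cup X_{i+1}$ for every $u\in X_i$). Anticonnectedness follows by noting that in $\overline{R}$ the nonempty parts are joined completely along the cyclic pattern $X_0,X_2,X_4,X_1,X_3,X_0$, so every vertex reaches every other part in $\overline{R}$ and $\overline{R}$ is connected. Since $R$ has at least five vertices, anticonnectedness already forbids a universal vertex (equivalently, any $v\in X_i$ has the nonempty, anticomplete $X_{i+2}$ consisting entirely of non-neighbors). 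Finally, for the absence of simplicial vertices I would take an arbitrary $v\in X_i$ and use the ring property that $v$ has a neighbor $u\in X_{i-1}$ and a neighbor $w\in X_{i+1}$; as $X_{i-1}$ and $X_{i+1}$ are two apart and hence anticomplete, $u$ and $w$ are nonadjacent, so $N_R(v)$ is not a clique. Combining these six verifications yields the proposition, with the clique-cover obstruction for the $3$-pentagon being the only nonroutine ingredient.
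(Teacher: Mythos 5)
Your proposal is correct, and it diverges from the paper's proof precisely at the step you identified as the crux, namely $3$-pentagon-freeness. The paper's argument there is a domination argument: since the $3$-pentagon $P$ has seven vertices and the ring partition has only five parts, some $X_i$ contains two vertices of $P$; by the nested-closed-neighborhood property in the definition of a ring, one of them dominates the other in $R$ and hence in $P$, which is impossible because no vertex of a $3$-pentagon dominates another. Your clique-cover argument (the $5$-crown partitions into the three cliques $X_{i^*}$, $X_{i^*-1}\cup X_{i^*-2}$, $X_{i^*+1}\cup X_{i^*+2}$, this property passes to induced subgraphs, and the $3$-pentagon needs at least four cliques because every clique through a $b_i$ has size at most two) is also valid; your counting that the three cliques containing $b_1,b_2,b_3$ cover at most six of the seven vertices is airtight. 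The trade-off: the paper's argument uses only the $5$-ring structure (so it shows every $5$-ring, not just every $5$-crown, is $3$-pentagon-free), whereas yours leans on the extra completeness conditions that distinguish a crown from a general ring; on the other hand, yours doubles as the source of $4K_1$-freeness (hence $2P_3$-freeness), making the proof slightly more self-contained, where the paper instead cites Lemma~\ref{lemma-5crown-4K1-free}. The remaining parts of your argument --- holes via Lemma~\ref{lemma-ring-hole}, anticonnectedness via the complete joins between parts two apart in $\overline{R}$, and non-simpliciality via a neighbor in each of the anticomplete parts $X_{i-1}$ and $X_{i+1}$ --- match the paper's reasoning. (Minor remark: invoking Proposition~\ref{prop-5-ring-4K1-2P3} to pass from $4K_1$-free to $2P_3$-free is overkill; that direction is immediate since $4K_1$ is an induced subgraph of $2P_3$.)
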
 
\begin{proof} 
Let $R$ be a 5-crown, and let $(X_0,X_1,X_2,X_3,X_4)$ be a 5-crown partition of $R$. Let $i^*$ be as in the definition of a 5-crown; by symmetry, we may assume that $i^* = 0$. 

\begin{adjustwidth}{1cm}{1cm} 
\begin{claim} \label{prop-5-crown-in-class-claim-2P3C4C5C7-free} 
$R$ is $(2P_3,C_4,C_6,C_7)$-free. 
\end{claim} 
\end{adjustwidth} 
{\em Proof of Claim~\ref{prop-5-crown-in-class-claim-2P3C4C5C7-free}.} By Proposition~\ref{lemma-5crown-4K1-free}, $R$ is $4K_1$-free; since $4K_1$ is an induced subgraph of $2P_3$, it follows that $R$ is $2P_3$-free. Further, the 5-crown $R$ is, in particular, a 5-ring; so, by Lemma~\ref{lemma-ring-hole}, all holes in $R$ are of length five, and it follows that $R$ is $(C_4,C_6,C_7)$-free.~$\blacklozenge$

\begin{adjustwidth}{1cm}{1cm} 
\begin{claim} \label{prop-5-crown-in-class-claim-3-pentagon-free} 
$R$ is 3-pentagon-free. 
\end{claim} 
\end{adjustwidth} 
{\em Proof of Claim~\ref{prop-5-crown-in-class-claim-3-pentagon-free}.} Suppose otherwise, and let $P$ be an induced 3-pentagon in $R$. Since $P$ contains seven vertices, we see that for some $i \in \mathbb{Z}_5$, $X_i$ contains at least two vertices of $P$, call them $u$ and $v$. Since $u,v$ are distinct vertices of $X_i$, we see that one of $u,v$ dominates the other in $R$, and therefore in $P$ as well. But this is impossible since $P$ is a 3-pentagon, and so no vertex of $P$ dominates any other vertex of $P$, a contradiction.~$\blacklozenge$

\begin{adjustwidth}{1cm}{1cm} 
\begin{claim} \label{prop-5-crown-in-class-claim-anticonn} 
$R$ is anticonnected and contains no universal and no simplicial vertices. 
\end{claim} 
\end{adjustwidth} 
{\em Proof of Claim~\ref{prop-5-crown-in-class-claim-anticonn}.} Since $X_0$ and $X_2 \cup X_3$ are nonempty, disjoint, and anticomplete to each other, we see that $R[X_0 \cup X_2 \cup X_3]$ is anticonnected. Since $X_1$ is anticomplete to $X_3 \neq \emptyset$, and $X_4$ is anticomplete to $X_2 \neq \emptyset$, we deduce that $R[(X_0 \cup X_2 \cup X_3) \cup (X_1 \cup X_4)] = R$ is anticonnected. Since $R$ contains more than one vertex, this implies, in particular, that $R$ contains no universal vertices. 

It remains to show that $R$ contains no simplicial vertices. But this follows immediately from the fact that for all $i \in \mathbb{Z}_5$, every vertex in $X_i$ has a neighbor both in $X_{i-1}$ and in $X_{i+1}$, and from the fact that $X_{i-1}$ and $X_{i+1}$ are disjoint and anticomplete to each other.~$\blacklozenge$ 

\medskip 

We are now done: Claims~\ref{prop-5-crown-in-class-claim-2P3C4C5C7-free},~\ref{prop-5-crown-in-class-claim-3-pentagon-free}, and~\ref{prop-5-crown-in-class-claim-anticonn} together prove the proposition. 
\end{proof}

\begin{lemma} \label{lemma-non-chordal-pyramid-free} Let $G$ be a $(2P_3,C_4,C_6,C_7,\text{3-pentagon})$-free graph. Then either $G$ is a 5-crown, or $G$ contains a simplicial vertex or a universal vertex. 
\end{lemma}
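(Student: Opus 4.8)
The plan is to apply the decomposition theorem, Theorem~\ref{thm-GUT-decomp}, to $G$. First, since $G$ is $(2P_3,C_4,C_6,C_7,\text{3-pentagon})$-free, Proposition~\ref{prop-no-3-pentagon-GUT} tells us that $G$ is $(\text{3PC},\text{proper wheel})$-free, so Theorem~\ref{thm-GUT-decomp} applies and yields four possible outcomes. I would treat each outcome in turn and show that in every case $G$ is a 5-crown or contains a simplicial or a universal vertex. Throughout I will use the fact that, since $G$ is $(2P_3,C_4,C_6,C_7)$-free and every hole of length at least eight contains an induced $2P_3$, \emph{all} holes of $G$ (and of each of its induced subgraphs) have length exactly five.

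Two of the four outcomes are essentially immediate. If $G$ admits a clique-cutset, then since $G$ is in particular $(2P_3,C_4)$-free, Proposition~\ref{prop-2P3C4-free-clique-cut-simplicial} directly gives a simplicial vertex. If $G$ is (long hole, $K_{2,3}$, $\overline{C_6}$)-free, then, being both long-hole-free and $C_4$-free, $G$ has no holes at all, i.e.\ $G$ is chordal; by the simplicial-elimination-ordering characterization of chordal graphs~\cite{FulkersonGross}, the nonnull graph $G$ then has a simplicial vertex.

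The heart of the argument is the ring outcome: $G$ has exactly one nontrivial anticomponent $Q$, and $Q$ is a long ring. Here I would first pin down the length. Since $Q$ is an induced subgraph of $G$, every hole of $Q$ has length five. On the other hand, $Q$ is a $k$-ring with $k \geq 5$, and Proposition~\ref{prop-k-ring-k-hole} gives $Q$ a $k$-hole while Lemma~\ref{lemma-ring-hole} forces every hole of $Q$ to have length $k$; hence $k = 5$ and $Q$ is a 5-ring. Being $2P_3$-free, $Q$ is $4K_1$-free by Proposition~\ref{prop-5-ring-4K1-2P3}, and then Lemma~\ref{lemma-5crown-4K1-free} identifies $Q$ as a 5-crown. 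Finally, since the 5-crown $Q$ is anticonnected with at least two vertices (Proposition~\ref{prop-5-crown-in-class}) and is the unique nontrivial anticomponent of $G$, Proposition~\ref{prop-non-trivial-anticomp-univ-vertices} shows that $G$ is obtained from $Q$ by adding (possibly zero) universal vertices: if none are added then $G = Q$ is a 5-crown, and otherwise $G$ has a universal vertex.

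For the remaining outcome, $\alpha(G) = 2$ with every anticomponent a 5-hyperhole or $(C_5,\overline{C_6})$-free, I would first dispose of trivial anticomponents: the unique vertex of any trivial anticomponent is universal, so I may assume $G$ has none; then, since $G$ is $C_4$-free, Proposition~\ref{prop-C4-free-one-anticomp} bounds the number of nontrivial anticomponents by one, so $G$ is its own unique anticomponent, i.e.\ anticonnected. Thus $G$ itself is a 5-hyperhole or is $(C_5,\overline{C_6})$-free. In the first case $G$ is a 5-hyperhole, hence a 5-ring, and it is $4K_1$-free because $\alpha(G) = 2$, so Lemma~\ref{lemma-5crown-4K1-free} makes $G$ a 5-crown. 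In the second case, $C_5$-freeness together with the fact that all holes of $G$ have length five makes $G$ chordal, giving a simplicial vertex as before. I expect the main obstacle to be the bookkeeping around anticomponents and universal vertices—correctly reducing each outcome to its (unique) nontrivial anticomponent and reassembling via Proposition~\ref{prop-non-trivial-anticomp-univ-vertices}—rather than any single hard computation; the one genuinely structural step is the ring-length determination, showing that a long ring in our class must in fact be a 5-ring, and I would isolate it carefully.
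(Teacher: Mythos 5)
Your proposal is correct and follows essentially the same route as the paper: reduce to $(\text{3PC},\text{proper wheel})$-free graphs via Proposition~\ref{prop-no-3-pentagon-GUT}, apply Theorem~\ref{thm-GUT-decomp}, dispose of the clique-cutset and chordal cases with Proposition~\ref{prop-2P3C4-free-clique-cut-simplicial} and the existence of simplicial vertices in chordal graphs, and identify the surviving ring/hyperhole cases as 5-crowns via Propositions~\ref{prop-k-ring-k-hole},~\ref{prop-5-ring-4K1-2P3} and Lemma~\ref{lemma-5crown-4K1-free}. The only difference is organizational --- the paper rules out clique-cutsets, $C_5$-freeness, and trivial anticomponents \emph{before} invoking the decomposition theorem so that only two outcomes survive, whereas you apply the theorem first and then treat all four outcomes --- but the underlying argument is the same.
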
 
\begin{proof} 
By Proposition~\ref{prop-no-3-pentagon-GUT}, $G$ is $(\text{3PC},\text{proper wheel})$-free, and so Theorem~\ref{thm-GUT-decomp} applies. However, before applying this theorem, we make some observations. First of all, we may assume that $G$ does not admit a clique-cutset, for otherwise, Proposition~\ref{prop-2P3C4-free-clique-cut-simplicial} guarantees that $G$ contains a simplicial vertex, and we are done. Further, since $G$ is $(2P_3,C_4,C_6,C_7)$-free, we know that all holes in $G$ are of length five. If $G$ is additionally $C_5$-free, then $G$ is chordal and consequently contains a simplicial vertex (by~\cite{D61}), and once again we are done. So, we may assume that $G$ contains an induced $C_5$. Further, we may assume that $G$ contains no trivial anticomponents, for otherwise, $G$ contains a universal vertex, and we are done. Since $G$ is $C_4$-free, Proposition~\ref{prop-C4-free-one-anticomp} now guarantees that $G$ is anticonnected. 

We now have that our graph $G$ is $(\text{3PC},\text{proper wheel})$-free and anticonnected, contains an induced $C_5$ (and is therefore not long-hole-free), and does not admit a clique-cutset. So, by Theorem~\ref{thm-GUT-decomp}, $G$ is either a long ring or a 5-hyperhole. In the latter case, we are done (because 5-hyperholes are 5-crowns). We may therefore assume that $G$ is a long ring. Since all holes in $G$ are of length five, Proposition~\ref{prop-k-ring-k-hole} guarantees that $G$ is a 5-ring. But now Proposition~\ref{prop-5-ring-4K1-2P3} and Lemma~\ref{lemma-5crown-4K1-free} together imply that $G$ is a 5-crown. 
\end{proof}

%\begin{enumerate}[(1)] 
%\item $G$ has exactly one nontrivial anticomponent, and this anticomponent is a long ring;%\footnote{We remind the reader that a ring is {\em long} if it is of length at least five.} 
%\item $G$ is (long hole, $K_{2,3}$, $\overline{C_6}$)-free; 
%\item $\alpha(G) = 2$, and every anticomponent of $G$ is either a 5-hyperhole or a $(C_5,\overline{C_6})$-free graph; 
%\item $G$ admits a clique-cutset. 
%\end{enumerate} 

\begin{theorem} \label{thm-non-chordal-pyramid-free-iff} 
For any graph $G$, the following are equivalent: 
\begin{enumerate}[(a)] 
\item $G$ is a $(2P_3,C_4,C_6,C_7,\text{3-pentagon})$-free graph that contains no simplicial vertices; 
\item $G$ contains exactly one nontrivial anticomponent, and this anticomponent is a 5-crown; 
\item $G$ can be obtained from a 5-crown by possibly adding universal vertices to it. 
\end{enumerate} 
\end{theorem}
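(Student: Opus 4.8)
The plan is to follow the same template used for Theorem~\ref{thm-T0free-pyramid-iff}, since the two results have identical logical structure. First I would dispose of the equivalence of (b) and (c). By Proposition~\ref{prop-5-crown-in-class}, every 5-crown is anticonnected, and every 5-crown obviously has at least two vertices; hence Proposition~\ref{prop-non-trivial-anticomp-univ-vertices} applies directly and yields the equivalence of (b) and (c) with no extra work. All the substance therefore lies in proving the equivalence of (a) and (b).

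For the direction from (a) to (b): assume $G$ is $(2P_3,C_4,C_6,C_7,\text{3-pentagon})$-free with no simplicial vertices. Since a graph with no simplicial vertices cannot be complete, $G$ has at least one nontrivial anticomponent, and since $G$ is $C_4$-free, Proposition~\ref{prop-C4-free-one-anticomp} forces exactly one; call it $Q$. It remains to identify $Q$ as a 5-crown, and this is where Lemma~\ref{lemma-non-chordal-pyramid-free} does the heavy lifting. I would verify its hypotheses for $Q$: the graph $Q$ inherits $(2P_3,C_4,C_6,C_7,\text{3-pentagon})$-freeness as an induced subgraph of $G$; $Q$ contains no simplicial vertices by Proposition~\ref{prop-one-nontrivial-anticomp-simplicial}; and $Q$ contains no universal vertex because it is an anticonnected graph on at least two vertices. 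Lemma~\ref{lemma-non-chordal-pyramid-free} then says $Q$ is either a 5-crown or contains a simplicial or universal vertex; the latter alternative being excluded, $Q$ is a 5-crown, so (b) holds.

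For the converse direction, from (b) to (a): suppose $G$ has a unique nontrivial anticomponent $Q$, and $Q$ is a 5-crown. By Proposition~\ref{prop-5-crown-in-class}, $Q$ is $(2P_3,C_4,C_6,C_7,\text{3-pentagon})$-free and contains no simplicial vertices. To transfer these properties from $Q$ to $G$, I would apply Proposition~\ref{prop-H-free-no-universal}, which requires that none of $2P_3,C_4,C_6,C_7$, and the 3-pentagon contains a universal vertex---an easy check (e.g.\ in the 3-pentagon the apex $a$ misses $c_1,\dots,c_t$). That gives $(2P_3,C_4,C_6,C_7,\text{3-pentagon})$-freeness of $G$, and Proposition~\ref{prop-one-nontrivial-anticomp-simplicial} gives that $G$ has no simplicial vertices, completing (a).

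Since all the hard structural analysis has already been carried out in Lemma~\ref{lemma-non-chordal-pyramid-free} (via Theorem~\ref{thm-GUT-decomp} and the ring/crown machinery), this final theorem is essentially bookkeeping, and I do not expect any serious obstacle. The only point demanding care is checking the ``no universal vertex'' hypothesis of Proposition~\ref{prop-H-free-no-universal} for each of the five forbidden subgraphs, so that freeness passes cleanly between $G$ and its nontrivial anticomponent $Q$.
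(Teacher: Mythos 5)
Your proposal is correct and follows essentially the same route as the paper: equivalence of (b) and (c) via Propositions~\ref{prop-5-crown-in-class} and~\ref{prop-non-trivial-anticomp-univ-vertices}, then (a)$\Leftrightarrow$(b) by passing to the unique nontrivial anticomponent (Propositions~\ref{prop-C4-free-one-anticomp}, \ref{prop-one-nontrivial-anticomp-simplicial}, \ref{prop-H-free-no-universal}) and invoking Lemma~\ref{lemma-non-chordal-pyramid-free}. Your explicit check that none of the five forbidden graphs has a universal vertex is a detail the paper leaves implicit here, but it is exactly the right hypothesis to verify.
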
 
\begin{proof} 
Fix a graph $G$. By Proposition~\ref{prop-5-crown-in-class}, any 5-crown is anticonnected, and obviously, it contains at least two vertices. So, by Proposition~\ref{prop-non-trivial-anticomp-univ-vertices}, (b) and (c) are equivalent. It remains to show that (a) and (b) are equivalent. 

Suppose first that (a) holds. Since $G$ contains no simplicial vertices, $G$ is not a complete graph. Consequently, $G$ contains at least one nontrivial anticomponent. Since $G$ is $C_4$-free, Proposition~\ref{prop-C4-free-one-anticomp} then implies that $G$ contains exactly one nontrivial anticomponent, call it $Q$. Since $G$ is $(2P_3,C_4,C_6,C_7,\text{3-pentagon})$-free, so is its anticomponent $Q$. Since $G$ contains no simplicial vertices, Proposition~\ref{prop-one-nontrivial-anticomp-simplicial} guarantees that $Q$ contains no simplicial vertices either. Since $Q$ is an anticonnected graph on at least two vertices, $Q$ contains no universal vertices. So, by Lemma~\ref{lemma-non-chordal-pyramid-free}, $Q$ is a 5-crown. Thus, (b) holds. 

Suppose conversely that (b) holds. Then $G$ contains exactly one nontrivial anticomponent, call it $Q$, and this anticomponent is a 5-crown. By Proposition~\ref{prop-5-crown-in-class}, the 5-crown $Q$ is $(2P_3,C_4,C_6,C_7,\text{3-pentagon})$-free; so, by Proposition~\ref{prop-H-free-no-universal}, $G$ is also $(2P_3,C_4,C_6,C_7,\text{3-pentagon})$-free. Moreover, by Proposition~\ref{prop-5-crown-in-class}, $Q$ contains no simplicial vertices; consequently, by Proposition~\ref{prop-one-nontrivial-anticomp-simplicial}, $G$ contains no simplicial vertices either. Thus, (a) holds. 
\end{proof} 

\begin{corollary} For any graph $G$, the following are equivalent: 
\begin{enumerate}[(a)] 
\item $G$ is $(2P_3,C_4,C_6,C_7,\text{3-pentagon})$-free and contains no simplicial vertices; 
\item $G$ is $(4K_1,C_4,C_6,C_7,\text{3-pentagon})$-free and contains no simplicial vertices. 
\end{enumerate} 
\end{corollary}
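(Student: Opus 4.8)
The implication from (b) to (a) requires no work beyond the introductory observation that $4K_1$ is an induced subgraph of $2P_3$: any graph containing an induced $2P_3$ therefore contains an induced $4K_1$, so every $4K_1$-free graph is $2P_3$-free. Since the remaining forbidden subgraphs ($C_4,C_6,C_7$, 3-pentagon) and the no-simplicial-vertex hypothesis are identical in the two statements, (b) immediately yields (a). The content of the corollary thus lies entirely in the reverse implication, which amounts to saying that, inside the class of $(C_4,C_6,C_7,\text{3-pentagon})$-free graphs with no simplicial vertices, forbidding $2P_3$ is already as strong as forbidding $4K_1$.

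For the implication (a)$\Rightarrow$(b), the plan is to feed the hypothesis directly into Theorem~\ref{thm-non-chordal-pyramid-free-iff}. By the equivalence of statements (a) and (b) there, any $(2P_3,C_4,C_6,C_7,\text{3-pentagon})$-free graph with no simplicial vertices contains exactly one nontrivial anticomponent $Q$, and $Q$ is a 5-crown. Now the structural fact recorded just before Lemma~\ref{lemma-5crown-4K1-free} is that the vertex set of any 5-crown partitions into three cliques, so every 5-crown is $4K_1$-free; hence $Q$ is $4K_1$-free. To transfer this to $G$, I would invoke Proposition~\ref{prop-H-free-no-universal}: since $4K_1$ has no universal vertex, $G$ is $4K_1$-free if and only if its unique nontrivial anticomponent $Q$ is $4K_1$-free, and we have just seen that $Q$ is. Combined with the $(C_4,C_6,C_7,\text{3-pentagon})$-freeness already supplied by (a) and the (literally identical) absence of simplicial vertices, this gives (b).

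There is essentially no obstacle once Theorem~\ref{thm-non-chordal-pyramid-free-iff} is available, since that theorem already pins down the structure exactly. The only point demanding a little care is the passage from the anticomponent $Q$ to the whole graph $G$: one must be sure that adjoining universal vertices cannot create an induced $4K_1$. This is precisely what Proposition~\ref{prop-H-free-no-universal} guarantees (a universal vertex cannot lie in any stable set of size at least two, hence not in a $4K_1$), so no separate argument is needed. Alternatively, one could sidestep the full structure theorem and argue more directly via Proposition~\ref{prop-5-ring-4K1-2P3} together with Lemma~\ref{lemma-5crown-4K1-free}, but routing through Theorem~\ref{thm-non-chordal-pyramid-free-iff} is the cleanest route.
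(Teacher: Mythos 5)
Your proposal is correct and follows essentially the same route as the paper: both directions are handled identically, with (a)$\Rightarrow$(b) obtained by applying Theorem~\ref{thm-non-chordal-pyramid-free-iff} to identify the unique nontrivial anticomponent as a 5-crown, noting that 5-crowns are $4K_1$-free, and transferring this to $G$ via Proposition~\ref{prop-H-free-no-universal}. The only cosmetic difference is that you justify the $4K_1$-freeness of 5-crowns by the three-clique partition remark while the paper cites Lemma~\ref{lemma-5crown-4K1-free} directly; both are valid.
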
 
\begin{proof} 
Since $4K_1$ is an induced subgraph of $2P_3$, it is clear that (b) implies (a). We now assume (a) and prove (b). It suffices to show that $G$ is $4K_1$-free, for the rest follows immediately from (a). Since (a) holds, Theorem~\ref{thm-non-chordal-pyramid-free-iff} guarantees that $G$ has exactly one nontrivial anticomponent, call it $Q$, and this nontrivial anticomponent is a 5-crown. By Lemma~\ref{lemma-5crown-4K1-free}, $Q$ is $4K_1$-free. Since $4K_1$ contains no universal vertices, Proposition~\ref{prop-H-free-no-universal} now implies that $G$ is also $4K_1$-free. This proves (a). 
\end{proof}

\section{The main structural results} \label{sec:structure}

We now put the results of the previous sections together in order to state and prove our main structural results: Theorems~\ref{thm-structure-C7T0Free} and~\ref{thm-main-structure} (below). Theorem~\ref{thm-structure-C7T0Free} gives a structural description of $(2P_3,C_4,C_6,C_7,T_0)$-free graphs that contain no simplicial vertices, whereas Theorem~\ref{thm-main-structure} gives a structural description of $(2P_3,C_4,C_6)$-free graphs that contain no simplicial vertices. (We remind the reader that the 3-pentagon was defined in subsection~\ref{subsec:BasicGraphsDef}, and it is also represented in Figure~\ref{fig:tPentagon}. Further, 5-baskets, villas, and mansions were defined in subsection~\ref{subsec:BasicGraphsDef}, and 5-crowns were defined in section~\ref{sec:3-pentagon-free}. Meanwhile, graphs $T_0$ and $T_1$ are represented in Figure~\ref{fig:T0T1}, and the family $\mathcal{M}$ was defined in section~\ref{sec:Part1}.)

\begin{theorem} \label{thm-structure-C7T0Free} For every graph $G$, the following are equivalent: 
\begin{enumerate}[(a)] 
\item $G$ is $(2P_3,C_4,C_6,C_7,T_0)$-free and contains no simplicial vertices; 
\item $G$ has exactly one nontrivial anticomponent, and this anticomponent is a 5-basket, a villa, a mansion, or a 5-crown. 
\item $G$ can be obtained from a 5-basket, a villa, a mansion, or a 5-crown, by possibly adding universal vertices to it. 
\end{enumerate} 
\end{theorem}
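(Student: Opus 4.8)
The plan is to derive this theorem by simply gluing together the two structural results already established, namely Theorems~\ref{thm-T0free-pyramid-iff} and~\ref{thm-non-chordal-pyramid-free-iff}, with the case split governed by whether or not $G$ contains an induced 3-pentagon. First I would dispense with the equivalence of (b) and (c): by Propositions~\ref{prop-5-basket-in-class},~\ref{prop-t-villa-in-class},~\ref{prop-t-mansion-in-class}, and~\ref{prop-5-crown-in-class}, every 5-basket, villa, mansion, and 5-crown is anticonnected and, having at least two vertices, is nontrivial, so Proposition~\ref{prop-non-trivial-anticomp-univ-vertices} yields (b)$\Leftrightarrow$(c) directly. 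Thus the real content is the equivalence of (a) and (b).

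For (a)$\Rightarrow$(b), I would split on whether $G$ contains an induced 3-pentagon. If it does, then $G$ satisfies the hypotheses of Theorem~\ref{thm-T0free-pyramid-iff}, and its implication (a)$\Rightarrow$(b) gives that $G$ has exactly one nontrivial anticomponent, which is a 5-basket, a villa, or a mansion. If $G$ is 3-pentagon-free, then since $G$ is already $T_0$-free, it is in fact $(2P_3,C_4,C_6,C_7,\text{3-pentagon})$-free, so Theorem~\ref{thm-non-chordal-pyramid-free-iff} applies and its implication (a)$\Rightarrow$(b) gives that $G$ has exactly one nontrivial anticomponent, which is a 5-crown. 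In either case, (b) holds.

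For the converse (b)$\Rightarrow$(a), I would again split, this time on the type of the unique nontrivial anticomponent. If it is a 5-basket, a villa, or a mansion, then the implication (b)$\Rightarrow$(a) of Theorem~\ref{thm-T0free-pyramid-iff} shows that $G$ is $(2P_3,C_4,C_6,C_7,T_0)$-free and contains no simplicial vertices, so (a) holds. If it is a 5-crown, then the implication (b)$\Rightarrow$(a) of Theorem~\ref{thm-non-chordal-pyramid-free-iff} shows that $G$ is $(2P_3,C_4,C_6,C_7,\text{3-pentagon})$-free and contains no simplicial vertices; since the 3-pentagon is an induced subgraph of $T_0$, being 3-pentagon-free forces $G$ to be $T_0$-free as well, so again (a) holds.

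The argument is essentially bookkeeping, since all the difficulty has been absorbed into the two earlier theorems and the observation that the lists of anticomponent types partition cleanly across the 3-pentagon/3-pentagon-free dichotomy. The only point requiring genuine care is the $T_0$-freeness of $G$ in the 5-crown branch of the converse, which must be recovered from the containment of the 3-pentagon in $T_0$ rather than verified directly; I do not anticipate any other obstacle.
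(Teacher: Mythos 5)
Your proposal is correct and follows essentially the same route as the paper, which simply notes that the 3-pentagon is an induced subgraph of $T_0$ (so $(2P_3,C_4,C_6,C_7,T_0,\text{3-pentagon})$-freeness coincides with $(2P_3,C_4,C_6,C_7,\text{3-pentagon})$-freeness) and then deduces the result immediately from Theorems~\ref{thm-T0free-pyramid-iff} and~\ref{thm-non-chordal-pyramid-free-iff}. Your write-up just makes explicit the case split on the presence of an induced 3-pentagon and the $T_0$-freeness recovery in the 5-crown branch, both of which the paper leaves implicit.
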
 
\begin{proof} 
Recall that the 3-pentagon is an induced subgraph of $T_0$, and consequently, a graph is $(2P_3,C_4,C_6,C_7,T_0,\text{3-pentagon})$-free if and only if it is $(2P_3,C_4,C_6,C_7,\text{3-pentagon})$-free. The result now follows immediately from Theorems~\ref{thm-T0free-pyramid-iff} and~\ref{thm-non-chordal-pyramid-free-iff}. 
\end{proof} 

\begin{theorem} \label{thm-main-structure} For every graph $G$, the following are equivalent: 
\begin{enumerate}[(a)] 
\item $G$ is a $(2P_3,C_4,C_6)$-free graph that contains no simplicial vertices; 
\item $G$ has exactly one nontrivial anticomponent, and this anticomponent is one of the following: 
\begin{itemize} 
\item a thickening of a graph in $\mathcal{M} \cup \{T_0,T_1\}$, 
\item a 5-basket, a villa, a mansion, or a 5-crown. 
\end{itemize} 
\item $G$ can be obtained from one of the following graphs by possibly adding universal vertices to it: 
\begin{itemize} 
\item a thickening of a graph in $\mathcal{M} \cup \{T_0,T_1\}$, 
\item a 5-basket, a villa, a mansion, or a 5-crown. 
\end{itemize} 
\end{enumerate} 
\end{theorem}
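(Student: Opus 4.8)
The plan is to deduce this theorem purely by combining the two structural results already in hand—Theorem~\ref{thm-main-withC7T0} (the main result of the first paper) and Theorem~\ref{thm-structure-C7T0Free}—via a case split on whether $G$ contains an induced $C_7$ or an induced $T_0$. The crucial observation is that these two theorems cover complementary cases: Theorem~\ref{thm-main-withC7T0} treats $(2P_3,C_4,C_6)$-free graphs that \emph{do} contain an induced $C_7$ or $T_0$, whereas Theorem~\ref{thm-structure-C7T0Free} treats $(2P_3,C_4,C_6)$-free graphs that contain \emph{neither}. Since a graph is $(2P_3,C_4,C_6)$-free and $(C_7,T_0)$-free precisely when it is $(2P_3,C_4,C_6,C_7,T_0)$-free (the hypothesis of Theorem~\ref{thm-structure-C7T0Free}), these two cases are exhaustive and mutually exclusive, and their two output families of anticomponents are exactly the two bullet points listed in (b) and (c).

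First I would prove (a)$\Rightarrow$(b). Assuming $G$ is $(2P_3,C_4,C_6)$-free with no simplicial vertices, I split according to whether $G$ contains an induced $C_7$ or $T_0$. If it does, the (a)$\Rightarrow$(b) direction of Theorem~\ref{thm-main-withC7T0} gives that $G$ has a unique nontrivial anticomponent and that it is a thickening of a graph in $\mathcal{M}\cup\{T_0,T_1\}$, i.e.\ the first bullet of our (b). If it does not, then $G$ is $(2P_3,C_4,C_6,C_7,T_0)$-free, and the (a)$\Rightarrow$(b) direction of Theorem~\ref{thm-structure-C7T0Free} gives that the unique nontrivial anticomponent is a 5-basket, villa, mansion, or 5-crown, i.e.\ the second bullet. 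In both cases (b) holds.

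For the converse (b)$\Rightarrow$(a), I would split according to which bullet the anticomponent falls into and invoke the matching (b)$\Rightarrow$(a) direction of the appropriate cited theorem. The first bullet yields, via Theorem~\ref{thm-main-withC7T0}, that $G$ is $(2P_3,C_4,C_6)$-free with no simplicial vertices; the second bullet yields, via Theorem~\ref{thm-structure-C7T0Free}, the stronger conclusion that $G$ is $(2P_3,C_4,C_6,C_7,T_0)$-free with no simplicial vertices, which I would simply weaken to $(2P_3,C_4,C_6)$-free as required. For (b)$\Leftrightarrow$(c), I would take the disjunction of the two biconditionals already established: the equivalence of (b) and (c) inside Theorem~\ref{thm-main-withC7T0} is exactly the equivalence of our first bullets, and the equivalence of (b) and (c) inside Theorem~\ref{thm-structure-C7T0Free} is exactly the equivalence of our second bullets. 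Since our (b) and (c) are the respective disjunctions of these forms, (b)$\Leftrightarrow$(c) follows at once; at bottom this rests on Proposition~\ref{prop-non-trivial-anticomp-univ-vertices} applied to each of the candidate anticomponents (all of which are anticonnected on at least two vertices, by the cited theorems together with Propositions~\ref{prop-5-basket-in-class},~\ref{prop-t-villa-in-class},~\ref{prop-t-mansion-in-class}, and~\ref{prop-5-crown-in-class}), exactly as in the two source theorems.

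I do not expect a serious obstacle here: the entire argument is a bookkeeping combination of two previously proved theorems. The only points requiring genuine care are verifying that the $C_7$/$T_0$ dichotomy is exhaustive (so that the case split is legitimate) and checking that the hypotheses align—chiefly the identification of "$(2P_3,C_4,C_6)$-free and $(C_7,T_0)$-free" with "$(2P_3,C_4,C_6,C_7,T_0)$-free." The mildest subtlety is the second-bullet instance of (b)$\Rightarrow$(a), where the cited theorem returns a conclusion stronger than needed and must be weakened; this is routine.
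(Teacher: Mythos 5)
Your proposal is correct and is exactly the paper's argument: the paper proves this theorem in one line by citing Theorems~\ref{thm-main-withC7T0} and~\ref{thm-structure-C7T0Free}, and your write-up simply spells out the same case split on whether $G$ contains an induced $C_7$ or $T_0$. No issues.
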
 
\begin{proof} 
This follows immediately from Theorems~\ref{thm-main-withC7T0} and~\ref{thm-structure-C7T0Free}. 
\end{proof}

\section{Clique-width} \label{sec:cwd}

Our goal in this section is to prove that $(2P_3,C_4,C_6,C_7,T_0)$-free graphs that contain no simplicial vertices have bounded clique-width (see Theorem~\ref{thm-cwd-in-class-with-C7T0Free}). Combined with Theorem~\ref{thm-cwd-in-class-with-C7-T0}, this will imply that $(2P_3,C_4,C_6)$-free graphs that contain no simplicial vertices have bounded clique-width (see Theorem~\ref{thm-main-cwd}). The following lemma was proven in the first part of our series. 

\begin{lemma} [Lemma~6.4 of~\cite{2P3C4C6FreePart1}] \label{lemma-add-univ-cwd} If a graph $G$ can be obtained from a graph $Q$ by possibly adding universal vertices to $Q$, then $\text{cwd}(Q) \leq \text{cwd}(G) \leq \max\big\{\text{cwd}(Q),2\big\}$. 
\end{lemma}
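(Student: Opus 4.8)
The plan is to prove the two inequalities separately, noting that both are purely clique-width facts independent of the graph classes under study. Throughout, write $U := V(G) \setminus V(Q)$, so that by hypothesis $Q = G[V(Q)]$ is an induced subgraph of $G$, the set $U$ is a (possibly empty) clique of $G$, and $U$ is complete to $V(Q)$ in $G$; equivalently, every vertex of $U$ is universal in $G$.

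For the lower bound $\text{cwd}(Q) \leq \text{cwd}(G)$, I would invoke the standard fact that clique-width is monotone under taking induced subgraphs. Concretely, given any construction of $G$ witnessing $\text{cwd}(G) = k$, one obtains a construction of $Q$ using at most $k$ labels by ``deleting'' the vertices of $U$: by induction on the sequence of operations building $G$, restricting the constructed labeled graph to the vertex set $V(Q)$ yields a valid construction of $Q$ in which each of the four operations either is retained unchanged or becomes vacuous (a creation of a deleted vertex is dropped, while joins and renames act on the surviving vertices exactly as before). Since adjacency among the vertices of $V(Q)$ is identical in $G$ and in $Q$ (because $Q$ is induced), this construction builds $Q$ and introduces no new labels, giving $\text{cwd}(Q) \leq k$.

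For the upper bound, the plan is to exhibit an explicit construction of $G$ using $\max\{\text{cwd}(Q), 2\}$ labels. Set $k := \text{cwd}(Q)$; since $Q$ is nonnull, $k \geq 1$. If $U = \emptyset$ then $G = Q$ and the bound is immediate, so assume $U = \{u_1, \ldots, u_m\}$ with $m \geq 1$. First build $Q$ using an optimal construction with label set $\{1, \ldots, k\}$, then apply rename operations to recolor every label $2, \ldots, k$ to label $1$, so that all of $V(Q)$ carries label $1$. Now add the universal vertices one at a time: for $j = 1, \ldots, m$, take the disjoint union with a single new vertex $u_j$ created with label $2$, apply the join operation between labels $1$ and $2$, and then rename label $2$ to label $1$. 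At the creation of $u_j$, all previously created vertices (namely $V(Q)$ together with $u_1, \ldots, u_{j-1}$) carry label $1$, so the join makes $u_j$ adjacent to all of them; after the final rename, $u_j$ too carries label $1$, so the next vertex will be joined to it as well. This produces exactly $G$: the internal edges of $Q$ come from the first phase, each $u_j$ is complete to $V(Q)$, and $U$ is a clique.

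The only labels used are $\{1, \ldots, k\}$ in the first phase and $\{1, 2\}$ in the second, so the construction uses $\max\{k, 2\}$ labels, giving $\text{cwd}(G) \leq \max\{\text{cwd}(Q), 2\}$ as required. I expect the point requiring the most care to be the bookkeeping in the second phase---verifying that at each step the already-constructed vertices all carry label $1$, so that each newly added universal vertex is joined to precisely the intended set---rather than any deep difficulty; the induced-subgraph monotonicity used for the lower bound is entirely standard.
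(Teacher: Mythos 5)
Your proof is correct: the lower bound is the standard induced-subgraph monotonicity of clique-width, and the upper bound construction (collapse all of $Q$ to one label, then repeatedly create a universal vertex with a second label, join, and rename) is the natural argument and uses exactly $\max\{\text{cwd}(Q),2\}$ labels. The present paper does not reprove this lemma — it imports it as Lemma~6.4 of the first paper in the series — but your argument is the expected one for that result.
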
 

Theorem~\ref{thm-structure-C7T0Free} and Lemma~\ref{lemma-add-univ-cwd} reduce the problem of showing that $(2P_3,C_4,C_6,C_7,T_0)$-free graphs that contain no simplicial vertices have bounded clique-width, to the problem of showing that 5-baskets, villas, mansions, and 5-crowns have bounded clique-width. But in fact, it was already shown in~\cite{4K1C4C6C7Free} that 5-baskets and 5-crowns have bounded clique-width. More precisely, we have the following.

\begin{lemma} [Lemma~3.4 of~\cite{4K1C4C6C7Free}] \label{lemma-5-basket-cwd} Every 5-basket $Q$ satisfies $\text{cwd}(Q) \leq 5$. 
\end{lemma}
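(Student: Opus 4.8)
The plan is to give an explicit construction of $Q$ using at most $5$ labels. First I would fix a $5$-basket partition $(A;B_1,B_2,B_3;C_1,C_2,C_3;F)$ with associated indices $i^*,j^*$, and by the symmetry of the index set $\{1,2,3\}$ assume $i^*=1$. The key observation driving everything is that, apart from the single pair $(A,B_1)$, \emph{all} the adjacencies prescribed by the definition of a $5$-basket are either complete or anticomplete between whole sets: $A$ is complete to $B_2\cup B_3$ and anticomplete to $C=C_1\cup C_2\cup C_3$; $C$ is a clique (the $C_i$ are pairwise complete); each $B_i$ is complete to $C_i$ and anticomplete to the rest; and $F$ is complete to everything outside $B_{j^*}\cup C_{j^*}$. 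The only ``nested'' (chain) adjacency is the one between $A$ and $B_{i^*}=B_1$, coming from the ordering with $N_Q(a_t)\cap B_1\subseteq\dots\subseteq N_Q(a_1)\cap B_1=B_1$. So the strategy splits naturally into two phases: build the co-bipartite chain $Q[A\cup B_1]$ first, and then glue on the remaining cliques using only complete/anticomplete joins.

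The technical heart, which I am confident works with just $3$ labels, is the chain phase. Using that the bipartite adjacency is a staircase, write $N_Q(b_j)\cap A=\{a_1,\dots,a_{g(j)}\}$ with $g(1)=t\ge g(2)\ge\dots\ge g(p)\ge 1$. I would process $j=p,p-1,\dots,1$, maintaining three labels: an ``active-$A$'' label holding the current prefix $\{a_1,\dots,a_{g(j)}\}$ (kept as a clique), a ``placed-$B$'' label holding all $b$'s added so far (kept as a clique with their $A$-adjacencies finalized), and one scratch label. Before handling $b_j$ I enlarge the active prefix by the increment $\{a_{g(j+1)+1},\dots,a_{g(j)}\}$, introducing each new vertex on the scratch label, joining it only to the active-$A$ label, and renaming it into that label; crucially one checks that a newly introduced $a_m$ with $m>g(j+1)$ is nonadjacent to every already-placed $b_{j'}$ (since $j'\ge j+1$ gives $g(j')\le g(j+1)<m$), so no spurious edges arise. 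Then $b_j$ is introduced on the scratch label, joined to the active-$A$ label (exactly $N_Q(b_j)\cap A$) and to the placed-$B$ label (completing the clique $B_1$), and renamed into placed-$B$. After $j=1$ the active prefix equals all of $A$, so the phase ends with $A$ and $B_1$ each occupying a single label.

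In the final phase I would attach $B_2,B_3,C_1,C_2,C_3,F$ through complete-join and renaming operations, recycling labels by retiring finished vertices into an inert label and reusing a common scratch label for building each new clique. The hard part will be staying within exactly $5$ labels rather than some larger constant: the sets $C_1,C_2,C_3$ form a single clique $C$, yet each $C_i$ must be matched only to $B_i$ (a ``net''-like pattern), $A$ must remain distinguishable until it has absorbed its edges to $B_2,B_3$, and the near-universal $F$ has exceptional non-neighbours $B_{j^*}\cup C_{j^*}$, which forces $C_{j^*}$ to be kept distinct from $C\setminus C_{j^*}$ even at the very end (so the three $C_i$ cannot simply be collapsed into one label). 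A naive thread-by-thread schedule spends one label too many, so the real work is to order the operations and retire/recycle labels aggressively — for instance building $B_i$ and its matching $C_i$ together, merging each completed $C_i$ into a persistent ``$C$-so-far'' label, and partitioning the already-finished vertices into the two classes (neighbours and non-neighbours of $F$) \emph{before} $F$ is created — and to carry this out in the two cases $j^*=i^*$ and $j^*\ne i^*$ (up to the symmetry of $\{1,2,3\}$). Once the bookkeeping is arranged so that at most five labels are ever simultaneously in use, the bound $\mathrm{cwd}(Q)\le 5$ follows.
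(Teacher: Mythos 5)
There is a genuine gap here, and it sits exactly where you flagged it. Your Phase 1 is fine: the three-label staircase construction of $Q[A\cup B_{i^*}]$ is correct (it is the same device as Claim~\ref{prop-t-spike-cwd-claim-base-case} in Proposition~\ref{prop-t-spike-cwd}), up to the minor point that the definition of a 5-basket permits $N_Q(a_t)\cap B_{i^*}=\emptyset$, so after processing $b_1$ the active prefix need not be all of $A$ and the leftover vertices of $A$ must still be appended (harmless, one extra round of introduce--join--rename). But Phase 2 is not a proof: the entire content of the bound $\mathrm{cwd}(Q)\le 5$ is the label accounting you defer with ``once the bookkeeping is arranged so that at most five labels are ever simultaneously in use, the bound follows.'' That arrangement is precisely what has to be exhibited, and it is not routine. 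The chain phase reserves three labels while it runs, so at most two other ``future-adjacency classes'' can be alive during it; yet relative to $(A,B_{i^*},F)$ and to each other, the six remaining sets $B_2,B_3,C_1,C_2,C_3,F$ fall into four distinct classes in the case $j^*=i^*$ and five in the case $j^*\neq i^*$. If you instead run the chain first or last and then attach the remaining cliques one at a time, you repeatedly reach states with four occupied labels carrying pairwise distinct future adjacencies plus only one free label, while building the next clique costs two; several natural schedules (including the one you sketch, building $B_i$ together with $C_i$ and merging finished $C_i$'s) land at six labels in the case $j^*\neq i^*$. So the claim is not established by your outline, and it is not clear that your outlined schedule can be completed at all without a further structural idea (for instance, exploiting that $A\cup F$ is a clique whose adjacency to $B_{i^*}$ is still a chain, or assembling pre-labeled pieces by disjoint union as in the proofs of Lemmas~\ref{lemma-t-villa-cwd} and~\ref{lemma-t-mansion-cwd}).

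For comparison: the present paper does not prove this lemma at all; it imports it verbatim as Lemma~3.4 of~\cite{4K1C4C6C7Free}. So any self-contained construction is a different route from the paper's, but to count as a proof it must actually display a schedule (in both cases $j^*=i^*$ and $j^*\neq i^*$) and verify at each step that no sixth label is needed. I would encourage you to either do that explicitly or simply cite the source as the paper does.
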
 

\begin{lemma} [Lemma~3.5 of~\cite{4K1C4C6C7Free}] \label{lemma-5-crown-cwd} Every 5-crown $Q$ satisfies $\text{cwd}(Q) \leq 5$. 
\end{lemma}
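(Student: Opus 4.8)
The plan is to give an explicit construction of $Q$ using the four clique-width operations from Section~\ref{sec:terminology} and to verify that it uses at most five labels. First I would unpack the structure of a 5-crown. Fix a 5-crown partition $(X_0,X_1,X_2,X_3,X_4)$ with $i^*=0$, so that $X_4$ is complete to $X_3$ and $X_1$ is complete to $X_2$; consequently $V(Q)$ is covered by the three cliques $X_0$, $X_1\cup X_2$, and $X_3\cup X_4$. Since $Q$ is a 5-ring it is in particular $C_4$-free, so Proposition~\ref{prop-C4Free-CoBip} applies to each pair of adjacent cliques and shows that the bipartite adjacency across each of the three nontrivial ``chain'' edges $X_0X_1$, $X_2X_3$, $X_4X_0$ is nested: the cliques can be ordered so that the cross-neighborhoods form a chain (an initial-segment, staircase pattern), while the remaining pairs of cliques are anticomplete. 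This is the only information about $Q$ that the construction needs to use.

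Next I would build $Q$ incrementally, introducing the vertices in the order dictated by these nested orderings. The cliques themselves are cheap: any clique is assembled with two labels by repeatedly creating a new vertex, joining it to the label carrying the part built so far, and renaming. The substance lies in encoding the chain adjacencies. I would process the five cliques around the cycle while maintaining, at each stage, a bounded pool of labels recording for each already-created vertex the set of not-yet-created vertices to which it must still be joined. Nestedness is exactly what keeps this pool small: along a chain edge the vertices of a clique fall into nested neighborhood classes, so once the construction's ``active frontier'' passes a vertex its remaining obligations collapse and it can be renamed into a single ``finished'' label, freeing its label for reuse. Using the two complete edges $X_1X_2$ and $X_3X_4$ to let $X_1,X_2$ (respectively $X_3,X_4$) share labels, and the fact that only three of the cliques are mutually ``active'' at any moment, the bookkeeping can be arranged with five labels in total; I would then check directly that the resulting graph has precisely the edge set of $Q$.

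The main obstacle is the cyclic arrangement of the five cliques. For a single chain edge between two cliques the construction is routine, and even the entire complement is a bipartite chain graph of bounded clique-width; the difficulty is that the five cliques form a cycle, so no single linear order of the vertices respects all three chain relations simultaneously. One is therefore forced to commit to an insertion order and to recycle labels so that edges still owed to vertices on ``both sides'' of the cycle are tracked at once. Bringing the count down to exactly five — rather than to the larger constant one would obtain from a naive complementation argument ($\text{cwd}(\overline{H})\le 2\,\text{cwd}(H)$) or from giving each chain edge its own private labels — is the delicate point, and it is precisely where the two complete edges of the crown must be exploited to merge labels. A cleaner alternative I would pursue in parallel is to mimic the proof of the companion statement Lemma~\ref{lemma-5-basket-cwd}: 5-baskets and 5-crowns have closely related clique/chain decompositions, so a five-label construction for one should transfer to the other with only minor changes.
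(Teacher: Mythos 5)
There is a genuine gap, and it sits exactly where you locate it. Your proposal correctly assembles the structural preliminaries (the three cliques $X_0$, $X_1\cup X_2$, $X_3\cup X_4$; the fact that the three non-complete adjacent pairs carry nested ``chain'' adjacencies, which in fact follows already from the ring orderings $X_i\subseteq N[u^i_{|X_i|}]\subseteq\dots\subseteq N[u^i_1]$ without invoking Proposition~\ref{prop-C4Free-CoBip}), and it correctly diagnoses the difficulty: the three chains sit on a cycle, so no linear insertion order handles them independently, and in particular $X_0$ must be interleaved with both $X_1$ and $X_4$ while their internal orderings are still visible to the label set. But the proof then stops at ``the bookkeeping can be arranged with five labels in total.'' That sentence is the entire mathematical content of the lemma. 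Nothing in the proposal exhibits the insertion order, the label invariant maintained at each step, or the verification that the joins create precisely $E(Q)$; and your own framing concedes that getting from a large constant down to five is ``the delicate point.'' A claim of the form $\text{cwd}(Q)\le 5$ cannot be established by asserting that a bounded pool of labels exists without specifying it --- compare Proposition~\ref{prop-t-spike-cwd} and Lemmas~\ref{lemma-t-villa-cwd} and~\ref{lemma-t-mansion-cwd}, where the labelings $L, L_1, L_2, L_3$ and the exact sequence of unions, joins, and renamings are written out and checked. Two incidental remarks: the aside that ``the entire complement is a bipartite chain graph'' is false (the complement of a 5-crown is generally not bipartite --- take one vertex from each of $X_0$, $X_2$, $X_4$ with the $X_0$--$X_4$ pair nonadjacent in $Q$ to get a triangle in $\overline{Q}$), though you only invoke it to dismiss that route; and the fallback of ``mimicking the proof of Lemma~\ref{lemma-5-basket-cwd}'' is not available inside this paper, since both that lemma and the present one are imported without proof from~\cite{4K1C4C6C7Free} (this paper's ``proof'' of the statement is the citation itself). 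So the comparison to make is with Lemma~3.5 of that reference, which does carry out the explicit construction your sketch defers.
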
 

This reduces the problem to showing that villas and mansions have bounded clique-width. 

\medskip 

We start by defining ``labeled graphs'' and their clique-width, as in~\cite{4K1C4C6C7Free}. A {\em labeling} of a graph $G$ is any function whose domain is $V(G)$. A {\em labeled graph} is an ordered pair $(G,L)$, where $G$ is a graph, and $L$ is a labeling of $G$; for a vertex $v \in V(G)$, $L(v)$ is the {\em label} of $v$. The {\em disjoint union} of two labeled graphs on disjoint vertex sets is defined in the natural way. To simplify notation, for a labeled graph $(G,L)$ and an induced subgraph $H$ of $G$, we often write $(H,L)$ instead of $(H,L \upharpoonright V(H))$.\footnote{As usual, for a function $f:A \rightarrow B$ and a set $A' \subseteq A$, we denote by $f \upharpoonright A'$ the restriction of $f$ to $A'$.} 

The {\em clique-width} of a labeled graph $(G,L)$, denoted by $\text{cwd}(G,L)$, is the minimum number of labels needed to construct $(G,L)$ using the following four operations:\footnote{Note that these are the same four operations that we had in the definition of the clique-width of nonlabeled graphs. The only difference is that, here, we insist that the labeling of $G$ at the end of the procedure be precisely the labeling $L$.} 
\begin{enumerate} 
\item creation of a new vertex $v$ with label $i$; 
\item disjoint union of two labeled graphs; 
\item joining by an edge every vertex labeled $i$ to every vertex labeled $j$ (where $i \neq j$); 
\item renaming label $i$ to label $j$.  
\end{enumerate} 
Thus, at the end of the procedure, each vertex $v \in V(G)$ is supposed to have label $L(v)$. Clearly, $\text{cwd}(G) \leq \text{cwd}(G,L)$. %Furthermore, if $L$ is a constant labeling of a graph $G$,\footnote{This simply means that $L$ is a constant function with domain $V(G)$, that is, that $L$ assigns the same label to all vertices of $G$.} then $\text{cwd}(G,L) = \text{cwd}(G)$. 

\begin{lemma} [Lemma~3.1 of~\cite{4K1C4C6C7Free}] \label{lemma-cwd-complete} Let $G$ be a complete graph, and let $L:V(G) \rightarrow C$ be a labeling of $G$. Then $\text{cwd}(G,L) \leq |C|+1$. 
\end{lemma}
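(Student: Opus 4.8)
The plan is to construct the labeled complete graph $(G,L)$ by a straightforward incremental procedure, adding the vertices one at a time and using one extra ``working'' label to make each new vertex complete to the vertices already built. Concretely, I would fix an enumeration $v_1,\dots,v_n$ of $V(G)$, and designate one special label, say $\star$, that is not in the codomain $C$; this $\star$ will serve as a temporary label while a vertex is being joined to the rest. The total label budget is then $|C|+1$, which matches the claimed bound.

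The key steps, in order, would be as follows. First, create $v_1$ with its intended label $L(v_1) \in C$. Then, inductively, suppose the labeled complete graph on $\{v_1,\dots,v_{k-1}\}$ has already been built, with each $v_j$ ($j \le k-1$) carrying its \emph{final} label $L(v_j)$. To add $v_k$: create a new vertex with the temporary label $\star$ (operation~1), take the disjoint union with the graph built so far (operation~2), and then for each label $i \in C$ that actually occurs among $v_1,\dots,v_{k-1}$, apply the edge-joining operation (operation~3) between label $i$ and label $\star$. Since $i \ne \star$ for every $i \in C$, this is legal, and it makes $v_k$ adjacent to every previously built vertex, exactly as required because $G$ is complete. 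Finally, rename $\star$ to $L(v_k)$ (operation~4), so that $v_k$ now carries its intended final label and $\star$ is once again free for the next iteration. Crucially, because all previously built vertices carry labels in $C$ and are pairwise complete, and because the new vertex is the only one ever bearing label $\star$, the edge-joins never create unwanted edges among the old vertices nor duplicate existing ones.

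I do not anticipate a genuine obstacle here; the lemma is routine, and the only thing to be careful about is the label accounting. At every moment of the construction, the labels in use are a subset of $C \cup \{\star\}$, so at most $|C|+1$ labels are ever simultaneously active, giving $\text{cwd}(G,L) \le |C|+1$. One subtlety worth stating explicitly is why reusing $\star$ across iterations is harmless: after the rename in step~$k$, no vertex carries $\star$, so when $v_{k+1}$ is created with label $\star$ it is again the unique $\star$-vertex, and the joins in step~$k+1$ affect only $v_{k+1}$. A second minor point is that the edge-joins should be performed only for labels $i$ that are present, or equivalently one may simply join $\star$ to each $i \in C$ regardless, since joining to an absent label is vacuous; either phrasing keeps the construction within the budget. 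This completes the verification that $|C|+1$ labels suffice.
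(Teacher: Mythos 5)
Your construction is correct: the incremental scheme with one spare label $\star$, joining $\star$ to each label of $C$ and then renaming, is exactly the standard argument for this bound, and your label accounting ($C \cup \{\star\}$, hence $|C|+1$) is right. The paper imports this lemma from \cite{4K1C4C6C7Free} without reproving it, but your proof coincides with the expected one there, so there is nothing to add.
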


For a positive integer $t$, a {\em $t$-spike} is a graph $Q$ whose vertex set can be partitioned into nonempty cliques $B_1,\dots,B_t,C_1,\dots,C_t$ that satisfy the following: 
\begin{itemize} 
\item cliques $B_1,\dots,B_t$ are pairwise anticomplete to each other; 
\item cliques $C_1,\dots,C_t$ are pairwise complete to each other; 
\item for all distinct $i,j \in \{1,\dots,t\}$, $B_i$ is anticomplete to $C_j$; 
\item for all $i \in \{1,\dots,t\}$, $B_i$ can be ordered as $B_i = \{b_1^i,\dots,b_{r_i}^i\}$ so that $N_Q(b_{r_i}^i) \cap C_i \subseteq \dots \subseteq N_Q(b_1^i) \cap C_i$. 
\end{itemize} 
Under these circumstances, we also say that $(B_1,\dots,B_t;C_1,\dots,C_t)$ is a {\em $t$-spike partition} of the $t$-spike $Q$. 

\begin{proposition} \label{prop-t-spike-cwd} For all positive integers $t$, all $t$-spikes $Q$ with an associated $t$-spike partition $(B_1,\dots,B_t;C_1,\dots,C_t)$, and all labelings $L$ of $Q$ that assign one label to all vertices of $B_1 \cup \dots \cup B_t$ and another label to all vertices of $C_1 \cup \dots \cup C_t$,\footnote{To be fully precise, we mean that there exist distinct $p,q$ such that: 
\begin{itemize} 
\item $L(b) = p$ for all $b \in B_1 \cup \dots \cup B_t$; 
\item $L(c) = q$ for all $c \in C_1 \cup \dots \cup C_t$. 
\end{itemize}} we have that $\text{cwd}(Q,L) \leq 4$. 
\end{proposition}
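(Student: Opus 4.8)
The plan is to prove the statement by induction on $t$, isolating the single spike ($t=1$) as a separate building block that is really the heart of the matter. Throughout I would work with four labels: two "output" labels $p,q$ prescribed by $L$, and two auxiliary labels $\alpha,\beta$. The only genuinely nontrivial feature of a $t$-spike is the nested (chain) adjacency between each $B_i$ and its $C_i$; the inter-spike structure is trivial, since distinct $B_i$'s are anticomplete, the $C_i$'s are pairwise complete, and $B_i$ is anticomplete to $C_j$ for $i\neq j$. So the main obstacle is realizing one chain $B_i$--$C_i$ with boundedly many labels, and this is what I would settle first.

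First I would prove a \emph{single-spike lemma}: if $B,C$ are (possibly empty) cliques with nested bipartite adjacency between them, then for any two distinct labels $x$ (for $B$) and $y$ (for $C$) the corresponding labeled graph can be built using only three labels $x,y,z$, with only $x,y$ appearing at the root of the term. The proof is by induction on $|B|+|C|$. If one side is empty the graph is a clique and two labels suffice (Lemma~\ref{lemma-cwd-complete}). If both sides are nonempty, the key is a dichotomy. Let $c^*\in C$ have an inclusion-minimal $B$-neighborhood. Either $N_Q(c^*)\cap B=\emptyset$, or else some $b^*\in B$ is universal in $Q[B\cup C]$: indeed, if $N_Q(c^*)\cap B\neq\emptyset$, choose $b^*\in B$ with an inclusion-maximal $C$-neighborhood; maximality yields $b^*\in N_Q(c^*)$, and then inclusion-minimality of $N_Q(c^*)\cap B$ (which is contained in $N_Q(c)\cap B$ for every $c\in C$) forces $b^*$ to be complete to $C$, hence universal in $Q[B\cup C]$. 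In the first case I build $Q[(B\cup C)\setminus c^*]$ by induction, create $c^*$ with label $z$, join $z$ to $y$ (attaching $c^*$ to the clique $C\setminus c^*$) and rename $z\to y$; in the second case I build $Q[(B\cup C)\setminus b^*]$, create $b^*$ with label $z$, join $z$ to both $x$ and $y$, and rename $z\to x$. Establishing this dichotomy cleanly is the step I expect to require the most care, though it is short.

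With the single-spike lemma in hand I would run the induction on $t$. For $t=1$, apply the lemma with output labels $p,q$ (three labels, within budget). For $t\geq 2$, let $Q'$ be the $(t-1)$-spike on the first $t-1$ spikes; build $(Q',L')$ with $B(Q')\to p$ and $C(Q')\to q$ by induction, so that only $p,q$ appear at its root. Separately, build the single spike $Q[B_t\cup C_t]$ with output labels $\alpha,\beta$ and auxiliary label $p$ (legitimate, since this is a disjoint subterm), so that only $\alpha,\beta$ appear at its root. Then take the disjoint union (the two roots use the disjoint label sets $\{p,q\}$ and $\{\alpha,\beta\}$, giving four classes), apply a single join between labels $q$ and $\beta$ to make $C_t$ complete to $C_1\cup\cdots\cup C_{t-1}$, and finally rename $\alpha\to p$ and $\beta\to q$. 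One checks that this creates exactly the required edges and no others: the old $B$-vertices (label $p$) and $B_t$ (label $\alpha$) are untouched by the join, which matches the anticompleteness relations in the definition of a $t$-spike. Since every label used anywhere in the resulting term lies in $\{p,q,\alpha,\beta\}$, we conclude $\text{cwd}(Q,L)\leq 4$, completing the induction.
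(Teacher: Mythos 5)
Your proof is correct, and its top-level structure coincides with the paper's: induction on $t$, peeling off one spike, building it in a disjoint subterm, joining its $C$-part to the union of the other $C_i$'s, and renaming, all within four labels. Where you differ is in the single-spike building block. The paper also handles that case by induction (on the number of vertices), but in one step it removes the vertex $b_1\in B$ with the largest $C$-neighborhood, splits $C$ into three blocks according to adjacency to $b_1$ and to $B\setminus\{b_1\}$, and reassembles the pieces, which costs four labels. Your version peels a single extremal vertex at a time --- either a $C$-vertex with no $B$-neighbors, or else a vertex of $B$ that is universal in $Q[B\cup C]$ --- and your dichotomy is sound, granted the easily verified fact (implicit in your parenthetical) that the $B$-neighborhoods of the $C$-vertices also form a chain; this gets the single spike down to three labels. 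That is marginally sharper and arguably cleaner, though the final bound is still $4$ because the inductive step on $t$ needs the fourth label, exactly as in the paper.
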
 
\begin{proof} 
We proceed by induction on $t$, and in the base case (see Claim~\ref{prop-t-spike-cwd-claim-base-case}), we proceed by induction on the number of vertices of the graph.

\begin{adjustwidth}{1cm}{1cm} 
\begin{claim} \label{prop-t-spike-cwd-claim-base-case} 
For all 1-spikes $Q$, with an associated 1-spike partition $(B,C)$, and for all labelings $L$ of $Q$ that assign one label to all vertices of $B$ and another label to all vertices of $C$, we have that $\text{cwd}(Q,L) \leq 4$. 
\end{claim} 
\end{adjustwidth} 
{\em Proof of Claim~\ref{prop-t-spike-cwd-claim-base-case}.} Fix a 1-spike $Q$, and assume inductively that the claim holds for 1-spikes on fewer than $|V(Q)|$ many vertices. Let $(B,C)$ be a 1-spike partition of $Q$, and fix a labeling $L$ of $Q$ that assigns one label to all vertices of $B$ and another label to all vertices of $C$. Clearly, we may assume that $L(b) = 1$ for all $b \in B$ and $L(c) = 2$ for all $c \in C$. Next, using the definition of a 1-spike, we fix an ordering $B = \{b_1,\dots,b_r\}$ of $B$ such that $N_Q(b_r) \cap C \subseteq \dots \subseteq N_Q(b_1) \cap C$. 

Set $B_1 := B \setminus \{b_1\} = \{b_2,\dots,b_r\}$. Next, set $C_2 := N_Q(B_1) \cap C$, $C_1 := \big(N_Q(b_1) \setminus N_Q(B_1)\big) \cap C$, and $C_0 := C \setminus (C_1 \cup C_2)$. Thus, $C_2$ is the set of all vertices in $C$ that have a neighbor in $B_1 = B \setminus \{b_1\}$; $C_1$ is the set of all vertices in $C$ whose only neighbor in $B$ is $b_1$; and $C_2$ is the set of all vertices in $C$ that are anticomplete to $B$. Clearly, sets $C_0,C_1,C_2$ form a partition of $C$. Moreover, it follows from our ordering of $B$ that $b_1$ is complete to $C_1 \cup C_2$ and anticomplete to $C_0$. 

Suppose first that $B_1 = \emptyset$, so that $B = \{b_1\}$. Then $C_2 = \emptyset$ and $C = C_0 \cup C_1$. We now define $L_C:C \rightarrow \{2,3\}$ by setting $L_C(c) = 2$ for all $c \in C_1$ and setting $L_C(c) = 3$ for all $c \in C_0$. By Lemma~\ref{lemma-cwd-complete}, we know that $\text{cwd}(Q[C],L_C) \leq 3$. So, we construct $(Q[C],L_C)$ using only labels $1,2,3$. Next, we create the vertex $b_1$ with label $1$, and make vertices labeled $1$ adjacent to vertices labeled $2$. (Now $b_1$ is complete to $C_1$ and anticomplete to $C_0$.) Finally, we rename label $3$ to label $2$. We have now constructed $(Q,L)$ using only labels $1,2,3$, and we deduce that $\text{cwd}(Q,L) \leq 3$. 

From now on, we assume that $B_1 \neq \emptyset$. Since $b_1 \notin B_1 \cup C_2$, we see that $Q[B_1 \cup C_2]$ is a proper induced subgraph of $Q$. Moreover, if $C_2 = \emptyset$, then $Q[B_1 \cup C_2] = Q[B_1]$ is a complete graph, whereas if $C_2 \neq \emptyset$, then $Q[B_1 \cup C_2]$ is a 1-spike with an associated 1-spike partition $(B_1,C_2)$, and this 1-spike has fewer than $|V(Q)|$ vertices. 

We now define $L':B_1 \cup C_2 \rightarrow \{1,2\}$ by setting $L'(b) = 1$ for all $b \in B_1$ and setting $L'(c) = 2$ for all $c \in C_2$. By Lemma~\ref{lemma-cwd-complete}, or by the induction hypothesis, we have that $\text{cwd}(Q[B_1 \cup C_2],L') \leq 4$, and we construct the labeled graph $(Q[B_1 \cup C_2],L')$ using only labels $1,2,3,4$. 

Next, define $L'':\{b_1\} \cup C_1 \rightarrow \{3,4\}$ by setting $L''(b_1) = 3$ and setting $L''(c) = 4$ for all $c \in C_1$. Note that $Q[\{b_1\} \cup C_1]$ is a complete graph, and so by Lemma~\ref{lemma-cwd-complete}, we have that $\text{cwd}(Q[\{b_1\} \cup C_1],L'') \leq 3$. We now construct the labeled graph $(Q[\{b_1\} \cup C_1],L'')$ using only labels $1,3,4$. We now take the disjoint union of the labeled graphs $(Q[B_1 \cup C_2],L')$ and $(Q[\{b_1\} \cup C_1],L'')$, we make vertices labeled $3$ adjacent to vertices labeled $1$ or $2$, and we make vertices labeled $4$ adjacent to vertices labeled $2$. Then, we rename label $3$ to $1$, and we rename label $4$ to $2$. 

So far, we have constructed the graph $(Q \setminus C_0,L)$ using only labels $1,2,3,4$. If $C_0 = \emptyset$, then we are done. So, we may assume that $C_0 \neq \emptyset$. We now define $L_0:C_0 \rightarrow \{3\}$ by setting $L_0(c) = 3$ for all $c \in C$. Now, $Q[C_0]$ is a complete graph, and so Lemma~\ref{lemma-cwd-complete} guarantees that $\text{cwd}(Q[C_0],L_0) \leq 2$. We then create the labeled graph $(Q[C_0],L_0)$ using only labels $1,3$. We now take the disjoint union of the labeled graphs $(Q \setminus C_0,L)$ and $(Q[C_0],L_0)$, we make vertices labeled $3$ adjacent to vertices labeled $2$, and we rename label $3$ as label $2$. We have now created the labeled graph $(Q,L)$ using only labels $1,2,3,4$, and we are done.~$\blacklozenge$ 

\medskip 

We now complete the proof of the proposition. We proceed by induction on $t$. The base case (``$t = 1$'') follows immediately from Claim~\ref{prop-t-spike-cwd-claim-base-case}. Now, fix a positive integer $t$, and assume that the proposition is true for $t$-spikes. Fix a $(t+1)$-spike $Q$, with an associated $(t+1)$-spike partition $(B_1,\dots,B_t,B_{t+1};C_1,\dots,C_t;C_{t+1})$. Let $L$ be a labeling of $Q$ that assigns one label to all vertices of $B_1 \cup \dots \cup B_t \cup B_{t+1}$, and assigns another label to all vertices of $C_1 \cup \dots \cup C_t \cup C_{t+1}$. By symmetry, we may assume that $L(b) = 1$ for all $b \in B_1 \cup \dots \cup B_t \cup B_{t+1}$, and that $L(c) = 2$ for all $c \in C_1 \cup \dots \cup C_t \cup C_{t+1}$. Set $Q_0 := Q \setminus (B_{t+1} \cup C_{t+1})$. By the induction hypothesis, we have that $\text{cwd}(Q_0,L) \leq 4$, and we create the labeled graph $(Q_0,L)$ using only labels $1,2,3,4$. Next, define $L':B_{t+1} \cup C_{t+1} \rightarrow \{1,3\}$ by setting $L'(b) = 1$ for all $b \in B_{t+1}$, and setting $L'(c) = 3$ for all $c \in C_{t+1}$. By Claim~\ref{prop-t-spike-cwd-claim-base-case}, we know that $\text{cwd}(Q[B_{t+1} \cup C_{t+1}],L') \leq 4$; so, we create the labeled graph $(Q[B_{t+1} \cup C_{t+1}],L')$ using only labels $1,2,3,4$. Then, we take the disjoint union of the labeled graphs $(Q_0,L)$ and $(Q[B_{t+1} \cup C_{t+1}],L')$, we make vertices labeled $2$ adjacent to vertices labeled $3$, and we rename label $3$ to label $2$. We have now created the labeled graph $(Q,L)$ using only labels $1,2,3,4$, and we deduce that $\text{cwd}(Q,L) \leq 4$. This completes the induction. 
\end{proof} 

\begin{lemma} \label{lemma-t-villa-cwd} All villas $Q$ satisfy $\text{cwd}(Q) \leq 4$. 
\end{lemma}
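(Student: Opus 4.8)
The plan is to reduce to Proposition~\ref{prop-t-spike-cwd} by peeling off the clique $A$. Fix a $t$-villa $Q$ (so $t \geq 3$) with an associated $t$-villa partition $(A;B_1,\dots,B_t;C_1,\dots,C_t)$, and set $B := B_1 \cup \dots \cup B_t$ and $C := C_1 \cup \dots \cup C_t$. The key observation is that $Q \setminus A = Q[B \cup C]$ is a $t$-spike with associated $t$-spike partition $(B_1,\dots,B_t;C_1,\dots,C_t)$. Indeed, comparing the definitions of a $t$-villa and a $t$-spike, every axiom of a $t$-spike is an immediate consequence of the corresponding villa axiom; the only point requiring a word is the ordering condition, and here the villa ordering $\emptyset \neq N_Q(b_{r_i}^i) \cap C_i \subseteq \dots \subseteq N_Q(b_1^i) \cap C_i = C_i$ is simply a stronger form of the spike ordering $N_Q(b_{r_i}^i) \cap C_i \subseteq \dots \subseteq N_Q(b_1^i) \cap C_i$. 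Moreover, since $A$ is anticomplete to $C$, neighborhoods inside $C_i$ are unchanged by the deletion of $A$, so the villa ordering remains valid as a spike ordering in $Q \setminus A$.

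Next I would reassemble $Q$ from $Q \setminus A$ and $A$. Let $L$ be the labeling of $Q \setminus A$ that assigns label $1$ to every vertex of $B$ and label $2$ to every vertex of $C$. By Proposition~\ref{prop-t-spike-cwd}, $\text{cwd}(Q \setminus A, L) \leq 4$, so I construct the labeled graph $(Q \setminus A, L)$ using only the labels $1,2,3,4$; note that at the end of this construction every vertex of $B$ has label $1$ and every vertex of $C$ has label $2$, so labels $3$ and $4$ carry no vertices. Separately, since $A$ is a clique, Lemma~\ref{lemma-cwd-complete} lets me construct $Q[A]$ with every vertex assigned the single label $3$, using at most two labels (which I take from $\{3,4\}$). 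I then form the disjoint union of these two labeled graphs, join by an edge every vertex labeled $3$ (the vertices of $A$) to every vertex labeled $1$ (the vertices of $B$), and finally rename label $3$ to label $1$. Because $A$ is a clique that is complete to $B$ and anticomplete to $C$, this yields exactly $Q$, and the whole construction uses only the four labels $1,2,3,4$.

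The argument is essentially bookkeeping once the structural observation is in place, so there is no serious obstacle; the one thing to verify is that four labels suffice with no collision, which is guaranteed by the fact that the spike construction returns with only labels $1$ and $2$ live, leaving $3$ and $4$ available for building and attaching $A$. Since $\text{cwd}(Q) \leq \text{cwd}(Q,L')$ for any labeling, we conclude $\text{cwd}(Q) \leq 4$; as $Q$ was an arbitrary $t$-villa and every villa is a $t$-villa for some $t \geq 3$, this proves the lemma.
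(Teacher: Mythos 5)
Your proposal is correct and follows essentially the same route as the paper's proof: both observe that $Q[B \cup C]$ is a $t$-spike, apply Proposition~\ref{prop-t-spike-cwd} to it and Lemma~\ref{lemma-cwd-complete} to the clique $A$, and then join $A$ to $B$ via a label-join after a disjoint union, all within four labels. The only differences are cosmetic (your choice of label names and the final renaming step, which the paper omits).
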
 
\begin{proof} 
Let $Q$ be a $t$-villa ($t \geq 3$) with an associated $t$-villa partition $(A;B_1,\dots,B_t;C_1,\dots,C_t)$. Set $B := B_1 \cup \dots \cup B_t$ and $C := C_1 \cup \dots \cup C_t$. Let $L:V(Q) \rightarrow \{1,2,3\}$ be defined by setting $L(a) = 1$ for all $a \in A$, setting $L(b) = 2$ for all $b \in B$, and setting $L(c) = 3$ for all $c \in C$. First of all, since $A$ is a clique, Lemma~\ref{lemma-cwd-complete} guarantees that $\text{cwd}(Q[A],L) \leq 2$. On the other hand, $Q[B \cup C]$ is a $t$-spike with an associated $t$-spike partition $(B_1,\dots,B_t;C_1,\dots,C_t)$, and so by Proposition~\ref{prop-t-spike-cwd}, we have that $\text{cwd}(Q[B \cup C],L) \leq 4$. We now create the labeled graphs $(Q[A],L)$ and $(Q[B \cup C],L)$ using only labels $1,2,3,4$, we take the union of the two labeled graphs, and we make vertices labeled $1$ adjacent to vertices labeled $2$. We have now created the labeled graph $(Q,L)$ using only labels $1,2,3,4$, and we deduce that $\text{cwd}(Q) \leq \text{cwd}(Q,L) \leq 4$. 
\end{proof}

\begin{lemma} \label{lemma-t-mansion-cwd} All mansions $Q$ satisfy $\text{cwd}(Q) \leq 5$. 
\end{lemma}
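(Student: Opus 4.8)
The plan is to construct $Q$ explicitly with the labeled-graph machinery and show that five labels suffice. Fix a $t$-mansion $Q$ with partition $(A;B_1,\dots,B_t;C_1,\dots,C_t;F;X;Y)$, and by symmetry take $j^*=1$. Write $B':=B_2\cup\dots\cup B_t$ and $C':=C_2\cup\dots\cup C_t$, and note that since all $B_i,C_i$ are nonempty and $t\ge 3$, the graph $Q[B'\cup C']$ is a $(t-1)$-spike with $(t-1)$-spike partition $(B_2,\dots,B_t;C_2,\dots,C_t)$. The crucial structural point is that the \emph{only} pair among the blocks $A,B_1,C_1,B',C',F,X,Y$ that is neither complete nor anticomplete to each other is the pair $(B',C')$; every other pair is either complete or anticomplete. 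Hence, once the internal spike-adjacency of $B'\cup C'$ has been created, the remainder of $Q$ is assembled purely by clique-creations, disjoint unions, and label-joins. I will also use the facts (all immediate from the definition of a $t$-mansion with $j^*=1$) that $A\cup B_1\cup X$ and $C_1\cup C'\cup Y$ are cliques, that $B_1$ is complete to $C_1$, that $A$ is complete to $B$, and that $F$ is complete to $A\cup B'\cup C'\cup X\cup Y$ and anticomplete to $B_1\cup C_1$.

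First I would invoke Proposition~\ref{prop-t-spike-cwd} to construct the labeled spike $(Q[B'\cup C'],L)$ with $B'$ entirely on label $1$ and $C'$ entirely on label $2$, using at most four labels; after this step only labels $1,2$ are occupied. Next I would build $Q\setminus F$ by attaching, one block at a time, the cliques $Y,C_1,A,B_1,X$: each is first created in isolation as a complete graph on a single label (Lemma~\ref{lemma-cwd-complete}), then brought in by a disjoint union, and the required complete relations $Y$--$C'$, $C_1$--$C'$, $C_1$--$Y$, $A$--$B'$, $B_1$--$C_1$, $B_1$--$A$, $X$--$A$, $X$--$B_1$ are realized by label-joins. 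Finally I would attach $F$: create it as a clique on a fresh label and join that label to the single label carrying $A,X,B',C',Y$, leaving $B_1,C_1$ untouched. This reconstructs $Q$, so it remains only to control the number of labels.

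The subtle point, and the main obstacle, is keeping the number of simultaneously occupied labels equal to five throughout. The block graph of $Q$ has no two blocks with identical neighborhoods, so a naive ``linear'' construction keeping every block on its own label would require more than five labels. The device that saves labels is operation~(4), renaming: as soon as a block has been joined to \emph{all} of its neighbors (present or still to come), it may be merged into a label shared with another block having the same remaining adjacency, even though the two blocks need not be adjacent to each other. Concretely, after joining $Y$ and $C_1$ to $C'$ I would rename $Y$'s label to $2$; after joining $A$ to $B'$ I would rename $B'$'s label to $2$, so that $\{B',C',Y\}$ all carry label $2$ (each of these being anticomplete to the not-yet-added $B_1,X$ and complete to the eventual $F$); and just before adding $F$ I would rename the labels of $A$, of $X$, and of the class $\{B',C',Y\}$ all to one label, which is legitimate precisely because $F$ is complete to each of these blocks and no later join must distinguish them.

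Tracking the occupied labels through this schedule -- spike $\{1,2\}$; add $Y,C_1$ and merge to reach $\{1,2,4\}$; add $A$ and merge to reach $\{2,3,4\}$; add $B_1,X$ to reach $\{1,2,3,4,5\}$; then merge $A,X,\{B',C',Y\}$ down and introduce $F$ -- one verifies that at most five labels are ever in play. The only real work is this bookkeeping, together with routine verification that each label-join creates exactly the intended complete relation and no spurious edges (in particular that the merged label $\{B',C',Y\}$ is never joined to $B_1$ or $X$, and that all of $A,X,B',C',Y$ are indeed complete to $F$ while $B_1,C_1$ are not). This yields $\text{cwd}(Q)\le 5$.
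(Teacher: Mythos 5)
Your proposal is correct and follows essentially the same route as the paper: both reduce the problem to the $(t-1)$-spike $Q[(B_2\cup\dots\cup B_t)\cup(C_2\cup\dots\cup C_t)]$ via Proposition~\ref{prop-t-spike-cwd}, build the remaining blocks as cliques via Lemma~\ref{lemma-cwd-complete}, and save labels by renaming blocks onto a common label once their remaining adjacency requirements coincide. The only difference is scheduling (the paper introduces $F$ together with the spike and $B_1\cup C_1$ in the first stage and then attaches $Y$, $A$, $X$, whereas you attach $F$ last), and your label bookkeeping checks out, so this is a cosmetic rather than substantive deviation.
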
 
\begin{proof} 
Let $Q$ be a $t$-mansion ($t \geq 3$), let $(A;B_1,\dots,B_t;C_1,\dots,C_t;F;X;Y)$ be an associated $t$-mansion partition of $Q$, and let $j^*$ be as in the definition of a $t$-mansion; by symmetry, we may assume that $j^* = 1$. Set $B := B_1 \cup \dots \cup B_t$ and $C := C_1 \cup \dots \cup C_t$. Define $L_1:(B \cup C \cup F) \rightarrow \{1,2,3,4,5\}$ by setting: 
\begin{itemize} 
\item $L_1(b) = 1$ for all $b \in B \setminus B_1$; 
\item $L_1(b) = 4$ for all $b \in B_1$; 
\item $L_1(c) = 2$ for all $c \in C \setminus C_1$; 
\item $L_1(c) = 5$ for all $c \in C_1$; 
\item $L_1(f) = 3$ for all $f \in F$. 
\end{itemize} 

\begin{adjustwidth}{1cm}{1cm} 
\begin{claim} \label{lemma-t-mansion-cwd-claim-L1} $\text{cwd}(Q[B \cup C \cup F],L_1) \leq 5$. 
\end{claim} 
\end{adjustwidth} 
{\em Proof of Claim~\ref{lemma-t-mansion-cwd-claim-L1}.} First, note that $Q[(B \setminus B_1) \cup (C \cup C_1)]$ is a $(t-1)$-spike with an associated $(t-1)$-spike partition $(B_2,\dots,B_t;C_2,\dots,C_t)$. Therefore, by Proposition~\ref{prop-t-spike-cwd}, we have that $\text{cwd}(Q[(B \setminus B_1) \cup (C \setminus C_1)],L_1) \leq 4$. Next, $B_1 \cup C_1$ and $F$ are both cliques, and $L_1$ uses two colors on the former and one color on the latter. So, by Lemma~\ref{lemma-cwd-complete}, $\text{cwd}(Q[B_1 \cup C_1],L_1) \leq 3$ and $\text{cwd}(Q[F],L_1) \leq 2$. Since the labeling $L_1$ uses labels $1,2,3,4,5$, we conclude that we can create labeled graphs $\text{cwd}(Q[(B \setminus B_1) \cup (C \cup C_1)],L_1)$, $(Q[B_1 \cup C_1],L_1)$, and $\text{cwd}(Q[F],L_1)$ using only labels $1,2,3,4,5$. Then, we take the disjoint union of these three graphs. Finally, we make all vertices labeled $3$ adjacent to all vertices labeled $1$ or $2$, and we make all vertices labeled $5$ adjacent to all vertices labeled $2$. We have now created the labeled graph $(Q[B \cup C \cup F],L_1)$ using only labels $1,2,3,4,5$, and we deduce that $\text{cwd}(Q[B \cup C \cup F],L_1) \leq 5$.~$\blacklozenge$ 

\medskip 

Next, we define $L_2:(B \cup C \cup F \cup Y) \rightarrow \{1,2,3,4,5\}$ by setting 
\begin{itemize} 
\item $L_2(b) = 1$ for all $b \in B \setminus B_1$; 
\item $L_2(b) = 4$ for all $b \in B_1$; 
\item $L_2(c) = 2$ for all $c \in C$; 
\item $L_2(f) = 3$ for all $f \in F$; 
\item $L_2(y) = 5$ for all $y \in Y$. 
\end{itemize} 

\begin{adjustwidth}{1cm}{1cm} 
\begin{claim} \label{lemma-t-mansion-cwd-claim-L2} 
$\text{cwd}(Q[B \cup C \cup F \cup Y],L_2) \leq 5$. 
\end{claim} 
\end{adjustwidth} 
{\em Proof of Claim~\ref{lemma-t-mansion-cwd-claim-L2}.} First, using Claim~\ref{lemma-t-mansion-cwd-claim-L1}, we create the graph $(Q[B \cup C \cup F],L_1)$ using only labels $1,2,3,4,5$, and we rename label $5$ to $2$. We have now created the labeled graph $(Q[B \cup C \cup F],L_2)$ using only labels $1,2,3,4,5$. Thus, if $Y = \emptyset$, then we are done. So, assume that $Y \neq \emptyset$. Since $Y$ is a clique, Lemma~\ref{lemma-cwd-complete} guarantees that $\text{cwd}(Q[Y],L_2) \leq 2$. We now create the labeled graph $(Q[Y],L_2)$ using only labels $4,5$, and then we take the disjoint union of the graphs $(Q[B \cup C \cup F],L_2)$ and $(Q[Y],L_2)$, and we make the vertices labeled $5$ adjacent to vertices labeled $2$ or $3$. We have now created the labeled graph $(Q[B \cup C \cup F \cup Y],L_2)$ using only labels $1,2,3,4,5$, and we deduce that $\text{cwd}(Q[B \cup C \cup F \cup Y],L_2) \leq 5$.~$\blacklozenge$ 

\medskip 

Next, we define $L_3:(A \cup B \cup C \cup F \cup Y) \rightarrow \{1,2,3,4,5\}$ by setting 
\begin{itemize} 
\item $L_3(a) = 4$ for all $a \in A$; 
\item $L_3(b) = 1$ for all $b \in B \setminus B_1$; 
\item $L_3(b) = 3$ for all $b \in B_1$; 
\item $L_3(c) = 2$ for all $c \in C$; 
\item $L_3(f) = 3$ for all $f \in F$; 
\item $L_3(y) = 2$ for all $y \in Y$. 
\end{itemize}

\begin{adjustwidth}{1cm}{1cm} 
\begin{claim} \label{lemma-t-mansion-cwd-claim-L3}
$\text{cwd}(Q[A \cup B \cup C \cup F \cup Y],L_3) \leq 5$. 
\end{claim} 
\end{adjustwidth} 
{\em Proof of Claim~\ref{lemma-t-mansion-cwd-claim-L3}.} Using Claim~\ref{lemma-t-mansion-cwd-claim-L2}, we create the labeled graph $(Q[B \cup C \cup F \cup Y],L_2)$ using only labels $1,2,3,4,5$. Next, we rename label $5$ to $2$, and we rename label $4$ to $3$. So far, we have created the labeled graph $(Q[B \cup C \cup F \cup Y],L_3)$. Next, by Lemma~\ref{lemma-cwd-complete}, we know that $\text{cwd}(Q[A],L_3) \leq 2$. So, we create the labeled graph $(Q[A],L_3)$ using only labels $3,4$, and then we take the disjoint union of the labeled graphs $(Q[A],L_3)$ and $(Q[B \cup C \cup F \cup Y],L_3)$. Finally, we make vertices labeled $4$ adjacent to the vertices labeled $1$ or $3$. We have now created the labeled graph $(Q[A \cup B \cup C \cup F \cup Y],L_3)$ using only labels $1,2,3,4,5$, and we deduce that $\text{cwd}(Q[A \cup B \cup C \cup F \cup Y],L_3) \leq 5$.~$\blacklozenge$

\medskip 

We now proceed as follows. We define $L:V(Q) \rightarrow \{1,2,3,4,5\}$ by setting 
\begin{itemize} 
\item $L(x) = 5$ for all $x \in X$; 
\item $L(x) = L_3(x)$ for all $x \in V(Q) \setminus X$. 
\end{itemize} 
Using Claim~\ref{lemma-t-mansion-cwd-claim-L3}, we first create the labeled graph $(Q[A \cup B \cup C \cup F \cup Y],L_3) = (Q \setminus X,L)$ using only labels $1,2,3,4,5$. If $X = \emptyset$, then we are done. So, we may assume that $X \neq \emptyset$. By Lemma~\ref{lemma-cwd-complete}, we know that $\text{cwd}(X,L) \leq 2$. We now create the labeled graph $(Q[X],L)$ using only labels $4,5$. Then, we take the disjoint union of the graphs $(Q \setminus X,L)$ and $(Q[X],L)$, and we make all vertices labeled $5$ adjacent to all vertices labeled $3$ or $4$. We have now created the labeled graph $(Q,L)$ using only labels $1,2,3,4,5$, and we deduce that $\text{cwd}(Q) \leq \text{cwd}(Q,L) \leq 5$. This completes the argument. 
\end{proof}

\begin{theorem} \label{thm-cwd-in-class-with-C7T0Free} Let $G$ be a $(2P_3,C_4,C_6,C_7,T_0)$-free graph that contains no simplicial vertices. Then $\text{cwd}(G) \leq 5$. 
\end{theorem}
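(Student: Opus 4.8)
The plan is to assemble this bound directly from the structural characterization and the per-class clique-width lemmas already established, since the theorem introduces no genuinely new combinatorial content. The strategy is: reduce an arbitrary graph $G$ satisfying the hypotheses to one of four ``basic'' graphs via Theorem~\ref{thm-structure-C7T0Free}, bound the clique-width of each basic graph using the lemmas proven above, and then account for the universal vertices via Lemma~\ref{lemma-add-univ-cwd}.

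First I would invoke Theorem~\ref{thm-structure-C7T0Free}, specifically the implication from statement~(a) to statement~(c). Since $G$ is $(2P_3,C_4,C_6,C_7,T_0)$-free and contains no simplicial vertices, it follows that $G$ can be obtained, by possibly adding universal vertices, from some graph $Q$ that is a 5-basket, a villa, a mansion, or a 5-crown. Crucially, these four possibilities are exhaustive, so the remaining argument splits into exactly these cases.

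Next I would bound $\text{cwd}(Q)$ uniformly by $5$ across all four cases, citing the relevant lemma in each: Lemma~\ref{lemma-5-basket-cwd} gives $\text{cwd}(Q) \leq 5$ when $Q$ is a 5-basket; Lemma~\ref{lemma-t-villa-cwd} gives the stronger bound $\text{cwd}(Q) \leq 4$ when $Q$ is a villa; Lemma~\ref{lemma-t-mansion-cwd} gives $\text{cwd}(Q) \leq 5$ when $Q$ is a mansion; and Lemma~\ref{lemma-5-crown-cwd} gives $\text{cwd}(Q) \leq 5$ when $Q$ is a 5-crown. Hence in every case $\text{cwd}(Q) \leq 5$. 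Finally, applying Lemma~\ref{lemma-add-univ-cwd} to the pair $(G,Q)$ yields $\text{cwd}(G) \leq \max\{\text{cwd}(Q),2\} \leq \max\{5,2\} = 5$, as required.

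I do not anticipate a genuine obstacle here: all the difficulty has been front-loaded into Theorem~\ref{thm-structure-C7T0Free} (the structural decomposition) and into the four clique-width lemmas for the basic graphs, each of which exhibits an explicit construction of the graph using at most five labels. The only points requiring care are verifying that the four structural possibilities really are covered by the cited lemmas (they are, one lemma per class) and applying the universal-vertex bound in the correct direction, namely using the upper bound $\text{cwd}(G) \leq \max\{\text{cwd}(Q),2\}$ rather than the lower bound $\text{cwd}(Q) \leq \text{cwd}(G)$ supplied by the same lemma.
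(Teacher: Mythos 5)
Your proposal is correct and is essentially identical to the paper's proof: both invoke Theorem~\ref{thm-structure-C7T0Free} to reduce to a 5-basket, villa, mansion, or 5-crown, then cite Lemmas~\ref{lemma-5-basket-cwd}, \ref{lemma-5-crown-cwd}, \ref{lemma-t-villa-cwd}, and~\ref{lemma-t-mansion-cwd} for the bound $\text{cwd}(Q) \leq 5$, and finish with Lemma~\ref{lemma-add-univ-cwd}. Your explicit care about using the upper-bound direction of Lemma~\ref{lemma-add-univ-cwd} is exactly the right point to flag, and nothing is missing.
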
 
\begin{proof} 
By Theorem~\ref{thm-structure-C7T0Free}, $G$ can be obtained from 5-basket, a villa, a mansion, or a 5-crown, by possibly adding universal vertices to it. By Lemma~\ref{lemma-add-univ-cwd}, it is therefore enough to show that if $Q$ is any 5-basket, villa, mansion, or 5-crown, then $\text{cwd}(Q) \leq 5$. But this follows from Lemmas~\ref{lemma-5-basket-cwd},~\ref{lemma-5-crown-cwd},~\ref{lemma-t-villa-cwd}, and~\ref{lemma-t-mansion-cwd}. 
\end{proof} 

\begin{theorem} \label{thm-main-cwd} If $G$ is a $(2P_3,C_4,C_6)$-free graph that contains no simplicial vertices, then $\text{cwd}(G) \leq 12$. 
\end{theorem}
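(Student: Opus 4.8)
The plan is to reduce everything to a single case split on whether $G$ contains an induced $C_7$ or an induced $T_0$, since the two clique-width bounds proved so far — Theorem~\ref{thm-cwd-in-class-with-C7-T0} (from the first paper) and Theorem~\ref{thm-cwd-in-class-with-C7T0Free} (just proved) — cover exactly these two complementary situations. No further structural analysis should be needed; the theorem is essentially a bookkeeping combination of the two known bounds.

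First I would handle the case where $G$ contains an induced $C_7$ or an induced $T_0$. Here $G$ is a $(2P_3,C_4,C_6)$-free graph that contains an induced $C_7$ or $T_0$ and contains no simplicial vertices, so Theorem~\ref{thm-cwd-in-class-with-C7-T0} applies verbatim and yields $\text{cwd}(G) \leq 12$. Next I would handle the complementary case, where $G$ contains neither an induced $C_7$ nor an induced $T_0$: then $G$ is $(2P_3,C_4,C_6,C_7,T_0)$-free and still contains no simplicial vertices, so Theorem~\ref{thm-cwd-in-class-with-C7T0Free} applies and gives the stronger bound $\text{cwd}(G) \leq 5$, which is certainly at most $12$. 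Combining the two cases gives $\text{cwd}(G) \leq 12$ in all cases.

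The only points to verify are that the two cases are genuinely exhaustive and complementary (they are, by definition: either $G$ has an induced $C_7$ or $T_0$, or it does not), that the hypothesis ``no simplicial vertices'' carries into each invocation unchanged (it does, since we never modify $G$), and that $5 \leq 12$ so that the second case does not weaken the stated bound. I do not expect any genuine obstacle: the entire difficulty of the result is already absorbed into the two cited clique-width theorems, and the present statement merely stitches them together. The bound $12$ in the conclusion is inherited entirely from the $C_7$/$T_0$ case, since the $(2P_3,C_4,C_6,C_7,T_0)$-free case is comfortably tighter.
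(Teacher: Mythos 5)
Your proposal is correct and matches the paper's proof exactly: the paper derives Theorem~\ref{thm-main-cwd} immediately from Theorems~\ref{thm-cwd-in-class-with-C7-T0} and~\ref{thm-cwd-in-class-with-C7T0Free} via the same case split on whether $G$ contains an induced $C_7$ or $T_0$. Your additional verification that the cases are exhaustive and that $5 \leq 12$ is just the spelled-out version of what the paper leaves implicit.
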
 
\begin{proof} 
This follows immediately from Theorems~\ref{thm-cwd-in-class-with-C7-T0} and~\ref{thm-cwd-in-class-with-C7T0Free}. 
\end{proof}

\section{Coloring $\boldsymbol{(2P_3,C_4,C_6)}$-free graphs} \label{sec:coloring}

\begin{theorem} \cite{Rao} \label{thm-Rao} \textsc{Graph Coloring} can be solved in polynomial time for graphs of bounded clique-width. 
\end{theorem}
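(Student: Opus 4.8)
The plan is to solve \textsc{Graph Coloring} on a graph $G$ of clique-width at most $k$ by dynamic programming over a clique-width expression for $G$. Since computing clique-width exactly is intractable in general, the first step is to obtain, in polynomial time, a clique-width expression using at most $f(k)$ labels for some function $f$; this can be done via the rank-width approximation of Oum and Seymour, after which $f(k)$ is a constant depending only on the bound $k$. From then on I would treat the resulting parse tree $\mathcal{T}$, whose leaves create single labeled vertices and whose internal nodes are disjoint unions, relabelings, and label-joins, as the object over which the recursion runs; at each node $u$ the associated induced subgraph $G_u$ carries a partition of its vertices into label classes $V_1^u,\dots,V_m^u$ with $m\le f(k)$.

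The central difficulty is that the number of colors in a proper coloring may grow with $n$, so one cannot remember the colorings themselves. The key idea is to record, for a proper coloring $c$ of $G_u$, only its \emph{type}: to each color $\alpha$ used by $c$ one associates its \emph{signature} $\sigma(\alpha)=\{\,i : \alpha \text{ appears on } V_i^u\,\}\subseteq\{1,\dots,m\}$, and defines the type of $c$ to be the function $\tau$ sending each $S\subseteq\{1,\dots,m\}$ to the number of colors whose signature is exactly $S$. Two colorings with the same type are interchangeable for everything that happens higher in $\mathcal{T}$, because every operation acts uniformly on entire label classes. Since there are at most $2^{f(k)}$ signatures and each count lies in $\{0,1,\dots,n\}$, the number of types is at most $(n+1)^{2^{f(k)}}$, which is polynomial in $n$ for fixed $k$. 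I would maintain at each node $u$ the set of \emph{realizable} types.

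Next I would specify and justify the transition rules, processing $\mathcal{T}$ from the leaves up. A leaf creating a vertex with label $i$ realizes exactly the single type with one color of signature $\{i\}$. A relabeling $i\mapsto j$ transforms each signature $S$ by replacing $i$ with $j$ and summing the resulting counts. A join between labels $i$ and $j$ keeps a type only if no realized color has a signature containing both $i$ and $j$ (otherwise the coloring is no longer proper once the new edges appear), and leaves it unchanged otherwise. The subtle case is the disjoint union: given realizable types $\tau_1,\tau_2$ of the two children, the colors of the two sides may be identified freely because no edges yet run between them, and merging a color of signature $A$ with one of signature $B$ yields a color of signature $A\cup B$; the realizable types of the union are exactly those obtained by all such pairings of the two color multisets, which, since only a constant number of signatures occur, can be enumerated in polynomial time. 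Finally, at the root the graph is $G$, and $G$ is $q$-colorable if and only if some realizable type uses at most $q$ colors in total, while $\chi(G)$ is the minimum total over all realizable types.

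Correctness would follow by induction on $\mathcal{T}$, showing that the computed set of types at each node is exactly the set of types of proper colorings of $G_u$, using that the four operations interact with colorings only through label classes. I expect the main obstacle to be the union step, where the bookkeeping of how colors from the two sides are identified must be carried out so as to neither discard an optimal coloring nor manufacture a spurious one; making this combination precise and polynomial is where the constant bound on the number of signatures is essential. The remaining work—the approximation of the clique-width expression and the verification that each transition runs in polynomial time—is standard, and together these steps yield the claimed polynomial-time algorithm.
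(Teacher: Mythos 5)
Your argument is essentially correct, but it is worth noting that the paper does not prove this statement at all: Theorem~\ref{thm-Rao} is quoted as a black box from the cited reference of Rao, whose own proof goes through a general framework for MSOL \emph{partitioning} problems on graphs of bounded clique-width (coloring being the special case where each part of the partition must be a stable set). What you have reconstructed is instead the direct dynamic programming argument in the style of Kobler and Rotics: preprocess via the Oum--Seymour rank-width approximation to get an expression with a bounded number of labels, and then propagate, up the parse tree, the set of realizable ``types'' of proper colorings, where a type records for each subset $S$ of labels the number of colors whose set of occupied label classes is exactly $S$. All four transition rules you give are right, including the filter at a join (discard types with a positive count on any signature containing both joined labels) and the union step (enumerate, for each pair of signatures $(A,B)$, how many colors of signature $A$ on one side are identified with colors of signature $B$ on the other, subject to the obvious marginal constraints); since only $2^{f(k)}$ signatures occur, this enumeration is polynomial for fixed $k$, and the key lemma that the type is a sufficient statistic follows because every operation above a node acts uniformly on entire label classes. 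Your route is more elementary and self-contained but specific to coloring; Rao's framework buys the same conclusion for a whole family of partitioning problems at once. As a proof sketch your proposal has no gaps that would not be routine to fill.
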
 

\begin{theorem} \label{thm-main-coloring} \textsc{Graph Coloring} can be solved in polynomial time for $(2P_3,C_4,C_6)$-free graphs. 
\end{theorem}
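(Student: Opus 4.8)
The plan is to reduce, in polynomial time, the $k$-colorability of a $(2P_3,C_4,C_6)$-free graph $G$ to the $k$-colorability of an induced subgraph of $G$ that contains no simplicial vertices, and then to invoke Theorems~\ref{thm-main-cwd} and~\ref{thm-Rao}. The key tool for the reduction is the elementary fact recorded in the footnote of the introduction: if $x$ is a simplicial vertex of a graph $G$ on at least two vertices, then $G$ is $k$-colorable if and only if $d_G(x) \leq k-1$ and $G \setminus x$ is $k$-colorable. The crucial ambient input is that the class of $(2P_3,C_4,C_6)$-free graphs is hereditary, so every induced subgraph arising during the reduction remains in the class.

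Concretely, given an instance $(G,k)$, I would repeat the following while $G$ has at least two vertices and possesses a simplicial vertex $x$ (such a vertex, if any, is found in polynomial time by testing whether each vertex's neighborhood is a clique). If $d_G(x) \geq k$, then $N_G[x]$ is a clique of size $d_G(x)+1 \geq k+1$, so $G$ is not $k$-colorable, and the algorithm halts and outputs ``No.'' Otherwise $d_G(x) \leq k-1$, and by the simplicial-vertex fact the answer for $(G,k)$ equals the answer for $(G \setminus x,\,k)$; so I replace $G$ by $G \setminus x$ and continue. Each iteration deletes a vertex, so the loop terminates after at most $|V(G)|$ steps, and at every step the current graph is still $(2P_3,C_4,C_6)$-free.

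When the loop halts, the current graph $G^*$ is either a single vertex --- in which case $G^*$, and hence the original $G$, is $k$-colorable exactly when $k \geq 1$ --- or a $(2P_3,C_4,C_6)$-free graph with no simplicial vertices. In the latter case, Theorem~\ref{thm-main-cwd} gives $\text{cwd}(G^*) \leq 12$, and Theorem~\ref{thm-Rao} then decides $k$-colorability of $G^*$ in polynomial time; since each deletion preserved the colorability answer, this also decides $k$-colorability of $G$. Because the reduction loop consists of polynomially many polynomial-time iterations and the final clique-width-based test is polynomial, the entire procedure runs in polynomial time.

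Regarding difficulty, there is no genuine obstacle remaining, since all the heavy lifting resides in the structural clique-width bound of Theorem~\ref{thm-main-cwd}; the only points demanding care are verifying that simplicial-vertex deletion stays inside the hereditary class and that the clique-detection branch correctly certifies non-$k$-colorability. One caveat worth flagging is that Theorem~\ref{thm-Rao} is used as a black box for graphs of bounded clique-width, so no explicit clique-width expression needs to be constructed in this final argument.
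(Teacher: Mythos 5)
Your proposal is correct and follows essentially the same route as the paper: iteratively strip simplicial vertices (using the fact that $G$ is $k$-colorable iff $d_G(x)\leq k-1$ and $G\setminus x$ is $k$-colorable), then apply Theorem~\ref{thm-main-cwd} and Theorem~\ref{thm-Rao} to the resulting simplicial-vertex-free graph. The paper's proof is just a more condensed version of the same argument.
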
 
\begin{proof} 
By simply examining the neighborhood of each vertex of an input graph, we can either find a simplicial vertex in the graph, or determine that the graph has no simplicial vertices. Moreover, if $v$ is a simplicial vertex of a graph $G$ on at least two vertices, then $\chi(G) = \max\{d_G(v)+1,\chi(G \setminus v)\}$, and so for any integer $k$, we have that $G$ is $k$-colorable if and only if $d_G(v) \leq k-1$ and $G \setminus v$ is $k$-colorable. On the other hand, by Theorem~\ref{thm-main-cwd}, $(2P_3,C_4,C_6)$-free graphs that contain no simplicial vertices have bounded clique-width, and so by Theorem~\ref{thm-Rao}, \textsc{Graph Coloring} can be solved in polynomial time for such graphs. The result is now immediate. 
\end{proof}

By Theorem~\ref{thm-main-coloring}, \textsc{Graph Coloring} can be solved in polynomial time for $(2P_3,C_4,C_6)$-free graphs. However, since it relies on bounded clique-width, the algorithm is in fact very slow (see~\cite{CWcol}). So, it is natural to ask whether there might be a faster coloring algorithm for $(2P_3,C_4,C_6)$-free graphs. In the first part of our series~\cite{2P3C4C6FreePart1}, we explained that $(2P_3,C_4,C_6)$-free graphs that contain an induced $C_7$ or $T_0$ can be colored in $O(n^3)$ time, although the algorithm relies on integer programming (and in particular, on the results of~\cite{Koutecky, Lampis}) and is therefore not combinatorial. The question is therefore whether there is reasonably fast coloring algorithm for $(2P_3,C_4,C_6,C_7,T_0)$-free graphs. Simplicial and universal vertices can easily be handled in the context of graph coloring, and so Theorem~\ref{thm-structure-C7T0Free} reduces the problem of coloring $(2P_3,C_4,C_6,C_7,T_0)$-free graphs to that of coloring 5-baskets, villas, mansions, 5-crowns. However, since 5-baskets and 5-crowns are $(4K_1,C_4,C_6)$-free (see Lemmas~\ref{lemma-5-basket-4K1Free} and~\ref{lemma-5crown-4K1-free}), they can in fact be colored in $O(n^3)$ time (see~\cite{4K1C4C6Alg}). This further reduces the problem to that of developing a reasonably fast coloring algorithm for villas and mansions. To summarize, combined with the previously obtained results (listed above), any $O(n^k)$ time coloring algorithm for villas and mansions will yield an $O(n^{\text{max}\{k,3\}})$ time coloring algorithm for $(2P_3,C_4,C_6)$-free graphs.

\small{
 
}

\end{document}